\numberwithin{equation}{section}
\newtheorem{Thm}{Theorem}
\newtheorem{Lm}{Lemma}[section]
\newtheorem{sublemma}[Lm]{Sublemma}
\newtheorem{Prop}[Lm]{Proposition}
\newtheorem{Cor}[Lm]{Corollary}
\newtheorem{Def}[Lm]{Definition}
\newtheorem{Rk}[Lm]{Remark}
\newtheorem{Not}[Lm]{Notation}
\newtheorem{Conjecture}{Conjecture}
\def\bdef{\begin{Def}}
\def\endef{\end{Def}}
\def\bthm{\begin{Thm}}
\def\ethm{\end{Thm}}
\def\bprop{\begin{Prop}}
\def\enprop{\end{Prop}}
\def\blm{\begin{Lm}}
\def\elm{\end{Lm}}
\def\beq{\begin{equation}}
\def\eeq{\end{equation}}
\def\bcor{\begin{Cor}}
\def\ecor{\end{Cor}}
\def\bfig{\begin{picture}}
\def\efig{\end{picture}}
\def\be{\begin{eqnarray}}
\def\ee{\end{eqnarray}}
\def\beal{\begin{aligned}}
\def\enal{\end{aligned}}
\newcommand{\bcon}{\begin{Conjecture}}
\newcommand{\econ}{\end{Conjecture}}
\newcommand\bL{\boldsymbol{L}}
\newcommand{\sign}{{{\rm sign}}}
\newcommand{\brc}{\bar c}
\newcommand{\gm}{\gamma}
\newcommand{\al}{\alpha}
\newcommand{\cR}{\mathcal{R}}
\newcommand{\cF}{\mathcal{F}}
\newcommand{\cD}{\mathcal{D}}
\newcommand{\cL}{\mathcal{L}}
\newcommand{\fh}{\mathfrak{h}}
\newcommand{\R}{\mathbb{R}}
\newcommand{\om}{\omega}
\newcommand{\bt}{\beta}
\newcommand{\dt}{\delta}
\newcommand{\lb}{\lambda}
\newcommand{\eps}{\varepsilon}
\newcommand{\T}{\mathbb{T}}
\newcommand{\Ger}{\mathbf{G}}
\newcommand{\Loc}{\mathbb{L}}
\newcommand{\Glob}{\mathbb{G}}
\newcommand{\Id}{\mathrm{Id}}
\newcommand{\Span}{\text{span}}
\newcommand{\Ker}{\text{Ker}}
\newcommand{\lin}{{\bf l}}
\newcommand{\brlin}{{\bar\lin}}
\newcommand{\brrlin}{{\bar\brlin}}
\def\({\left(}
\def\){\right)}
\def\Const{{\text{Const}}}
\newcommand\bx{\boldsymbol{x}}
\newcommand\by{\boldsymbol{y}}
\newcommand\bbV{{\mathbb{V}}}
\newcommand{\brE}{\bar E}
\newcommand{\bre}{\bar e}
\newcommand{\brG}{\bar G}
\newcommand{\brg}{\bar g}
\newcommand{\bv}{{\bf u}}
\newcommand{\brv}{\bar\bv}
\newcommand{\brrv}{\bar \brv}
\newcommand\brdelta{{\bar\delta}}
\newcommand\brtau{{\bar\tau}}
\newcommand\brchi{{\bar\chi}}
\newcommand{\cK}{\mathcal{K}}
\newcommand{\cP}{\mathcal{P}}
\newcommand\su{\mathsf u}
\newcommand\sv{\mathsf v}
\newcommand\sw{\mathsf w}
\def\hu{{\hat{u}}}
\def\hlin{{\hat{\lin}}}
\def\hGamma{{\hat{\Gamma}}}
\def\ta{\tilde{a}}
\def\tb{\tilde{b}}
\def\te{\tilde{e}}
\def\tQ{\tilde{Q}}
\def\tw{\tilde{w}}
\def\tGamma{\tilde{\Gamma}}
\def\tdelta{\tilde{\delta}}
\title{Noncollision Singularities in a Planar Four-body Problem}
\author[Jinxin Xue]{Jinxin Xue}
\address{Yau Mathematical Sciences Center \& Department of Mathematics, Tsinghua University, Beijing, China, 100084}
\email{jxue@tsinghua.edu.cn, jinxinxue@gmail.com}
\begin{document}
\maketitle
\begin{abstract}
In this paper, we show that there is a Cantor set of initial conditions in the planar four-body problem such that all four bodies escape to infinity in a finite time, avoiding collisions. This proves the Painlev\'{e} conjecture for the four-body case, and thus settles the last open case of the conjecture.
\end{abstract}
\tableofcontents

\renewcommand\contentsname{Index}

%\maketitle

\section{Introduction}
%This work is a continuation of our previous paper \cite{DX}. \subsection{Statement of the main result}
Consider two large bodies $Q_1$ and $Q_2$ of masses $m_1=m_2=1$
located at distance $\chi\gg 1$ from each other initially, and
two small particles $Q_3$ and $Q_4$ of masses $m_3=m_4=\mu\ll 1$.
%$Q_3$ is captured by $Q_4$ and $Q_4$ is a messenger traveling between $Q_1$ and $Q_2$.
The $Q_i$s interact with each other via Newtonian potential. We denote the momentum of $Q_i$ by  $P_i.$
The Hamiltonian of this system can be written as
\begin{equation}
\label{eq: hammain}
%\begin{aligned}
H(Q_1,P_1;Q_2,P_2;Q_3,P_3;Q_4,P_4)=\frac{P_1^2}{2}+\frac{P_2^2}{2}+\frac{P_3^2}{2\mu}+\frac{P_4^2}{2\mu}
%\end{aligned}
\end{equation}
$$-\frac{1}{|Q_1-Q_2|}-\frac{\mu}{|Q_1-Q_3|}-\frac{\mu}{|Q_1-Q_4|}-\frac{\mu}{|Q_2-Q_3|}-\frac{\mu}{|Q_2-Q_4|}-\frac{\mu^2}{|Q_3-Q_4|}.$$
%We assume that the total energy of the system is zero.
We choose the mass center as the origin.

We want to study singular solutions of this system, that is solutions which cannot be continued for all positive times.
We will exhibit a rich variety of singular solutions. Fix a small $\eps_0.$ Let $\boldsymbol\omega=\{\omega_j\}_{j=1}^\infty$
be a sequence of 3s and 4s.

\begin{Def}\label{DefSing}
We say that $(Q_i(t), \dot Q_i(t)),\ i=1,2,3,4,$ is a {\bf singular solution with symbolic sequence $\boldsymbol\omega$} if there exists a positive
increasing sequence $\{t_j\}_{j=0}^\infty$ such that
\begin{itemize}
\item $t^*=\lim_{j\to\infty} t_j<\infty.$
\item $|Q_3-Q_2|(t_j)\leq \eps_0,$ $|Q_4-Q_2|(t_j)\leq \eps_0.$
\item For $t\in [t_{j-1}, t_j]$, $|Q_{7-\omega_j}-Q_2|(t)\leq \eps_0$ and
$\{Q_{\omega_j}(t)\}_{t\in [t_{j-1}, t_j]}$ leaves the $\eps_0$ neighborhood of $Q_2$, winds around $Q_1$ exactly once, then reenters the $\eps_0$ neighborhood of $Q_2$.
%If $\omega_j=3$ then for $t\in [t_{j-1}, t_j]$, $|Q_4-Q_2|(t)\leq \eps_0$ and
%$\{Q_3(t)\}_{t\in [t_{j-1}, t_j]}$ winds around $Q_1$ exactly once.
\item $\limsup_{t}|\dot{Q}_i(t)|,\ \limsup_{t}|{Q}_i(t)| \to \infty$  as $t\to t^*,$\ $i=1,2,3,4$.
\end{itemize}
\end{Def}
During the time interval $[t_{j-1}, t_j]$ we refer to $Q_{\omega_j}$ as the traveling particle and
to $Q_{7-\omega_j}$ as the captured particle. Thus $\omega_j$ prescribes which particle is
the traveler during the $j$th trip.

We denote by ${\Sigma_{\boldsymbol\omega}}$ the set of initial conditions of singular orbits with symbolic sequence $\boldsymbol\omega.$
\bthm
\label{ThMain}
There exists $\mu_*\ll 1$ such that for $\mu<\mu_*$ the set $\Sigma_{\boldsymbol\omega}\neq\emptyset.$

Moreover there is an open set $U$ on the zero energy level and zeroth angular momentum level, and a foliation of $U$ by two-dimensional surfaces such that for any leaf
$S$ of our foliation $\Sigma_{\boldsymbol\omega}\cap S$ is a Cantor set.
\ethm

We remark that the choice of the zero energy level is only for simplicity. Our construction holds for sufficiently small nonzero energy levels.

%In \cite{DX} we considered the restricted problem where $Q_1$ and $Q_2$ were fixed. So only the sequences where both 3 and 4
%appear infinitely many times corresponded to {\it noncollision} singularities. For sequences where either 3 or 4 appear only finitely
%many times we had a collision of that body with $Q_2.$ In the present setting both $Q_1$ and $Q_2$ escape to infinity in finite
%time so all sequences give noncollision singularities. %So in order to simplify notation we assume below that $\omega$ is a sequnces
%of all 4s. The general case requires minor modifications as explained in \cite{DX}.
\subsection{Motivations and perspectives}
Our work is motivated by the following fundamental  problem in celestial mechanics.
{\it Describe the set of initial conditions  of the Newtonian N-body problem leading to global solutions.}
The complement to this set splits into the initial conditions leading to the collision and non-collision singularities.

It is clear that the set of initial conditions leading to collisions is non-empty for all $N>1$ and it is shown in
\cite{Sa1} that it has zero measure. Much less is known about the non-collision singularities. The main motivation
for our work is provided by the
following basic problems.

\bcon
\label{ConZM}
The set of non-collision singularities has zero measure for all $N>3.$
\econ

This conjecture can be found in the problem list \cite{Sim} as the first problem. This conjecture remains almost completely open. The only known result, by Saari \cite{Sa2}, is that the conjecture is true for $N=4$ .  To obtain the complete solution of this conjecture one needs to understand better
the structure of the non-collision singularities. Our Cantor set in Theorem \ref{ThMain} has zero measure and codimension 2 on the energy level, which is in favor of Conjecture \ref{ConZM}. As a first step, it is natural to conjecture the following.

\bcon[Painlev\'{e} Conjecture, 1897]
\label{ConNE}
The set of non-collision singularities is non-empty for all $N>3.$
\econ

There is a long history studying Conjecture \ref{ConNE}. There are some nice surveys, see for instance \cite{G3}. Conjecture \ref{ConNE} was explicitly mentioned
in Painlev\'{e}'s lectures \cite{Pa} where the author proved that for $N=3$ there are no non-collision singularities, using an argument based on the triangle inequality (see also \cite{G3} for the argument).
Soon after Painlev\'{e}, von Zeipel showed that if the system of $N$ bodies has a non-collision singularity, then some
particle should fly off to infinity in finite time. Thus non-collision singularities seem quite counterintuitive. The first landmark towards proving the conjecture came in 1975. In \cite{MM} Mather and McGehee constructed a system of four bodies on the line where the particles go to infinity
in finite time after an infinite number of binary collisions (it was known since the work of Sundman \cite{Su} that
binary collisions can be regularized so that the solutions can be extended beyond the collisions). Since the
Mather-McGehee example had collisions it did not solve Conjecture \ref{ConNE} but made it plausible.
Conjecture \ref{ConNE} was proved independently by Xia \cite{X} for the spatial five-body problem and by Gerver
\cite{G1} for the planar $3N$-body problem where $N$ is sufficiently large. It is a general belief that a non collision singularity in the $(N+1)$-body problem can be obtained by adding one more remote and light body to the $N$-body problem, to which the existence of non-collision singularities is known. The hardest case of the problem, $N=4$, still remained open. Our result proves the conjecture in the $N=4$ case. %However in \cite{G2} (see also \cite{G3})Gerversketched a scenario which may lead to a non-collision singularity in the planar four-body problem. Gerver has notpublished the details of his construction due to a large amount of computations involved (it suffices to mention thateven technically simpler large $N$ case took 68 pages in \cite{G1}).

We believe the method used in this paper could also be used to construct
noncollision singularities for the general $N$-body problem, for any $N>3$.
We can put any number of bodies into our system sufficiently far from the mass center of our four bodies, orthogonal to the line passing through $Q_1$ and $Q_2$. This produces noncollision singularities in the $N$-body problem. We have not checked all the details in that case
but we do not expect any significant difficulties. Treating the general $N$ however would significantly
increase the length of the paper, so to simplify the exposition we concentrate here on the four-body case.

Since our technique is perturbative and it is necessary that $\mu\ll 1$, we ask the following questions.

\textbf{Question 1:} {\it Are there noncollision singularities for the four-body problem in which all the four bodies have comparable masses?}

In fact it is possible that the following stronger result holds.

\textbf{Question 2:} {\it Is it true that for any choice of positive masses $(m_1,m_2,m_3,m_4)\in \R \mathbb P^3 $
%up to permutation,
the corresponding four-body problem has noncollision singularities?}

We need to develop some nonperturbative techniques for the first question and we need to explore
the obstructions for the existence of noncollision singularities for the second.

\subsection{Sketch of the proof}
%By contrast, in \cite{X}, it is unclear to us how to add more bodies to the system since the model there requires
%some symmetry. For instance, if we want to add one more body, the symmetry will be broken if we add the body off the $z$-axis and there would be collisions if we add the body on the $z$-axis.
 The main idea of the proof is outlined in \cite{Xu}. The proof consists of the following three aspects: physical, mathematical and algorithmic aspects.
The physical aspect is an idealistic model constructed by Gerver \cite{G2} (see Section \ref{SGerver}), in which the hyperbolic Kepler motion of one light body can extract energy from the elliptic Kepler motion of the other light body. Moreover, after each cycle of energy extraction, the configuration is made self-similar to the beginning, so that the procedure of energy extraction can be iterated infinitely.

The mathematical aspect is a partially hyperbolic dynamics framework. We find that there are two strongly expanding directions that are invariant under iterates along our singular orbits. The strong expansions allow us to push the iteration to the future and synchronize the two light bodies. Namely, the two light bodies can be chosen to come to the correct place simultaneously in order to have a close encounter. One strong expansion is given by a close encounter between $Q_1$ and $Q_4$. This is the hyperbolicity created by scattering (hyperbolic Kepler motion). The other one is induced by shear coming from the elliptic Kepler motion, which seems quite new in celestial mechanics. %See Section \ref{sct: hyp} and Remark \ref{RkPhysics}.

The algorithmic aspect is a systematic toolbox that we develop to compute the derivative of the Poincar\'{e} map in detail. This toolbox includes symplectic coordinate systems and partition of the phase space (Section \ref{sct: symp} and Appendix \ref{appendixa}), integration of the variational equations (Section \ref{sct: var}) and boundary contributions (Section \ref{sct: boundary}), coordinate change between different pieces of the phase space (Section \ref{sct: switch}), collision exclusion (Section \ref{ssct: nocollision}) {\it etc}. Moreover, we develop new methods to regularize the double collision using hyperbolic Delaunay coordinates and extract $\mathscr C^1$ information of the near double collision from its singular limit, the elastic collision, using polar coordinates (Section \ref{sct: loc}). These new methods are more suitable to our framework than previously known methods such as Levi-Civita regularization, and hopefully have wider applications.
%Let us also briefly comment on our result in \cite{DX}. In \cite{DX}, we consider a model that we call two-center-two-body problem in which the two large bodies $Q_1$ and $Q_2$ are fixed. This model, looks artificial, however it captures the main difficulties of the problem. Moreover, the calculation is much simpler. The experience and intuition that we get when working on \cite{DX} effectively guide us to go through the harder calculations of this paper.

The paper is organized as follows. In Section \ref{sct: main}, we give the proof of the main Theorem \ref{ThMain}. In Section \ref{sct: hyp} we study the structure of the derivative of the local map and the global map. In Section \ref{sct: symp}, we perform several symplectic transformations to reduce the Hamiltonian system to a form suitable for doing calculations and estimates.
This section is purely algebraic without dynamics. Next, we state our estimates for the derivatives of the factor maps of the global map as Proposition \ref{Prop: main} in Section \ref{sct: prop}. The following Sections \ref{sct: eqmotion}, \ref{sct: var}, \ref{sct: boundary}, \ref{sct: switch} and \ref{sct: loc} are devoted to the proof of the proposition.
In Appendix \ref{appendixb}, we give the proof of our main estimate for the derivative of the global map,
Lemma \ref{Lm: glob}, based on Proposition \ref{Prop: main}. % In Section \ref{sct: loc}, we establish the structure of the local map stated in Lemma \ref{Lm: loc} and prove the nondegeneracy condition. In Section \ref{sct: N>4}, we discuss briefly how to construct non-collision singularities for $N$-body problem with $N>4$.
 Finally,
in Appendix \ref{appendixa}, we give an introduction to Delaunay variables including estimates of the various partial derivatives
which are used in our calculations, and in Appendix \ref{section: gerver}, we summarize the result of Gerver in \cite{G2}.

%We briefly talk about the use of computer here. %Besides the computer usage in Lemma \ref{Lm: local4}, \ref{Lm: local3} and Appendix \ref{SSEllipse} that are done in \cite{DX},
%We use Mathematica in Section \ref{sct: var} and Appendix \ref{appendixb} to help us multiply matrices in order to prove Lemma \ref{Lm: glob}. Multiplying 10$\times$10 matrices involves 1000 multiplications.
%We need to do such calculations several times. Since we are only interested in main terms such calculations can, in principle,
%be performed by hand but we believe that the computer is more reliable. Finally we note that the leading terms in the asymptotics
%of the global map in this paper are the same as in \cite{DX} so the computer is only used to suppress the subleading terms. See Remark \ref{Rk: final} for more discussion.

%In Remark , we will explain that the computer is not needed to get the two leading terms of the derivative of the global map in Lemma \ref{Lm: glob} and it is only used to confirm our estimates of the perturbation.

We use the following conventions for constants.

{\bf Convention for constants}:
{\it
\begin{itemize}
\item We use $C,c,\hat C,\tilde{C}$ (without subscript) to denote a constant whose value may be different in different contexts.
\item When we use subscript $1,3,4$, for instance $C_1,\ C_3,\ C_4$ {\it etc}, we mean the constant has fixed value throughout the paper specifically chosen for the first, third or fourth body.
\end{itemize}
}
\section{Proof of the main theorem}\label{sct: main}
\subsection{The coordinates}
We first introduce the set of coordinates needed to state our lemmas and prove our theorems. This set of coordinates is known as the Jacobi coordinates.

\begin{Def}[The coordinates]\label{def: coordR}

\begin{itemize}
\item We define the relative position of $Q_1,Q_3,Q_4$ to $Q_2$ as the new variables $q_1,q_3,q_4$
\begin{equation}\label{eq: qQ}
q_1=Q_1-Q_2,\quad q_3=Q_3-Q_2,\quad q_4=Q_4-Q_2,
\end{equation}
and the new momentum $p_1,p_3,p_4$
%conjugate to $q_1,q_3,q_4$ from
which are related to the old momentum $P_1,P_3,P_4$ by
\begin{equation}\label{eq: pP} P_1=\mu p_1,\quad P_3=\mu p_3,\quad P_4=\mu p_4.\end{equation}
\item Next, we define the new set of variables $(x_3,v_3;x_1,v_1;x_4,v_4)$ called Jacobi coordinates through
\begin{equation}\label{eq: right}
\begin{cases}
&v_3=p_3+\frac{\mu}{1+\mu}(p_4+p_1),\\
&v_1=p_1,\\
&v_4=p_4+\frac{\mu p_1}{1+2\mu},\\
\end{cases}\quad
\begin{cases}
&x_3=q_3,\\
&x_1=q_1-\frac{\mu(q_3+q_4)}{2\mu+1},\\
&x_4=q_4-\frac{\mu q_3}{1+\mu}.
\end{cases}
\end{equation}
One can easily check that this transformation is symplectic, i.e. the following symplectic form $\bar\omega$ is preserved
\begin{equation}\label{eq: baromega}
\bar\omega=\sum_{i=3,1,4}dp_i\wedge dq_i=\sum_{i=3,1,4}dv_i\wedge dx_i.
\end{equation}
\item The total angular momentum is
$$G_0:=\sum_{i=3,1,4}p_i\times q_i=\sum_{i=3,1,4}v_i\times x_i.$$
In this paper we assume the total angular momentum $G_0=0$.
\end{itemize}
\end{Def}
\begin{Rk}\label{RkCoord}
\begin{itemize}
\item This set of new coordinates $(x_3,v_3;x_1,v_1;x_4,v_4)$ look complicated. Heuristically,
the new coordinates have the same physical meanings as $(q_3,p_3;q_1,p_1;q_4,p_4)$, since the transformation between them is a $O(\mu)$ perturbation of $\Id$. We will study coordinate changes systematically in Section \ref{sct: symp}.
\item The rescaling \eqref{eq: pP} changes the meanings of some physical quantities. First, $v_3,v_4$ are close to the velocities of $Q_3$ and $Q_4$ respectively; however, $v_1$ is not close to the velocity of $Q_1$ but is close to $\mu^{-1}$ times the velocity of $Q_1$. Next, the angular momentum $G_0$ that we use here is actually $\mu^{-1}$ times the angular momentum defined using the original coordinates $P_i,Q_i,\ i=1,2,3,4$. Similarly, the energy is also $\mu^{-1}$ times the original energy.
\end{itemize}
\end{Rk}
We then use Appendix \ref{appendixa} to pass to Delaunay variables $(x_3,v_3)\to (L_3,\ell_3,G_3,g_3)$ and $(x_4,v_4)\to (L_4,\ell_4,G_4,g_4)$. For Kepler motion with Hamiltonian $H_2=\frac{|v|^2}{2}-\frac{1}{|x|},\ (x,v)\in \R^2\times \R^2$, the Delaunay variables have explicit geometric meanings. When $H_2<0$, the Kepler motion is elliptic. The quantity $L^2$ is the semimajor axis, $|LG|$ is the semi-minor axis, $g$ is the argument of apapsis, and $\ell$ is the mean anomaly indicating the position of the moving particle on the ellipse. When $H_2>0$, the Kepler motion is hyperbolic, in which case the Delaunay variables have similar geometric meanings. Details are provided in Appendix \ref{appendixa}.

To start we assume the energy $E_3$ of the subsystem $(x_3,v_3)$ is negative while the energy $E_4$ of the subsystem $(x_4,v_4)$ is positive. The energies and their relations to the Delaunay variables are given as follows
$$E_3:=\frac{|v_3|^2}{2m_3}-\frac{k_3}{|x_3|}=-\frac{m_3k_3^2}{2L_3^2},\quad\mathrm{and}\quad E_4:=\frac{|v_4|^2}{2m_4}-\frac{k_4}{|x_4|}=\frac{m_4k_4^2}{2L_4^2},$$
where the values of $m_{i}$ and $k_i$ are given explicitly in \eqref{Eqmk} below, and it is enough to know that $m_i,\,k_i=1+O(\mu),$ \ $i=3,4$.
The variable $G_i=v_i\times x_i$ means minus the angular momentum of the subsystem $(x_i,v_i),\ i=3,4$.

We fix the zero energy level so that we can eliminate $L_4$ from our list of variables, applying the implicit function theorem (Section \ref{SSReduction}). Next we pick a Poincar\'{e} section and treat $\ell_4$ as the new time (see Definition \ref{def: sct} below), so that we eliminate $\ell_4$ from our set of coordinates. So we get  $(L_3,\ell_3,G_3,g_3;x_1,v_1;G_4,g_4)\in \R^7\times \T^3$ as the set of coordinates that we use to do calculations. In this section, we use the energy $E_3$ instead of $L_3$, eccentricities $e_3,e_4$ instead of the negative angular momentum $G_3,G_4$. The new choice of coordinates are related to the old ones through $e_i=\sqrt{1+2G_i^2E_i},\ i=3,4$. We use the set of coordinates $(E_3,\ell_3,e_3,g_3;x_1,v_1;e_4,g_4)$ to give the proof of the main theorem since it is easier to study their
behavior under the rescaling.
Actually, our system still has total angular momentum conservation.
We could have fixed an angular momentum and eliminated two more variables. However, this would lead to more
complicated calculations.

\begin{Not}\label{NotVX}
\begin{itemize}
\item We refer to our set of variables as
$$\mathcal V=(\mathcal V_3;\mathcal V_1;\mathcal V_4)=(L_3,\ell_3,G_3,g_3;x_1,v_1; G_4,g_4).$$
\item We denote the Cartesian variables as
$$\mathcal X:=(\mathcal X_3;\mathcal X_1;\mathcal X_4)=(x_3,v_3;x_1,v_1;x_4,v_4).$$
\item In the following, when we use Cartesian coordinates such as $x,v$, each letter has two components. We will use the subscript $\parallel$ to denote the horizontal coordinate and subscript $\perp$ to denote the vertical coordinate. So we write $x=(x_\parallel,x_\perp)$ and $v=(v_\parallel,v_\perp)$ {\it etc}.
\end{itemize}
\end{Not}
%In the following, we always redefine $\chi$ as the value of $x_{1,\parallel}$ at the center of each cube after applying $\cR$, i.e. when $Q_3$ ellipse has unit size. One $\chi$ is associated to a cube $\mathcal C$, so after application of $\cP$, we first partition the section $\{x_{4,\parallel}^R=-2\}$ into cubes of size $1/\sqrt{\chi}$, then consider dynamics restricted on the intersection of $\cP(\mathcal C)$ with each new cube.

%\subsection{Gerver's map}
%\begin{center}
%\begin{figure}
   %     \centering
   %     \begin{subfigure}[b]{0.55\textwidth}
     %           \includegraphics[width=1.1\textwidth]{1stcoll.jpg}
     %         \caption{Angular momentum transfer}
     %           \label{fig:coll1}
   %     \end{subfigure}%
        %~ %add desired spacing between images, e. g. ~, \quad, \qquad, \hfill etc.
          %(or a blank line to force the subfigure onto a new line)
          %    \begin{subfigure}[b]{0.55\textwidth}
           %     \includegraphics[width=1.1\textwidth]{2ndcoll.jpg}
    %            \caption{Energy transfer}
        %        \label{fig:coll2}
  %      \end{subfigure}
%        \caption{Gerver's construction}\label{fig:gerver}
%\end{figure}
%\end{center}
\begin{figure}[ht]
\begin{center}
\includegraphics[width=0.8\textwidth]{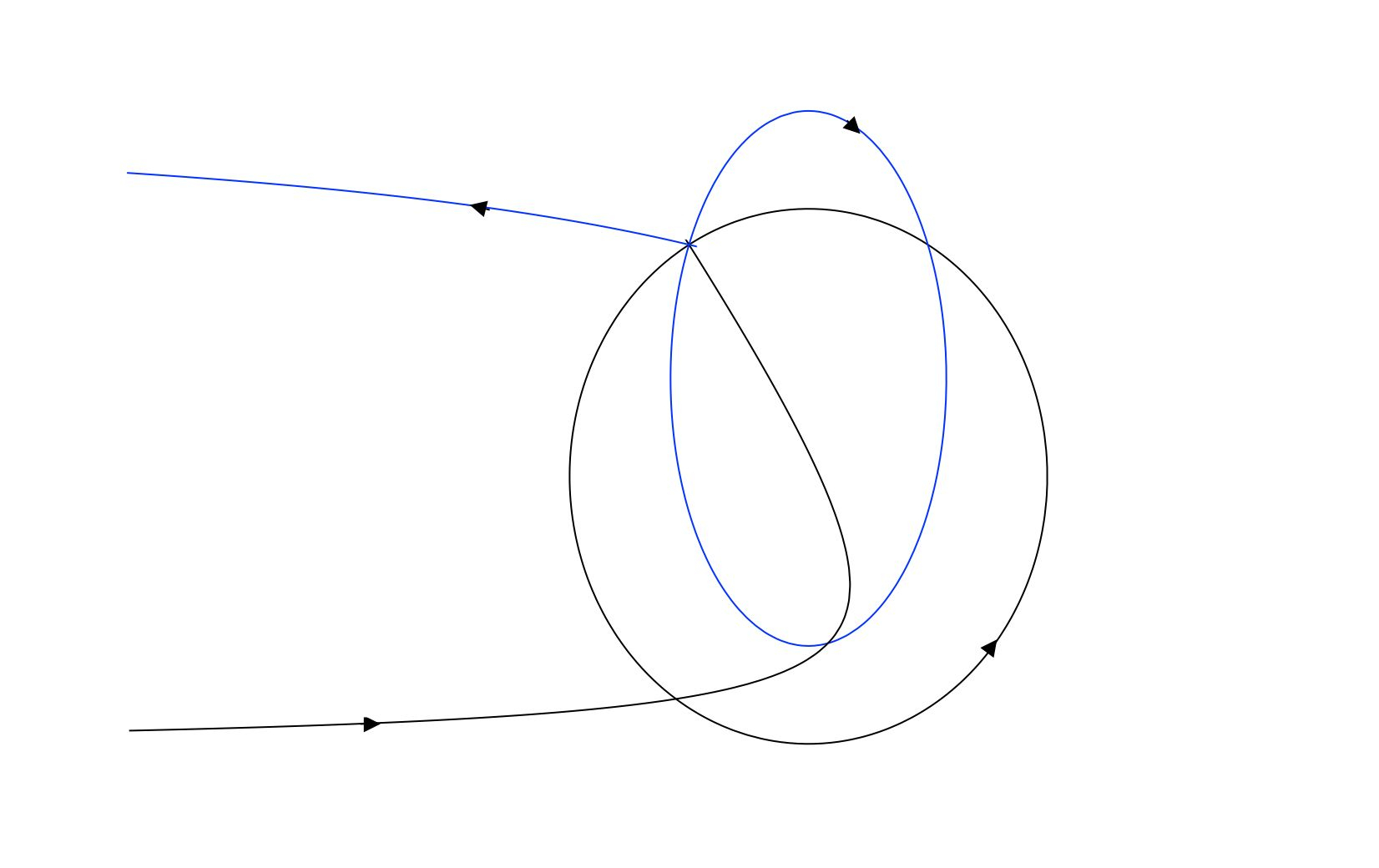}
\caption{Angular momentum transfer}
\end{center}
\label{fig:coll1}
\end{figure}

\begin{figure}[ht]
\begin{center}
\includegraphics[width=0.8\textwidth]{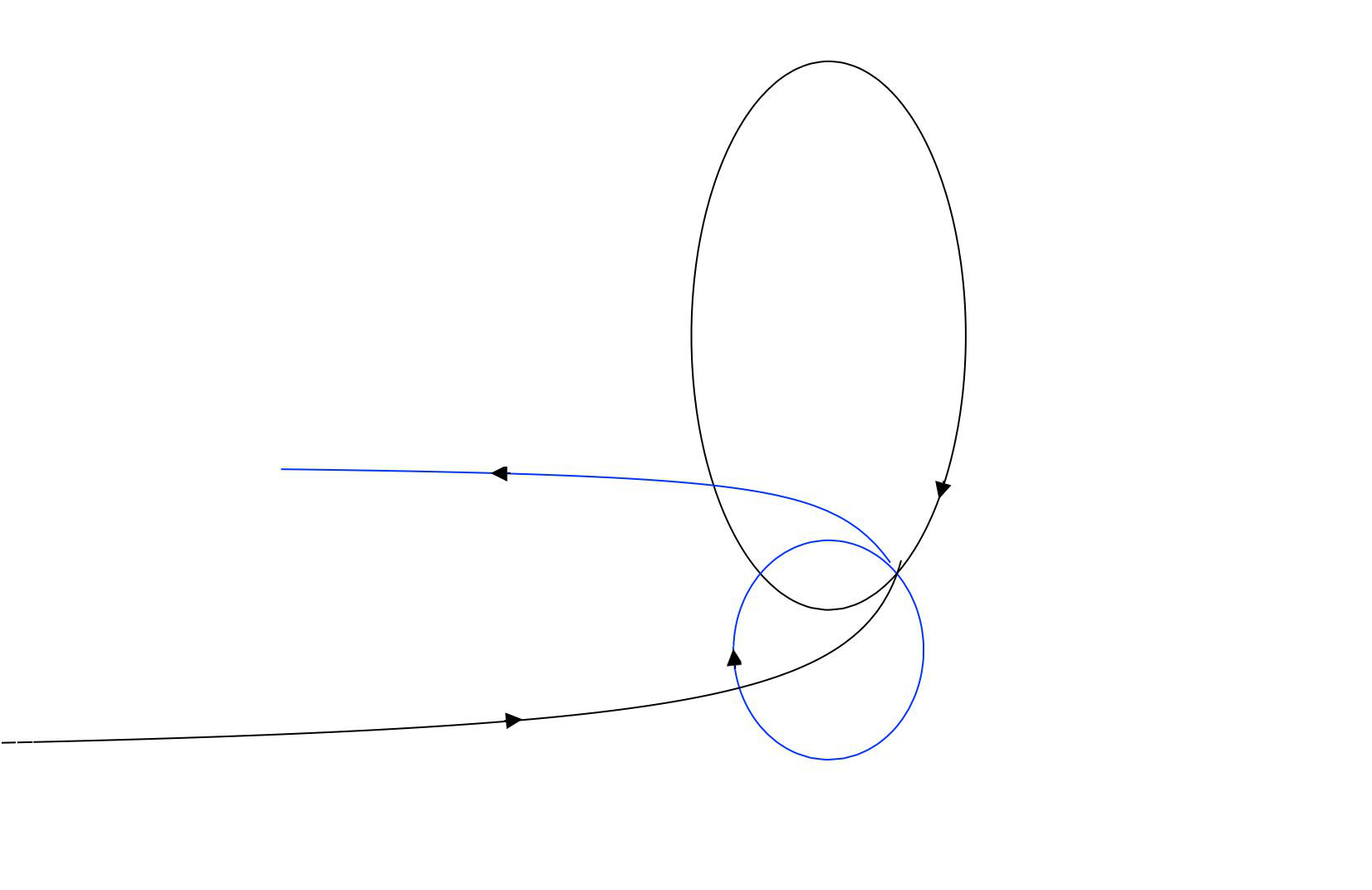}
\caption{Energy transfer}
\end{center}
\label{fig:coll1}
\end{figure}
\subsection{Gerver's model}\label{SGerver}
Following \cite{G2}, we discuss in this section the dynamics of the subsystem $Q_2,Q_3,Q_4$ in the limit case $\mu=0$ with $Q_1$ ignored.
We assume that
\begin{itemize}
\item $Q_3$ has elliptic motion and $Q_4$ has hyperbolic motion with focus $Q_2$;
\item $Q_3$ and $Q_4$ arrive at the correct intersection point of their orbits simultaneously (see Figure 1 and 2);
\item $Q_3$ and $Q_4$ do not interact unless they have an exact collision, and the collision is treated as elastic collision (energy and momentum are preserved).
\end{itemize}
The main conclusion is that
\begin{itemize}
\item the major axis of the elliptic motion is always kept vertical;
\item the incoming and outgoing asymptotes of the hyperbolic motion are always horizontal;
\item after two steps of the collision procedure, the ellipse has the same eccentricity as the ellipse before the first collision, but has a smaller semi-major axis (see Figure 1 and 2).
\end{itemize}

The interaction of $Q_3$ and $Q_4$ is desribed by the
elastic collision. That is,
velocities before ($-$) and after $(+)$ the collision are related by
\begin{equation}
\label{Elastic}
 v_3^+=\frac{v_3^-+v_4^-}{2}+\left|\frac{v_3^--v_4^-}{2}\right| n(\al), \quad
v_4^+=\frac{v_3^-+v_4^-}{2}-\left|\frac{v_3^--v_4^-}{2}\right| n(\al),
\end{equation}
where $n(\al)$ is a unit vector making angle $\al$ with $v_3^--v_4^-.$ The only free parameter $\al$ here is fixed by the condition that the outgoing asymptote of the traveling particle is horizontal.

We next introduce the {\it Gerver map} to formalize the above description. The Gerver map describes the parameters of the elliptic orbit change during the interaction of $Q_3$ and $Q_4.$
The orbits of $Q_3$ and $Q_4$ intersect in two points, of which we pick one (see Figure 1 and 2).
We use the subscript $j\in\{1, 2\}$ to describe the first or the second collision in Gerver's construction. Since $Q_1$ is ignored, we use only the orbit parameters $(E_3,\ell_3,e_3,g_3; e_4,g_4)$. The assumptions on the horizontal asymptotes of the traveler further remove $g_4$. Finally, at the intersection point of the elliptic and hyperbolic orbit, we get rid of one last variable $\ell_3$, so we only need to work with the variables $(E_3,e_3,g_3, e_4).$

With this in mind we proceed to define the Gerver map
$\Ger_{e_4, j, \omega}(E_3, e_3, g_3).$
This map depends on two discrete parameters $j\in \{1, 2\}$ and
$\omega\in\{3, 4\}.$ The role of $j$ has been explained above, and $\omega$ will tell us which particle will be the traveler
after the collision.
%To define $\Ger$ we assume that $Q_4$ moves along the hyperbolic orbit with parameters $(-E_3, e_4, g_4)$
%where $g_4$ is fixed by requiring that the incoming asymptote of $Q_4$ is horizontal. We assume that $Q_3$ and $Q_4$
%arrive to the $j$-th intersection point of their orbit simultaneously. At this point their velocities are changed by \eqref{Ellastic}
%where the only free parameter $\al$ is fixed by the condition $\bar\theta_\omega=\pi.$
After colliding, the particles move independently. Thus $Q_3$ moves on an orbit with parameters $(\brE_3, \bre_3, \brg_3)$, and
$Q_4$ moves on an orbit with parameters
$(\brE_4, \bre_4, \brg_4).$

If $\omega=4$, we choose $\al$ so that after the exchange $Q_4$ moves on a hyperbolic
orbit with horizontal asymptote and let
$$ \Ger_{e_4, j, 4}(E_3, e_3, g_3)=(\brE_3, \bre_3, \brg_3).$$
If $\omega=3$ we choose $\al$ so that after the exchange $Q_3$ moves on a hyperbolic
orbit with horizontal asymptote and let
$$ \Ger_{e_4, j, 3}(E_3, e_3, g_3)=(\brE_4, \bre_4, \brg_4).$$

{\it In the following, to fix our notation, we always call the captured particle $Q_3$ and the traveler $Q_4$, i.e. we fix $\omega=4$.
}

We will denote the ideal orbit parameters in Gerver's paper \cite{G2} of $Q_3$ and $Q_4$ before the first (respectively second) collision with * (respectively **).
Thus, for example, $G_4^{**}$ will denote the negative angular momentum of $Q_4$ before the second collision.
The real values after the first (respectively, after the second) collisions are denoted with a $bar$ or $double\ bar$.

%Gerver's map $\Ger$ has a skew product form
%$$ \bre_3=f_e(e_3, g_3, e_4), \quad
%\brg_3=f_g(e_3, g_3, e_4), \quad
%\brE_3=E_3 f_E(e_3, g_3, e_4) . $$
%This skew product structure will be crucial in the proof of Theorem \ref{ThMain} since it will allow us to iterate $\Ger$
%so that $E_3$ grows exponentially while $e_3$ and $g_3$ remain almost unchanged.

The following is the main result of \cite{G2} and plays a key role in constructing singular solutions.

\blm [\cite{G2}, Lemma 2.2 of \cite{DX}]
\label{LmGer}
Assume that the total energy of the $Q_2,Q_3,Q_4$ system is zero, i.e. $E_3+E_4=0$,
%Assume that the incoming and outgoing asymptotes of $Q_4$ are horizontal.
%For $\mu=0,\ \chi=\infty$, if we start with some $0<e_3^*<\sqrt{2}/2$, $E_3^*=-1/2, g_3^*=\pi/2$, i.e. vertical semimajor,
and fix the incoming and outgoing asymptotes of the hyperbola to be horizontal.%, i.e. $\theta^-=0,\ \bar{\theta}^+=\pi,\ \bar{\theta}^-=0,\
%\brrtheta^+=\pi$,
\begin{itemize}
\item[(a)]For $E_3^*=-\frac{1}{2}, g_3^*=\frac{\pi}{2}$ and for any
  $\ e_3^*\in (0,\frac{\sqrt 2}{2})$, there exist $e^*_4, e^{**}_4,\lb_0>1$ such that
$$ (e_3, g_3, E_3)^{**}=\Ger_{e_4^*, 1, 4}\left(e_3, g_3, E_3 \right)^*, \quad (e_3, -g_3, \lambda_0 E_3)^*=\Ger_{e_4^{**}, 2, 4}\left(e_3, g_3, E_3 \right)^{**}, $$
where $E_3^{**}=E_3^*=-\frac{1}{2},\ g_3^{**}=g_3^*=\frac{\pi}{2}$ and $e_3^{**}=\sqrt{1-e_3^{*2}}$.
\item[(b)] There is a constant $\brdelta$ such that if $(e_3,g_3,E_3)$
  lie in a $\brdelta$ neighborhood of $(e_3^*,g_3^*,E_3^*),$  then there exist smooth functions $e_4'(e_3, g_3),$ $e_4''(e_3, g_3),$ and $\lambda(e_3, g_3,E_3)$ such that
$$ e_4'(e_3^*, g_3^*)=e_4^*, \quad e_4''(e_3^*, g_3^*)=e_4^{**}, \quad \lambda(e^*_3, g^*_3,E^*_3)=\lb_0,$$
\begin{equation}\nonumber
\begin{aligned}
(\bre_3, \brg_3, \brE_3)&=\Ger_{e_4'(e_3, g_3), 1, 4}\left(e_3, g_3, E_3 \right),\\
(e^*_3, -g^*_3, \lb(e_3,g_3,E_3)E_3^*)&=\Ger_{e_4''(e_3, g_3), 2, 4}\left(\bre_3, \brg_3, \brE_3 \right). \end{aligned}\end{equation}
\item[(c)] 1-homogeneity in $E_3$: for any $\lambda >0$ and $(e_3,g_3,E_3)$ such that $\|(e_3,g_3,E_3/\lambda,e_4)-(e_3,g_3,E_3,e_4)^\dagger\|_{\infty}<\bar\dt$, with $\dagger=*,**$, we have
$$\pi_{E_3} \Ger_{e_4,j,4}(e_3,g_3, E_3)=\lambda\cdot\pi_{E_3} \Ger_{e_4,j,4}(e_3,g_3,E_3/\lambda),$$
where $\pi_{E_3}$ means the projection to the $E_3$ component, and $j=1,2$ corresponds to $*$, $**$.
\end{itemize}
\elm

Part (a) is the main content of \cite{G2}, which gives a two-step procedure to decrease the energy of the elliptic Kepler motion and maintain the self-similar structure (See Figure 1 and 2). We call the collision points in part (a) the {\it Gerver's collision points}, whose exact coordinates can be found in Appendix \ref{section: gerver}.
The results are summarized in Appendix \ref{section: gerver} with orbit parameters given explicitly. Part (b) says that once the ellipse gets deformed slightly away from the standard case in Figure 1 after the first collision, we can correct it by changing the phase of $Q_3$ slightly at the next collision to guarantee that the ellipse that we get after the second collision is standard. %This idea was proposed in \cite{G2}. We fill in details of the proof in Appendix \ref{SSEllipse}.

The notion of {\it angle of asymptote} above is clear since we only deal with the Kepler motion. We next introduce the explicit definition of angles of asymptotes, which are used in place of $g_4$ sometimes even when we deal with perturbed Kepler motion.
\begin{Not}[Angles of asymptotes]\label{NotAsymp}
In the following, we use $$\theta_4^-:=g_4-\arctan \frac{G_4}{L_4}$$
for the incoming $($superscript $-)$ asymptote of the $(x_4,v_4)$ motion and
\begin{equation}\label{EqAsymp}\theta_4^+:=\pi+g_4+\arctan \frac{G_4}{L_4}\end{equation}
for the outgoing $($superscript $+)$ asymptote. In Lemma \ref{LmGer}, we always have $\theta_4^-=0$ and $\theta_4^+=\pi.$ Geometrically, the angle is formed by the asymptote pointing to the direction of $x_4$'s motion and the positive $x_\parallel$ axis.  See Appendix \ref{appendixa} for a detailed discussion of the choice of the sign in front of $\arctan \frac{G_4}{L_4}$.
\end{Not}
\subsection{The local and global map, the renormalization and the domain}
\subsubsection{The Poincar\'{e} section and the Poincar\'{e} map}
\begin{Def}[The Poincar\'{e} section, the local map, the global map and the Poincar\'{e} map]\label{def: sct}
 We define a section
 $\{x_{4,\parallel}=-2\}$ on the zeroth energy level.
\begin{itemize}
 \item Following the Hamiltonian flow, to the right of this section, we define the local map
 \[\Loc:\ \{x_{4,\parallel}=-2,\ v_{4,\parallel}>0\}\to \{x_{4,\parallel}=-2,\ v_{4,\parallel}<0\},\]
 \item and to the left we define the global map \[\Glob:\ \{x_{4,\parallel}=-2,\ v_{4,\parallel}<0\}\to \{x_{4,\parallel}=-2,\ v_{4,\parallel}>0\}.\]
\item Finally, we define the Poincar\'{e} return map \[\mathcal P=\Glob\circ\Loc:\  \{x_{4,\parallel}=-2,\ v_{4,\parallel}>0\}\to \{x_{4,\parallel}=-2,\ v_{4,\parallel}>0\}.\]
   \end{itemize}
   \end{Def}
These maps $\Glob,\Loc, \cP$ are defined by the standard procedure following the Hamiltonian flow. Once we find one orbit going from one section to another, the corresponding map can be defined in a neighborhood of this orbit. The existence of a returning orbit  follows from Lemma \ref{LmChooseAM}.

\subsubsection{The renormalization map}
Next, we define the renormalization map $\cR$, which will be applied after two applications of the Poincar\'{e} map.
 %in Lemma \ref{LmPMC0} and its proof.
 We first fix a large number $\chi\gg 1$ which can be thought as a typical distance between the heavy
bodies $Q_1$ and $Q_2.$

\begin{Def}[The renormalization map]
\label{Def: renorm}
We define the renormalization map $\mathcal R$ in several steps as follows.

\begin{itemize}
\item Given a point $\bx$, called the base point, on  the section $\{x_{4,\parallel}=-2, $ $v_{4,\parallel}>0\}$, we denote by $\mathsf C(\bx)$ a cube of size $\frac{1}{2\sqrt{\lambda\chi}}$ centered at $\bx$, where %The cube $\mathsf C(\bx)$ is the domain of $\cR$.
 $\lb=-2E_3$ is measured at $\bx$. %We consider only the cubes with $\lambda>1$ and $|x_1+(\chi,0)|\leq \sqrt\chi$.

 Let $\beta=-\arctan\frac{x_{1,\perp}}{x_{1,\parallel}}$ evaluated at $\bx$, and denote by $\mathrm{Rot}(\beta)$ the rotation of the plane by angle $\beta$ around the origin. %We will prove in Lemma \ref{LmPMC0} and Lemma \ref{Lm: position} that the angle of rotation is $O(\mu/\chi^{1/2})$.

\item We push forward the cube $\mathsf C(\bx)$ to the section $$\{ (\mathrm{Rot}(\beta)^{-1}\cdot x_{4})_{\parallel}=\cos\beta x_{4,\parallel}+\sin \beta x_{4,\perp}=-2/\lb,\quad v_{4,\parallel}>0\}$$ along the Hamiltonian flow.  We define $$\tilde\Glob:\quad \{x_{4,\parallel}=-2,\ v_{4,\parallel}>0\}\to\{(\mathrm{Rot}(\beta)^{-1}\cdot x_{4})_{\parallel}=-2/\lb, \ v_{4,\parallel}>0\}$$and apply the following procedure to $\tilde\Glob(\mathsf C(\bx))$.
\item Rotation: we rotate the $x_\parallel$-axis around the origin by angle $\beta$,
so that for the center point in each cube, we have that $x_{1,\perp}$ is nearly zero $($to be estimated as $|x_{1,\perp}|=O(\mu/\chi)$, with the error caused by $\tilde\Glob)$. Now the section $\{(\mathrm{Rot}(\beta)^{-1}\cdot x_{4})_{\parallel}=-2/\lambda, \ v_{4,\parallel}>0\}$ becomes $\{x_{4,\parallel}=-2/\lambda, \ v_{4,\parallel}>0\}$.

\item Rescaling: we zoom in on the configuration space by $\lb>1$.
Simultaneously, we also slow down the velocities by dividing by $\sqrt \lambda.$ Now the section $\{x_{4,\parallel}=-2/\lambda, \ v_{4,\parallel}>0\} $ becomes $\{x_{4,\parallel}=-2, \ v_{4,\parallel}>0\} $.
%acting on vectors with two components where $\beta$ is the rotation angle.
\item Reflection: we reflect the whole system along the $x$-axis. %Now the section $\{(\mathrm{Rot}(\beta)^{-1}\cdot x_{4})_{\parallel}=-2\lb, \ v_{4,\parallel}>0\}$ becomes $\{(\mathrm{Rot}(\beta)\cdot x_{4})_{\parallel}=-2\lb, \ v_{4,\parallel}>0\}$.
\item Finally, we reset $\chi$ to be equal to the value $\lambda |x_{1,\parallel}|$ evaluated at $\bx$.
\end{itemize}

%$\mathcal{R}$ will be applied after two applications of the Poincar\'{e} map.
We have \[\cR:\  \tilde\Glob(\mathsf C(\bx))\left(\subset\{(\mathrm{Rot}(-\beta)\cdot x_{4})_{\parallel}=-2/\lambda,\ v_{4,\parallel}>0\}\right)\to  \left\{ x_{4,\parallel}=-2, \ v_{4,\parallel}>0\right\},\]
\begin{equation}\label{EqRenorm}
\begin{aligned}&
\cR(E_3,\ell_3,e_3,g_3;x_1,v_1;e_4,g_4)
=\\&\left( \frac{E_3}{\lb}, \ell_3,e_3,-(g_3-\beta);\lb \left[\begin{array}{cc}
1&0\\
0&-1
\end{array}\right] \mathrm{Rot}(\bt) x_1,\left[\begin{array}{cc}
1&0\\
0&-1
\end{array}\right] \frac{\mathrm{Rot}(\bt) v_1}{\sqrt{\lb}};e_4,-(g_4-\beta)\right). \end{aligned}\end{equation}
The renormalization also sends time $t$ to $\lb^{3/2}t$ and the Poincar\'e-Cartan invariant gets multiplied by $\lb^{1/2}$.
\end{Def}
\begin{Rk}
The primary goal of the definition of the renormalization map is to rescale the lower ellipse in Figure 2 to the size of the lower ellipse in Figure 1. The reflection is needed since the motions on the two ellipses have opposite orientations $($compare the arrows in Figure 1 and Figure 2$)$. The rotation is needed since we want to put $x_1$ on the horizontal axis, however,  $x_1$ has some angular momentum relative to $0$, hence $v_1$ forms an angle with $x_1$, which moves $x_1$ away from the horizontal axis.
\end{Rk}

%We define in addition the map $\tilde\Loc:\ \{\mathrm{Rot}(\beta)\cdot x_{4})_{\parallel}=-2\lb,\ v_{4,\parallel}>0\}\to \left\{ x_{4,\parallel}=-2, \ v_{4,\parallel}>0\right\}$ by following the flow.

We will iterate the map $ \cR\circ\tilde\Glob\circ (\Glob\circ \Loc)^2$: $\left\{ x_{4,\parallel}=-2, \ v_{4,\parallel}>0\right\}\to \left\{ x_{4,\parallel}=-2, \ v_{4,\parallel}>0\right\}$.

%We will see in Lemma \ref{LmPMC0} that $\beta=O(\mu/\sqrt{\chi})$ hence the section $\{(\mathrm{Rot}(\beta)^{-1}\cdot x_{4})_{\parallel}=-2, \ v_{4,\parallel}>0\}$ forms a small $O(\mu/\chi^{1/2})$ angle with the section $\{x_{4,\parallel}=-2, \ v_{4,\parallel}>0\}$ when seeing in the configuration space. This difference is negligible, so we always talk about the section $\{x_{4,\parallel}=-2, \ v_{4,\parallel}>0\}$ for notational simplicity.

We shall show that for orbits of interest $\cR$ sends $\chi$ to $\lb\chi(1+O(\mu))$.
Thus $\chi$ will grow to infinity exponentially under iteration. Hence $\beta=O(\chi^{-1/2})$ decays exponentially to zero. % and the total angular momentum $G_0$, if not zero, also grows to infinity exponentially with the same exponential rate as $\chi$.
 Without loss of generality we always assume in our estimates
that $1/\chi\ll \mu.$

%The case of total angular momentum $G_0=0$ differs from the case $G_0\neq 0$ drastically. %, since the renormalization multiplies $\sqrt\lambda$ to the total angular momentum. When $G_0=0$, it remains zero after renormalization, but when $G_0\neq 0$, under iteration, it grows to infinity exponentially with the same exponential rate as $\sqrt\chi$ does.

\subsection{Asymptotics of the local and global map}
%We use $\theta$ to denote the slope of the asymptote of $x_4,v_4$ and the superscripts $+$ (or $-$) to specify the orbit parameters entering (or exiting) the close encounter between $Q_3$ and $Q_4$.

%\subsubsection{The domain}
%In this section, we define the domain of the maps $\Glob,\Loc, \cP$.
 %We note that the case of zero angular momentum is much simpler so the readers who only want to see the construction of the non collision singularities in the four body case may concentrate on that case only. The general case is only needed when we explain how to add more bodies to the system.

\subsubsection{The standing assumptions}

To simplify the presentation, we list standard assumptions that we will impose on the initial or final values of the local and global map respectively.
%the initial orbit parameters$$(E_3,\ell_3,e_3,g_3;x_1,v_1;e_4,g_4),\quad \sqrt{2}/2< e_3<1 $$

We introduce
$$K:=\sup\max\left\{\|d\Ger_{e_4,1,4}(e_3,g_3,E_3)\|_{\infty}+\left\|\frac{\partial\Ger_{e_4,1,4}}{\partial e_4}(e_3,g_3,E_3)\right\|_{\infty},\|d(e_4',e_4'')(e_3,g_3)\|_{\infty}\right\}+1$$
where the sup is taken over $\dagger = *,**$, and over all $(e_3,g_3,E_3,e_4)$ in a $\bar\delta$-neighborhood of $(e_3,g_3,E_3,e_4)^\dagger$, the maps $\Ger$ and $e_4',e_4''$ are in Lemma \ref{LmGer}, and the $\|\cdot\|_\infty$ norm for a linear map $M:\ \R^n\to \R^m$ is defined as $\sup\|Mv\|_{\infty}$, where the sup is taken among all $v\in \R^n$ with $\|v\|_{\infty}=1$.

We consider $0<\dt<\bar\dt/K^2$ and fix some large numbers $C_0, C_0'.$ For $\hat\lambda=1$ or $\lambda_0$ in Lemma \ref{LmGer}, we use the following standing assumption for the local map.

\textbf{AL}$(\hat\lambda)$:
{\it \begin{enumerate}
\item[(AL.3)] Initially on the section $\{x_{4,\parallel}=-2,\ v_{4,\parallel}>0\}$ we have
$$ \left\|(e_3,g_3-\sigma(\hat\lambda)\cdot \pi,E_3/\hat\lambda)-(e_3,g_3,E_3)^{\dagger}\right\|_{\infty}<K^\dagger\delta;$$%\mathrm{\ (or\ } \left\|(e_3,g_3,E_3/\lambda_0)-(e_3,g_3,E_3)^{\dagger}\right\|_{\ell^\infty}<K^\dagger\delta\mathrm{\ resp.});$$
\item[(AL.1)] the initial values of $(x_1,v_1)$ satisfy\begin{equation*}
x_{1,\parallel} \leq-\chi  ,\quad |x_{1,\perp}|\leq C_0\mu,\quad |v_{1,\perp}|\leq C_0/\chi,\quad 0< -v_{1,\parallel}<C_0;\end{equation*}

\item[(AL.4)] the incoming and outgoing asymptotes of the nearly hyperbolic motion of $x_4,v_4$ satisfy
\[|\theta_4^-|\leq C_0\mu,\ \quad |\bar{\theta}_4^+-\pi|\leq \tilde\theta,\]
and the initial value of $e_4$ satisfies  $ |e_4-e_4^{\dagger}|<K^\dagger \delta,$\\
where \begin{itemize}
\item $\dagger =*,**$ and $K^*=1,\ K^{**}=K$;
\item $\tilde\theta\ll 1$ is a constant independent of $\chi,\mu$;
\item $\sigma:\ \{1,\lambda_0\}\to\{0,1\}$ is defined as $\sigma(1)=0$ and $\sigma(\lambda_0)=1$.
\end{itemize}
\end{enumerate}}

We use the following standing assumption for the global map.

\textbf{AG}$(\hat\lambda)$:
{\it \begin{enumerate}
\item[(AG.3)] Initially on the section $\{x_{4,\parallel}=-2,\ v_{4,\parallel}<0\}$, we have
\[\left\|(e_3,g_3-\sigma(\hat\lambda)\cdot \pi,E_3/\hat\lambda)-\Ger_{e_4^\dagger,i,4}(e_3,g_3,E_3)^\dagger\right\|_{\infty}<KK^\dagger\dt, \]%\[\mathrm{\ (or\ }\left\|\left(e_3,g_3,\frac{E_3}{\lambda_0}\right)-\Ger_{e_4^\dagger,i,4}\left(e_3,g_3,\frac{E_3}{\lambda_0}\right)^\dagger\right\|_{\ell^\infty}<KK^\dagger\dt\ \mathrm{resp.)},\]
where $ \dagger=*,**$ and $i=1,2$ correspond to the first and second collisions;
%where $E_3^{*j},\ j=1,2$ is the energy of $E_3$ after the first ($j=1,\ E_3^{\dagger}=-1/2$) or second collision $(j=2)$ in Gerver's model (see Appendix \ref{section: gerver} and \cite{G2}).
\item[(AG.1)] the initial conditions of $x_1,v_1$ satisfy \begin{equation*}\label{Eqx1v1}
-1.1\chi\leq x_{1,\parallel} \leq  -\chi,\quad |x_{1,\perp}|\leq C'_0\mu,\quad |v_{1,\perp}|\leq C'_0/\chi,\quad \frac{1}{C_0'}< -v_{1,\parallel}<C'_0;\end{equation*}
\item[(AG.4)] on the section $\{x_{4,\parallel}=-2\}$, we have $|x_{4,\perp}|< C_0'$ holds both at initial and final moments.
\end{enumerate}}
If $\hat \lambda=1$, we abbreviate \textbf{AL}=\textbf{AL}$(1)$ and \textbf{AG}=\textbf{AG}$(1)$.

We stress that in both {\bf AL}$(\hat\lambda)$ and {\bf AG}$(\hat\lambda)$, we consider only orbits on the zeroth energy level and the zeroth total angular momentum level of the Hamiltonian \eqref{eq: hammain}.

\begin{Rk}
\begin{itemize}
\item In {\bf AL}$(\hat\lambda)$, we ask the initial values of $(x_3,v_3),(x_4,v_4)$ to be close to Gerver's value in Lemma \ref{LmGer}. The assumption on $(x_1,v_1)$ requires $Q_1$ to be far away and not to have too much energy. We also require the outgoing asymptote to be almost horizontal, which forces $Q_3$ and $Q_4$ to have a close encounter since otherwise $Q_4$ moves on a slightly perturbed hyperbola whose outgoing asymptote will not be nearly horizontal.
\item In {\bf AG}$(\hat\lambda)$, the main requirement is $(AG.4)$ where we require $|x_{4,\perp}|$ to be bounded at both the initial and final moments. This will force the motion of $(x_4,v_4)$ to be close to a horizontal free motion for most of the time.
\end{itemize}
\end{Rk}
\subsubsection{The asymptotes of the local, global maps}

In the next two lemmas, our notations are such that $\Loc,\Glob$ send {\it unbarred} variables to {\it barred} variables.

The next lemma shows that the real local map $\Loc$ is well approximated by the Gerver map $\Ger$ in the $\mathscr{C}^0$ sense. Its proof will be given in Section \ref{SSC0Loc}.
\blm
\label{LmLMC0}
Assume {\bf AL}$(\hat\lambda)$ with $\hat\lambda=1$ or $\lambda_0$.
Then after the application of $\Loc$, the following asymptotics hold uniformly
\[(\brE_3, \bre_3, \brg_3) =\Ger_{e_4}(E_3,e_3,g_3)+o(1)\]
as $1/\chi\ll\mu\to 0$ and $\tilde\theta\to 0$.
%\item[(b)] Suppose the final values of $\tilde\Loc$ satisfies {\bf AL}, then we have $$|(\brE_3, \bre_3, \brg_3,\bre_4,\bar \theta^-_4) -(E_3,e_3,g_3,e_4,\theta^-_4)|=O(\mu)$$ as $1/\chi\ll\mu\to 0$ for the initial and final values of $\tilde \Loc$. \end{itemize}
\elm
%$E_3$ is the energy of $x_3,v_3$ without renormalization, which grows like $-\lb^{n}.$
%Since $\sG$ grows exponentially if $G_0\neq 0$, we have that $1/\mu\ll\sG$ when $G_0\neq 0$ for large $n$. We will see in the proof of the main theorem that $\chi$ grows like $\lb^n(1+c\mu)^n\chi_0$, so we have $\sG\ll \sqrt \chi$.

The next lemma deals with the $\mathscr{C}^0$ estimates for the global map $\Glob$.
\blm
\label{LmGMC0}
Assume {\bf AG}$(\hat\lambda)$ with $\hat\lambda=1$ or $\lambda_0$. Then there exist constants $C_3$ and $C_4$ such that after the application of $\Glob$ and $\tilde\Glob\circ\Glob$ the following estimates hold uniformly in $\chi, \mu$ as $1/\chi\ll\mu\to 0$
\begin{enumerate}
\item[(a)]$\left|\frac{\brE_3}{E_3}-1\right|\leq C_3\mu, \quad \left|\frac{\brG_3}{G_3}-1\right|\leq C_3\mu, \quad |\brg_3-g_3|\leq C_3\mu;$
\item[(b)]$|\theta_4^+-\pi|\leq C_4\mu,\quad |\bar\theta_4^-|\leq C_4\mu;$
\item[(c)] the return times defining $\Glob$ and $\tilde\Glob\circ\Glob$ are bounded by $3\chi$.
\end{enumerate}
\elm
The proof of this lemma is given in Section \ref{SSPfGMPM}. From now on we choose the constant $C_0$ in {\bf AL} to be larger than $C_4$ in Lemma \ref{LmGMC0}.
\subsubsection{Dynamics of $(x_1,v_1)$ under the renormalized Poincar\'e map}
The next lemma deals with the $\mathscr{C}^0$ estimates of $(x_1,v_1)$.  The proof is also in Section \ref{SSPfGMPM}.
\begin{Lm}\label{LmPMC0}Fix $\hat\lambda=1$, there exist constants $C_0, C_0', c_1,\bar c_1, C_1>0$ with $\bar c_1<C_0$,  such that the following holds.
Consider an orbit with initial condition $\bx$ satisfying
\begin{itemize}
\item[(i)] {\rm(AL.3)} and {\rm(AL.4)} satisfied when applying $\Loc$ for the first time, and {\rm(AG.4)} satisfied when applying $\Glob$ for the first time;%$|x_{4,\perp}|< C_4$ for the first two times the orbits visit the section $\{x_{4,\parallel}=-2\};$
%\item[(ii)] for the time interval defining $\cP$, we have $\left|E_3-E_3^{*}\right|\leq  C_3\dt, \ \sqrt{2}/2< e_3< 1;$  % for some $\dt\ll 1$ independent of $\chi,\mu$, where $E_3^{*i}$ is as $(AG.1)$.
%\item[(iii)] the total angular momentum $|G|\leq \sG_\chi$.
\item[(ii)]  initially on the section $\{x_{4,\parallel}=-2,\ v_{4,\parallel}>0\}$\begin{equation}\label{eq: x1v1}
G_0=0,\quad -\chi-\frac{1}{\sqrt{\chi}}\leq x_{1,\parallel}(0)\leq -\chi,\quad |x_{1,\perp}(0)|\leq \frac{1}{\sqrt{\chi}},
\quad -\bar c_1\leq v_{1,\parallel}(0)\leq -c_1.\end{equation}

\end{itemize}
Then we have
\begin{itemize}
\item[(a)] after the application of $\cP,$ ${\rm(AL.1)}$ is satisfied for $(x_1,v_1)$;
\item[(b)] after the application of $\Loc$ and $\Loc\circ \cP$ $($whenever the second $\Loc$ is defined$)$, ${\rm(AG.1)}$ is satisfied for $(x_1,v_1)$.
\end{itemize}
Assume ${\rm(i}')$ and ${\rm(ii)}$ in place of ${\rm(i)}$ and ${\rm(ii)}$ above, where
\begin{itemize}
\item[(i$'$)] {\rm(AL.3)} and {\rm(AL.4)} are satisfied when applying $\Loc$ for both the first and the second times, and {\rm(AG.4)} is satisfied when applying $\Glob$ for both the first and the second times.%$|x_{4,\perp}|< C_4$ for the first two times the orbits visit the section $\{x_{4,\parallel}=-2\};$
\end{itemize}
Then we have
\begin{itemize}
%we have
%followed by $\cR$, the resulting values of $\bar{\bar x}_{1}, \bar{\bar v}_{1}$ satisfy
%\[\tilde C_1^{-1}\mu\leq \frac{\bar{\bar x}_{1,\parallel}}{x_{1,\parallel}}-1\leq \tilde C_1\mu,\quad \left|\frac{ \bar{\bar x}_{1,\perp}}{ \bar{\bar x}_{1,\parallel}}\right|\leq \frac{\bar C_1\mu \mathscr{G}_\chi}{\chi}.\]
\item[(c)]after the application of $\cR\circ\tilde\Glob\circ\cP^2$, where $\cR$ is based at the point $\cP^2(\bx)$, we get that the renormalized $\chi$, denoted by $\tilde \chi$, satisfies
$\lb(1+ C^{-1}_1\mu)\chi\leq \tilde\chi\leq \lb(1+ C_1\mu)\chi$, and the renormalized orbit parameters $G_0,  x_1,  v_1$ satisfy \eqref{eq: x1v1} with $\chi$ replaced by $\tilde\chi$.
%we have
%$|\cR( \bar{\bar G})|\leq  \sG_{\tilde\chi}$ and
%followed by $\cR$, the resulting values of $\bar{\bar x}_{1}, \bar{\bar v}_{1}$ satisfy
%\begin{equation}\label{EqPMC0}
%\begin{aligned}
%&-\tilde\chi-\frac{1}{\sqrt{\tilde\chi}}\leq \cR(\bar{\bar x}_{1,\parallel})\leq -\tilde\chi+\frac{1}{\sqrt{\tilde\chi}},\quad |\cR(\bar{ \bar x}_{1,\perp})|\leq \frac{1}{\sqrt{\tilde\chi}},\\
%& -\bar c_1\leq  \cR(\bar{\bar v}_{1,\parallel})\leq -c_1,\quad | \cR(\bar{\bar v}_{1,\perp})|\leq C_1\frac{\sG_{\tilde\chi}}{\tilde\chi},
%\end{aligned}
%\end{equation}
%where $\lambda$ is the renormalization factor in Definition \ref{Def: renorm}.
\end{itemize}
\end{Lm}
\begin{Rk}\label{Rkx1v1} We explain the physical meaning of the lemma. The assumption implies that both $v_4$ and $v_3$ are of order $1$. By \eqref{eq: x1v1}, $v_1$ is also of order 1 and $v_{1,\perp}$ is bounded by $C/\chi$. In Remark \ref{RkCoord} we have stressed that $\mu v_1$ instead of $v_1$ is close to the velocity of $Q_1$. So $Q_1$ moves to the left with a velocity of order $\mu$ having a tiny vertical component. It takes $Q_4$ time of order $\chi$ to complete a return and during this time, $Q_1$ moves a distance of order $\mu\chi$. This gives the estimates of $x_{4,\parallel}$ and $\tilde\chi$ after renormalization. The energy exchange between $Q_1$ and $Q_4$ will change $v_{4,\parallel}$ significantly, but the renormalization will slow down $v_{4,\parallel}$ to the interval $[-\bar c_1, -c_1]$. The rotation in the renormalization controls $x_{4,\perp}$. %Part $(b)$ also implies that $|\tan\beta|\leq \frac{\bar C_1\mu \mathscr{G}_\chi}{\chi}$, where $\beta$ is the rotation angle in Definition \ref{Def: renorm}. As $\chi\to\infty$ exponentially, this rotation angle decays to zero exponentially.
\end{Rk}
\subsection{The tangent dynamics and the strong expansion}\label{SSProofMain}
\begin{Def}
Given $\delta<\frac{\bar\delta}{K^2}$ where $\bar\dt$ is in Lemma \ref{LmGer}, we define the following open sets in the section $\{x_{4,\parallel}=-2,\ v_{4,\parallel}>0\}$ on the zeroth energy level by
%\begin{itemize}
%\item
\begin{equation*}
\begin{aligned}
 U_1(\delta)&=\left\{
{\bf AL}, {\rm\ except\ the\ }\ \bar\theta_4^+ {\rm\ assumption\ therein},{\rm\ holds\ with\ }\dagger=*\right\}, \\
%\item
 U_2(\delta)&=\left\{{\bf AL}, {\rm\ except\ the\ }\ \bar\theta_4^+ {\rm\ assumption\ therein},{\rm\ holds\ with\ }\dagger=**\right\}, \\
 U_0(\delta)&=\left\{{\bf AL}(\lambda_0), {\rm\ except\ the\ }\ \bar\theta_4^+ {\rm\ assumption\ therein},{\rm\ holds\ with\ }\dagger=*\right\} .
 %&\left. K^\dagger=1\right\}, \mathrm{\ where\ }\lambda_0 \mathrm{\ is\ in\ Lemma\ \ref{LmGer}. }
 \end{aligned}
 \end{equation*}
%\end{itemize}
\end{Def}
\begin{Rk}
\begin{enumerate}
\item The sets $U_j(\dt),\ j=1,2$ are neighborhoods of Gerver's collision points in Lemma \ref{LmGer}. The set $U_0$ is introduced to study the dynamics without the renormalization. %For a given point $\bx\in U_0(\dt)$, define $\cR$ based at $\bx$, then we have $\cR\tilde\Glob(\mathsf C(\bx))\subset U_1(\dt)$.
%In the system before the renormalization, the set $ U_0(\dt)$ can be considered as the $U_1(\dt)$. %We get rid of the $\bar\theta_4^+$ assumption from {\bf AL} since it is the only assumption on the final condition.
\item In the definition
%of the two sets, we do not have restriction on $x_1,v_1$. We assume initially $x_1,v_1$ satisfies the assumption \eqref{eq: x1v1}, then the assumption will always be satisfied and we always have good control on $x_1,v_1$ due to Lemma \ref{LmPMC0}.
we do not restrict $\ell_3$ since $\ell_3$ can take any value in $[0,2\pi)$. We do not restrict $v_{1,\perp}$, since it can be bounded by $C/\chi$ by the information in \eqref{eq: x1v1}. We also get rid of the assumption on the final value $\bar\theta_4^+$ in {\bf AL}. %See Lemma \ref{LmPMC0}.
\end{enumerate}
\end{Rk}
\subsubsection{The invariant cone fields}
We introduce the following cone fields.
\begin{Def}[Cone fields]\label{DefKone}
Let \begin{enumerate}
\item $\cK_1\subset T_{U_1(\dt)}(\R^7\times \T^3)$  be the set of vectors forming an angle less than a small number $\eta$ with span$(d\cR w_2,d\cR\tilde w)$,
\item $\cK_0\subset T_{ U_0(\dt)}(\R^7\times \T^3)$ be the set of vectors forming an angle less than a small number $\eta$ with span$(w_2,\tilde w)$, and
\item $\cK_2\subset T_{U_2(\dt)}(\R^7\times \T^3)$ be the set of vectors forming an angle less than $\eta$ with span$(w_1,\tilde w)$,
\end{enumerate} where
$$\tilde w=\frac{\partial}{\partial\ell_3},\quad\mathrm{and}\quad w_j=\left(\frac{\sqrt{e_4^2-1}}{L_3e_4}\frac{\partial }{\partial e_4}-\frac{1}{L_3e_4^2}\frac{\partial }{\partial g_4}\right),\quad j=1,2.$$
We choose our parameters to be $0<1/\chi\ll\mu\ll\dt\ll\eta\ll1$.
\end{Def}

The next lemma establishes the (partial) hyperbolicity of the Poincar\'{e} map.
\blm
\label{LmKone}
There exists a constant $c$
such that
for all $\bx\in U_1(\delta)$ satisfying $\mathcal{P}(\bx)\in U_2(\dt)$,
and for all $\bx\in U_2(\delta)$ satisfying $\mathcal{P}(\bx)\in  U_0(\dt)$, we have
\begin{itemize}
\item[(a)] $($Invariance$)$ $d\cP (\cK_1)\subset \cK_2$, $d\cP(\cK_2)\subset \cK_0$, $d(\cR\circ\tilde\Glob\circ \cP) (\cK_2)\subset \cK_1$, where the base point defining $\cR$ can be chosen to be any point in $U_0(\dt)$ since $\dt\ll\eta$.
\item[(b)] $($Expansion$)$ If  $v\in\cK_1$, then $\Vert d\cP(v)\Vert\geq c\chi \Vert v\Vert$.\\
If $v\in\cK_2$, then $\Vert d \cP(v)\Vert\geq c\chi \Vert v\Vert$ and $\Vert d(\cR\circ\tilde\Glob\circ \cP)(v)\Vert\geq c\chi \Vert v\Vert.$
\end{itemize}
\elm
We give the proof in Section \ref{sct: hyp}.
The next lemma follows directly from Definition \ref{DefKone}.
\blm
\label{LmAdm}
\begin{itemize}
\item[(a)] The vector $\tw=\frac{\partial}{\partial \ell_3}$ is in $\cK_i.$
\item[(b)] For any plane $\Pi$ in $\cK_i$ the projection map
$ \pi_{e_4, \ell_3}=(de_4, d\ell_3):\Pi\to\R^2$
is one-to-one.
%is a graph of a map $A:\Pi_1\to\Pi_2.$
%In other words $(e_4, \ell_3)$ can be used as coordinates on admissible surfaces.
\end{itemize}
\elm

\subsubsection{The admissible surfaces}
\begin{Def}[admissible surfaces]\label{DefAdm}
We call a two-dimensional $\mathscr{C}^1$ surface $S\subset U_i(\delta)$
{\rm admissible} if $TS\subset \cK_i$, $i=0,1,2$.
\end{Def}
%The total angular momentum conservation implies that the admissible surfaces are always inside
%the zero level set of the total angular momentum under iterations of the Poinar\'{e} map.

Since Poincar\'e maps send admissible surfaces to admissible surfaces if the images lie in $U_j(\dt),\ j=1,2,$ by Lemma \ref{LmKone} and Lemma \ref{LmAdm}, we can restrict the Poincar\'e maps to admissible surfaces to obtain two-dimensional maps. The reduction is done as follows. We introduce two cylinder sets
$$ \mathcal C_0(\dt)=\mathcal C_1(\dt):=(e^*_4-\dt,e^*_4+\dt)\times \T^1,\quad \mathcal C_2(\dt)=(e^{**}_4-K\dt,e^{**}_4+K\dt)\times \T^1.$$
By Lemma \ref{LmAdm}, we get that each piece of admissible surface $S\subset U_i(\dt)$ is the graph of a function $\mathcal S$ defined on $\mathcal C_i(\dt)$, $i=0,1,2$. So we get that $\cP\circ\mathcal S$ is a function of two variables $(e_4,\ell_3)$. However, for most points in $\mathcal C_i(\dt)$, the map $\cP\circ\mathcal S$ is not defined since the orbit might not return. %So we introduce the following definition.

Given a piece of admissible surface $S\subset U_j(\dt)$, we next introduce the maps $\mathcal Q_1$,$\mathcal Q_2$ and $\mathcal Q_0$ from a subset of $\mathcal C_{1}(\dt)$ to $\mathcal C_{2}(\dt),$ a subset of $\mathcal C_{2}(\dt)$ to $\mathcal C_{0}(\dt),$ and a subset of $\mathcal C_{0}(\dt)$ to $\mathcal C_{2}(\dt)$ respectively:
$$\mathcal Q_1:=\pi_{e_4,\ell_3}\cP(\mathcal S(\cdot, \cdot)),\quad \mathcal Q_2:=\pi_{e_4,\ell_3}\mathcal \cP(\mathcal S(\cdot, \cdot)),\quad \mathcal Q_0:=\pi_{e_4,\ell_3}\cP\cR\tilde\Glob(\mathcal S(\cdot, \cdot)).$$
where the base point of $\cR$ in $ \mathcal Q_0$ will be specified below.
The domain of $\mathcal Q_1$ can be found by taking $\mathcal Q_1^{-1}(\mathcal C_{2}(\dt))\cap \mathcal C_{1}(\dt)$, and similarly for $\mathcal Q_2$ and $\mathcal Q_0$.
This completes the reduction of the Poincar\'e maps to two-dimensional maps. %Note that in $\mathcal Q_2$ the renormalization $\cR$ does not play a role since $e_4$ and $\ell_3$ are $\cR$ invariant.
%In particular in $\mathcal Q_2$ we do not have the partition in the definition $\cR$.
\begin{Def}[Essential admissible surfaces]\label{DefEssAdm}
For $\dt'<\dt$, we call an admissible surface $S\subset U_j(\dt)$ $\dt'$-{\rm essential} if $\pi_{e_4, \ell_3}S$ contains $\mathcal C_j(\dt')$, $j=1,2$.
\end{Def}

\blm
\label{LmChooseAM}
Given $0<\dt'<\dt\leq \bar\dt/K^2$, we have the following for $\chi$ sufficiently large.
\begin{itemize}
\item[(a)] Given a $\dt'$-essential admissible surface $S\subset U_1(\delta)$, and $\te_4\in (e^*_4-\dt'+\frac{1}{\chi}, e^*_4+\dt'-\frac{1}{\chi})$ there exists $\tilde{\ell}_3$, such that $\pi_{e_4}\cP\mathcal S(\te_4,\tilde\ell_3)=e_4^{**}$. Moreover, there exists a neighborhood  $V_1(\te_4)(\subset \mathcal C_1(\dt'))$ of $(\tilde e_4,\tilde \ell_3)$ of diameter $O(1/\chi)$, such that $\mathcal Q_1$ maps $V_1(\te_4)$ surjectively to $\mathcal C_2(\dt)$. %Here the renormalization $\cR$ is based at the point $\mathcal S_0(\te_4,\tilde\ell_3)$.
%\item[(b)] Given a $\dt'$-essential admissible surface $S\subset U_2(\delta)$  and $\te_4\in (e^*_4-K'\dt', e^*_4+K'\dt')$ there exists $\tilde{\ell}_3$, such that the orbit with initial condition $\mathcal S(\te_4,\tilde\ell_3)$ experiences a collision between $Q_1$ and $Q_4$. In a $1/\sqrt\chi$ neighborhood of $(\te_4,\tilde\ell_3)$, there exists a an open set  $V(\te_4)\subset \mathcal C_2(\dt')$ not containing $(\te_4,\tilde\ell_3)$, such that $\mathcal Q_2$ maps $V(\te_4)$ surjectively to $\tilde{\mathcal C}_1(\dt)$.
\item[(b)] Given a $\dt'$-essential admissible surface $S\subset U_2(\delta)$  and $\te_4\in (e^{**}_4-K\dt'+\frac{1}{\chi}, e^{**}_4+K\dt'-\frac{1}{\chi})$ there exists $\tilde{\ell}_3$, such that $\pi_{e_4}\cP\mathcal S(\te_4,\tilde\ell_3)=e_4^{*}$. Moreover, there exists a neighborhood  $V_2(\te_4)(\subset \mathcal C_2(\dt'))$ of $(\tilde e_4,\tilde \ell_3)$ of diameter $O(1/\chi)$, such that $\mathcal Q_2$ maps $V_2(\te_4)$ surjectively to $\mathcal C_0(\dt)$.
\item[(c)] Given a $\dt'$-essential admissible surface $S\subset U_0(\delta)$ and $\te_4\in (e^*_4-\dt'+\frac{1}{\chi}, e^*_4+\dt'-\frac{1}{\chi})$ there exists $\tilde{\ell}_3$, such that $\pi_{e_4}\cP\mathcal S(\te_4,\tilde\ell_3)=e_4^{**}$. Moreover, defining the renormalization $\cR$ based at the point $\mathcal S(\te_4,\tilde\ell_3)$, there exists a neighborhood $V_0(\te_4)(\subset \mathcal C_0(\dt'))$ of $(\tilde e_4,\tilde \ell_3)$ of diameter $O(1/\chi)$, such that $\mathcal Q_0$ maps $V_0(\te_4)$ surjectively to $\mathcal C_2(\dt)$. %Here the renormalization $\cR$ is based at the point $\mathcal S(\te_4,\tilde\ell_3)$.
% Given a $\dt'$-essential admissible surface $S_1\subset U_1(\delta)$ and $\te_4\in (e^*_4-K'\dt', e^*_4+K'\dt')$ there exists $\tilde{\ell}_3$ such that $\cP(\mathcal S_1(\te_4, \tilde{\ell}_3))\in U_2(\delta').$ Moreover, if $|\te_4-e^*|<K'\dt'-1/\chi$, then there is a neighborhood $V(\te_4)\subset \mathcal C_1(\dt')$ of $(\te_4, \tilde{\ell}_3)$ such that $\mathcal Q_1$ maps $V(\te_4)$ surjectively to $\mathcal C_2(\dt)$.
%\item[(b)] Given a $\dt'$-essential admissible surface $S_2\subset U_2(\delta)$ and $\te_4\in (e^{**}_4-KK'\dt', e^{**}_4+KK'\dt')$ there exists $\tilde{\ell}_3$ such that
%$\cR\circ\tilde\Glob\circ \cP(\mathcal S_2(\te_4, \tilde{\ell}_3))\in U_1(\delta').$ Moreover if $|\te_4-e^{**}|<KK'\dt'-1/\chi,$ then for any $(\bar e_4,\bar\ell_3)\in \mathcal C_1(\dt)$ with dist$((\bar e_4,\bar\ell_3),\partial \mathcal C_1(\dt))>\frac{1}{\sqrt\chi}$,
%there is a partition when defining $\cR$ and a neighborhood $V(\te_4)\subset \mathcal C_2(\dt')$ of $(\te_4, \tilde{\ell}_3)$
%such that $\mathcal Q_2$ maps $V(\te_4)$ continuously and surjectively to a $\frac{1}{4\sqrt\chi}$-neighborhood of $(\bar e_4,\bar\ell_3)$.
\item[(d)]For points in $V_i(\te_4)$ from parts $(a)$ and $(b)$ $(i=0,1,2)$, there exist constants $c,\mu_0,\chi_0$, such that for $\mu<\mu_0$ and $\chi>\chi_0$, we have that the particles $Q_3$ and $Q_4$ avoid collisions before the next return and the minimal distance $d$ between $Q_3$ and $Q_4$ satisfies $c\mu\leq d \leq \frac{\mu}{c}.$ Moreover, $Q_1$ and $Q_4$ do not collide.
\end{itemize}
\elm
Part (a), (b) and (c) of the lemma are proved in Section \ref{SSAngMom}. Part (d) is given in Section \ref{ssct: nocollision} as well as Lemma \ref{Lm: landau}(b).
\subsection{Proof of the main Theorem \ref{ThMain}}
%Up to now, we have reconstructed the framework of \cite{DX}.  The proof of the main theorem follows the same line of that of \cite{DX}. The only difference is that we need to pay special attention to the motion of $Q_1$.

{\bf Step 1, Concatenating Lemma \ref{LmLMC0}, \ref{LmGMC0} and \ref{LmPMC0}.}

We will iterate $\cR\circ\tilde\Glob\circ\Glob\circ\Loc\circ\Glob\circ\Loc$. Suppose we have a point $\bx\in U_1(\dt)$ whose images $\cP(\bx)\in U_2(\dt)$ and $\cR\tilde\Glob\cP^2(\bx)\in U_1(\dt)$, where $\cR$ is defined with the base point $\cP^2(\bx)$. We assume in addition that \eqref{eq: x1v1} is satisfied for $\bx$. Leaving the existence of such a point to be addressed later, we first show how the assumptions of Lemma \ref{LmLMC0}, \ref{LmGMC0} and \ref{LmPMC0} are satisfied.

The assumption {\bf AL} (except the $\theta^+_4$ assumption therein) for Lemma \ref{LmLMC0} is satisfied since $\bx\in U_1(\dt)$.
To proceed, we pick some small $\tilde\theta$ and assume $|\bar\theta^+_4-\pi|<\tilde\theta$ in (AL.4) is satisfied.

The conclusion of Lemma \ref{LmLMC0} combined with Lemma \ref{LmGer} implies (AG.3) by choosing $\mu$ and $\tilde\theta$ sufficiently small, and the assumption that $\bx\in U_1(\dt)$ and $\cP(\bx)\in U_2(\dt)$ implies (AG.4).  Next the assumptions of Lemma \ref{LmPMC0} for the first application of $\cP$ are satisfied, so we get (AG.1). Now the assumption {\bf AG} is satisfied.

 Now we apply Lemma \ref{LmGMC0} to conclude that $E_3,G_3,g_3$ have $O(\mu)$-oscillations and the initial and final angles of asymptotes are $O(\mu)$ close to $0$ and $\pi$ respectively. By choosing $\mu$ small, we see that the $\theta_4^+$ assumption in (AL.4) is automatically satisfied. That is to say, if $\Glob$ is applicable after the application of $\Loc$, then the $\theta_4^+$ assumption in (AL.4) is redundant.%which in particular justifies the requirements on the asymptotes in $U_1(\dt)$. %(for the outgoing asymptote) and $U_2(\dt)$ (for the incoming asymptote).

 Next we consider the second application of $\Loc$. In the first application of $\Loc$, we see that $\Loc$ is approximated by $\Ger$ by Lemma \ref{LmGer}. Next, the application of $\Glob$ gives only a $\mu$-oscillation to the values of $E_3,g_3,e_3$, so applying Lemma \ref{LmGer}, we see that (AL.3) is satisfied for the second application of $\Loc$.
 The $\theta_4^-$ and $e_4$ parts of  (AL.4) are satisfied since we have $\cP(\bx)\in U_2(\dt)$.
 Lemma \ref{LmPMC0} implies that (AL.1) is satisfied. The only missing assumption in (AL.4) is the assumption on the outgoing angle of asymptote $\bar\theta_4^+$, which is again redundant under the assumption $\cR\tilde\Glob\cP^2(\bx)\in U_1(\dt)$. %Indeed, since we have $\cR\cP^2(\bx)\in U_1(\dt)$, as in part (b) of Lemma \ref{LmGMC0}, the existence of returning orbit implies that the asymptotes are arbitrarily close to be horizontal (see Sublemma \ref{KeepDirection}).

 Now we can apply Lemma \ref{LmLMC0} for the second time. Similarly, we verify the assumption for the second application of $\Glob$. After $\cR\tilde\Glob\cP^2$, the assumption (AL.1)  and \eqref{eq: x1v1} are provided by part (c) of Lemma \ref{LmPMC0}. The assumption (AL.3) and $e_4$ part of (AL.4) is provided by Lemma \ref{LmGer} and the renormalization applied to $E_3$. The assumptions on the angles of asymptotes in (AL.4) are again given by the existence of returning orbits, to be addressed below. So we can apply Lemma \ref{LmLMC0} for the third time.

%It can be verified that all the assumptions of {\bf AL} except $|\bar \theta_4^+-\pi|<\tilde \theta$ is satisfied since we assume $\bx\in U_1(\dt)$ and $|\theta^-_4|\leq C_4\mu$. The missing assumption $|\bar \theta_4^+-\pi|<\tilde \theta$ is guaranteed by the fact that the orbit returns to $U_2(\dt)$ (see Sublemma \ref{KeepDirection}). So we apply Lemma \ref{LmLMC0} to verified (AG.1) and (AG.2), and assumption (i) and (ii) of Lemma \ref{LmPMC0}. Then we apply Lemma \ref{LmPMC0} to verified (AG.3). So we can apply Lemma \ref{LmGMC0} to conclude that for the global map, the initial value $\theta_4$ is $O(\mu)$ close to $\pi$ and the final value is $O(\mu)$ close to $0.$

% From the definition of $U_j(\dt),\ j=1,2$, we see that (AL.1), (AL.3), (AG.2) are satisfied. (AG.1) follows from

%The conclusion of Lemma \ref{LmLMC0} implies the (AG.1), i.e. the assumption of \ref{LmGMC0}. The (AG.2) is about the existence of returning orbit for the global map, which is given by Lemma \ref{LmChooseAM}. (AG.3) is given by Lemma \ref{LmPMC0}. The conclusions of Lemma \ref{LmGMC0} imply the (AL.1), (AL.2), which are the key assumptions of Lemma \ref{LmLMC0}. (AL.3) is also given by Lemma \ref{LmPMC0}.

{\bf Step 2, choosing the initial piece of admissible surface}

We choose a number $\dt'<\dt/K^2$. % but is much larger than both $\mu$ and $1/\chi.$
 Let $S_0$ be a $\dt'$-essential admissible surface in $U_1(\dt)$;
then by Definition \ref{DefKone} and Definition \ref{DefAdm} on $S_0$ we have
\begin{equation}\label{EqOsc3}
|E_3-E^*_3|<\dt'+\eta\dt,\quad |e_3-e^*_3|<\dt'+\eta\dt, \quad |g_3-g^*_3|<\dt'+\eta\dt,\end{equation}
where $\eta$ is the small number in Definition \ref{DefKone}. Here we choose $\eta$ so small that $\dt'+\eta\dt<\dt$. Such a piece of $\dt'$-essential admissible surface $S_0$ exists by explicit construction as follows. We first take an integral curve in $ U_1(\dt')$ along the vector field $w_1$ in Definition \ref{DefKone} such that its $e_4$ component is the interval $(e^*_4-\dt',e^*_4+\dt')$. Then the surface $S_0$ can be chosen as the product of the curve with $\T^1(\ni\ell_3)$.
%For example, we can pick a point $\bx\in U_1(\delta)$ and let $w_1$ be a vector in $\cK_1(\bx)$ in Definition \ref{DefKone} then let
%$$ S_0=\left\{(E_3, \ell_3, e_3, g_3; x_1,v_1;e_4, g_4)(\bx)+aw_1+(0, b, 0_{1\times 8})\ |\ |a|\leq KK'\dt,\ b\in \T^1\right\}.$$

{\bf Step 3, Noncollision singularities.}

We wish to construct a singular orbit with initial value in $S_0$.  We define $S_i$ inductively so that $S_1$ is a $\delta'$-essential component of ${\mathcal P}(S_0)\cap U_2(\delta)$, and for $i\ge 2$, $S_i$ is a $\delta ^\prime$-essential component of $({\mathcal P\mathcal R}\tilde {\mathbb G}{\mathcal P})(S_{i-1})\cap U_2(\delta)$ (we shall show below that such components exist).  Given a $\delta ^\prime$-essential admissible surface $S_i\subset U_2(\delta)$, choose $\tilde e_4\in (e_4^{**}-K\delta^{\prime}+\frac 1{\chi},e_4^{**}+K\delta^{\prime}-\frac 1{\chi})$.  Then the hypothesis of Lemma 2.21(b) is satisfied, so there exist $\tilde \ell _3$ and $V_{2,i}(\tilde e_4)$ satisfying Lemma 2.21(b).  In particular, $V_{2,i}(\tilde e_4)$ is a subset of ${\mathcal C}_2(\delta^{\prime})$ with diameter $O(\mu /\chi)$, and $(\tilde e_4,\tilde \ell _3)\in V_{2,i}(\tilde e_4)$.  It follows that for every $(e_4,\ell _3)\in V_{2,i}(\tilde e_4)$, we have $e_4\in (e_4^{**}-K\delta^{\prime},e_4^{**}+K\delta^{\prime})$.  In fact this is true for every $(e_4,\ell _3)$ in $\bar V_{2,i}(\tilde e_4)$, the closure of $V_{2,i}(\tilde e_4)$.  Thus $\bar V_{2,i}(\tilde e_4)\subset {\mathcal C}_2(\delta ^{\prime})$, and ${\mathcal S}_i$ is defined on $\bar V_{2,i}(\tilde e_4)$.  Let $\hat S_i={\mathcal S}_i(\bar V_{2,i}(\tilde e_4))$.  Then, because ${\mathcal S}_i$ is a continuous bijection, $\hat S_i$ is closed.  Also, because $S_i={\mathcal S}_i({\mathcal C}_2(\delta^{\prime}))$, we have $\hat S_i\subset S_i$.  Likewise, $({\mathcal P\mathcal R}\tilde {\mathbb G}{\mathcal P})^{-1}(\hat S_i)$ is a closed subset of $({\mathcal P\mathcal R}\tilde {\mathbb G}{\mathcal P})^{-1}(S_i)$.  We shall show below that $({\mathcal P\mathcal R}\tilde {\mathbb G}{\mathcal P})^{-1}(S_{i+1})\subset \hat S_i$.  It follows by induction on $i$ that \begin{equation}
\bigl \{ {\mathcal P}^{-1}({\mathcal P\cR}\tilde {\mathbb G}{\mathcal P})^{-i}\hat S_{i+1}\bigr \} _{i=0}^{\infty}
\nonumber
\end{equation}
is a family of nested non-empty sets, whose intersection $X$ is therefore non-empty.  Choose any $\boldsymbol{x}\in X$.  (In fact, $X$ has only one element, but we do not need to use that fact.) We claim that $\bx$ has a singular orbit. We define $t_i$ as the time of the orbit's $2i$-th visit to the section $\{x_{4,\parallel}=-2,\ x_{4,\parallel}>0\}.$
By Lemma \ref{LmGer} and Lemma \ref{LmLMC0}, the rescaled energy is close to Gerver's values in Lemma \ref{LmGer} and the rescaling factor $\lambda_0+\tdelta\geq \lambda\geq \lambda_0-\tdelta>1$ where $\lambda_0$ is in Lemma \ref{LmGer} and $\tdelta=\dt'+\eta\dt$, so  the unrescaled energy of $(x_3,v_3)$ satisfies $$\frac{1}{2}(\lambda_0-\tdelta)^{i-1}\leq -E_3(t_i)\leq \frac{1}{2}(\lambda_0+\tdelta)^{i-1}.$$
According to Lemma \ref{LmGMC0}, Lemma \ref{LmPMC0} and the total energy conservation (see Lemma \ref{LmHamR} below for the Hamiltonian), we get that the velocity $|v_4|$ during the $i$-th iteration is bounded from below by $c\sqrt{-E_3(t_i)}\geq c (\lambda_0-\tdelta)^{(i-1)/2}.$
Note that in Step 1
the initial conditions for $x_1,v_1$ are chosen to satisfy the assumption \eqref{eq: x1v1}.
%when we define $U_1(\dt),U_2(\dt)$.
%According to Lemma \ref{LmPMC0}, if initially $x_{1,\parallel}=\chi$, then after one period we have $x_{1,\parallel}=(1+O(\mu))\chi$.
Lemma \ref{LmPMC0} then shows that the assumptions on $x_1,v_1$ are always satisfied.
Thus we can iterate Lemma \ref{LmPMC0} for arbitrarily many steps.

Now let us look at the orbit in the physical space without doing any renormalization. Inductively, we have
\[x_{1,\parallel}(t_i)\in \left[-(1+\mu C_1)^{i-\frac12}\chi_0, -(1+\mu  C^{-1}_1)^{i-\frac12} \chi_0\right] \]
after the $i$-th iteration using part (c) of Lemma \ref{LmPMC0}, where $\chi_0$ is the initial value for $\chi$. Therefore, $x_{1,\parallel}\to -\infty$ as $n\to \infty$. The value of $\chi$ used during each step of $\cP\cR\tilde\Glob\cP$, denoted by $\chi_{i}(=\frac12 x_{1,\parallel}(t_i)/E_3(t_i))$, is estimated as
$$(\lb_0-\tilde\dt)^{i-1}(1+\mu  C^{-1}_1)^{i-\frac12} \chi_0\leq \chi_{i}\leq (\lb_0+\tilde\dt)^{i-1}(1+\mu  C_1)^{i-\frac12} \chi_0.$$

Next, for each application of $\Loc$, the total time is bounded by a uniform constant. For each application of $\Glob$, the return time is bounded by $3\chi_{i}$ by Lemma \ref{LmGMC0}(c). So without the renormalization, the time difference
 \[|t_{i+1}-t_i|\leq C(\lambda_0-\tdelta)^{-3i/2}\cdot (\lb_0+\tilde\dt)^{i}(1+\mu  C_1)^i \chi_0\]
 where the constant $C$ absorbs finite powers of $(\lambda_0\pm\tilde\dt)$ and $(1+\mu C_1)$,
so the total time $t_*=\lim_{i\to\infty} t_i$ is bounded as needed.
%\[t_*=\lim_{i\to\infty} t_i\leq \Const \chi_0\sum_{i=1}^\infty \frac{1}{(\lb_0-\tdelta)^{i/4}}(1+\mu C_1)^i<\infty\] as needed.
This shows that infinitely many steps complete within finite time and $x_1$ goes to infinity. Since $\mu$ is small and in $U_{j}(\dt),\ j=1,2$ both $x_3$ and $x_4$ are bounded, from \eqref{eq: right} we see that $q_1$ also goes to infinity. This implies that both $Q_1$ and $Q_2$ escape to infinity since $q_1=Q_1-Q_2$ and the mass center is fixed. We also have that $Q_3$ escapes to infinity since $Q_3$ is always close to $Q_2$, i.e. $q_3$ is bounded. Finally, $Q_4$ travels between $Q_1$ and $Q_2$. To see that no collision occurs during the whole process, we only examine the $Q_3$-$Q_4$ and $Q_1$-$Q_4$ close encounters whose collisions are excluded by part (d) of Lemma \ref{LmChooseAM} (See Section \ref{ssct: nocollision} for more details).

The symbolic dynamics in the statement of the main theorem is due to the fact that we can switch the roles of $Q_3$ and $Q_4$ after their close encounter. For elastic collisions, such a switch is done by replacing $\al$ by $\pi-\al$ in \eqref{Elastic}. Both cases ($\al$ and $\pi-\al$) can be shadowed by Kepler hyperbolic motion when $\mu>0$. See \cite{G3} for more details. In the above we have been fixing the discrete parameter $\omega=4$ in the definition of Gerver's map $\Ger_{e_4,j,\omega}$, i.e. we have been choosing $Q_4$ as the traveler. In this case, the global map $\Glob$ sends points in $U_j(\dt)$ to points in $U_{3-j}(\dt)$, $j=1,2$, so we see that $Q_4$ winds around $Q_1$ once in the sense of Definition \ref{DefSing}. In general it is the traveler $Q_{\omega}$ that winds around $Q_1$.

{\bf Step 4, The induction steps. }

It remains to show that we can find a $\dt'$-essential component of $S_{i}$ inside $U_2(\delta)$. % and
%a $\dt'$-essential component of $(\cR\circ\tilde\Glob\circ \cP(S_{2i+1}))$ inside $U_1(\delta).$  %Note that Lemma
%\ref{LmChooseAM} allows to choose such components inside larger sets $U_2(K\delta)$ and $U_1(K\delta).$

We proceed inductively, and assume $S_{i}\subset U_2(\dt)$ for $i\geq 2$ is a $\dt'$-essential admissible surface after application of $(\cP\cR\tilde\Glob\cP)^{i-1}$ to a subset of $S_1$. The fact that the $\dt'$-essential admissible surface $S_1$ exists follows from Lemma \ref{LmChooseAM}(a) applied to $S_0$. Indeed, $S_1=\mathcal P\mathcal S(V_1(\tilde e_4)).$%Note that by the partition in the definition of $\cR$, there might be many components of $S_{2i}$ of size $\chi^{-1/2}$. %However this discontinuity disappears after applying $\pi_{e_4,\ell_3}$ to $S_{2i}$ since the two components $e_4,\ell_3$ are $\cR$ invariant.
%We have the freedom of choosing the partition, which we will use later.

%We also assume that there exists $(\hat e_{3,j}, \hat g_{3,j})$ $\frac{\dt}{2}$ close to $(e_3^*,g_3^*)$ such that on $S_{2j}$, we have
%$$|e_3-\hat e_{3,j}|\leq\eps,\quad |g_3-\hat g_{3,j}|\leq \eps $$

%Due to the rescaling in the Definition of $\cR$, we have $|E_3+\frac{1}{2}|=O(\chi^{-1/2})$ on each component of $S_{2j}$.

We next apply Lemma \ref{LmChooseAM}(b). For given $\tilde e_4\in (e^{**}_4-K\dt'+\frac{1}{\chi}, e^{**}_4+K\dt'-\frac{1}{\chi})$, there exists $\tilde\ell_3\in \T^1$ such that $\pi_{e_4} \cP(\mathcal S_{i}(\tilde e_4,\tilde \ell_3))=e_4^{*}$. % $\cR\tilde\Glob\cP(\mathcal S_{i}(\tilde e_4,\tilde \ell_3))\in U_1(\dt')\subset U_1(\dt)$.
Moreover,
there exists  a neighborhood $V_{2,i}(\tilde e_4)$ of $(\tilde e_4,\tilde \ell_3)$ such that  $\mathcal Q_2$ maps $V_{2,i}(\tilde e_4)$  surjectively onto $\mathcal C_0(\dt')$. We denote by $S_{i+1/2}$ the image $\cP(\mathcal S_i(V_{2,i}(\tilde e_4)))$, which is admissible by Lemma \ref{LmAdm} and $\dt'$-essential. Moreover, every point $\bx\in S_{i+1/2}$ satisfies (AL.3) by Lemma \ref{LmLMC0} and Lemma \ref{LmGMC0}(a), (AL.1) by Lemma \ref{LmPMC0} and (AL.4) by Lemma \ref{LmGMC0}(b) and the $\dt'$-essentiality implies that $\pi_{e_4}S_{i+1/2}=(e_4^*-\dt', e_4^*+\dt'); $ thus $S_{i+1/2}\subset U_0(\dt)$.
%We define $\cR$ based at the point $\cP(\mathcal S_{i}(\tilde e_4,\tilde \ell_3))\in U_0(\dt)$.

We next apply  Lemma \ref{LmChooseAM}(c)\ %. For given  $\tilde{\tilde e}_4\in (e^{*}_4-K'\dt'+\frac{1}{\chi}, e^{*}_4+K'\dt'-\frac{1}{\chi})$,
%there exists $\tilde{\tilde\ell}_3\in \T^1$ such that $\pi_{e_4} \cP(\mathcal S_{i}(\tilde{\tilde e}_4,\tilde{\tilde \ell}_3)=e_4^{**}$, % $\cR\tilde\Glob\cP(\mathcal S_{i}(\tilde e_4,\tilde \ell_3))\in U_1(\dt')\subset U_1(\dt)$.
%Moreover,
 to find a point $(e^*, \tilde{\tilde\ell}_3)$  such that $\pi_{e_4}\cP\mathcal S_{i+1/2}(e^*_4,\tilde{\tilde \ell}_3)=e^{**}_4$. Now we introduce the renormalization $\cR$ based at the point $\mathcal S_{i+1/2}(e^*_4,\tilde{\tilde \ell}_3)$ and define $\mathcal Q_0$. Again by Lemma \ref{LmChooseAM}(c), there exists a neighborhood $V_{0,i+1/2}(e^*_4)\subset \mathcal C_0(\dt)$ such that $\mathcal Q_0$ maps $V_{0,i+1/2}(e^{*}_4)$ surjectively onto $\mathcal C_2(\dt')$, and we have that the diameter of $V_{0,i+1/2}(e^*_4)$ is $O(\mu/\chi)$, which is much smaller than $1/\sqrt\chi$, the size of the domain of $\cR$, so $\cR$ is well-defined on $\tilde \Glob(V_{0,i+1/2}(e^*_4))$ ($d\tilde \Glob$ is bounded).

%We  define $T_{i+1/2}(\bar e_4,\bar\ell_3):=\cR\tilde\Glob\cP( \mathcal S_{i}(V(\tilde e_4))),$ which is known to be an admissible surface by Lemma \ref{LmAdm}. However, the surface $T_{i+1/2}(\bar e_4,\bar\ell_3)$ has only diameter $O(1/\sqrt\chi)$. We next extend $T_{i+1/2}(\bar e_4,\bar\ell_3)$ to a $\dt'$-essential surface, denoted by $S_{i+1/2}$.

%We next apply Lemma \ref{LmAdm}(a) to the $\dt'$-essential surface $S_{i+1/2}$. Fix an $\bar e_4$ as in the previous paragraph, we can find $\bar \ell_4$ such that

 We simply define $$S_{i+1}:=\{\bx\in\cP\cR\tilde \Glob( \mathcal S_{i+\frac12}(V_{0,i+1/2}( e_4^*)))\ :\  \pi_{e_4,\ell_3}\bx\in \mathcal C_2(\dt')\}.$$
By Lemma \ref{LmKone}(a), we know that $S_{i+1}$ is admissible. %By Lemma \ref{LmKone}(b), we know that the pre-image of $S_{i+1}$ in $S_{i}$ has diameter at most $1/(c\chi^2)(\ll \chi^{-1/2})$.  So we get that $\cR$ is continuous and $S_0$ is a smooth surface when restricted to the preimage.

By Definition \ref{DefEssAdm}, we know that $S_{i+1}$ is $\dt'$-essential. On $S_{i+1}$, we always have \eqref{EqOsc3} for the variables $E_3,e_3, g_3$ by Definition \ref{DefKone} and Definition \ref{DefAdm}.  The variables $(x_1,v_1)$ are always controlled by part (c) of Lemma \ref{LmPMC0} and the angle $g_4$ is controlled by part (b) Lemma \ref{LmGMC0} using the asymptotes of the hyperbolic motion (The bounds on $\theta_4^-$ and $\bar\theta_4^+$ require that $Q_4$ has a near collision with $Q_3$, which constrains $g_4$). So we only need to deal with the variables $(e_4,\ell_3)$ on a $\dt'$-essential admissible surface.

Finally, to prove that $({\mathcal P\cR}\tilde {\mathbb G}{\mathcal P})^{-1}(S_{i+1})\subset \hat S_i$, we note that
\begin{equation*}
\begin{aligned}
({\mathcal P\cR}\tilde {\mathbb G}{\mathcal P})^{-1}(S_{i+1})&= \bigl \{ \boldsymbol{x}\in {\mathcal P}^{-1}({\mathcal S}_{i+1/2}(V_{0,i+1/2}(e_4^*))):\pi _{e_4,\ell _3}({\mathcal P\cR}\tilde {\mathbb G}{\mathcal P})(\boldsymbol{x})\in {\mathcal C}_2(\delta ^{\prime})\bigr \} \nonumber \\
&\subset {\mathcal P}^{-1}({\mathcal S}_{i+1/2}(V_{0,i+1/2}(e_4^*)))\subset {\mathcal P}^{-1}({\mathcal S}_{i+1/2}({\mathcal C}_2(\delta ^{\prime}))) \nonumber \\
&={\mathcal P}^{-1}(S_{i+1/2})={\mathcal S}_i(V_{2,i}(\tilde e_4))\subset {\mathcal S}_i(\bar V_{2,i}(\tilde e_4))=\hat S_i\mbox. \nonumber
\end{aligned}
\end{equation*}

%Similarly we construct a piece of $\dt'$-essential admissible surface $S_{2i+2}$ by applying the above argument to $S_{2i+1}$ in place of $S_{2i}$ and $\cR\tilde\Glob\cP$ in place of $\cP$.

\qed

%\begin{Rk}\label{RkHaus} In each step of the Cantor set construction above, the ratio of the remained measure with the deleted measure is $O(1/\chi)$ as $\chi\to\infty$ and $\mu,\dt$ fixed.  As the expansion rate $\chi$ grows exponentially under iterates due to the renormalization, the resulting Cantor set on each piece of admissible surface is a zero Hausdorff dimension set.
%\end{Rk}

\section{The hyperbolicity of the Poincar\'{e} map}\label{sct: hyp}
In this section, we consider the hyperbolicity of the Poincar\'{e} map by studying the derivative of the local and global maps.
\subsection{The structure of the derivative of the global map and local map}
\begin{Lm}\label{Lm: loc}
Suppose the initial condition $\bx_\mu\in U_j(\delta)$ satisfies {\bf AL} and let $\hat \bx$ be the closest point to $\bx_\mu$ in the set $U_j(\delta)$ with $\mu=0$, which leads to a collision between bodies 3 and 4. %$\Loc(\bx)\in \{x_{4,\parallel}=-2,\ v_{4,\parallel}<0\}$ satisfy $\theta_4^-=O(\mu),\ |\bar{\theta}_4^+-\pi|\leq\tilde\theta\ll 1 .$
Then
\begin{enumerate}
\item[(a)]  there exist a constant $C$ independent of $\mu$,
continuous vector-valued functions $\lin_j(\hat\bx)$ and $\mathbf u_j(\hat\bx,\bar\theta_4^+)$ and a continuous matrix-valued function $B_j(\hat\bx,\bar\theta_4^+)$, where $\bar\theta_4^+$ is the angle of the outgoing asymptote, such that $\|\mathbf u_j\|, \|\mathbf \lin_j\|,\|B_j\|<C$ and as $1/\chi\ll\mu\to 0$
$$ d\Loc(\bx_\mu)=\frac{1}{\mu} (\bv_j(\hat\bx,\bar\theta_4^+)+o(1))\otimes (\lin_j(\hat\bx) +o(1))+B_j(\hat\bx,\bar\theta_4^+)+o(1); $$
\item[(b)] moreover, there exist vectors $\hlin_j$ and $\hu_j$, and a matrix $\hat B_j$ such that
$$ \lin_j(\hat\bx)\to \hlin_{j},  \quad \bv_j(\hat\bx, \bar\theta_4^+)\to \hu_{j}, \quad B_j(\hat\bx, \bar\theta_4^+)\to \hat B_j, \text{ as } \delta,\tilde\theta\to 0,
$$
\end{enumerate}
where $j=1,2$, meaning the first or the second collision.
\end{Lm}
The proof is given in Section \ref{SSLocC1}.
\begin{Lm}\label{Lm: glob}
Let $\bx$ and $\by=\Glob(\bx)$ be the initial and final values of the global map $\Glob$ and satisfy {\bf AG}.
 Then
 \begin{enumerate}
\item[(a)]   there exist continuous linear functionals $\brlin_j(\bx)$ and $\brrlin_j(\bx)$, continuous vectorfields $\brv_j(\by)$ and $\brrv_j(\by)$, and nonvanishing constants $c_1$ and $c_2$ such that as $1/\chi\ll\mu\to 0$
$$ d\Glob(\bx)=c_1\chi^2 (\brv_j(\by)+o(1)) \otimes(\brlin_j(\bx)+o(1))+c_2\chi (\brrv_j(\by)+o(1)) \otimes(\brrlin_j(\bx)+o(1))+O(\mu\chi). $$
%Moreover as  $\chi\to \infty,$ $\mu\to 0,$ we have
%$$ \brlin=\brlin_{2}(\bx)+o(1), \quad  \brrlin=\brrlin_{3}(\bx)+o(1) . $$
\item[(b)] Moreover, we have the following explicit expressions for the above vectors and functionals in Delaunay coordinates $($see $\mathcal V$ in Notation \ref{NotVX}$)$ %there exist vector $w_j,\tilde w$ and linear functionals $\brlin_j, \brrlin_j\ (j=1,2$ meaning the first or second collision$)$such that if $\delta\to 0$ then
%$$ \brlin_j(\bx)\to\hat\brlin_j,\quad \brrlin_j(\bx)\to\hat\brrlin_j,\quad \mathrm{and\ }  ,$$
\[\brrlin_j=(1,0_{1\times 9}),\ \brlin_j=\left(-\frac{\tilde G_{4,j}/\tilde L_{4,j}}{\tilde L_{4,j}^2+\tilde G_{4,j}^2}, 0_{1\times 7},\frac{1}{\tilde L_{4,j}^2+\tilde G_{4,j}^2} , \frac{1}{\tilde L_{4,j}}\right),\]
\[ \brrv_j=\tilde w:=(0,1,0_{1\times 8})^T,\quad \brv_j=w_j+c\tw,\quad  w_j=\left(0_{1\times 8};1,\frac{\hat L_{4,j}}{\hat L_{4,j}^2+\hat G_{4,j}^2}\right)^T,\] where $c$ is some constant, $\tilde L_{4,j}$ and $\tilde G_{4,j}$ stand for the initial values of $L_{4,j},G_{4,j}$ and $\hat L_{4,j},\hat G_{4,j}$ stand for the final values, and $\tilde w$ and $ w_j$ appeared in Definition \ref{DefKone} with $L_4=L_3$ when $\mu=0$ and $e_4=\sqrt{1+(G_4/L_3)^2}$.% before the $(3-j)$-th collision $($final value of $\Glob$$)$.%, which can be found in Appendix \ref{section: gerver}.
\end{enumerate}
\end{Lm}
\begin{Not} We denote by $\hat{\brrlin}_j$ and $\hat\brlin_j$ the vectors corresponding to $\brrlin_j$ and $\brlin_j$ respectively when $L_{4,j},G_{4,j}$ are evaluated at Gerver's collision points.
\end{Not}
The proof of Lemma \ref{Lm: glob} is in Appendix \ref{appendixb}. The leading terms in the above two lemmas have clear physical meanings explained in \cite{Xu} by simple arguments.
\subsection{The nondegeneracy condition}
\blm
\label{LmNonDeg}
The following non degeneracy conditions are satisfied for $E_3^*=-\frac{1}{2}$, $e^*_3=\frac{1}{2}$ and $g_3^*=\frac{\pi}{2}$.
\begin{itemize}
\item[(a1)] $ \Span(\hu_{1}, \hat B_1(\hlin_{1}(\tw) d\cR w_2-\hlin_{1}(d\cR w_2)\tw))$ is transversal to $\Ker(\hat\brlin_{1})\cap \Ker(\hat\brrlin_{1}).$
\item[(a2)] $de_4(d\cR w_2)\neq 0.$
\item[(b1)]
$ \Span(\hu_{2}, \hat B_2(\hlin_{2}(\tw) w_1-\hlin_{2}(w_1)\tw))$ is transversal to $\Ker(\hat\brlin_{2})\cap \Ker(\hat\brrlin_{2}).$
\item[(b2)] $de_4(w_1)\neq 0.$
\item[(c)] $\hat{\mathbf l}_j\cdot\tilde w\neq 0$,\quad $\hat{\mathbf l}_2\cdot w_{1}\neq0,$\quad $\hat{\mathbf l}_1\cdot d\cR w_{2}\neq0,$\quad $\hat\brlin_j\cdot \hat{\mathbf u}_j\neq 0$, $j=1,2.$
\end{itemize}
\elm
This lemma is proved in Section \ref{SSTrans} and \ref{subsection: local3}.
\subsection{Proof of Lemma \ref{LmKone}, the expanding cones}

Consider for example the case where $\bx\in U_2(\delta).$
We claim that if $\delta, \mu$ are small enough then
$d\Loc(\Span(w_1, \tw))$ is transversal to $\Ker \brlin_{2}\cap \Ker \brrlin_{2}.$ Indeed take
$\Gamma$ such that $\lin_2 (\Gamma)=0.$ If $\Gamma=a w_1+\ta \tw$ then
$a \lin_2 (w_1)+\ta \lin_2 (\tw)=0.$
It follows that the direction of $\Gamma$ is close to the direction of
$\hGamma=\hlin_2 (\tw) w_1-\hlin_2(w_1) \tw. $ Next take $\tGamma=bw_1+\tb\tw$ where
$b \lin_2 (w_1)+\tb \lin_2 (\tw)\neq 0.$
Then
the direction of  $d\Loc \tGamma $ is close to $\hat{\mathbf{u}}_{2}$ and the direction of
$d\Loc(\Gamma)$ is close to $B_2(\hGamma)$, so our claim follows from Lemma \ref{LmNonDeg}.

Thus for any plane $\Pi$ close to $\Span(w_1, \tw)$ we have that
$d\Loc(\Pi)$ is transversal to $\Ker \brlin_{2}\cap \Ker \brrlin_{2}.$
Take any $Y\in \cK_2.$ Then either
$Y$ and $w_1$ are linearly independent,  or $Y$ and $\tw$ are linearly independent.
Hence $d\Loc(\Span(Y, w_1))$ or $d\Loc(\Span(Y, \tw))$ is transversal to $\Ker \brlin_{2}\cap \Ker \brrlin_{2}.$
Accordingly either $\brlin_{2}(d\Loc(Y))\neq 0$
or $\brrlin_{2}(d\Loc(Y))\neq 0.$ If $\brlin_{2}(d\Loc(Y))\neq 0$ then the direction of $d(\Glob\circ \Loc)(Y)$
is close to $\brv.$ If $\brlin_{2}(d\Loc(Y))=0$ then the direction of $d(\Glob\circ \Loc)(Y)$
is close to $\brrv.$ Next, estimating $d\tilde{\mathbb G}$, we have $d\tilde{\mathbb G}\cdot \mathrm{span}(\brv,\brrv)=\mathrm{span}(\brv,\brrv)+o(1)$ (see Lemma \ref{Lm: 2pieces}).
Next, by Definition \ref{DefKone}, we get that $d\cR \mathrm{span}(\brv,\brrv)\subset \cK_1$.

So in either of the two cases above, we have $d(\cR\circ\tilde\Glob\circ \Glob\circ\Loc)(Y)\in \cK_{1}$ and
$\Vert d(\cR\circ\tilde\Glob\circ\Glob\circ\Loc)(Y)\Vert \geq c\chi \Vert Y\Vert .$
This completes the proof in the case $\bx\in U_2(\delta).$ The case of $\bx\in U_1(\delta)$ is similar.
%Finally $d\cR$ only changes $E_3$ component of the vector
%so by part (c) of Lemma \ref{LmNonDeg} a possible application of $d\cR$ does not change the direction
%of $d(\Glob\circ \Loc)(Y)$ much.
\qed

\section{Symplectic transformations and Poincar\'{e} sections}\label{sct: symp}

In this section we define several Poincar\'{e} sections and perform symplectic transformations in
the regions between the consecutive  sections to make the Hamiltonian system suitable for doing calculations.
\subsection{The Poincar\'e coordinates}
 We start with the Hamiltonian \eqref{eq: hammain}. The translation invariance enables us to remove one body in the Hamiltonian. We choose $Q_2$ as this body.
 We start with the symplectic form
\begin{equation}\begin{aligned}
\om=&\sum_{i=1}^4 dP_i\wedge dQ_i
=d(P_1+P_2+P_3+P_4)\wedge dQ_2+d P_1\wedge d(Q_1-Q_2)\\
&+d P_3\wedge d(Q_3-Q_2)+d P_4\wedge d(Q_4-Q_2)\\
=&d(P_1+P_2+P_3+P_4)\wedge dQ_2+ dP_1\wedge dq_1+dP_3\wedge dq_3+dP_4\wedge dq_4,
\end{aligned}\nonumber\end{equation}
where we have used \eqref{eq: qQ}.
If we choose the mass center of the four bodies as the origin, then $P_1+P_2+P_3+P_4=0.$
Now the Hamiltonian becomes
\begin{equation}\nonumber
\begin{aligned}
H(q_1,P_1;q_3,P_3;q_4,P_4)
&=P_1^2+\frac{1}{2}\left(1+\frac{1}{\mu}\right)(P_3^2+P_4^2)+(\langle P_1,P_3\rangle+\langle P_1,P_4\rangle+\langle P_3,P_4\rangle)\\
&-\frac{1}{|q_1|}-\frac{\mu}{|q_3|}-\frac{\mu}{|q_4|}-\frac{\mu}{|q_1-q_3|}-\frac{\mu}{|q_1-q_4|}-\frac{\mu^2}{|q_3-q_4|}.
\end{aligned}
\end{equation}
Restricted to the subspace %of $(p_3,q_3;p_1,q_1;p_4,q_4)$ and
where $P_1+P_2+P_3+P_4=0$, up to a factor $\mu$, the symplectic form is $\bar\omega$ defined in \eqref{eq: baromega}.
We divide the whole Hamiltonian by $\mu$ to get
\begin{equation}\label{eq: hamglob}
\begin{aligned}
%\Longrightarrow
&H(q_1,p_1;q_3,p_3;q_4,p_4)=
\mu p_1^2+\frac{\mu}{2}\left(1+\frac{1}{\mu}\right)(p_3^2+p_4^2)
+\mu(\langle p_1,p_3\rangle+\langle p_1,p_4\rangle+\langle p_3,p_4\rangle)\\
&-\frac{1}{\mu |q_1|}-\frac{1}{|q_3|}-\frac{1}{|q_4|}-\frac{1}{|q_1-q_3|}-\frac{1}{|q_1-q_4|}-\frac{\mu}{|q_3-q_4|}.\\
\end{aligned}
\end{equation}
It can be checked that the Poincar\'e-Cartan one-form is multiplied by a factor $\mu$ due to the coordinate change and the Hamiltonian canonical equation holds true in the new coordinates.

In the new coordinates the total angular momentum equals
$$ G=Q_1 \times P_1-Q_2 \times (P_1+P_3+P_4)+ Q_3 \times P_3+Q_4 \times P_4=
q_1\times P_1+q_3\times P_3+q_4 \times P_4.$$
Therefore the conservation of angular momentum takes the form
$$ \sum_{j=3,1,4} q_j \times p_j=\Const. $$

\subsection{More Poincar\'{e} sections}
When $Q_4$ is closer to $Q_i, i=1,2$,
we treat its motion as a hyperbolic Kepler motion with focus at $Q_i$ and perturbed by $Q_3,$ $Q_{3-i}$. \begin{Def}\label{def: coordL}
We introduce one more set of coordinates
\begin{equation}\label{eq: left}
\begin{cases}
&v_3=p_3+\frac{\mu}{1+\mu}(p_1+p_4),\\
&v_1=p_1+p_4,\\
&v_4=\frac{1}{1+\mu}p_4-\frac{\mu}{1+\mu}p_1,\\
\end{cases}\quad
\begin{cases}
&x_3=q_3,\\
&x_1=\frac{1}{1+\mu}q_1-\frac{\mu}{1+\mu}q_3+\frac{\mu}{1+\mu}q_4,\\
&x_4=q_4-q_1.\\
\end{cases}
\end{equation}
\end{Def}
One can check that the transformation \eqref{eq: left} is symplectic with respect to the symplectic form $\bar\omega$.
\begin{Not}
To distinguish the new set of coordinates from those of Definition \ref{def: coordR}, we use superscript $R$
(meaning {\it right}) and write $(x_3,v_3;x_1,v_1;x_4,v_4)^R=(x_3^R,v_3^R;x_1^R,v_1^R;x_4^R,v_4^R)$ for the coordinates
from Definition \ref{def: coordR} and use superscript $L$ (meaning {\it left}) and write
$(x_3,v_3;x_1,v_1;x_4,v_4)^L=(x^L_3,v^L_3;x^L_1,v^L_1;x^L_4,v^L_4)$ for the coordinates
from Definition~\ref{def: coordL}. Notice that $(x_3,v_3)^R=(x_3,v_3)^L$, so we omit this superscript for simplicity.
\end{Not}

\begin{Def}[Further Poincar\'{e} sections]\label{DefSection}
We further define two more sections to cut the global map into three pieces (see Figure 2).
\begin{itemize}
\item Map $(I)$ is the Poincar\'{e} map between the sections
$$\left\{x^R_{4,\parallel}=-2,\ v^R_{4,\parallel}<0\right\},\quad\mathrm{ and}\quad
\left\{x^R_{4,\parallel}=-\frac{\chi}{2},\ v^R_{4,\parallel}<0\right\}.$$
\item Map $(III)$ is the Poincar\'{e} map between the sections $$\left\{x^R_{4,\parallel}=-\frac{\chi}{2},\ v^R_{4,\parallel}<0\right\},\quad
\mathrm{and}\quad  \left\{x^L_{4,\parallel}=\frac{\chi}{2},\ v^L_{4,\parallel}>0\right\}.$$
\item Map $(V)$ is the Poincar\'{e} map between the sections $$\left\{x^L_{4,\parallel}=\frac{\chi}{2},\ v^L_{4,\parallel}>0\right\},\quad \mathrm{and}\quad
\left\{x^R_{4,\parallel}=-2,\ v^R_{4,\parallel}>0\right\}.$$
\item We also introduce map $(II)$ to change coordinates from right to the left on the section
$\left\{x^R_{4,\parallel}=-\frac{\chi}{2},\ v^R_{4,\parallel}<0\right\}$ and map $(IV)$ to change coordinates from left to the right on the section $\left\{x^L_{4,\parallel}=\frac{\chi}{2},\ v^L_{4,\parallel}>0\right\}.$
\end{itemize}
\end{Def}
\begin{Rk}\begin{enumerate}
\item %$Q_4$ moves between $Q_1$ and $Q_2$, so that $q_{4,\parallel}$ is between 0 and $-\chi$. Hence $x_{4,\parallel}^R$ is between $0$ and $-\chi$, and $x_{4,\parallel}^L$ is between $0$ and $\chi$ for most of the time according to Definitions \eqref{eq: left} and \eqref{eq: right}.
The two sections $\left\{x^R_{4,\parallel}=-\frac{\chi}{2},\ v^R_{4,\parallel}<0\right\}$ and $\left\{x^L_{4,\parallel}=\frac{\chi}{2},\ v^L_{4,\parallel}>0\right\}$ lie almost at the midpoint of $Q_1$ and $Q_2$.

\item In Section 4.3, 4.4, we will treat the equations of motion as three Kepler motions $(x_i,v_i)^{R,L},\ i=3,1,4$ with perturbations. When perturbation is neglected the orbit of $x_4^R$ is a hyperbola focused at the origin and opening to the left while the orbit of $x_4^L$ is a hyperbola focused at origin and opening to the right.
\end{enumerate}
\end{Rk}

\begin{figure}
\begin{center}
\includegraphics[width=0.8\textwidth]{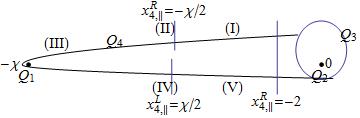}
\caption{Poincar\'{e} sections}
\end{center}\label{figureSection}
\end{figure}
In the following subsections we describe the suitable changes of variables adapted to maps $(I), (III), (V)$
as well as the local map $\Loc.$
\subsection{Hamiltonian of the right case, when $Q_4$ is closer to $Q_2$}
%We use superscript $R$ and $L$ to distinguish the coordinates $(x_3,v_3;x_1,v_1;x_4,v_4)$ in Lemma  and Lemma .
 We write the Hamiltonian in terms of three Kepler motions with perturbations.
%\begin{Lm}\label{Lm: right} When $Q_4$ is moving to the right of the section $\{x^R_{4,\parallel}=-\frac{\chi}{2}\}$ (or the section $\{x^L_{4,\parallel}=\frac{\chi}{2}\}$) and to the left of $\{x^R_{4,\parallel}=-2\}$, where we make the symplectic transformation \eqref{eq: right}, $(q_3,p_3;q_1,p_1;q_4,p_4)\to (x_3,v_3;x_1,v_1;x_4,v_4)^R$
%we get the following Hamiltonian if we perform a further symplectic transformation by sending Cartesian coordinates to Delaunay coordinates: $(x_3,v_3)\to (L_3,\ell_3,G_3,g_3)$ and $(x_4,v_4)\to (L_4,\ell_4,G_4,g_4)$,
%where we have defined
%\[m_{1R}=\frac{1+2\mu}{2\mu(1+\mu)},\ m_{3R}=\frac{1}{1+\mu},\ m_{4R}=\frac{1+\mu}{1+2\mu},\ k_{1R}=\frac{1+2\mu}{\mu},\ k_{3R}=1 ,\ k_{4R}=1+\mu,\]
%and {\small\begin{equation}{\small\begin{equation}
%\begin{aligned}
%&U^R=-\frac{1}{|x_1|^3}\left(\frac{1}{1+\mu}\left(|x_3|^2-\frac{3\langle x_1,x_3\rangle^2}{2|x_1|^2}\right)+\frac{1+\mu}{1+2\mu}\left(|x_4|^2-\frac{3\langle x_1,x_4\rangle^2}{2|x_1|^2}\right)\right)\\
%&+\frac{\mu }{(1+\mu)|x_4|^3}\left(|x_3|^2-\frac{3\langle x_4,x_3\rangle^2 }{2|x_4|^2}\right)+O\left(\frac{1}{|x_4|^4}+\frac{1}{|x_1|^4}\right).
%\end{aligned}\nonumber
%\end{equation}}
%\end{Lm}

%In the following calculation, we will use the easy formula: For $a,b,c$ vectors and $|a|>|b+c|$, we have
%\begin{equation}\label{eq: taylor}\frac{1}{|a+b+c|}=\frac{1}{|a|}\left(1-\frac{\langle a,b+c\rangle}{|a|^2}-\frac{|b+c|^2}{2|a|^2}+\frac{3\langle a,b+c\rangle^2}{2|a|^4}+O\left(\frac{1}{|a|^3}\right)\right), \mathrm{\ as}\ |a|\to\infty.\end{equation}
\begin{Lm}\label{LmHamR} In the coordinates \eqref{eq: right}, the Hamiltonian for maps $(I)$ and $(V)$ has the form
\begin{equation}\label{eq: hamR}
\begin{aligned}
&H(x_3,v_3;x_1,v_1;x_4,v_4)=\left[\frac{\mu(1+\mu)}{1+2\mu} v_1^2-\frac{2\mu+1}{\mu|x_1|}\right]+\left[\frac{1+\mu}{2}v_3^2-\frac{1}{|x_3|}\right]+\left[\frac{1+2\mu}{2(1+\mu)}v_4^2
-\frac{1+\mu}{|x_4|}\right]+U^R,
\end{aligned}
\end{equation}
where
\begin{equation}\label{eq: UR}
\begin{aligned}
 U^R(x_3,x_1,x_4)=&\frac{2\mu+1}{\mu|x_1|}-\left(\frac{1}{\mu\left|x_1+\frac{\mu}{1+2\mu}x_4+\frac{\mu}{1+\mu} x_3\right|}+
\frac{1}{\left|x_1+\frac{\mu}{1+2\mu}x_4-\frac{1}{1+\mu}x_3\right|}
+\right.\\
& \left.\frac{1}{\left|x_1-\frac{1+\mu}{1+2\mu}x_4\right|}\right)
+\frac{1+\mu}{|x_4|}-\left(\frac{1}{\left|x_4+\frac{\mu x_3}{1+\mu}\right|}
+\frac{\mu}{\left|\frac{x_3}{1+\mu}-x_4\right|}\right).
\end{aligned}
\end{equation}
%{\small\begin{equation}\nonumber
%\begin{aligned}
%=&\frac{\mu(1+\mu)}{1+2\mu} v_1^2+\frac{1+\mu}{2}v_3^2+\frac{1+2\mu}{2(1+\mu)}v_4^2-\frac{1+2\mu}{\mu |x_1|}-\frac{1}{|x_3|}-\frac{1+\mu}{|x_4|}+\frac{\mu |x_3|^2}{(1+\mu)|x_4|^3}-\frac{3\mu \langle x_4,x_3\rangle^2}{2(1+\mu)|x_4|^5}+O\left(\frac{1}{|x_4|^4}\right)\\
%&-\frac{1}{\mu|x_1|^3}\left(\left(\frac{\mu}{1+2\mu}x_4+\frac{\mu}{1+\mu} x_3\right)^2+\mu\left(\frac{\mu}{1+2\mu}x_4-\frac{1}{1+\mu}x_3\right)^2+\mu\left(\frac{1+\mu}{1+2\mu}x_4\right)^2\right)\\
%&+\frac{3/2}{\mu|x_1|^3}\left(\left(\frac{\mu\langle x_1,x_4\rangle}{1+2\mu}+\frac{\mu\langle x_1,x_3\rangle}{1+\mu} \right)^2+\mu\left(\frac{\mu\langle x_1,x_4\rangle}{1+2\mu}-\frac{\langle x_1,x_3\rangle}{1+\mu}\right)^2+\mu\left(\frac{1+\mu}{1+2\mu}\langle x_1,x_4\rangle\right)^2\right)+O\left(\frac{1}{|x_1|^4}\right)\\
%=&\left(\frac{\mu(1+\mu)}{1+2\mu} v_1^2-\frac{1+2\mu}{\mu |x_1|}\right)+\left(\frac{1+\mu}{2}v_3^2-\frac{1}{|x_3|}\right)+\left(\frac{1+2\mu}{2(1+\mu)}v_4^2-\frac{1+\mu}{|x_4|}\right)+U^R.\\
%\end{aligned}
%\end{equation}}
%In the above, when doing Taylor expansion, we use $|x_1|\gg |x_4|>|x_3|$. This is already in the form of three Kepler motions with perturbation $U^R$.
%We choose to include the potentials $\left[\frac{2\mu+1}{\mu|x_1|}+\frac{1}{|x_3|}+\frac{1+\mu}{|x_4|}\right]$ in \eqref{eq: hamR} to the unperturbed part so that $U^R$ is a controllable perturbation to the Kepler motions after some cancelations. We will study $U^R$ in more details in Lemma \ref{Lm: U}.

And in Delaunay variables we have
\begin{equation}\nonumber
\begin{aligned}
H(&L_3,\ell_3,G_3,g_3;x_1,v_1;L_4,\ell_4, G_4,g_4)=\left(\frac{v_1^2}{2m_{1R}}-\frac{k_{1R}}{ |x_1|}\right)-\frac{m_{3R}k^2_{3R}}{2L_3^2}+\frac{m_{4R}k_{4R}^2}{2L_4^2}+U^R
\end{aligned}
\end{equation}
where
\begin{equation}\label{Eqmk}
m_{1R}=\frac{1+2\mu}{2\mu(1+\mu)},\ m_{3R}=\frac{1}{1+\mu},\ m_{4R}=\frac{1+\mu}{1+2\mu},\ k_{1R}=\frac{1+2\mu}{\mu},\ k_{3R}=1 ,\ k_{4R}=1+\mu.\end{equation}
\end{Lm}

%\begin{Rk}\label{Rk: angular momentum}
%In the above derivation, if we further transform $(x_1,v_1)^R\to (L_1,\ell_1,G_1,g_1)^R$, we get
%Note that the total angular momentum conservation takes form $G_1+G_3+G_4=const,$ where $G_i=v_i\times x_i, \ i=3,1,4$. Indeed,
%\[\sum_{i=3,1,4}v_i\times x_i=\sum_{i=3,1,4}p_i\times q_i\] since the transformation \eqref{eq: right} is linear symplectic.
%\end{Rk}
%from the first equality of \eqref{eq: hamR}, we see that the Hamiltonians depend only on the relative angles $g_4-g_1$, $g_3-g_1$ without dependence on $g_1$. So using the easy calculation \[dG_1\wedge dg_1+dG_3\wedge dg_3+dG_4\wedge dg_4=d(G_1+G_3+G_4)\wedge dg_1+dG_3\wedge d(g_3-g_1)+dG_4\wedge d(g_4-g_1),\] we get the total angular momentum conservation. Notice here we calculate the angular momentum using $G_i=v_i\times x_i$ instead of $p_i\times q_i$, $i=3,1,4$.
\subsection{Hamiltonian of the left case, when $Q_4$ is closer to $Q_1$}
%\begin{Lm}\label{Lm: left}
In this section, we explain the choice of the Jacobi coordinates \eqref{eq: left} and derive the corresponding Hamiltonian.
When $Q_4$ is moving between the sections $\{x^R_{4,\parallel}=-\frac{\chi}{2}\}$ and $\{x^L_{4,\parallel}=\frac{\chi}{2}\}$ and turns around $Q_1$, %we make the symplectic transformation \eqref{eq: left} $(q_3,p_3;q_1,p_1;q_4,p_4)\to (x_3,v_3;x_1,v_1;x_4,v_4)^L$
%followed by a symplectic transformation sending $(x_3,v_3)\to (L_3,\ell_3,G_3,g_3)$ and $(x_4,v_4)\to (L_4,\ell_4,G_4,g_4)$ to get the Hamiltonian
%and {\small\begin{equation}
%\begin{aligned}
%&U^L=-\frac{1}{|x_1|^3}\left(|x_3|^2+|x_4|^2-\frac{3}{2}\frac{\langle x_1,x_3\rangle^2+\langle x_1,x_4\rangle^2}{|x_1|^2}\right)+O\left(\frac{1}{|x_1|^4}\right).
%\end{aligned}\nonumber
%\end{equation}}
%\end{Lm}
we treat $Q_4$'s motion as an approximate hyperbola with focus at $Q_1$.
%So we do the following change of variables

\begin{Lm}\label{LmHamL}In the coordinates \eqref{eq: left}, the Hamiltonian for map $(III)$ becomes
\begin{equation}\label{eq: hamL}
\begin{aligned}
&H(x_3,v_4;x_1,v_1;x_4,v_4)=\frac{\mu}{1+\mu} v_1^2-\frac{(1+\mu)^2}{\mu|x_1|}+\frac{\mu}{2}\left(1+\frac{1}{\mu}\right)(v_3^2+v_4^2)-\frac{1}{|x_3|}-\frac{1}{|x_{4}|}+U^L
%\\&-\frac{1}{\mu|x_1|^3}\left(2\left(\frac{\mu(x_3-x_4)}{1+\mu}\right)^2+\mu\left(\frac{\mu x_3+x_4}{1+\mu}\right)^2+\mu\left(\frac{x_3+\mu x_4}{1+\mu}\right)^2\right)\\
%&+\frac{3}{2\mu|x_1|^5}\left(2\left(\frac{\mu(\langle x_1,x_3-x_4\rangle)}{1+\mu}\right)^2+\mu\left(\frac{\langle x_1,\mu x_3+x_4\rangle}{1+\mu}\right)^2+\mu\left(\frac{\langle x_1,x_3+\mu x_4\rangle}{1+\mu}\right)^2\right)+O(1/|x_1|^4)\\
%=&\frac{\mu}{1+\mu} v_1^2-\frac{(1+\mu)^2}{\mu |x_1|}+\frac{\mu}{2}\left(1+\frac{1}{\mu}\right)(v_3^2+v_4^2)-\frac{1}{|x_3|}-\frac{1}{|x_{4}|}+U^L.
\end{aligned}
\end{equation}
where
\begin{equation}\label{eq: UL}
\begin{aligned}
 &U^L(x_3,x_1,x_4)=\frac{(1+\mu)^2}{\mu|x_1|}-\left(\frac{1}{\mu \left|x_1+\frac{\mu}{1+\mu}x_3-\frac{\mu}{1+\mu}x_4\right|}+\frac{1}{\left|x_1+\frac{\mu}{1+\mu}x_3+\frac{1}{1+\mu}x_4\right|}+\right.\\
&\left.\frac{1}{\left|x_1-\frac{1}{1+\mu}x_3-\frac{\mu}{1+\mu}x_4\right|}
+\frac{\mu}{\left|x_1-\frac{1}{1+\mu}x_3+\frac{1}{1+\mu}x_4\right|}\right).
\end{aligned}
\end{equation}
In Delaunay coordinates, we have
\begin{equation}\nonumber
\begin{aligned}
H(&L_3,\ell_3,G_3,g_3;x_1,v_1;L_4,\ell_4, G_4,g_4)=\left(\frac{v_1^2}{2m_{1L}}-\frac{k_{1L}}{ |x_1|}\right)-\frac{m_{3L}k^2_{3L}}{2L_3^2}+\frac{m_{4L}k_{4L}^2}{2L_4^2}+U^L,
\end{aligned}
\end{equation}
\[\mathrm{where\quad }m_{1L}=\frac{1+\mu}{2\mu},\ m_{3L}=m_{4L}=\frac{1}{1+\mu},\ \quad k_{1L}=\frac{(1+\mu)^2}{\mu},\ k_{3L}=k_{4L}=1.\]
\end{Lm}

\subsection{Hamiltonian of the local map, away from close encounter}
We cut the local map into three pieces by introducing a new section $|q_3-q_4|=\mu^\kappa, \ 1/3<\kappa<1/2$. The
restriction $\kappa< 1/2$ comes from the proof of Lemma \ref{Lm: 2pieces} where we need $\mu^{1-2\kappa}$ to be small, and the restriction $\kappa> 1/3$ comes from the proof of Lemma \ref{Lm: landau} where we need $\mu^{3\kappa-1}$ to be small. %On the one hand, the $|q_3-q_4|$ cannot be too small since when we study the dynamics in the region $|q_3-q_4|>\mu^\kappa$, small $\mu^\kappa$ will lead to larger interaction potential $\mu/|q_3-q_4|$. On the other hand, the $|q_3-q_4|$ cannot be too large, since when we study the dynamics in the region $|q_3-q_4|<\mu^\kappa$, large $\mu^\kappa$ will lead to larger perturbation to the pair $Q_3,Q_4$ coming from $Q_2$.

When $Q_3,Q_4$ are moving outside the circle $|q_3-q_4|=\mu^\kappa$, we use the same transformation as \eqref{eq: right} but different ways of grouping terms. So we get the following from the first equality of \eqref{eq: hamR}.
\begin{Lm} \label{LmRightOut} In the coordinates \eqref{eq: right},
assume $|x_3|,|x_4|<2$ and $|x_1|\sim \chi$. Then the Hamiltonian can be written in the following form
\begin{equation}\label{eq: hamRR}
\begin{aligned}
H( x_1, v_1;x_3,v_3;x_4,v_4)^R&=
\left[\frac{\mu(1+\mu)}{1+2\mu} v_1^2-\frac{2\mu+1}{\mu|x_1|}\right]+\left[\frac{1+\mu}{2}v_3^2-\frac{1}{|x_3|}\right]+\left[\frac{1+2\mu}{2(1+\mu)}v_4^2-\frac{1+\mu}{|x_4|}\right]\\
&-\frac{\mu}{\left|\frac{x_3}{1+\mu}-x_4\right|}+V_{out}(x_3,x_1,x_4),\\
\end{aligned}
\end{equation}
where \begin{equation*}\begin{aligned}V_{out}(x_3,x_1,x_4)&=\left(\frac{1+\mu}{|x_4|}-\frac{1}{\left|x_4+\frac{\mu x_3}{1+\mu}\right|}
\right)+\left(\frac{2\mu+1}{\mu|x_1|}-
\frac{1}{\mu\left|x_1+\frac{\mu}{1+2\mu}x_4+\frac{\mu}{1+\mu} x_3\right|}\right.\\
&\left.-\frac{1}{\left|x_1+\frac{\mu}{1+2\mu}x_4-\frac{1}{1+\mu}x_3\right|}-\frac{1}{\left|x_1-\frac{1+\mu}{1+2\mu}x_4\right|}\right).\end{aligned}\end{equation*}
In Delaunay coordinates we have
\begin{equation}\label{eq: hamRRDel}
\begin{aligned}
&H( x_1, v_1;L_3,\ell_3,G_3,g_3;L_4,\ell_4,G_4,g_4)^R\\
&=\left(\frac{v_1^2}{2m_{1R}} -\frac{k_{1R}}{ |x_1|}\right)-\frac{m_{3R}k_{3R}^2}{2L_3^2}+\frac{m_{4R}k_{4R}^2}{2L_4^2}
-\frac{\mu}{\left|\frac{x_3}{1+\mu}-x_4\right|}+V_{out}.
\end{aligned}
\end{equation}
\end{Lm}

\subsection{Hamiltonian of the local map, close encounter}
When $Q_3,Q_4$ are moving inside the circle $|q_3-q_4|=\mu^\kappa$, we derive the Hamiltonian system describing the relative motion of $Q_3,Q_4$. We start with the Hamiltonian
\eqref{eq: hamglob} and make the following symplectic changes to convert to the coordinates of relative motion and motion of the center of mass  \begin{equation}\label{eq: relcm}\begin{cases}
&q_-=\frac{1}{2}(q_3-q_4)\\
&q_+=\frac{1}{2}(q_3+q_4)\\
&q_1=q_1
\end{cases},\quad \begin{cases}
&p_-=p_3-p_4\\
&p_+=p_3+p_4\\
&p_1=p_1
\end{cases}.\end{equation}
The symplectic form is now\[\bar\omega=d p_1\wedge d  q_1+d p_+\wedge d \mathbf{q}+d p_-\wedge d q_-.\]

\begin{Lm}\label{LmRightIn}
In the coordinates \eqref{eq: relcm},  we have $|q_-|\leq 2\mu^\kappa$
inside the circle $|q_-|=2\mu^\kappa$ and the following
expression of the Hamiltonian \eqref{eq: hamglob}
\begin{equation*}\label{eq: hamrel}
\begin{aligned}
H( q_1, p_1;q_-,p_-; q_+,p_+)&=\left[\mu p_1^2-\frac{1+2\mu}{\mu |q_1|}\right]+\left[\frac{1+2\mu}{4}p_+^2-\frac{2}{|q_+|}\right]+\left[\frac{1}{4}p_-^2-\frac{\mu}{2|q_-|}\right]+V_{in},
\end{aligned}
\end{equation*}
where
\begin{equation}
\begin{aligned}
V_{in}&=\mu\langle p_1,p_+\rangle-\frac{1}{|q_+-q_-|}
 -\frac{1}{|q_++q_-|}-\frac{1}{|q_1- q_++q_-|}-\frac{1}{|q_1- q_+-q_-|}+\frac{2}{|q_1|}+\frac{2}{|q_+|}\\
&=\mu\langle p_1,p_+\rangle- \frac{3\langle q_+,q_-\rangle^2}{2|q_+|^5}+
\frac{|q_-|^2}{|q_+|^3}-\frac{\langle q_1,q_+\rangle}{|q_1|^3}+O\left(|q_-|^3+\frac{1}{|q_1|^3}\right).
\end{aligned}
\end{equation}
The $O$ expression holds as $|q_-|\to 0$ and $|q_1|\to\infty.$
\end{Lm}
We can convert the term $\frac{1}{4}p_-^2-\frac{\mu}{2|q_-|}$ into Delaunay coordinates as $\frac{\mu^2}{4L_-^2}$.

\section{Statement of the main technical proposition}\label{sct: prop}
In this section, we give the statement of our calculation of matrices needed in the proof of the global map.
We use the coordinate system $(L_3,\ell_3,G_3,g_3;x_1,v_1;G_4,g_4)$ to do the calculation.
In the following, the superscript $i$ means ``initial" and $f$ means ``final".

\begin{Not}\label{Def: sim} To avoid many $O$ notations in our estimates, we introduce the following conventions. \begin{itemize}
\item
We use the notation $a\lesssim b$ if $a=O(b)$ or equivalently $|a|\leq C|b|$ for some constant $C$ independent of $\chi,\mu$,  and the notation $a\sim b$ if both $a\lesssim b$ and $b\lesssim a$ hold.
\item We also generalize this notation to vectors and matrices. For two vectors $A,\ B\in \R^{n}$, we write $A\lesssim B$ if $A_i\lesssim B_i$ holds for each entry $A_i, B_i$ of $A$ and $B$ respectively, and write $A\sim B$ if $A\lesssim B$ and  $B\lesssim A$ hold. Similarly for matrices.
\item For a matrix $[\sharp]$, we refer to its blocks as $\left[\begin{array}{c|c|c}
\sharp_{33}&\sharp_{31}&\sharp_{34}\\
\hline
\sharp_{13}&\sharp_{11}&\sharp_{14}\\
\hline
\sharp_{43}&\sharp_{41}&\sharp_{44}
\end{array}\right]$, and its $(i,j)$-th entry as $[\sharp](i,j),\ i,j=1,2,\ldots,10.$
\item Moreover, when we use ``$\lesssim$", there may be some entries in the vector or matrix, for which we have an estimate in the sense of $\sim$. Those entries will be important to show that the $\chi^2$ and $\chi$ terms
in Lemma \ref{Lm: glob} do not vanish. For those entries, we use {\bf bold} font.
    \end{itemize}
\end{Not}

\begin{Prop}\label{Prop: main}
Under the assumption {\bf AG}, we have the following:
$(a)$\begin{itemize}
\item[(a.1)] The derivative of the global map is the product of five $10\times 10$ matrices $d\mathbb G=(V)(IV)(III)(II)(I)$
having the following form
\begin{equation}\nonumber
\begin{aligned}
(I)=&(\Id_{10}+\chi u_1^f\otimes l_1^f)N_1(\Id_{10}-u_1^i\otimes l_1^i)
\lesssim(\Id_{10}+\chi u\otimes l)N_1(\Id_{10}+u_1^i\otimes l_1^i),\\
(II)=&(\chi u_{iii}\otimes l_{iii}+A)L\cdot R^{-1}(\chi u_{i}\otimes l_{i}+C ),\\
(III)=&(\Id_{10}+\chi u_3^f\otimes l_3^f)N_3(\Id_{10}-\chi u_3^i\otimes l_3^i)\lesssim(\Id_{10}+\chi u\otimes l)N_3(\Id_{10}+\chi u\otimes l'),\\
(IV)=&(\chi u_{iii'}\otimes l_{iii'}+A)R\cdot L^{-1}(\chi u_{i'}\otimes l_{i'}+C)\\
%\sim&(\chi u_{iii}\otimes l_{iii}+A)L\cdot R^{-1}(\chi u_{i}\otimes l_{i'}+C),\\
(V)=&(\Id_{10}+u_5^f\otimes l_5^f)N_5(\Id_{10}-\chi u_5^i\otimes l_5^i)\lesssim(\Id_{10}+u_1^i\otimes l_1^i)N_5(\Id_{10}+\chi u\otimes l'),
\end{aligned}
\end{equation}
\item[(a.2)] where
 \begin{equation}\nonumber
\begin{aligned}
 u_1^f,\ u_3^f,\ u^i_3,\ u_5^i\lesssim u&:=\left(\frac{1}{\chi^3},-\mathbf 1, \frac{1}{\chi^3},\frac{1}{\chi^3};\mu,\frac{\mu}{\chi},\frac{1}{\mu\chi^2},\frac{1}{\chi^3} ;\frac{1}{\chi^2},\frac{1}{\chi^2} \right)^T_{10\times 1},\\
l_1^f,\ l_3^f \lesssim l&:=\left(\mathbf 1, \frac{1}{\chi^3},\frac{1}{\chi^3},\frac{1}{\chi^3};\frac{1}{\mu\chi^2},\frac{1}{\chi^3}, \mu,\frac{\mu}{\chi};\frac{1}{\chi}, \frac{1}{\chi}\right)_{1\times 10},\\
-l_3^i,\  -l_5^i\lesssim  l'&:=\left(\mathbf 1, \frac{1}{\chi^3},\frac{1}{\chi^3},\frac{1}{\chi^3};\frac{1}{\chi},\frac{1}{\chi^3}, \mu,\frac{\mu}{\chi};\frac{1}{\chi}, \frac{1}{\chi}\right)_{1\times 10},\\
u_{iii},\ u_{iii'}&\sim\left(0_{1\times 8};\mathbf 1,\mathbf 1\right)^T_{10\times 1},\\
 l_{iii},\ l_{iii'}&\lesssim\left(0_{1\times 8};\frac{1}{\chi^2}, \frac{\mathbf 1}{\chi}, \frac{1}{\chi},\mathbf 1\right)_{1\times 12},\\
u_i&=\left(0_{1\times 9},\frac{L_4}{2m_{4R}^2k_{4R}^2},0,\frac{\mathbf 1}{\chi}\right)^T_{12\times 1},\\
u_{i'}&=\left(0_{1\times 9},\frac{L_4}{2m_{4L}^2k_{4L}^2},0,\frac{\mathbf 1}{\chi}\right)^T_{12\times 1},\\
l_{i}&\lesssim\left(\mathbf 1,\frac{1}{\chi^3},\frac{1}{\chi^3},\frac{1}{\chi^3};\frac{1}{\mu\chi^2},\frac{1}{\chi^3},\mu,\frac{\mu}{\chi};\mathbf 1,\mathbf 1\right)_{1\times 10},\\
l_{i'}&\lesssim\left(\frac{1}{\chi},\frac{1}{\chi^4},\frac{1}{\chi^4},\frac{1}{\chi^4};\frac{1}{\mu\chi^3},\frac{1}{\chi^4},\frac{\mu}{\chi},\frac{\mu^2}{\chi^2};\mathbf 1,\mathbf 1\right)_{1\times 10}.
\end{aligned}
\end{equation}
\begin{equation}\nonumber
\begin{aligned}
u_5^f,\  u_1^i&\lesssim \left(\mu, 1, \mu,\mu;\mu,\frac{\mu}{\chi},\frac{1}{\mu\chi^2},\frac{1}{\chi^3}; \mu,\mu \right)^T_{10\times 1},\\
 l_5^f,\ l_1^i&\lesssim \left( 1, \mu,\mu,\mu;\frac{1}{\mu\chi^2},\frac{1}{\chi^3}, \mu,\frac{\mu}{\chi}; 1,  1\right)_{1\times 10};
\end{aligned}
\end{equation}
\item[(a.3)] and $A=\left[\begin{array}{c|cccc}
\mathrm{Id}_{8\times 8}&0_{8\times 1}&0_{8\times 1}&0_{8\times 1}&0_{8\times 1}\\
\hline
0_{1\times 8}&0&0&0&0\\
0_{1\times 8}&O\left(\frac{1}{\chi^2}\right)&O\left(\frac{1}{\chi^2}\right)& O(1)&O(1)
\end{array}\right]_{10\times 12},$

\[C= \left[\begin{array}{c|c}
\mathrm{Id}_{8\times 8} &0_{8\times 2}\\
\hline
0_{1\times 8} & 0_{1\times 2}\\
0_{1\times 8} &O(1)_{1\times 2}\\
\breve{l}_i &O\left(\frac{1}{\chi}\right)_{1\times 2}\\
0_{1\times 8} &0_{1\times 2}
\end{array}\right]_{12\times 10}\mathrm{with\ }
\breve{l}_i\lesssim\(1,\frac{1}{\chi^3},\frac{1}{\chi^3},\frac{1}{\chi^3};\frac{1}{\mu\chi^2},\frac{1}{\chi^3},\mu,\frac{\mu}{\chi}\)_{1\times 8};
\]
\item[(a.4)] matrices $R$ and $L$ are the derivative
matrices of the transformations \eqref{eq: right} and \eqref{eq: left} respectively,
and they have the following expression in the coordinates $(x_3,v_3;x_1,v_1;x_4,v_4)$
\begin{equation}
\label{DerLR}
R\cdot L^{-1},\ L\cdot R^{-1}=\left[\begin{array}{ccccc}
\Id_4&0&0&0&0\\
0&m_\pm\Id_2&0&\mp\frac{2\mu}{1+2\mu}\Id_2&0\\
0&0&m_\mp\Id_2&0&\mp \Id_2\\
0&\pm \Id_2&0 &m_\mp\Id_2&0\\
0&0&\pm\frac{2\mu}{1+2\mu}\Id_2&0&m_\pm\Id_2
\end{array}
\right]_{12\times 12},
\end{equation}
where $m_+=\frac{1+\mu}{1+2\mu}$ and $m_-=\frac{1}{1+\mu}$, and we choose the upper sign for $R\cdot L^{-1}$ and the lower sign for $L\cdot R^{-1}$ when we need to make a choice in $\pm$ or $\mp$;
\item[(a.5)] the following estimates hold
\begin{equation}N_1-\Id_{10}\lesssim\left[\begin{array}{llll|llll|ll}
\mu &\mu&\mu&\mu &\frac{\mu}{\chi}&\frac{\mu}{\chi}&\mu^2&\frac{\mu}{\chi}&\mu&\mu\\
\mu\chi &\mu^2\chi&\mu^2\chi&\mu^2\chi &\mu^2&\mu^2&\mu\chi&\mu&\mu^2\chi&\mu^2\chi\\
\mu &\mu&\mu&\mu&\frac{\mu}{\chi}&\frac{\mu}{\chi}&\mu^2&\frac{\mu}{\chi}&\mu&\mu\\
\mu &\mu&\mu&\mu&\frac{\mu}{\chi}&\frac{\mu}{\chi}&\mu^2&\frac{\mu}{\chi}&\mu&\mu\\
\hline
\mu\chi&\mu^2\chi&\mu^2\chi&\mu^2\chi&\mu^2&\mu^2&\mu\chi&\mu^2& \mu^2\chi&\mu^2\chi\\
\mu&\mu^2&\mu^2&\mu^2&\frac{\mu}{\chi}&\frac{1}{\chi}&\mu^2&\mu\chi&\mu&\mu\\
\frac{1}{\mu\chi}&\frac{1}{\chi}&\frac{1}{\chi}&\frac{1}{\chi}&\frac{1}{\mu\chi^2}&\frac{1}{\chi^2}&\frac{1}{\chi}&\frac{\mu}{\chi}&\frac{1}{\chi}&\frac{1}{\chi}\\
\frac{1}{\chi}&\frac{\mu}{\chi}&\frac{\mu}{\chi}&\frac{\mu}{\chi}&\frac{1}{\chi^2}&\frac{1}{\mu\chi^2}&\frac{\mu}{\chi}&\frac{1}{\chi}&\frac{1}{\chi}&\frac{1}{\chi}\\
\hline
1 &\mu&\mu&\mu&\frac{1}{\chi}&\frac{1}{\chi}&\mu&\mu&\mathbf{1}&\mathbf{1}\\
1 &\mu&\mu&\mu&\frac{1}{\chi}&\frac{1}{\chi}&\mu&\mu&\mathbf{1}&\mathbf{1}
\end{array}\right]_{10\times 10},\nonumber
\end{equation}
\begin{equation}N_3-\Id_{10}\lesssim\left[\begin{array}{llll|llll|ll}
\frac{\mu}{\chi}&\frac{1}{\chi^2}&\frac{1}{\chi^2}&\frac{1}{\chi^2}&\frac{1}{\mu\chi^3}&\frac{1}{\chi^3}&\frac{\mu}{\chi} &\frac{\mu}{\chi^2}&\frac{1}{\chi^2}&\frac{1}{\chi^2}\\
\mu\chi&\frac{\mu}{\chi}&\frac{\mu}{\chi}&\frac{\mu}{\chi}&\frac{1}{\mu\chi}&\frac{1}{\chi}&\mu\chi&\mu&1&1\\
\frac{\mu}{\chi}&\frac{1}{\chi^2}&\frac{1}{\chi^2}&\frac{1}{\chi^2}&\frac{1}{\mu\chi^3}&\frac{1}{\chi^3}&\frac{\mu}{\chi}&\frac{\mu}{\chi^2}&\frac{1}{\chi^2}&\frac{1}{\chi^2}\\
\frac{\mu}{\chi}&\frac{1}{\chi^2}&\frac{1}{\chi^2}&\frac{1}{\chi^2}&\frac{1}{\mu\chi^3}&\frac{1}{\chi^3}&\frac{\mu}{\chi}&\frac{\mu}{\chi^2}&\frac{1}{\chi^2}&\frac{1}{\chi^2}\\
\hline
\mu\chi&\frac{\mu}{\chi}&\frac{\mu}{\chi}&\frac{\mu}{\chi}&\frac{1}{\chi}&\frac{\mu}{\chi}&\mu\chi&\mu^2&\mu&\mu\\
\mu&\frac{\mu}{\chi^2}&\frac{\mu}{\chi^2}&\frac{\mu}{\chi^2}&\frac{\mu}{\chi}&\frac{1}{\chi}&\mu^2&\mu\chi&\mu&\mu\\
\frac{1}{\mu\chi}&\frac{1}{\mu\chi^3}&\frac{1}{\mu\chi^3}&\frac{1}{\mu\chi^3}&\frac{1}{\mu\chi^2}&\frac{1}{\chi^2}&\frac{1}{\chi}&\frac{\mu}{\chi}&\frac{1}{\chi}&\frac{1}{\chi}\\
\frac{1}{\chi}&\frac{1}{\chi^3}&\frac{1}{\chi^3}&\frac{1}{\chi^3}&\frac{1}{\chi^2}&\frac{1}{\mu\chi^2}&\frac{\mu}{\chi}&\frac{1}{\chi}&\frac{1}{\chi}&\frac{1}{\chi}\\
\hline
1&\frac{1}{\chi^2}&\frac{1}{\chi^2}&\frac{1}{\chi^2}&\frac{1}{\chi}&\frac{1}{\chi}&\mu&\mu&\mathbf{1}&\mathbf{1}\\
1&\frac{1}{\chi^2}&\frac{1}{\chi^2}&\frac{1}{\chi^2}&\frac{1}{\chi}&\frac{1}{\chi}&\mu&\mu&\mathbf{1}&\mathbf{1}
\end{array}\right]_{10\times 10},\nonumber
\end{equation}
\begin{equation}N_5-\Id_{10}\lesssim\left[\begin{array}{llll|llll|ll}
\mu^2\chi &\mu&\mu&\mu  &\frac{1}{\chi}&\frac{\mu}{\chi}&\mu^2\chi& \mu^2 &\mu&\mu\\
\mu\chi &\mu&\mu&\mu &\frac{1}{\mu\chi}&\frac{1}{\chi}&\mu\chi&\mu&1&1\\
\mu^2\chi &\mu&\mu&\mu &\frac{1}{\chi }&\frac{\mu}{\chi}&\mu^2\chi&\mu^2&\mu&\mu\\
\mu^2\chi &\mu&\mu&\mu &\frac{1}{\chi }&\frac{\mu}{\chi}&\mu^2\chi&\mu^2&\mu&\mu\\
\hline
\mu\chi&\mu^2&\mu^2&\mu^2&\frac{1}{\chi}&\frac{\mu}{\chi}&\mu\chi&\mu^2& \mu&\mu\\
\mu&\frac{\mu^2}{\chi}&\frac{\mu^2}{\chi}&\frac{\mu^2}{\chi}&\frac{\mu}{\chi}&\frac{1}{\chi}&\mu^2&\mu\chi&\mu&\mu\\
\mu^2&\frac{\mu}{\chi}&\frac{\mu}{\chi}&\frac{\mu}{\chi}&\frac{1}{\mu\chi^2}&\frac{1}{\chi^2}&\mu^2&\frac{\mu}{\chi}&\frac{1}{\chi}&\frac{1}{\chi}\\
\mu^2&\frac{\mu}{\chi}&\frac{\mu}{\chi}&\frac{\mu}{\chi}&\frac{1}{\chi^2}&\frac{1}{\mu\chi^2}&\mu^2&\frac{1}{\chi}&\frac{1}{\chi}&\frac{1}{\chi}\\
\hline
\mu^2\chi &\mu&\mu&\mu &\frac{1}{\chi }&\frac{1 }{\chi }&\mu^2\chi&\mu&\mathbf{1}&\mathbf{1}\\
\mu^2\chi &\mu&\mu&\mu &\frac{1}{\chi }&\frac{1 }{\chi }&\mu^2\chi&\mu&\mathbf{1}&\mathbf{1}
\end{array}\right]_{10\times 10}.\nonumber
\end{equation}
\end{itemize}
$(b)$ Moreover, for the $\mathbf 1$ entries in $(a.2)$, we have the following exact estimates
\begin{itemize}
\item[(b.1)] As $1/\chi\ll\mu\to 0$, we have
\begin{equation}\nonumber
\begin{aligned}
u_{1,3}^{f},\ u_{3,5}^i,\ u&\to (0,1,0_{1\times 8})_{10\times 1}^T=\tw,\\
u_{iii}&\to \left(0_{1\times 8};1,\frac{1}{\tilde L_{4,j}}\right)^T_{10\times 1},\ u_{iii'}\to \left(0_{1\times 8};1,\frac{\hat L_{4,j}}{\hat G_{4,j}^2+\hat L_{4,j}^2}\right)^T_{10\times 1}=w_j,\\
l^f_{1,3},\ l^i_{3,5},\ l,\ l' &\to (1,0_{1\times 9})_{1\times 10}=\hat\brrlin_j,\\
l_i&\to \left(\frac{\tilde G_{4,j}/\tilde L_{4,j}}{\tilde L_{4,j}^2+\tilde G_{4,j}^2}, 0_{1\times 7},-\frac{1}{\tilde L_{4,j}^2+\tilde G_{4,j}^2} , -\frac{1}{\tilde L_{4,j}}\right)_{1\times 10}=\hat\brlin_j.
\end{aligned}
\end{equation}
Here $j=1,2$ means the first and second collisions in Gerver's construction. $\tilde L_4$ and $\tilde G_4$ are the values of the Delaunay coordinates
at the initial point for the global map and $\hat L_4$ and $\hat G_4$ are the values of the Delaunay coordinates at the final point. %See also the statement of Lemma \ref{Lm: glob} for the convention of $\tilde L,\tilde G,\hat L,\hat G$.
\item[(b.2)]In addition, we have as $1/\chi\ll\mu\to 0$,
\[l_{i'}\to  \left( 0_{1\times 8},\frac{1}{\tilde L_{4,j}^2} , -\frac{1}{\tilde L_{4,j}}\right)_{1\times 10},\quad
 u_{i'}=\left(0_{1\times 9},\frac{L_4}{2}+O(\mu),0,O\left(\frac{1}{\chi}\right)\right)^T_{12\times 1},\]
\[ l_{iii}=-l_{iii'}=
\left(0_{1\times 8}; O\(\frac{\mu}{\chi^2}\), -\frac{1+O(\mu)}{\chi L_4}, O\(\frac{\mu}{\chi}\), -\frac{1}{2}\)_{1\times 12}.
\]
\item[(b.3)]  The $O(1)$ blocks in $N_1,N_3,N_5$ have exact estimates as follows,

\begin{equation}\nonumber
\begin{aligned}
&(N_1)_{44}\simeq
\left[\begin{array}{cc}
1-\frac{\tilde L^2_{4,j}}{2 (\tilde L_{4,j}^2+\tilde G_{4,j}^2)} & -\frac{\tilde L_{4,j}}{2}\\
 \frac{\tilde L_{4,j}^3}{2 (\tilde L_{4,j}^2+\tilde G_{4,j}^2)^2}& 1+\frac{\tilde L_{4,j}^2}{ 2(\tilde L_{4,j}^2+\tilde G_{4,j}^2)} \\
\end{array}
\right], \ (N_3)_{44}\simeq\left[\begin{array}{cc}
\frac{1}{2} & -\frac{ \tilde L_{4,j}}{2}\\
 \frac{3}{2\tilde L_{4,j}}& \frac{1}{2} \\
 \end{array}
\right] ,\\
%\text{ for map } (I) \]
&(N_5)_{44}\simeq\left[\begin{array}{cc}
1+\frac{1/2\hat L^2_{4,j}}{\hat L_4^2+\hat G_{4,j}^2} &-\hat L_{4,j}/2\\
 \frac{1/2\hat L_{4,j}^3}{(\hat L_4^2+\hat G_{4,j}^2)^2}& 1-\frac{1/2\hat L_{4,j}^2}{\hat L_{4,j}^2+\hat G_{4,j}^2} \\
\end{array}
\right].  \\ %\text{ for map } (V)\]
\end{aligned}\label{eq: twist}
\end{equation}
where the notation $\simeq$ means up to $O(\mu)$ relative error.
\item[(b.4)] Finally, the derivative of the renormalization map is
\[d\cR=\mathrm{diag}\left\{ \sqrt{\lb}, 1,-\sqrt{\lb},-1;\lb \left[\begin{array}{cc}
1&0\\
0&-1
\end{array}\right] \mathrm{Rot}(\bt) ,\left[\begin{array}{cc}
1&0\\
0&-1
\end{array}\right] \frac{\mathrm{Rot}(\bt)}{\sqrt{\lb}};-\sqrt{\lb},-1\right\}. \]
\end{itemize}
\end{Prop}

 In part $(a.1)$ of the proposition, each of the five matrices is a product of three matrices. For the matrices $(I),(III),(V)$, we use the formula
for the derivative of the Poincar\'{e} map
(see equation \eqref{eq: formald4} and Section \ref{sct: boundary}). The matrices $N_1,N_3,N_5$ are solutions of the variational equations and the two remaining matrices are boundary contributions coming from the fact that different orbits take different time to travel between two consecutive sections. For $(II)$, we first convert from Delaunay variables to Cartesian variables in the right, then we use $L\cdot R^{-1}$ to convert $(x_3,v_3;x_1,v_1;x_4,v_4)^R\to (x_3,v_3;x_1,v_1;x_4,v_4)^L$, and finally we convert from Cartesian in the left to Delaunay variables. The matrix $(IV)$ is similar but in the opposite direction.

The plan of the proof of Proposition \ref{Prop: main} is as follows.
\begin{itemize}
\item In Section \ref{sct: eqmotion} and \ref{sct: var}, we write down the equations of motion, the variational equations and estimate their solutions. This gives us the matrices $N_1, N_3$ and $N_5$ in Proposition \ref{Prop: main}.
\item In Section \ref{sct: boundary}, we study the boundary contribution to the derivative of the Poincar\'{e} map. We get all the $u$'s and $l$'s with various sub- and super-scripts in $(I),(III),(V)$. Together with $N_1,N_3,N_5$, the estimates of the boundary contributions complete the estimates of $(I),(III),(V)$.
\item In Section \ref{sct: switch}, we study the transformation of coordinates from the left to the right and that from the right to the left. This gives us the matrices $(II),(IV)$ stated in Proposition \ref{Prop: main}.
\item The derivative of the renormalization map follows immediately from its definition in Definition \ref{Def: renorm}.
\end{itemize}
We now compute the matrices $R\cdot L^{-1}$ and $L\cdot R^{-1}$
%in the proposition
based on Definitions \ref{def: coordR} and \ref{def: coordL}.
\begin{proof}[Proof of \eqref{DerLR}]
To get $R\cdot L^{-1}=\frac{\partial (x_3,v_3;x_1,v_1;x_4,v_4)^R}{\partial (x_3,v_3;x_1,v_1;x_4,v_4)^L}$, we first use
\eqref{eq: left} to compute the matrix $L^{-1}:=\frac{\partial (q_3,p_3;q_1,p_1;q_4,p_4)}{\partial (x_3,v_3;x_1,v_1;x_4,v_4)^L}$,
then we use \eqref{eq: right} to compute $R:=\frac{\partial (x_3,v_3;x_1,v_1;x_4,v_4)^R}{\partial (q_3,p_3;q_1,p_1;q_4,p_4)}$.
The composition of the two gives us $R\cdot L^{-1}$. Similarly we get $L\cdot R^{-1}=\frac{\partial (x_3,v_3;x_1,v_1;x_4,v_4)^L}{\partial (x_3,v_3;x_1,v_1;x_4,v_4)^R}=(R\cdot L^{-1})^{-1}$.
\end{proof}

\section{Equations of motion, $\mathscr C^0$ control of the global map}\label{sct: eqmotion}
\subsection{The Hamiltonian equations}\label{SSReduction}
For the Hamiltonians in Lemma \ref{LmHamR} and Lemma \ref{LmHamL}, we suppress the super- or sub-scripts $R$ and $L$ to express the Hamiltonian in one single expression in Delaunay coordinates
$$H(L_3,\ell_3,G_3,g_3; x_1,v_1;L_4,\ell_4,G_4,g_4)=E_1-\frac{m_3k_3^2}{2L_3^2}+\frac{m_4k_4^2}{2L_4^2}+U,\mathrm{\ where\ }E_1=\frac{|v_1|^2}{2m_1}-\frac{k_1}{|x_1|}.$$
 Next we perform the energy reduction to get rid of the variables $L_4,\ell_4$.

We solve for $L_4$ using energy conservation. Suppose the total energy of the system is zero; we get
\begin{equation*}
\frac{1}{L_4^2}=\frac{m_3k_3^2}{m_4k_4^2L_3^2}\left(1-\frac{2L_3^2}{m_3k_3^2}(E_1+U)
\right),
\end{equation*}
hence
\begin{equation}
L_4=L_3\frac{m_4^{1/2}k_4}{m_3^{1/2}k_3}\left(1+\frac{L_3^2}{m_3k_3^2}(E_1+U)+h.o.t.\right),
\label{eq: L4}
\end{equation}
where the $higher$ $order$ $terms$ are in $E_1+U$.
We treat $\ell_4$ as the new time. So we divide the Hamiltonian equations by the equation
$\frac{d\ell_4}{dt}=-\frac{m_4k_4^2}{L_4^3}+\frac{\partial U}{\partial L_4},$
whose reciprocal is
$\frac{dt}{d\ell_4}=-\frac{L_4^3}{m_4k_4^2}\left(1+\frac{L_4^3}{m_4k_4^2}\frac{\partial U}{\partial L_4}+O(U^2)\right).$

Eliminating $L_4$ using \eqref{eq: L4} we get
\begin{equation}
\begin{aligned}\label{eq: dt/dl4}
\frac{dt}{d\ell_4}&=-\frac{\left(\frac{m_4^{1/2}k_4}{m_3^{1/2}k_3}\right)^3}{m_4k_4^2}L_3^3\left(1+\frac{3L_3^2}{m_3k_3^2}(E_1+U)\right)-\frac{\left(\frac{m_4^{1/2}k_4}{m_3^{1/2}k_3}\right)^6}{m_4^2k_4^4}L_3^6\frac{\partial U}{\partial L_4}+h.o.t.\\
&=-(1+O(\mu))L_3^3\left(1+3(1+O(\mu))L_3^2(E_1+U)\right)-(1+O(\mu))L_3^6\frac{\partial U}{\partial L_4}+h.o.t,\\
\end{aligned}
\end{equation}
where in the last equality, we use the fact that $k_{3,4},m_{3,4}=1+O(\mu)$.

Now we write the equations of motion as follows:
\begin{equation}\label{eq: ham}
\begin{cases}
&\frac{dL_3}{d\ell_4}=-\frac{dt}{d\ell_4}\frac{\partial U}{\partial \ell_3},\\
&\frac{dG_3}{d\ell_4}=-\frac{dt}{d\ell_4}\frac{\partial U}{\partial g_3},\\
&\frac{dx_1}{d\ell_4}=\frac{dt}{d\ell_4}\frac{v_1}{m_1},\\
&\frac{dG_4}{d\ell_4}=-\frac{dt}{d\ell_4}\frac{\partial U}{\partial g_4},
\end{cases}\quad
\begin{cases}
&\frac{d\ell_3}{d\ell_4}=\frac{dt}{d\ell_4}\left(\frac{m_3k_3^2}{L_3^3}+\frac{\partial U}{\partial L_3}\right),\\
&\frac{d g_3}{d\ell_4}=\frac{dt}{d\ell_4}\left(\frac{\partial U}{\partial G_3}\right),\\
&\frac{dv_1}{d\ell_4}=-\frac{dt}{d\ell_4}\left(\frac{k_1 x_1}{|x_1|^3}+\frac{\partial U}{\partial x_1}\right),\\
&\frac{d g_4}{d\ell_4}=\frac{dt}{d\ell_4}\left(\frac{\partial U}{\partial G_4}\right).
\end{cases}
\end{equation}
\begin{Not}
 We denote the RHS of \eqref{eq: ham} by $\mathcal F=(\mathcal F_3;\mathcal F_1;\mathcal F_4)$.
Thus \eqref{eq: ham} takes the form
%the Hamiltonian equation has the form of a Newtonian equation $\frac{d}{d\ell_4}\mathcal V=\mathcal F$ and
$\frac{d}{d\ell_4}\mathcal V_i=\mathcal F_i,$ $i=3,1,4$. \end{Not}

\subsection{Estimates of the Hamiltonian equations}
\label{SSApriori}
\subsubsection{Estimates of the positions}
The next step in our analysis is an important {\it a priori} bound.

We make the following standing assumptions:

We first introduce a rectangle to which $x_4,x_1$ are confined.
\begin{Def}
We let $\mathcal S_{\hat C}$ be the strip bounded by two horizontal lines, $x_\perp=\pm \hat C$ and two vertical lines $x_\parallel=-2$ and $x_\parallel=-2\chi$.
\end{Def}
%The next lemma shows that all of the $(x_i,\, v_i),\ i=3,1,4$ behaves like a Kepler motion $(i=3,4)$ or free motion ($i=1$) for global map, as if there is no interactions. %This fact is intuitively clear, however, has a lengthy proof due to the fact that we have 10 variables to control and have cut the orbit into several pieces.

\subsubsection{Estimate of the derivatives of the potential}
We make the following standing assumption
\begin{equation}\label{eq: assumprough}
|x_1|\geq0.9 \chi, \ |x_4|\leq 0.8\chi,\ |x_3|<2,\ |x_3-x_4|>\dt>0.
\end{equation}
\begin{Lm}\label{Lm: derU}
Define $\su(\ell_4)=\frac{1}{\chi^3}+\frac{\mu}{|\ell_4|^3+1}.$ Suppose we have \eqref{eq: assumprough} for both the left and right cases, and in the right case in addition that $1/C<\frac{|x_4^R(\ell_4)|}{|\ell_4|}<C$.
\begin{itemize}
\item[(a)]  Then we have the following estimates for the first order derivatives
\begin{equation}
\begin{aligned}
&\frac{\partial U^R}{\partial x_3}\lesssim \su(\ell_4),\  \frac{\partial U^R}{\partial x_4}\lesssim \frac{1}{\chi^2}+\frac{\mu }{\ell_4^4+1},\  \frac{\partial U^{R,L}}{\partial x_1}\lesssim \frac{1}{\chi^2},\ \frac{\partial U^L}{\partial x_3}\lesssim \frac{1}{\chi^3},\ \frac{\partial U^L}{\partial x_4}\lesssim \frac{1}{\chi^2}.\\
\end{aligned}\nonumber
\end{equation}
\item[(b)] the second order derivatives satisfy the following estimates
\begin{equation}
\begin{aligned}
&\frac{\partial^2 U^R}{\partial x_3^2}\lesssim \su(\ell_4),\quad \frac{\partial^2 U^R}{\partial x_3\partial x_4}\lesssim \frac{\mu}{\chi^4}+\frac{\mu }{|\ell_4|^4+1},\quad \frac{\partial^2 U^R}{\partial x_4^2}\lesssim \frac{1}{\chi^3}+\frac{\mu }{|\ell_4|^5+1},
\\
&  \frac{\partial^2 U^{R,L}}{\partial x_3\partial x_1}\lesssim\frac{1}{\chi^4},\quad \frac{\partial^2 U^L}{\partial x_3^2}\lesssim \frac{1}{\chi^3},\quad \frac{\partial^2 U^L}{\partial x_3\partial x_4}\lesssim \frac{\mu}{\chi^4}.
\end{aligned}\nonumber
\end{equation}
\item[(c)] If we assume furthermore that $x_1\in \mathcal S_{\mu\hat C}$ and $-x_4^L, x^R_4\in \mathcal S_{\hat C}$, then we have
$$\frac{\partial^2 U^{R,L}}{\partial x_1^2},\ \frac{\partial^2 U^{R,L}}{\partial x_4\partial x_1},\ \frac{\partial^2 U^L}{\partial x_4^2}\lesssim \frac{\Id_2}{\chi^3}+\frac{(\chi,1)^{\otimes 2}}{\chi^5}.$$
\end{itemize}
\end{Lm}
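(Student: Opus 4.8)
\emph{Proof strategy.} The proof is a term-by-term differentiation of the explicit formulas \eqref{eq: UR} and \eqref{eq: UL}, built on top of the power series expansion \eqref{Pith} and the classification of monomials already carried out in Lemma~\ref{Lm: U}. The plan for part (a) is: expand each Newtonian term of $U^{R}$ and $U^{L}$ via \eqref{Pith}, taking $a=x_1$ for every term involving $x_1$ and $a=x_4$ for the two terms of $U^R$ that do not (those already carrying an explicit factor $\mu$); then differentiate the resulting monomials. Differentiating a monomial of type (2)--(3) of Lemma~\ref{Lm: U} in $x_3$, e.g. $\langle x_1,x_3\rangle^{m}|x_3|^{2n}/|x_1|^{2(m+n)+1}$, lowers the $x_3$--degree by one while keeping the power of $|x_1|$ in the denominator, so Lemma~\ref{Lm: U}(2)--(3) immediately give $\partial U^R/\partial x_3\lesssim u(\ell_4)$ and $\partial U^L/\partial x_3\lesssim 1/\chi^3$ once the type-(4) monomials are handled using $|x_4|\gtrsim 1+|\ell_4|$ along the orbit (Lemma~\ref{Lm: position}(b)), which is what converts powers of $|x_4|$ into powers of $|\ell_4|$. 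A derivative in $x_4$ does \emph{not} lower the $x_3$--degree, so the leading $m=0,n=1$ monomial of case (4) survives and produces the weaker $\partial U^R/\partial x_4\lesssim 1/\chi^2+\mu/(\ell_4^4+1)$; while a derivative in $x_1$ gains one extra power of $|x_1|^{-1}$ from each term (each of which is already $O(1/\chi)$ once the leading $|x_1|^{-1}$ and inner-product pieces have been cancelled in the definition of $U$), giving $\partial U^{R,L}/\partial x_1\lesssim 1/\chi^2$.

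For part (b) the same mechanism is iterated once more: an additional $x_3$--derivative either lowers the $x_3$--degree further or, once the degree is forced down to the minimal value allowed by the cancellation of Lemma~\ref{Lm: U}(3), leaves the denominator still carrying $|x_1|^{-3}$ (or $|x_4|^{-3}$), while each additional $x_4$-- or $x_1$--derivative raises a denominator power; collecting the leading contribution in each case gives all of the stated bounds, with the role of \eqref{APriori34} being to keep $|x_3|$ bounded and $x_4$ in the trapezoid so that $|x_4|\sim|\ell_4|$. The only genuinely new point is the estimate for $\partial^2 U^{R,L}/\partial x_1^2$, $\partial^2 U^{R,L}/\partial x_4\partial x_1$ and $\partial^2 U^{L}/\partial x_4^2$: two $x_1$--derivatives of a term $|x_1+(\text{small})|^{-1}$ produce a Hessian $|x_1|^{-3}(-\Id_2+3\,\hat x_1\otimes\hat x_1)+\dots$, and to turn this into the claimed $\dfrac{\Id_2}{\chi^3}+\dfrac{(\chi,\mu\sG)^{\otimes 2}}{\chi^5}$ one uses the extra hypothesis $x_1\in\mathcal T_{G_0,\hat C}$, which says precisely that $|x_1|\sim\chi$ while $x_1$ lies within $O(\mu\sG)$ of the horizontal axis, so that $\hat x_1\otimes\hat x_1=(\chi,O(\mu\sG))^{\otimes 2}/\chi^2$ and the rank-one part of the Hessian points in the direction recorded in the statement. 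The remaining mixed second derivatives are read off from Lemma~\ref{Lm: U} exactly as in part (a).

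\emph{Main obstacle.} The difficulty is bookkeeping rather than anything conceptual: for each monomial produced by the multivariable Taylor expansion of the six (respectively seven) Newtonian terms in $U^R$ (resp.\ $U^L$) one must track which of $x_3,x_4,x_1$ it contains, how many powers of $\mu$ it carries, and how a derivative in each variable changes these, and one must check that the cancellations isolated in Lemma~\ref{Lm: U}(3) (vanishing of the $\langle x_3,x_4\rangle|x_4|^{m}$ coefficients) are not destroyed by differentiation in $x_3$, since it is exactly these that upgrade the naive $1/\chi^2$ bound on the $x_3$--$x_1$ cross term to $1/\chi^4$. Because the estimates are needed only up to the indicated order of magnitude, it suffices to carry out this accounting for the leading monomial of each type, every higher monomial being smaller by a further power of $1/\chi$.
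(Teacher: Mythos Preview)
Your proposal is correct and follows essentially the same approach as the paper: differentiate the monomial classification of Lemma~\ref{Lm: U} term by term, noting that an $x_3$--derivative leaves the order unchanged while an $x_1$-- (resp.\ $x_4$--) derivative gains a factor of $1/\chi$ (resp.\ $1/(|\ell_4|+1)$ in the right case, $1/\chi$ in the left). The paper's proof is in fact terser than yours; your explicit discussion of the Hessian $|x_1|^{-3}(-\Id_2+3\,\hat x_1\otimes\hat x_1)$ and the role of $x_1\in\mathcal T_{G_0,\hat C}$ in producing the $(\chi,\mu\sG)^{\otimes2}/\chi^5$ structure is a useful elaboration of what the paper subsumes under ``the same procedure.''
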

\begin{proof}
First,
let $X=c_1x_1+c_4x_4+c_3x_3$; then we have
 \begin{equation*}
 \begin{aligned}
\frac{\partial }{\partial x_i}\frac{1}{|X|}&=-c_i\frac{X}{|X|^3},\quad
\frac{\partial^2 }{\partial x_i\partial x_j}\frac{1}{|X|}=c_ic_j\left(\frac{-\mathrm{Id}_2}{|X|^3}+3\frac{X\otimes X}{|X|^5}\right).
 \end{aligned}
 \end{equation*}
 This is enough to give us the estimates $  \frac{\partial U^L}{\partial x_4},\frac{\partial U^L}{\partial x_3}$, $ \frac{\partial^2 U^{R,L}}{\partial x_3\partial x_1}$, $ \frac{\partial^2 U^L}{\partial x_3^2},$$\frac{\partial^2 U^L}{\partial x_3\partial x_4}$, $\frac{\partial^2 U^{R,L}}{\partial x_4\partial x_1},\frac{\partial^2 U^L}{\partial x_4^2}$.

 Second, for the estimates $\frac{\partial U^{R,L}}{\partial x_1}$ and $ \frac{\partial^2 U^{R,L}}{\partial x_1^2}$, we need to utilize the cancelation due to the Kepler potentials $\frac{1}{\mu |x_1|}$. To see the cancelation, we next introduce $f(t)=\frac{1}{|a- tb|}$ so $\frac{1}{|a-b|}=f(1)$. Thus we get $f(1)=f(0)+f'(\xi)$ for some $\xi\in[0,1]$, hence $\frac{1}{|a-b|}=\frac{1}{|a|}-\frac{a-\xi b}{|a-\xi b|^3}\cdot b$ for some $\xi\in[0,1]$ by the mean value theorem. From this, we get $O(1/\chi)$ instead of the $O(\frac{1}{\mu\chi})$ estimate for the term $\frac{2\mu+1}{\mu|x_1|}-\frac{1}{\mu\left|x_1+\frac{\mu}{1+2\mu}x_4+\frac{\mu}{1+\mu} x_3\right|}$ in $U^R$ and the term $\frac{(\mu+1)^2}{\mu|x_1|}-\frac{1}{\mu\left|x_1-\frac{\mu}{1+\mu}x_4+\frac{\mu}{1+\mu} x_3\right|}$ in $U^L$. To get the first order derivative $\frac{\partial U^{R,L}}{\partial x_1}$, we use the following: $-\frac{a-b}{|a-b|^3}=-\frac{a}{|a|^3}+ (-\frac{\mathrm{Id}_2}{|a-\xi b|^3}+ 3\frac{(a-\xi b)\otimes (a-\xi b)}{|a-\xi b|^5})\cdot b$ for some $\xi\in [0,1]$. Similarly, we get the second order derivative $\frac{\partial^2 U^{R,L}}{\partial x_1^2}$ using
 \begin{equation*}
 \begin{aligned}
 &\left(-\frac{\mathrm{Id}_2}{|a-b|^3}+ 3\frac{(a-b)\otimes (a-b)}{|a-b|^5}\right)-\left(-\frac{\mathrm{Id}_2}{|a|^3}+ 3\frac{a\otimes a}{|a|^5}\right) \\
 =&-3\frac{\mathrm{Id}_2}{|a-\xi b|^5}\langle a-\xi b,b\rangle- 3\frac{b\otimes (a-\xi b)+(a-\xi b)\otimes b}{|a-\xi b|^5}+15\frac{(a-\xi b)\otimes (a-\xi b)}{|a-\xi b|^7} \langle a-\xi b,b\rangle
 \end{aligned}
 \end{equation*}
 and the fact that $x_4$ and $x_1$ are almost parallel due to the assumptions $x_1\in \mathcal S_{\mu \hat C}$ and $-x_4^L, x^R_4\in \mathcal S_{\hat C}$.

Finally, for all the remaining estimates, we use the expansion $\frac{1}{|a+b|}=\frac{1}{|a|}\frac{1}{\sqrt{1+z}}=\frac{1}{|a|}(1-\frac12z+\sum_{n=2}^\infty c_nz^n)$ where $z=\frac{2\langle a,b\rangle}{|a|^2}+\frac{|b|^2}{|a|^2}$. We apply the expansion to the term $U_{34}:=\frac{1+\mu}{|x_4|}-\left(\frac{1}{\left|x_4+\frac{\mu x_3}{1+\mu}\right|}
+\frac{\mu}{\left|\frac{x_3}{1+\mu}-x_4\right|}\right)$ in $U^R$ such that $$\frac{1}{\left|x_4+\frac{\mu x_3}{1+\mu}\right|}=\frac{1}{|x_4|}(1-\frac{1}{2}z_1+\sum_{n=2}^\infty c_nz_1^n),\quad z_1=\frac{1}{|x_4|^2}(2 \mu\langle \frac{x_3}{1+\mu},x_4 \rangle+\frac{\mu^2}{(1+\mu)^2}|x_3|^2)$$ and $$\frac{\mu}{\left|\frac{x_3}{1+\mu}-x_4\right|}=\frac{\mu}{|x_4|}(1-\frac{1}{2}z_2+\sum_{n=2}^\infty c_nz_2^n),\quad z_2=\frac{1}{|x_4|^2}(-2\langle x_4,\frac{x_3}{1+\mu}\rangle+\frac{1}{(1+\mu)^2}|x_3|^2).$$
 It can be verified that the $O(1/|x_4|)$ and $O(1/|x_4|^2)$ terms in $U_{34}$ are canceled. So we get $$-U_{34}=\frac{-\mu}{2(1+\mu)}\frac{|x_3|^2}{|x_4|^3}+\frac{1}{|x_4|}\sum_{n=2}^\infty c_n(z_1^n+\mu z_2^n)$$ and we have the estimates $z_1=O(\frac{\mu}{|\ell_4|+1})$ and $z_2=O(\frac{1}{|\ell_4|+1})$. The exponential convergence in $z_i$ allows us to take derivatives term by term. For instance, we have $$-\frac{\partial U_{34}}{\partial x_3}=\frac{-\mu}{1+\mu}\frac{x_3}{|x_4|^3}+\frac{1}{|x_4|}\left(\frac{\partial z_1}{\partial x_3}(\sum_{n=2}^\infty n c_n  z_1^{n-1})+\mu\frac{\partial z_2}{\partial x_3}(\sum_{n=2}^\infty n c_n z_2^{n-1})\right)=O(\frac{\mu}{|\ell_4|^3+1})$$
 using the estimates $\frac{\partial z_1}{\partial x_3}=O(\frac{\mu}{|\ell_4|+1})$ and $\frac{\partial z_2}{\partial x_3}=O(\frac{1}{|\ell_4|+1})$. We apply the same procedure to all the remaining estimates above.
 \end{proof}

\begin{Lm}\label{LmEnergyCons}
Suppose \eqref{eq: assumprough} and in addition $|v_1|<C.$
Then on the zero energy level, we have
$$E_4=-E_3+O(\mu),\quad \mathrm{as}\quad 1/\sqrt\chi\leq\mu\to 0. $$
\end{Lm}
\begin{proof}
This lemma follows directly from the total energy conservation. We write the Hamiltonian as
$0=E_1+E_3+E_4+U.$
We  estimate the potential $U=O(\mu)$ by the assumption. Next $E_1=\frac{1}{2m_1}|v_1|^2-\frac{k_1}{|x_1|}$ where $m_1\simeq \frac{1}{2\mu}$ and $k_1\simeq 1/\mu$ in both the left and right cases. This gives us that $E_1=O(\mu).$

\end{proof}
\begin{Lm}\label{Lm: ham}
Suppose we have \eqref{eq: assumprough}.
\begin{itemize}
\item[(a.1)] Suppose in the right case in addition that $$1/C<|G_3|\leq|L_3|<C,\quad 1/C<\frac{|x_4(\ell_4)|}{|\ell_4|}<C, \quad |v_1|<C,$$ then
$$\left|\frac{dt}{d\ell_4}\right|\sim 1,\quad \frac{d}{d\ell_4}(L_3,\ell_3,G_3,g_3)=(0,-1,0,0)+O(\su(\ell_4),\mu, \su(\ell_4),\su(\ell_4)).$$
\item[(a.2)] Suppose in the left case, in addition to \eqref{eq: assumprough} that $$1/C<|G_3|\leq |L_3|<C,\quad |v_1|<C,$$ then we have
$$ \left|\frac{dt}{d\ell_4}\right|\sim 1,\quad \frac{d}{d\ell_4}(L_3,\ell_3,G_3,g_3)=(0,-1,0,0)+O\left(\frac{1}{\chi^3},\mu, \frac{1}{\chi^3},\frac{1}{\chi^3}\right).$$
\item[(b.1)]Suppose in the right case in addition to \eqref{eq: assumprough} that
$$1/C<|L_3|<C,\ 1/C<\frac{|x_4(\ell_4)|}{|\ell_4|}<C,\ |v_1|<C,\ |G_4|<C, \ x_4\in \mathcal S_{\hat C},\ x_1\in \mathcal S_{\mu\hat C}.$$
Then $\frac{d}{d\ell_4}(G_4,g_4)=(\sv(\ell_4))_{1\times 2}$, where  $\sv(\ell_4):=\frac{1}{\chi^2}+\frac{\mu}{|\ell_4|^3+1}$.
\item[(b.2)] Suppose in the left case in addition to \eqref{eq: assumprough} that $$1/C<|L_3|<C,\quad |G_4|<C,\quad |v_1|<C,\quad -x_4\in \mathcal S_{\hat C},\quad x_1\in \mathcal S_{\mu\hat C},$$
 then $\frac{d}{d\ell_4}(G_4,g_4)=O\left(\frac{1}{\chi^2}\right)_{1\times 2}.$
%\item[(b.3)] Suppose in the above $(b.1)$ and $(b.2)$ we do not have the assumption $|G_4|<C$, then we have the same estimate of $\frac{dG_4}{d\ell_4}$, but we do not have an estimate for $\frac{dg_4}{d\ell_4}$.
\item[(c.1)] In both the right and the left cases we have
$$\dfrac{dx_1}{dt}=\frac{v_1}{m_1},\quad \frac{dv_1}{dt}=-\frac{k_1 x_1}{|x_1|^3}+h.o.t.=O\left(\frac{1}{\mu \chi^2}\right).$$
\item[(c.2)] If we assume in addition that $x_4^R,\ -x_4^L\in \mathcal S_{\hat C},\quad x_1\in \mathcal S_{\mu\hat C}$, then we have
$$\dfrac{dx_1}{dt}=\frac{v_1}{m_1},\quad \frac{dv_1}{dt}=-\frac{k_1 x_1}{|x_1|^3}+h.o.t.=O\left(\frac{1}{\mu \chi^2}, \frac{1}{ \chi^3}\right).$$

\end{itemize}
\end{Lm}

\begin{proof}
We first prove part (a.1) and (a.2). We apply \eqref{eq: dt/dl4} and the assumptions to conclude that $\frac{1}{C}\leq \left|\frac{dt}{d\ell_4}\right|\leq C$. Next we consider the Hamiltonian equation \eqref{eq: ham}. We have $\frac{\partial U}{\partial \mathcal V_3}=\frac{\partial U}{\partial x_3}\frac{\partial x_3}{\partial \mathcal V_3}$ where $\mathcal V_3=(L_3,\ell_3,G_3,g_3)$. Because of the boundedness of $L_3$, we get that $\left|\frac{\partial x_3}{\partial \mathcal V_3}\right|<C$. Now part (a.1) and (a.2) are proved by applying Lemma \ref{Lm: derU}(a).

Part (c.1) and (c.2) follow directly from the Hamiltonian equation. In fact the estimates for $\frac{dv_1}{dt}$ are given by the Kepler motion.

We next prove part (b.1) and (b.2). We need  the estimates
$\frac{\partial x_4}{\partial g_4}\cdot x_4=0$ and $\frac{\partial x_4}{\partial G_4}\cdot x_4=O(\ell_4)$ from part (c) of Lemma \ref{Lm: dx/dDe}
in Appendix \ref{appendixa}.
For example in the right case in \eqref{eq: hamR}, we consider the derivative of the term
\[\frac{\partial }{\partial G_4}\frac{1}{\mu\left|x_1+\frac{\mu}{1+2\mu}x_4+\frac{\mu}{1+\mu} x_3\right|}=\frac{\left(x_1+\frac{\mu}{1+2\mu}x_4+\frac{\mu}{1+\mu} x_3\right) }{(1+2\mu)\left|x_1+\frac{\mu}{1+2\mu}x_4+\frac{\mu}{1+\mu} x_3\right|^3}\cdot \frac{\partial x_4}{\partial G_4}\]
We claim that the above expression is $O(1/\chi^2).$ Indeed, the denominator is of order $\chi^3.$ The main contributions to the numerator come from
$\left\langle x_4,\frac{\partial x_4}{\partial G_4}\right\rangle$ which is  $O(\ell_4)$ due to part (c) of
Lemma \ref{Lm: dx/dDe}, and from $\left\langle x_1,\frac{\partial x_4}{\partial G_4}\right\rangle.$
To estimate the later product we write
$$ x_1=\frac{|x_1|}{|x_4|} \cos \alpha \; x_4+|x_1| \sin \alpha \; \mathbf{e} $$
where $\alpha=\angle(x_4,x_1)$ and $\mathbf{e}$ is the unit vector perpendicular to $x_4.$
%This shows $\angle\left(x_4,\frac{\partial x_4}{\partial G_4}\right)=\pi/2+O(1/\ell_4)$. Next,
We note that  the assumptions $x_4\in \mathcal S_{\hat C}$ and $x_1\in \mathcal S_{\mu\hat C},$
imply $\alpha=O(1/\ell_4)$. This gives
$$\left\langle x_1,\frac{\partial x_4}{\partial G_4}\right\rangle=O\left(\frac{|x_1|}{|x_4|}\right)
\left\langle x_4, \frac{\partial x_4}{\partial G_4}
\right\rangle+|x_1| O(\alpha) O\left(\left|\frac{\partial x_4}{\partial G_4}\right|\right)=
O(\chi)$$
where the last estimate comes from Lemma \ref{Lm: dx/dDe}(c).

The other derivatives are estimated similarly and result in the estimates of the lemma.
In particular,
the $O\left(\frac{\mu}{|\ell_4|^3+1}\right)$ part in our bound for $\cF_4$ comes from differentiating the terms in $U^R$ which do not
contain $x_1.$ This bound
is obtained by multiplying the $O\left(\frac{\mu}{|\ell_4|^4+1}\right)$ term in the estimate of
$\frac{\partial U^R}{\partial x_4}$ in part (a) of Lemma \ref{Lm: derU} by the $O(\ell_4)$ bound on
$\frac{\partial x_4}{\partial G_4}$ from Lemma \ref{Lm: dx/dDe}(c).

%For part (b.3), we have $\frac{dG_4}{d\ell_4}=-\frac{dt}{d\ell_4}\frac{\partial U}{\partial x_4}\frac{\partial x_4}{\partial g_4}.$ Notice $\frac{\partial x_4}{\partial g_4}$ is a $\pi/2$ rotation of the vector $x_4$ and we have $\angle(x_4,x_1)=O(1/\ell_4+\sG/\chi)$ from assumption $x_4\in \mathcal T_{G_0,\hat C}$ and $x_1\in \mathcal T_{G_0,\hat C}$. Hence we have $\left\langle x_1,\frac{\partial x_4}{\partial g_4}\right\rangle=O(\chi\sG)$ in the same way as we did above. We then use Lemma \ref{Lm: U} (mainly items (1) and (4)) to get $\frac{dG_4}{d\ell_4}$ estimate.
\end{proof}

%\subsection{Proof of Lemma \ref{Lm: position}, $\mathscr C^0$ estimates of the orbit}
%\label{SSC0Key}
In the next lemma, we show that the assumption $\mathbf{AG}$, which is only on the initial and final conditions, gives control of the dynamics of $x_1,v_1,x_3,v_3$ for all time.
\begin{Lm}\label{Lm: position}
%Suppose $$1/C<|G_3(0)|\leq |L_3(0)|<C,$$
%$$|x_1(t)|>\chi,\quad |v_1(t)|<C,\quad |x_3(t)-x_4(t)|>\dt,$$
%for $t\in [0,T]$, $0<T<C\chi$, such that $x_{4,\parallel}^R(0)=x_{4,\parallel}^R(T)=-2$, and $Q_4$ turns around $Q_1$ once in the sense of Definition \ref{DefSing}.
Assume $\mathbf{AG}$ and in addition $|x_4|<0.8\chi$ for all time $t\in [0,T], \ T\leq 10\chi,$ when $\Glob$ is defined.

 Then we have as $1/\chi\ll\mu\to 0$ that
\begin{itemize}
\item[(a)] $L_3(t)-L_3(0)=O(\mu),\quad G_3(t)-G_3(0)=O(\mu),\quad g_3(t)-g_3(0)=O(\mu),\quad t\in [0,T]$,
\item[(b)] $|x_4^{R,L}|=(L_3^2(0)+O(\mu)+o_{|\ell_4|\to\infty}(1))|\ell^{R,L}_4|.$
\item[(c)] $x^R_{1,\perp}(t)=O(\mu),\ x^R_{1,\parallel}(t)-x^R_{1,\parallel}(0)=O(\mu\chi), $ $v_{1,\perp}^R(t)=O(1/\chi)$, $v_{1,\parallel}^R(t)-v_{1,\parallel}^R(0)=O(\frac{1}{\mu\chi})$
for all $t$ defining piece $(I)$.
\end{itemize}
\end{Lm}

\begin{proof}
We first consider the piece of orbit going from the section $\{x_{4,\parallel}^R=-2\} $ to the section $\{x_{4,\parallel}^R=-\chi/2\} $ that we called piece $(I)$.
Let $[0,\tau]$ be the maximal time interval such that
\begin{equation}\label{eq: assump}
\begin{aligned}
&\left|\frac{G_3(t)}{G_3(0)}\right|,\,\left|\frac{L_3(t)}{L_3(0)}\right|\,
\in \left[\frac{3}{4},\frac{4}{3}\right],\\%,\quad |G_4(\ell_4)|\leq 2|G_4(\ell_4^*)|+1,\\
%&|x_{1,\perp}(\ell_4)|\leq C\mu \sG,\quad \ v_{1,\perp}(\ell_4)\leq \frac{2C\sG}{\chi},
& |x_1^R(t)|\geq 0.95\chi, \quad |v_{1}^R(t)|\leq C_0'+1.
  \end{aligned}\end{equation}
  During time $[0,\tau]$, we have \eqref{eq: assumprough} satisfied with the help of the additional assumption $|x_4|<0.8\chi$.

We always have $|x_4|\geq 2$ since $x_4$ is to the left of the section $\{x_4=-2\}$. So we get $L_4(t)=L_3(t)+O(\mu)$ for $t\in [0,\tau]$ using Lemma \ref{LmEnergyCons} and the bound on $|v_{1}^R(t)|$ in \eqref{eq: assump}. Then using formula \eqref{eq: Q4} and $e_4=\sqrt{1+\frac{G_4^2}{L_4^2}}$, we find
\begin{equation}\label{eq: Ol4}
\begin{aligned}
|x_4|&=\frac{1}{m_4k_4}L_4\sqrt{L_4^2(\cosh u-e_4)^2+G_4^2\sinh^2 u}\\
&=\frac{1}{m_4k_4}L_4\sqrt{L_4^2(\cosh^2 u-2e_4\cosh u+e_4^2)+(L^2_4e_4^2-L_4^2)\sinh^2 u}\\
&=\frac{1}{m_4k_4}L_4^2\sqrt{1-2e_4\cosh u+e_4^2+e_4^2\sinh^2 u}\\
&=\frac{1}{m_4k_4}L_4^2\sqrt{1-2e_4\cosh u+e_4^2+e_4^2(\cosh^2u-1 )}\\
&=\frac{1}{m_4k_4}L_4^2\sqrt{(1-e_4\cosh u)^2}=L_4^2(e_4\cosh u-1).
\end{aligned}
\end{equation}
We always have $e_4\geq 1$, so we get $|\ell-u|\geq|\sinh u|\geq \frac{e^{|u|}-1}{2}$ from \eqref{eq: hypul}, so that $u=o(\ell)$ as $|\ell|\to\infty$. Continuing \eqref{eq: Ol4}, we have \[e_4\cosh u\simeq e_4|\sinh u|=|\ell-u|=(1+o(1))|\ell|.\]
So we obtain \begin{equation}\label{EqO(l4)}|x_4|= L_4^2(1+o(1))|\ell_4|,\quad \mathrm{as}\ |\ell_4|\to\infty.\end{equation}
By assumption \eqref{eq: assump}, we get that $\frac{1}{C}<\frac{|x_4(\ell_4)|}{|\ell_4|}<C$ for some constant $C$ for the time interval $[0,\tau]$. So Lemma \ref{Lm: ham}(a.1) is applicable. Over time $O(\chi)$, we get $$L_3(t)-L_3(0)=O(\mu),\ G_3(t)-G_3(0)=O(\mu),\ g_3(t)-g_3(0)=O(\mu).$$

From the Hamiltonian equation, we get
\begin{equation}\label{Eqx1v1}\dot x_1=\frac{1}{m_1} v_1,\quad\dot v_1=\frac{x_1}{\mu |x_1|^3} +O\left(\frac{1}{\chi^2}\right)\end{equation} where the $O(1/\chi^2)$ estimate is from Lemma \ref{Lm: derU}(a). We get that the assumptions on $x_1,v_1$ in \eqref{eq: assump} are satisfied over time $O(\chi)$. This proves the estimate in item (a) for piece $(I)$. For the estimate in item (a) for piece $(III)$ and $(V)$, in order to repeat the above argument, we only need to show that $v_1$ is bounded so that Lemma \ref{LmEnergyCons} applies.  From equation \eqref{DerLR}, we get
\begin{equation}\label{Eqleftright}x_1^L=\frac{1}{1+\mu} x_1^R+\frac{2\mu}{1+2\mu} x_4^R,\quad v_1^L=\frac{1+\mu}{1+2\mu} v_1^R+ v_4^R.\end{equation}
Since we have $L^R_4(t)=L_3(t)+O(\mu)=L_3(0)+O(\mu)$ on the section $\{x_{4,\parallel}^R=-\chi/2\} $, we get $v_4^R$ hence $v_1^L$ is bounded on the section. Now we can repeat the previous case $(I)$ argument to establish the estimate in item (a) for piece $(III)$. Similarly for piece $(V)$.

We next work on item (c) for piece $(I)$ only. We assume $\tau$ is the maximal time such that the following holds:
\begin{equation}\label{EqAssymp1}-1.2\chi\leq x_{1,\parallel}^R(t)\leq -0.95\chi,\quad |x_{1,\perp}^R(t)|\leq 1,\quad |v_1^R(t)|\leq C_0'+1.\end{equation}
On the time interval $[0,\tau]$, we have $\dot v_{1,\perp}^R =O(\frac{1}{\mu\chi^3}+\frac{1}{\chi^2})$ from \eqref{Eqx1v1}, hence the oscillation of $v_{1,\perp}^R$ is bounded by $O\left(\frac{1}{\mu\chi^2}+\frac{1}{\chi}\right)$ and the oscillation of $x^R_{1,\perp}$ is $O(\mu)$ using the equation $\dot x_{1,\perp}^R=\frac{1}{m_1} v^R_{1,\perp}$ and $m_1\sim1/\mu$. Therefore on the time interval $[0,\tau]$, we always have $|x^R_{1,\perp}|\leq 1$ and obtain the estimate $x^R_{1,\perp}=O(\mu)$. Similarly, we have $x^R_{1,\parallel}(t)-x^R_{1,\parallel}(0)=O(\mu\chi)$, $v_{1,\perp}^R(t)=O(1/\chi)$  and $v_{1,\parallel}^R(t)-v_{1,\parallel}^R(0)=O(\frac{1}{\mu\chi})$. This implies that the assumption \eqref{EqAssymp1} holds for the entire piece $(I)$ and we have proved item (c).
\end{proof}

%Since the estimate of $|x_{1,\perp}|= O(\mu\sG)$ in part $(c)$ of Lemma \ref{Lm: position} improves the assumption $|x_{1,\perp}|\leq O(\sG)$ of Lemma \ref{Lm0: ham}, we get the following corollary of Lemma \ref{Lm: position} improving Lemma \ref{Lm0: ham}.
%\begin{proof}
%We are assuming the same assumption as Lemma \ref{Lm: position}. In the list of assumptions of Lemma \ref{Lm0: ham}, \eqref{APriori34} is as assumed here, and \eqref{LmPosEqB} is the conclusion of part (b) of Lemma \ref{Lm: position}. The assumption $\frac{1}{C}\leq |L_3|,|G_3|\leq C,\ |G_4|\leq C$ is implied by \eqref{LmPosEqQ3} and \eqref{LmPosEqA}. Finally, the assumption in Lemma \ref{Lm0: ham} on $x_1,v_1$ is implied and improved by \eqref{LmPosEq1} and \eqref{LmPosEqC}.
%Our assumption here on $x_{1,\perp}$ is stronger than that in Lemma \ref{Lm0: ham}. We follow the proof of Lemma \ref{Lm0: ham} %noticing that the only difference in the proof is that we get now $\angle(x_4,x_1)=O(1/\ell_4+\mu\sG/\chi)$ and $\left\langle %x_1,\frac{\partial x_4}{\partial G_4}\right\rangle,\ \left\langle x_1,\frac{\partial x_4}{\partial g_4}\right\rangle=O(\mu\chi\sG)$, which %improves the $O(\chi\sG)$ estimate that we had before.
%\end{proof}
%We use the assumptions of this corollary as the standing assumptions to estimate Hamiltonian equations and variational equations.  These assumptions confine $x_1,x_4$ such that their motions are close to linear motion forming a small angle with the $x_{\parallel}$ axis. These assumptions are satisfied if Lemma \ref{Lm: position} holds.
\subsection{Justification of the assumptions of Lemma \ref{Lm: ham}}
We demonstrate that the orbits satisfying \textbf{AG}
%the conditions of Lemma \ref{LmGMC0}
satisfy the assumptions of Lemma \ref{Lm: ham}. In \textbf{AG} we make assumptions on the initial and final values of $x_4,v_4$. However, in the assumptions of Lemma \ref{Lm: ham}, we require that the orbit of $x_4$ to be bounded in $\mathcal S_{\hat C}.$
\begin{Lm}
\label{Lm: strip}
Assume {\bf AG} for an orbit defined on the time interval $[0,T]$ such that $x^R_{4,\parallel}(0)=x^R_{4,\parallel}(T)=-2$ and $Q_4$ turns around $Q_1$ once in the sense of Definition \ref{DefSing}. Then there exist constants $\hat C, \mu_0$ such that for $\mu\leq\mu_0$ we have \[|G_4^{L,R}(t)|<\hat C,\quad\mathrm{and}\quad x^R_4,-x_4^L\in\mathcal S_{\hat C},\quad x_1^{R,L}\in\mathcal S_{\mu\hat C}\mathrm{\ for\ all\ } t\in [0, T].\]
\end{Lm}
To prove this result, we first need the following sublemma.
\begin{sublemma}\label{KeepDirection}
Given small $\tilde\theta>0$ there exist $\mu_0, $ $\chi_0$ such that under the assumptions of Lemma \ref{Lm: strip} if
$\mu\leq \mu_0,$ $\chi\geq \chi_0$ for all $t\in [0,  T]$ then
%Assume $(i,ii,iii)$ of the lemma and further more $|E_3|<C$ for all the time.
%Given $\tilde\theta>0$ there exists $\mu_0, \chi_0$ such that the following holds for $\mu\leq \mu_0,$
%$\chi>\chi_0.$
%If the outgoing asymptote satisfies

\begin{itemize}
\item[(a)] for all $t$ when the orbit is moving to the right of the sections $\{x^{R}_{4,\parallel}=\chi/2\}$ and $\{x^{L}_{4,\parallel}=-\chi/2\}$, we have \begin{equation}
\label{EqMiss}
|\pi-\theta^+_4(t)|<\tilde\theta,\quad |\theta^-_4(t)|<\tilde\theta,
\end{equation}
where $\theta^+_4$ $($respectively $\theta^-_4$$)$ is the angle of the outgoing $($respectively incoming$)$ asymptote of $x_4$ $($see Notation \ref{NotAsymp}$)$.
\item[(b)] for all $t$ when the orbit is moving to the left of the sections  $\{x^{R}_{4,\parallel}=\chi/2\}$ and $\{x^{L}_{4,\parallel}=-\chi/2\}$, we have $|\theta_4(t)-\pi|<\tilde\theta$ for the piece with $u<0$ and $|\theta_4(t)|<\tilde\theta$ for the piece with $u>0$. $($See Appendix \ref{subsection: hyp} for the convention of $u).$ %at some time $t$, then $Q_4$ escapes from the system.
\end{itemize}
\end{sublemma}
\begin{proof} %The idea of the proof is to show that the segment of orbit with $\{D\leq |x^{R,L}_{4,\parallel}|\leq \frac{\chi}{2}\}$ for some large $D$ is approximately linear. For linear motion, the slope can only be $O(1/\chi)$ since $Q_4$ would come close to $Q_1$ and $x_{1,\perp}=O(\mu)$.

Pick a large $D$ and let $\tau^*$ be the first time when $|x_4^R(\tau^*)|=D$ and let $\bar \tau$ be the first time when the orbit intersects the section $\{x_{4,\parallel}^R=-\chi/2\}$.  It is enough to consider below the times $t\geq \tau^*$. Indeed, $\theta_4^+$ changes by $O(D\mu)$ on the time segment $[0, \tau^*]$ since we have $\theta^+_4=\pi+g_4+\arctan\frac{G_4}{L_4}$ by \eqref{EqAsymp}, hence $\dot \theta_4^+=O(\mu)$ by the Hamiltonian equation, and we know that $\tau^*$ and $D$ are constants independent of $\mu$. Next,
$$ \theta^+(\tau^*)=\arctan\left(\frac{v_{4,\perp}^R} {v_{4,\parallel}^R}\right)(\tau^*)
+o_{D\to\infty}(1). $$
To fix our idea we suppose that $\tilde\theta\leq \left|\arctan\left(\frac{v_{4,\perp}^R} {v_{4,\parallel}^R}\right)(\tau^*)\right|\leq \frac{\pi}{4}.$ This implies that $v_4$ has a horizontal component that is bounded away from zero, therefore it takes time $O(\chi)$ to travel between two consecutive sections. Under this assumption, we get that $|x_4|\leq (\frac{\sqrt 2}{2}+O(\mu))\chi\leq 0.8\chi$ so that the assumptions of Lemma \ref{Lm: position} are satisfied.

Let $\tau^\dagger$ be the first time when $|v_4^R(\tau^\dagger)-v_4^R(\tau^*)|>0.01.$ For
$t\leq\min(\brtau, \tau^\dagger)$ we have
$$ D+c(t-\tau^*)<|x_4^R(t)|<D+C(t-\tau^*). $$
%We consider the piece of orbit $|x_4^R|\geq D$ for some large $D$ independent of $\chi,\mu$. The slope $\bar\theta^+_4=g_4^R\pm \arctan G_4^R/L_4^R$ evaluated on $\{x_4^R=D\}$ depends on $\mu,1/\chi$ continuously. We assume $-\bar\theta/2\geq \arctan\frac{v^R_{4,\perp}}{v^R_{4,\parallel}}\geq -\pi/4$ when $x^R_4=D$ for $1/\chi\ll \mu$ small enough (when the slope is less than $-\pi/4$, we can integrate the Hamiltonian equations directly to show $Q_4$ escapes).
On the other hand, the Hamiltonian equations give
\[\dot v_4^R=-(1+O(\mu))\frac{x^R_4+O(\mu x_3)}{|x^R_4|^3}+O\left(\frac{x^R_4}{\chi^3}\right),\]
where $x_3$ is bounded by Lemma \ref{Lm: position}(a).
%If $c\leq |v_4^R|\leq C$ over time interval $[0,t]$, we have $D+ct\leq |x^R_4|\leq D +C t$. We may choose $c=\frac{1}{2}|v^R_4(0)|$ and $C=2|v^R_4(0)|.$
Integrating this estimate we get
\[|v^R_4(t)-v^R_4(\tau^*)|\leq 1.1 \int_{\tau^*}^t \frac{1}{|D+c(s-\tau^*)|^2}+O\left(\frac{D+C(s-\tau^*)}{\chi^3}\right)ds=
\frac{1.1}{cD}+O(t^2/\chi^3).\]
Thus, the oscillation of $v^R_4$ is smaller than $\frac{2}{cD}$ if $t\leq \tau^\dagger$ and
$t=O(\chi).$ It follows that $\brtau=O(\chi)$ and $\tau^\dagger>\brtau.$
%so that \[|v^R_{4,\parallel}|\geq |v^R_{4,\parallel}(0)|-\frac{2}{cD}\geq \frac{\sqrt{2}}{2}|v^R_4(0)|-\frac{2}{cD}>0,\quad \mathrm{and\ }c\leq |v^R_4|\leq C.\] This implies that $x^R_4$ arrives at the section $\{x_{4,\parallel}^R=-\frac{\chi}{2}\}$ within time of order $\chi.$ On the section $\{x_{4,\parallel}^R=-\frac{\chi}{2}\},$ we have $\arctan\frac{v^R_{4,\perp}}{v^R_{4,\parallel}}\leq -(\bar\theta/2-\frac{5}{cD})$ for $D\geq 10/(c\bar\theta)$ large enough, and $|x^R_{4,\perp}|\geq \frac{\chi}{2}(\bar\theta/2-\frac{5}{cD})$.
%Integrating the $\dot x_1^R,\dot v_1^R$ equations until time $\brtau=O(\chi)$
%we get
%\begin{equation}
%%\label{XV1BrTau}
%x_1^R=x_1^R(0)+O(\mu)(\chi, \sG) \text{ and }v_1^R=v_1^R(0)+(1/(\mu\chi), \sG/(\mu\chi^2))
%\end{equation}

Next we change the coordinates to the left variables.
From \eqref{DerLR}, we get that
%$$x_1^L=\frac{1}{1+\mu}x_1^R+\frac{2\mu}{1+2\mu} x_4^R,\quad v_1^L=\frac{1+\mu}{1+2\mu} v_1^R+v_4^R.$$
\begin{equation}\label{EqR->L}
x_4^L=-x_1^R+\frac{1+\mu}{1+2\mu} x_4^R,\quad v_4^L=-\frac{2\mu}{1+2\mu} v_1^R+\frac{1}{1+\mu} v_4^R,\end{equation}
from which we obtain on the section $\{x^R_{4,\parallel}=-\chi/2\}$ that
\begin{equation*}%\label{EqCorner}
\left|\arctan\frac{v^L_{4,\perp}}{v^L_{4,\parallel}}(\brtau)-
\arctan\frac{v^R_{4,\perp}}{v^R_{4,\parallel}}(\tau^*)\right|\leq
\frac{3}{c^2D}+O(\mu),\quad |x^L_{4,\perp}(\bar\tau)|\geq \frac{\chi}{2}\frac{2}{3}\tilde\theta,\end{equation*}
by choosing $D$ large such that $\frac{4}{c^2D}<\frac{\tilde\theta}{3}$.
We apply a similar estimate to the left piece of orbit to show that for the orbit between the two sections $\{x_{4,\parallel}^R=-\chi/2\}$ and $\{x_{4,\parallel}^L=D\}$ the motion of $x_4^L$ is almost linear with the estimate
$\left|\arctan\frac{v^L_{4,\perp}}{v^L_{4,\parallel}}\right|>\frac{\tilde\theta}{3},\  |x^L_{4,\perp}|\geq \chi\frac{1}{3}\tilde\theta$ when arriving at the section $\{x_{4,\parallel}^L=D\}$.

 On the other hand, by definition we have $x^L_4=q_4-q_1=Q_4-Q_1$. In order to have a returning orbit to the section $\{x_{4,\parallel}^R=-2\}$, the two bodies $Q_4$ and $Q_1$ have to have a close encounter. This contradicts our estimate of $x^L_4$ at the end of the previous paragraph. This proves that the slope of the initial outgoing asymptote satisfies $|\theta_4^+-\pi|<\tilde\theta.$ Similarly, we get that the final incoming asymptote satisfies $|\theta_4^-|<\tilde\theta$ by repeating the above argument with the time reversed. For the estimate in part (b), we apply the same argument above treating the orbits as starting from a neighborhood of $Q_1$ moving towards $Q_2$.

\end{proof}

\begin{proof}[Proof of Lemma \ref{Lm: strip}]
The proof is a refinement of that of Sublemma \ref{KeepDirection} with the same general idea.

The fact that $x^R_1\in \mathcal S_{\mu\hat C}$ is given by Lemma \ref{Lm: position}(c).

{\bf Step 1, the boundedness of $G_4^{R,L}$.}

Without any assumption on $G_4$, we have that $\dot G_4=\frac{\partial U}{\partial x_4}\cdot\frac{\partial x_4}{\partial g_4}$ is $O(1/\chi)$ in the left case and is $O(\frac{1}{\chi}+\frac{\mu}{\ell_4^3+1})$ in the right case, directly from the estimate of $\frac{\partial U}{\partial x_4}$ in Lemma \ref{Lm: derU}, the bound on $L_3,$ and  the fact that $\left|\frac{\partial x_4}{\partial g_4}\right|=|x_4|$. This implies the oscillation of $G_4$ is $O(1)$ over time $O(\chi)$. By Sublemma \ref{KeepDirection}(b)
 we see that in the left case the slope of asymptotes of $x_4^L$ is bounded by $\tilde\theta$, so $\left|\frac{G_4}{L_4}\right|\leq 2\tilde\theta$. Next by Lemma \ref{Lm: position} and \ref{LmEnergyCons}, we get that $1/C<L_4<C$. Therefore $G_4^L=O(1)$ when $Q_4$ comes close to $Q_1$. We also assumed that $G_4^R=O(1)$ on the section $\{x^{R}_{4,\parallel}=-2\}$. We get $G_{4}^L,G_4^R=O(1)$ for all the time when they are defined, in particular,
when evaluated on the sections $\{x^{R}_{4,\parallel}=-\frac{\chi}{2}\}$ and $\{x^{L}_{4,\parallel}=\frac{\chi}{2}\}$.

{\bf Step 2, the estimate of $x^R_{4,\perp}$ and $v^R_{4,\perp}$.}

We use \eqref{EqR->L} to get the relation for angular momentum\beq\label{eq: GLGR}
\beal
G_4^L&=v_4^L\times x_4^L=\left(\frac{1}{1+\mu}v^R_4-\frac{2\mu}{1+2\mu}v_1^R\right)\times\left(\frac{x_4^R(1+\mu)}{1+2\mu}-x_1^R\right)\\
&=\frac{G_4^R}{1+2\mu}-\frac{1}{1+\mu}v^R_4\times x_1^R-\frac{2\mu(1+\mu)}{(1+2\mu)^2}v_1^R\times x_4^R+\frac{2\mu}{1+2\mu}v_1^R\times x_1^R.
\enal
\eeq
Using the estimates on $v^R_1=O(1,\frac{1}{\chi}),x^R_{1,\parallel}\leq -\chi, x^R_{1,\perp}=O(\mu)$ from Lemma \ref{Lm: position}(c), we get that on the section $\{x^{R}_{4,\parallel}=-\frac{\chi}{2}\}$
\[O(1)=G_{4}^L-\frac{G_4^R}{1+2\mu}=(1+O(\mu))[v^R_{4,\perp}\chi+O(\mu)]+O(\mu)[O(1)+x^R_{4,\perp}]+O(\mu)[O(1)].\]
This implies \begin{equation}\label{Eqv4Rperp}v^R_{4,\perp}\chi=O(1)+O(\mu)x^R_{4,\perp}.\end{equation} Next, we have \begin{equation}\label{EqX4RPerp}O(1)=G_{4}^R=v^R_{4,\perp}x^R_{4,\parallel}-x^R_{4,\perp}v^R_{4,\parallel}=-v^R_{4,\perp}\frac{\chi}{2}-x^R_{4,\perp}v^R_{4,\parallel}.\end{equation}
Substituting \eqref{Eqv4Rperp} into \eqref{EqX4RPerp} and using the lower bound on $v_{4,\parallel}^R$
we get $x^R_{4,\perp}=O(1)$. We next substitute the $x^R_{4,\perp}$ estimate back into \eqref{Eqv4Rperp} to get $v^{R}_{4,\perp}=O(1/\chi)$. We then obtain $x^L_{4,\perp}=O(1)$ and $v^{L}_{4,\perp}=O(1/\chi)$ using \eqref{EqR->L}. Remember that these estimates are only established so far on the sections $\{x^{R}_{4,\parallel}=-\frac{\chi}{2}\}$ and $\{x^{L}_{4,\parallel}=\frac{\chi}{2}\}$.

\textbf{Step 3, bounding the right piece of the orbit $x^R_{4}$.}

We next bound the orbit between the sections $\{x^{R}_{4,\parallel}=-D\}$ and $\{x^{R}_{4,\parallel}=-\frac{\chi}{2}\}$ for some large constant $D$ independent of $\chi,\mu$. Suppose the orbit intersects the section $\{x^{R}_{4,\parallel}=-\frac{\chi}{2}\}$ at time $t_0$ and the section $\{x^{R}_{4,\parallel}=-D\}$ at time $t_1$.  We have $|x_{4,\perp}(t_1)|\leq C$ for some constant $C$ due to the continuity of the flow and the boundedness of the initial conditions on the section $\{x^{R}_{4,\parallel}=-2\}$ as assumed. We have
\begin{equation}\label{EqInt} x_{4,\perp}(t)=x_{4,\perp}(t_0)+v_{4,\perp}(t_0)(t-t_0)+
\int_{t_0}^t \int_{t_0}^u\ddot{x}_{4,\perp}(s) dsdu, \end{equation}
where for piece $(I)$ we have $t_1<s<t_0$ and
$$\ddot{x}_{4,\perp}(s)=O\left(\frac{x_{4,\perp}(s)}{|x_4(s)|^3}+\frac{|x_4|}{\chi^3}+\frac{\mu |x_3|}{|x_4(s)|^3}\)=O\(\frac{x_{4,\perp}(s)+\mu}{(\chi/2+(s-t_0))^3}+\frac{|x_4|}{\chi^3}\).$$ By Step 2, we have $x_{4,\perp}(t_0)=O(1)$ and $v_{4,\perp}(t_0)=O(1/\chi)$. We bound the double integral of the term $\frac{|x_4|}{\chi^3}$ in $\ddot{x}_{4,\perp}$ by a constant. So we get
$$|x_{4,\perp}(t)|\leq C+C\sup_{t_0<s<t_1}|x_{4,\perp}(s)|\int_{t_0}^t \int_{t_0}^u\frac{1}{(\chi/2+(s-t_0))^3}dsdu\leq C+\frac{C}{D}\sup_{t_0<s<t_1}|x_{4,\perp}(s)|.$$
Choosing $D>C$, this shows that $x_4^R\in \mathcal S_{\hat C}$ for some large $\hat C$ for the piece of orbit in consideration. For piece $(V)$, $t_0 < s < t_1$ and the denominator of the integrand becomes $(\chi/2 - (s - t_0))^3$.

\textbf{Step 4, bounding the left piece of the orbit $x^L_{4}$ and the returning orbit.}

We have $x_{4,\perp}^L=O(1)$ on the section $\{x^{L}_{4,\parallel}=D\}$ by Sublemma \ref{KeepDirection}(b).
We apply the same argument as in Step 3 to both halves of the left piece between the section $\{x^{L}_{4,\parallel}=D\}$
and section $\{x^{R}_{4,\parallel}=-\frac{\chi}{2}\}$. This shows that $-x_4^L\in \mathcal S_{\hat C}$ for the piece of orbit in consideration. The fact that $x_1^L\in \mathcal S_{\mu\hat C}$ follows from the $v_{4,\perp}, x_{4,\perp}$ estimate in Step 2, Lemma \ref{Lm: position}(c) and equation \eqref{Eqx1v1}, \eqref{Eqleftright}.

%For $x_1^L,v_1^L$, we first establish the estimates $v^L_1=O(1,\sG/\chi),\ x^L_1\in \mathcal T_{G_0,\hat C}$ on the section $\{x^{R}_{4,\parallel}=-\frac{\chi}{2}\}$ using \eqref{DerLR}, the estimate on $x^R_1,v^R_1$, $x^{L,R}_{4,\perp}$ and $v_{4,\perp}^{L,R}$ in Step 2. Then we integrate the $\dot x_1,\dot v_1$ equations.

%For the returning orbit, the argument is similar except that we use the time reversed orbit.
%need to replace \eqref{eq: GLGR} by \eqref{eq: GRGL} to be derived later.
\end{proof}
\begin{Rk}\label{RkStrip}  From the proof, we see that Lemma \ref{Lm: strip} still holds if instead of {\bf AG}, we assume $(AG.3)$, $(AG.1)$ and $|x^R_{4,\perp}(0)|<2$, $x^R_{4,\parallel}(0)=-2$ for the initial condition and $x_4^L(T)=0$ for the final condition. The final condition implies a collision between $Q_1$ and $Q_4$. The crucial ingredient in the proof is that $Q_1$ must come close to $Q_4$ in order to have a return orbit.
\end{Rk}
%The proof of the lemma is the same as Lemma 2.3 of \cite{DX} in Section 10.2 of \cite{DX}. The only thing we need in the proof is the $\mathscr{C}^0$ estimate of the local map given in Lemma \ref{Lm: landau}.
All the assumptions of Lemma \ref{Lm: ham} are implied by {\bf AG} due to Lemma \ref{Lm: strip} and Lemma \ref{Lm: position}, so we have the following.
\begin{Cor}\label{Cor: ham}
Assume {\bf AG}, then \begin{itemize}
\item[(a)] in the right case, we have
\begin{equation}
\begin{aligned}
\mathcal F^R=(0,1,0_{1\times 8})+O\left(\su(\ell_4),\mu,\su(\ell_4),\su(\ell_4);\mu,\frac{\mu}{\chi},\frac{1}{\mu\chi^2},\frac{1}{\chi^3};\sv(\ell_4),\sv(\ell_4)\right);
\end{aligned}
\nonumber
\end{equation}
\item[(b)] in the left case, we have
\begin{equation}
\begin{aligned}
\mathcal F^L=(0,1,0_{1\times 8})+O\left(\frac{1}{\chi^3},\mu,\frac{1}{\chi^3},\frac{1}{\chi^3};\mu,\frac{\mu}{\chi},\frac{1}{\mu\chi^2},\frac{1}{\chi^3}; \frac{1}{\chi^2},\frac{1}{\chi^2}\right).
\end{aligned}
\nonumber
\end{equation}
\end{itemize}
\end{Cor}
\begin{Lm}\label{Lm: tilt}
Assume $\mathbf{AG}$, then we have
\begin{itemize}
\item[(a)] when $x_4$ is moving to the right of the sections $\left\{x_{4,\parallel}^{R}=-\frac{\chi}{2}\right\}$ and $\left\{x_{4,\parallel}^{L}=\frac{\chi}{2}\right\}$, we have \[\tan g_4=-\mathrm{sign}(u)\frac{G_4}{L_4}+O\left(\frac{\mu}{|\ell_4|^2+1}+\frac{1}{\chi}\right), \quad\mathrm{as\ } |\ell_4|\to\infty,\ 1/\chi\ll\mu\to 0.\]
\item[(b)] When $x_4$ is moving to the left of the sections $\left\{x_{4,\parallel}^{R}=-\frac{\chi}{2}\right\}$ and $\left\{x_{4,\parallel}^{L}=\frac{\chi}{2}\right\}$, then $G_4,g_4=O(1/\chi)$ as $1/\chi\ll\mu\to 0$.
\end{itemize}
\end{Lm}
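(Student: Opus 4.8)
The plan is to read the relation between the Delaunay angle $g_4$ and the pair $(G_4,L_4)$ off the explicit Kepler parametrization of $(x_4,v_4)$ in Appendix~\ref{appendixa}, to anchor it at the ``midway'' sections $\{x^R_{4,\parallel}=-\chi/2\}$, $\{x^L_{4,\parallel}=\chi/2\}$ using the $\mathscr{C}^0$ control already established in Lemma~\ref{Lm: position} (and Sublemma~\ref{KeepDirection}), and to propagate it along the flow using the smallness of the perturbation in the equations of motion \eqref{eq: ham}.

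The key elementary fact is that for an exact hyperbola with elements $(L_4,G_4,g_4)$ the slope of the velocity at hyperbolic anomaly $u$ equals $g_4+\mathrm{sign}(u)\arctan(G_4/L_4)$ up to an error $O(e^{-2|u|})=O(1/(\ell_4^2+1))$ (since $|x_4|$ is comparable to $|\ell_4|+1$ by Lemma~\ref{Lm: position}(b)); equivalently the quantity $\Theta:=\tan g_4-\mathrm{sign}(u)\,G_4/L_4$ depends only on the orbit elements, vanishes exactly when the relevant asymptote (incoming on the way in, outgoing on the way out) is horizontal, and $\Theta=O(\text{slope of that asymptote})$ in general. The delicate point is to fix the orientation conventions of Appendix~\ref{appendixa} so that the sign really comes out as $\mathrm{sign}(u)$; note that the anomaly $u$ keeps a fixed sign on each of the two relevant legs of the global map, so $\Theta$ is well defined leg by leg.

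For part (a): by \eqref{Eqv4Rperp}, \eqref{Eqv4L} (transported through the coordinate changes \eqref{eq: r->l}, \eqref{eq: l->r}), $v_4=O(1,\mu\sG/\chi)$ on the midway sections, where $|x_4|$ is of order $\chi$; so the Keplerian correction there is $O(1/\chi^2)\ll\mu\sG/\chi$ and $\Theta=O(\mu\sG/\chi)$ there. Differentiating $\Theta$ along $\ell_4$ and inserting $\tfrac{dg_4}{d\ell_4},\tfrac{dG_4}{d\ell_4}=O(\tfrac{\mu\sG}{\chi^2}+\tfrac{\mu}{|\ell_4|^3+1})$, $\tfrac{dL_3}{d\ell_4}=O(u(\ell_4))$ from Corollary~\ref{Lm: ham}(a) (with $L_4=L_3+O(\mu)$) and integrating from the midway section inward gives $\Theta(\ell_4)=O(\mu\sG/\chi)+O(\int_{|\ell_4|}^{C\chi}(\tfrac{\mu\sG}{\chi^2}+\tfrac{\mu}{s^3+1})\,ds)=O(\tfrac{\mu}{|\ell_4|^2+1}+\tfrac{\mu\sG}{\chi})$, which is the claim. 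For part (b): the orbit of $(x^L_4,v^L_4)$ about $Q_1$ is a perturbation of size $O(1/\chi^2)$ of a Kepler hyperbola, and $Q_4$ crosses $\{x^L_{4,\parallel}=\chi/2\}$ with velocity slope $O(\mu\sG/\chi)$ both entering and leaving (again by Lemma~\ref{Lm: position} and \eqref{eq: r->l}, \eqref{eq: l->r}); equating these with the Keplerian incoming/outgoing slopes $g_4\mp\arctan(G_4/L_4)$, and including the $O(G_4/\chi^2)$ Keplerian correction at distance $\chi/2$ and the $O(\mu\sG/\chi)$ drift of the elements over the traversal (Corollary~\ref{Lm: ham}(b)), yields $g_4\pm\arctan(G_4/L_4)=O(\mu\sG/\chi+G_4/\chi^2)$; adding and subtracting, and absorbing $G_4/\chi^2\ll G_4$, gives $G_4=O(\mu\sG/\chi)$ and then $g_4=O(\mu\sG/\chi)$.

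I expect the main obstacle to be twofold: pinning down the Delaunay orientation conventions so that $\mathrm{sign}(u)$ appears correctly, and arranging that the Keplerian-versus-osculating discrepancy is of size $1/(\ell_4^2+1)$ — which is exactly why one must propagate the orbit-element quantity $\Theta$ (whose drift carries the extra factor $\mu$) rather than the raw velocity slope, whose tail correction would only be $O(1/|\ell_4|)$ and would spoil the bound. The remaining work is a routine integration of bounds already obtained in Section~\ref{sct: eqmotion}.
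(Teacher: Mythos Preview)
Your proposal is correct and follows the same strategy as the paper: anchor the estimate at the midway sections and propagate via the integrated drift of the osculating elements using Corollary~\ref{Lm: ham}. Two minor presentational differences are worth noting. First, the paper proves (b) before (a): it integrates the $G_4^L,g_4^L$ equations from $\ell_4=0$, then reads two linear relations in $(g_4(0),G_4(0))$ off the \emph{position} formula for $x^L_{4,\perp}$ in \eqref{eq: Q4l} evaluated at the two crossings of the midway section (opposite signs of $u$); adding and subtracting gives $g_4,G_4=O(\mu\sG/\chi)$. You instead use the \emph{velocity} slope at the two crossings for the same purpose---equally valid, and arguably cleaner since the needed bound $v_{4,\perp}^{R,L}=O(\mu\sG/\chi)$ is already recorded in \eqref{Eqv4Rperp}, \eqref{Eqv4L}. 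Second, the paper then uses part (b) together with $R\cdot L^{-1}$ to seed the slope of $v_4^R$ at the midway section for part (a), whereas you bypass this internal dependency by citing those velocity bounds directly; both routes arrive at the same integration $\int(\tfrac{\mu\sG}{\chi^2}+\tfrac{\mu}{s^3+1})\,ds=O(\tfrac{\mu}{|\ell_4|^2+1}+\tfrac{\mu\sG}{\chi})$.
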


\begin{proof} The proof is to integrate the estimates of $\frac{d}{d\ell_4}(G_4,g_4)$ in Lemma \ref{Lm: ham}$(b.1)$ and $(b.2)$.

\textbf{Step 1.} We prove part $(b)$. Integrating
the Hamiltonian equation for $G_4^L,g_4^L$ in Lemma \ref{Lm: ham}(b.2) starting from $\ell_4=0$ we get $(G_4^L,g_4^L)(\ell_4)=(G_4^L,g_4^L)(0)+O(1/\chi)$ when arriving at the sections $\left\{x_{4,\parallel}^{R}=-\frac{\chi}{2}\right\}$ and $\left\{x_{4,\parallel}^{L}=\frac{\chi}{2}\right\}$. To conclude part (b), we need to show that the initial conditions $G_4^L(0),g_4^L(0)$ are bounded by $O(1/\chi)$.
Using \eqref{eq: Q4l} (we omit the superscript $L$ and subscript $4$), we have on the sections $\left\{x_{4,\parallel}^{R}=-\frac{\chi}{2}\right\}$ and $\left\{x_{4,\parallel}^{L}=\frac{\chi}{2}\right\}$
\begin{equation*}\begin{aligned}&x_{4,\perp}=\frac{1}{mk}(\sin g L^2(\cosh u-e)+\cos g LG\sinh u)\\
&=\frac{1}{mk}(\sin g(0) L^2(\cosh u-e)+\cos g(0) LG(0)\sinh u)+O(1).\end{aligned}\end{equation*}
Note that this holds for both large positive and large negative $u$ and that on both sections $\cosh u$ and
$|\sinh u|$ are of order $\chi.$
%We choose the sign of $u$ to make sure the two summands in the parenthesis have the same sign.
By  Lemma \ref{Lm: strip}, we have  $-x^L_4\in \mathcal S_{\hat C}$, which shows that $|x^L_{4,\perp}|\leq 2\hat C$ on the sections $\left\{x_{4,\parallel}^{R}=-\frac{\chi}{2}\right\}$ and $\left\{x_{4,\parallel}^{L}=\frac{\chi}{2}\right\}$. Next, we apply Lemma \ref{LmEnergyCons} and Lemma \ref{Lm: position} to get that $1/C<|L_4|<C$. Next, arguing as in Lemma \ref{Lm: strip} Step 1, we have that $|G_4|<C$, this implies that $e_4<C$. This implies that $|g(0)|,|G(0)|=O(1/\chi)$.

\textbf{Step 2.} Then we use the matrix $R\cdot L^{-1}$ in Proposition \ref{Prop: main} to convert the left variables to the right to obtain $v_4^R=O(\mu)v_1^L\pm (1+O(\mu))v_4^L$.
From Step 1 and \eqref{eq: Q4l}, we get that the slope of $v_4^L$ is $g_{4}^L-\arctan\frac{G_{4}^L}{L_{4}^L}+O(1/\chi^2)=O(1/\chi)$, and from assumption (AG.1) and part (c.1) of Lemma \ref{Lm: ham} that the slope of $v_1^L$ is $O(1/\chi)$. So the slope of $v_4^R$ is $g_{4}^R-\arctan\frac{G_{4}^R}{L_{4}^R}+O(1/\chi^2)=O(1/\chi)$ on the sections $\left\{x_{4,\parallel}^{R}=-\frac{\chi}{2}\right\}$ and $\left\{x_{4,\parallel}^{L}=\frac{\chi}{2}\right\}$ due to \eqref{eq: Q4}.

\textbf{Step 3.} To prove part $(a)$, we use the $O(1/\chi)$ estimates of the slope of $v_4^R$ in Step 2 as our initial condition. We get that the oscillation of $G_4^R,\ g_4^R$ is $O\left(\frac{\mu}{|\ell_4|^2+1}+\frac{1}{\chi}\right)$ from $\ell_4=O(\chi)$ to $\ell_4$ by integrating the $\frac{d G^R_4}{d\ell_4},\frac{d g^R_4}{d\ell_4}$ estimates in Lemma \ref{Lm: ham}.
\end{proof}

\subsection{Collision exclusion}\label{ssct: nocollision}

The following lemma excludes the possibility of collisions between $Q_1$ and $Q_4$.

\begin{Lm}\label{Lm: nocollision}
If we assume  $(AG.3)$, $(AG.1)$ and $|x^R_{4,\perp}(0)|<2$, $x^R_{4,\parallel}(0)=-2$ for the initial condition and $x_4^L(T)=0$ for the final condition $($collision between $Q_4$ and $Q_1)$, then there is an orbit bouncing back from the $Q_1$-$Q_4$ collision lying entirely in a strip $\mathcal S_{\hat C}$ for some constant $\hat C$. Moreover we have
$\brG_4^R+G^R_4=O(\mu)$ when evaluated on the section $\{x_{4,\parallel}^R=-2\}$, where $G_4^R$ and $\brG^R_4$ are the angular momentum of $(x_4,v_4)^R$ before and after the application of the global map respectively.
\end{Lm}
\begin{proof}
By Remark \ref{RkStrip}, the assumption implies the assumptions of Lemma \ref{Lm: position} according to Lemma \ref{Lm: strip} so we can use the conclusions of Lemma \ref{Lm: position}.

Suppose we have a collision. We compare the bouncing back orbit (subscript $out$) with the time reversal of the incoming orbit
(subscript $in$). We will show that the orbits are close and so the values of $G_4$ will be close when evaluated on the section $\{x_{4,\parallel}^R=-2\}$.

\textbf{Step 1, Comparing orbits to the left of the line $x_{4, \parallel}^R=-\frac{\chi}{2}.$}

For the collisional orbit and the bouncing back orbit to the left of the section $x_{4, \parallel}^R=-\frac{\chi}{2},$ the value $\ell_4=0$ corresponds to the collision. When $\ell_4=0$, all the values of $(L_3,g_3,x_1,G_4,g_4)$ are the same for the two orbits and the variables $(G_3,\ell_3, v_1)$ have opposite signs for the two orbits. Moreover, since the variables $\mathbf Y=(G_4,g_4)$ are constants of motion when the potential $U$ is neglected, we get that  for small $|\ell_4|>0$ the values of $\mathbf Y$ for the two orbits will stay close.

Let $\mathbf{F}$ be the RHS of
the corresponding Hamiltonian equations \eqref{eq: ham} for $\mathbf Y$.
We denote $\dt\mathbf Y=\mathbf Y_{in}-\mathbf Y_{out}$. Taking the difference of the Hamiltonian equations for $\mathbf Y_{in}$ and $\mathbf Y_{out}$, we have
\[\dfrac{d}{d\ell_4}\dt\mathbf Y=\frac{\partial \mathbf F}{\partial \mathbf Y}(\mathbf Y_{in},\hat{\mathbf Y}_{in})
\dt\mathbf Y+O\left(|\dt\mathbf Y|^2\right)+
[\mathbf F(\mathbf Y_{in}, \hat{\mathbf Y}_{in})-\mathbf F(\mathbf Y_{in}, \hat{\mathbf Y}_{out})],\]
where we denote $\hat{\mathbf Y}:=(L_3,\ell_3,G_3,g_3;x_1,v_1)$ and in the bracketed term we fix $\mathbf Y_{in}$ since the difference $\mathbf Y_{in}-\mathbf Y_{out}$ is considered in the $\dt\mathbf Y$ and $|\dt\mathbf Y|^2$ terms.
%where the last parenthesis contains the terms in the Hamiltonian equations depending on $\ell_3, x_1$ which are estimated in parts (b),(c) of Lemma \ref{Lm: U} (part (d) does not apply since we are dealing with the left piece of orbit).

We trace the orbit back to the section $\{x_{4, \parallel}^R=-\frac{\chi}{2}\}.$ During the $O(\chi)$ time, the oscillation of $x_1$ and $v_1$ are estimated as $$(\dt x_1,\dt v_1)=(O(\mu\chi),O(\mu); O(1), O(1/\chi))$$ from Lemma \ref{Lm: ham}(c.2) as well as the sign change of the initial condition.

The term $[\mathbf F(\mathbf Y_{in}, \hat{\mathbf Y}_{in})-\mathbf F(\mathbf Y_{in}, \hat{\mathbf Y}_{out})]$ can be estimated as $O\left(\frac{1}{\chi^3}+\frac{\mu}{\chi^2}\right)$, where the estimate $\frac{1}{\chi^3}$ is given by $\frac{\partial\mathbf F}{\partial x_3}$ due to different values of $\ell_3$ for the two orbits, and the estimate $O\left(\frac{\mu}{\chi^2}\right)=\frac{\partial \mathbf F}{\partial x_1}\dt x_1$ is due to  different $x_1$ for the two orbits (see Lemma \ref{Lm: derU}).  The variable $v_1$ enters through $\frac{dt}{d\ell_4}$ in \eqref{eq: ham} and causes a difference in $\mathbf F$ that is much smaller than the above two cases for $\ell_3, x_1$.

We denote by $\ell_4^f$ the time when the time reversed
incoming orbit hits $\{x_{4, \parallel}^R=-\frac{\chi}{2}\}.$

Note the initial condition $(\mathbf Y_{in}-\mathbf Y_{out})(0)=0$ and that  the fundamental solution of the variational equation
$Z'=\frac{\partial \mathbf F}{\partial \mathbf Y} Z$ is $O(1)$ (the fundamental solution is given by the matrix $N_3$ in
Proposition \ref{Prop: main}. Here we pick only the rows and columns corresponding to variables in $\mathbf Y$. In fact we have the estimate $\frac{\partial \mathbf F}{\partial \mathbf Y}=O(1/\chi)$ in Lemma \ref{Lm: var}(b) below).
Since  we have by DuHamel's principle that
$$\int_{0}^{\ell_4^f}O(1) O\(\frac{\mu}{\chi^2}+\frac{1}{\chi^3}\)d\ell_4=
O\left(\frac{\mu}{\chi}\right),$$
the Gronwall inequality gives $ \mathbf Y_{in}-\mathbf Y_{out}=O\left(\frac{\mu}{\chi}\right)$ at time $\ell_4^f$.

\textbf{Step 2, Cartesian coordinates.} We already had in Step 1 the estimate of $\dt(x_1^L,v_1^L)$.
We need to control the change of $(x_4,v_4)^L$ as well.
We have
\[\dt (x_4,v_4)^L=\left[\dt L_4\frac{\partial }{\partial L_4}+\dt G_4\frac{\partial }{\partial G_4}+\dt g_4\frac{\partial }{\partial g_4}\right]^L(x_4,v_4)^L.\]
Note that here we do not have $\dt \ell_4\frac{\partial }{\partial \ell_4}$ since we have the same $\ell_4=\ell_4^f$ for the two orbits, so $\dt\ell_4=0$.
We use Lemma \ref{Lm: dx/dDe} in the appendix to get the partial derivatives
$$\frac{\partial v_4}{\partial *}=O(1),\quad  \frac{\partial x_4}{\partial *}=O(\chi),\quad *=L_4,G_4,g_4,$$
and in particular $$\frac{\partial v_{4,\perp}}{\partial L_4}=C\frac{G_4}{L_4(G_4^2+L_4^2)}=O(1/\chi),\quad \frac{\partial x_{4,\perp}}{\partial L_4}=C\frac{G_4L_4^2\ell_4}{(G_4^2+L_4^2)}=O(1)$$ since $G_4=O(1/\chi)$ by Lemma \ref{Lm: tilt} (b).

The estimates for $(\dt G_4, \dt g_4)^L=O(\mu/\chi)$ are obtained in Step 1. The estimate of $\dt L_4$ is obtained from \eqref{eq: L4} and $\dt L_4=\frac{\partial L_4}{\partial \mathbf Y}\dt \mathbf Y+L_{4}(\mathbf Y_{in},\hat{ \mathbf Y}_{in})-L_{4}(\mathbf Y_{in},\hat{ \mathbf Y}_{out})$. We have $$\frac{\partial L_4}{\partial \mathbf Y}=O(1),\quad \dt \mathbf Y=O(\mu/\chi),\quad L_{4}(\mathbf Y_{in},\hat{ \mathbf Y}_{in})-L_{4}(\mathbf Y_{in},\hat{ \mathbf Y}_{out})=O(\mu),$$ where the main contribution to the last $\mu$ estimate is given by $\frac{\partial L_4}{\partial v_1}=O(\mu), \dt v_1=O(1)$. So we get $\dt L_4=O(\mu)$.

This tells us that
$$\dt (x_4,v_4)^L=O(\mu\chi, \mu; \mu, \mu/\chi). $$
%Using $R\cdot L^{-1}$ from \eqref{DerLR} of Proposition \ref{Prop: main} we get
%$$\dt (x_4,v_4)^R=O(\mu\chi, \mu; 1/\chi, \mu/\chi). $$
We also have $(x_4,v_4)^L=O(\chi,1; 1, 1/\chi)$ and $(x_1,v_1)^L=O(\chi,\mu, 1, 1/\chi)$ at time $\ell_4^f$ (see Step 2 of Lemma \ref{Lm: strip} for the estimate of $x_{4,\perp}$ and $v_{4,\perp}$).

\textbf{Step 3, Comparing angular momenta.} Using the relation
\beq
\beal
&G_4^R=v_4^R\times x_4^R=\left(\frac{1+\mu}{1+2\mu}v^L_4+\frac{2\mu}{1+2\mu}v_1^L\right)\times\left(\frac{x_4^L}{1+\mu}+x_1^L\right)\\
&=\frac{G_4^L}{1+2\mu}+\frac{1+\mu}{1+2\mu}v^L_4\times x_1^L+\frac{2\mu}{(1+2\mu)(1+\mu)}v_1^L\times x_4^L+\frac{2\mu}{1+2\mu}v_1^L\times x_1^L
\enal
\eeq
and the results of Step 2 we get $\delta G_4^R=O(\mu)$ at time $\ell^f_4.$

{\bf Step 4, Oscillation of $G^R_4$ to the right of $\{x_{4, \parallel}^R=-\frac{\chi}{2}\}$}.
Now we consider the right pieces of orbits. For the collisional orbit the oscillations of $G^R_4$
are $O(\mu)$ by integrating the estimate in Corollary \ref{Cor: ham}. In order to apply the same estimate to the bouncing back orbit, we need to show that the bouncing back orbit lies in the strip $\mathcal S_{\hat C}$.

We first get $$\dt (x_4,v_4)^R=O(\mu\chi, \mu; \mu, \mu/\chi)$$
using $R\cdot L^{-1}$ from \eqref{DerLR} of Proposition \ref{Prop: main} and the estimates of $\dt (x_4,v_4,x_1,v_1)^L$ above. At the time $\ell_4^f$, the collisional orbit is on the section $\{x_{4,\parallel}^R=-\chi/2\}$, but the bouncing back orbit might be $O(\mu\chi)$ distance away. Tracing the bouncing back orbit over time $O(\mu\chi)$ so that it is also on the section $\{x_{4,\parallel}^R=-\chi/2\}$, since $\dot v_4=-\frac{k_4 x_4}{|x_4|^3}+h.o.t.$, we see that $v_4$ gains a new oscillation $O(\mu/\chi)$. Comparing the two orbits on the section $\{x_{4,\parallel}^R=-\chi/2\}$ we get $\dt (x_4,v_4)^R=O(0, \mu; \mu, \mu/\chi)$. Applying \eqref{EqInt}, we see that the $x_{4,\perp}^R$ components for the two orbits stay $O(D\mu)$-close to each other when traveling between $\{x_{4,\parallel}^R=-\chi/2\}$ and $\{x_{4,\parallel}^R=-D\}$ for some large $D$ independent of $\mu,\chi$. This shows that the bouncing back orbit also lies in the strip $\mathcal S_{\hat C}$ since the collisional orbit does.

Now we apply Corollary \ref{Cor: ham} to get that the oscillation of $G_4^R$ for the bouncing back orbit is also $O(\mu)$ when traveling between the sections $\{x^R_{4,\parallel}=-\chi/2\}$ and $\{x^R_{4,\parallel}=-2\}$.
%Next we keep track of the two orbits in the right of the section $\{x^R_{4,\parallel}=-\frac{\chi}{2}\}$. We have the integral equation
%\[\dt x_{4}(t)=\dt x_{4}(t_0)+\dt v_{4}(t_0)(t-t_0)+
%\int_{t_0}^t \int_{t_0}^u\ddot{\dt x}_{4}(s) dsdu,\]
% where $\ddot{\dt x}_{4}(s)=O\left(\frac{\dt x_{4}(s)+|\dt x_3|}{|x_4(s)|^3}+\frac{\dt x_{4}(s)+\dt x_{1}}{|x_1(s)|^3}\)$ using Lemma \ref{Lm: U}, and $|\dt x_3|=O(1)$ coming from Lemma \ref{LmLMC0}. We apply this equation to get $\dt x_{4}^R=O(\mu^2)$ between the section $\{x^R_{4,\parallel}=-\frac{\chi}{2}\}$ and $\{x^R_{4,\parallel}=-D\}$ for some large $D$ independent of $\mu$. So the two orbits will stay close to each other in the Hausdorff distance in the configuration space. This shows that the collision-ejection orbit satisfies \textbf{AG} and the lemma is not void.

%For the term $v_4^L\times x_1^L$ in \eqref{eq: GRGL}, we get $\dt (v_4^L\times x_1^L)=(\dt v_4^L)\times x_1^L+v_4^L\times \dt x_1^L=O(\mu^2)$.  The remaining two terms in \eqref{eq: GRGL} are $O(\mu^3)$. This shows that the difference
%of $G_4^R$ for the two orbits are of order $O(\mu^2)$ at the section $\{x_{4,\parallel}^R=-\frac{\chi}{2}\}$. Then we know
%that the oscillation of $G_4^R$ within the two sections $\{x^R_{4,\parallel}=-\frac{\chi}{2}\}$ and $\{x^R_{4,\parallel}=-2\}$
%is $O(\mu)$ according to Lemma \ref{LmGMC0}. This implies that if we trace the two orbits all the way back to the
%section $\{x^R_{4,\parallel}=-2\}$, the difference $\dt G_4,$ is of order $\mu$.
Steps 1--4 show that difference between the angular momenta of the reversed incoming orbit and the bouncing back orbit is $O(\mu).$
Without the time reversal
we have $\bar G^R_4+G^R_4=O(\mu)$ as claimed.
\end{proof}
The possibility of collision between $Q_4$ and $Q_1$ is excluded since in Gerver's construction,
$\bar G_4+G_4$ is always bounded away from zero independent of $\mu$. Now we exclude the possibility of collisions between $Q_3$ and $Q_4$.
Note that $Q_3$ and $Q_4$ have two potential collision points corresponding to two intersections of the ellipse of $Q_3$
and the branch of the hyperbola utilized by $Q_4.$ See Fig 1 and 2.
Now it follows from Lemma~\ref{Lm: landau}(b) that $Q_3$ and $Q_4$ do not collide near the intersection where they have
the close encounter. We need also to rule out the collision near the second intersection point.
This was done by Gerver in \cite{G2}. Namely he
shows that the times for $Q_3$ and $Q_4$ to move from one crossing point to the other are different.
As a result, if $Q_3$ and $Q_4$ come to the correct intersection points nearly simultaneously,
they do not collide at the wrong points. In the setting of our paper ($\mu>0$), the travel times for $Q_3$ and $Q_4$ to move from one crossing point to the other are $O(\mu)$ perturbations of that computed in \cite{G2}. So it is impossible to have a collision at a wrong intersection point.
%To see that the travel times are different recall that by second Kepler's law the area swept by the moving body in unit time is a constant for the two-body problem. In terms of Delaunay coordinates, this fact is given by the equation $\dot\ell=\pm\frac{1}{L^3}$ where $-$ is for hyperbolic motion and $+$ for elliptic. In our case, we have $L_3\approx L_4$ when $\mu\ll 1, \chi\gg 1.$ Therefore in order to collide  $Q_3$ and $Q_4$ must sweep nearly the same area within the unit time.We see from Fig 1 and Fig 2, the area swept by $Q_4$ is a proper subset of that by $Q_3$ between the two crossing points. Therefore the travel time for $Q_4$ is shorter.
\subsection{Proofs of Lemma \ref{LmGMC0} and \ref{LmPMC0}}\label{SSPfGMPM}
In this section, we prove of Lemma \ref{LmGMC0} and \ref{LmPMC0}. %We first show some finer estimates of the slope of the asymptotes in the next lemma.%, which justifies the assumption of Lemma \ref{Lm: dx/dDe} in Appendix \ref{appendixa} and proves part (b) of Lemma \ref{LmGMC0}.
% Poincar\'{e} map, proof of Lemma \ref{LmGMC0} and \ref{LmPMC0}}
Now we prove Lemma \ref{LmGMC0}.
\begin{proof}[Proof of Lemma \ref{LmGMC0}]
We first prove part (c). We first get that $x_{1,\parallel}(t)=(1+O(\mu))x_{1,\parallel}(0)=(1+O(\mu))\chi$ by integrating part (c) of Lemma \ref{Lm: ham} for $t\in [0,100\chi]$. Next by Lemma \ref{LmEnergyCons} and Lemma \ref{Lm: position} we get that $E_4(t)=-E_3(t)+O(\mu)=\frac{1}{2}+O(\dt+\mu)$ for $t\in [0,100\chi]$. This implies that $|v_4|\geq 1-O(\dt+\mu)$. Next, by Sublemma \ref{KeepDirection}, we get that $|v_{4,\parallel}|>\frac{3}{4}$ by choosing $\dt,\mu,\tilde\theta$ small. So we get that the total return time $T\leq 2(1+O(\mu))\chi/(3/4)<3\chi(<100\chi)$.

Since {\bf AG} implies the assumptions of Lemma \ref{Lm: ham}, combined with Lemma \ref{Lm: position}, we get part (a) of Lemma \ref{LmGMC0} from Lemma \ref{Lm: position}, and part (b) from Lemma \ref{Lm: tilt} using Notation \ref{NotAsymp}.
\end{proof}
Now we are ready to prove Lemma \ref{LmPMC0}.
\begin{proof}[Proof of Lemma \ref{LmPMC0}]
 The idea of the proof is to integrate the equations $\dot x_1$ and $\dot v_1$ for the pieces $(I),(III)$ and $(V)$, and apply the coordinate changes $(II)$ and $(V)$, to keep track of the change of $x_1$ and $v_1$. The main idea was sketched in Remark \ref{Rkx1v1}. %It is quite clear that $x_{1,\parallel}$ will decrease by $O(\mu\chi)$ for each application of $\mathcal P$. The total angular momentum conservation will be used to control $v_{1,\perp}$ hence $x_{1,\perp}$. Finally, $v_{1,\parallel}$ will change significantly due to the momentum exchange between $Q_1$ and $Q_2$, but the renormalization will slow it down to a designated interval $[-\bar c_1,-c_1]$.

{\bf Step 0, preparations. }

We use Lemma \ref{Lm: ham}(c.1) and (c.2) to get that $v_1=v_1(0)+O(1/\mu\chi, 1/\chi^2) $ during time $O(\chi)$. It follows from $\frac{dx_{1,\parallel}}{dt}=v_{1,\parallel}/m_1$ that over time $O(\chi)$, the horizontal component $x_{1,\parallel}$ can move only distance $O(\mu\chi)$. Moreover, the local map takes only $O(1)$ time as $1/\chi\ll \mu\to 0.$ %The time it takes the system to complete one return, i.e. once application of $\cP$, is bounded by $4\chi/c_1$.

Initially, we have angular momentum conservation $G_1+G_3+G_4=0.$ Also from the initial conditions in the assumption $(i)$ and the total energy conservation (Lemma \ref{LmEnergyCons}), we estimate
\[|G_3|,\ |G_4|\leq 2C_0'+1.\]
We get from the definition of angular momentum, Lemma \ref{Lm: position}(c) and assumption $(ii)$ that
 \begin{equation}\nonumber| v_{1,\perp}(0)|\leq \left|v_{1,\parallel}\frac{ x_{1,\perp}}{ x_{1,\parallel}}\right|+\left|\frac{ G_1}{x_{1,\parallel}}\right|\leq O\left(\frac{\mu}{\chi}\right)+\frac{2(2C_0'+1)}{\chi}\leq \frac{4(C_0'+1)}{\chi}.\end{equation}

{\bf Step 1, piece $(I)$ composed with the local map.}

We integrate the $\frac{dv_{1,\parallel}}{dt}$ estimates from the section $\{x^R_{4,\parallel}=-2,\ v^R_{4,\parallel}>0\}$ to the section $\{x^R_{4,\parallel}=-\frac{\chi}{2}, v^R_{4,\parallel}<0\}$ (note that the local map is included).
The total traveling time is $<3\chi$ by Lemma \ref{LmGMC0}(c). In the following we use the notation $a=O_+(b)$ if $b>0,\ a=O(b)$ and $\frac{a}{b}>c>0$ for some constant $c$. Using Lemma \ref{Lm: ham}(c) we have as $1/\chi\ll \mu\to 0$, \begin{equation}\nonumber
\begin{aligned}
& x^R_{1,\parallel}-x^R_{1,\parallel}(0)=O_+(\mu\chi)v^R_{1,\parallel}(0),\quad x^R_{1,\perp}-x^R_{1,\perp}(0)=O(\mu\chi)v^R_{1,\perp}(0)=O(\mu),\\
&v^R_{1,\parallel}\in[-\bar c_1,-c_1]+O(1/\mu\chi),\quad v_{1,\perp}^R=O(1/\chi).
\end{aligned}
\end{equation}
on the section $\{x^R_{4,\parallel}=-\frac{\chi}{2}, v^R_{4,\parallel}<0\}$. On the same section, we also have
\[v^R_{4,\parallel}=-\sqrt{2E_4}+O(1/\chi)=-\sqrt{-2E_3}+O(\mu),\quad v_{4,\perp}^R=O(1/\chi)\]
by Lemma \ref{LmEnergyCons}, Lemma \ref{Lm: tilt} and equation \eqref{eq: Q4} (see also Step 2 of the proof of Lemma \ref{Lm: strip}).

{\bf Step 2, piece $(III).$}

We use Lemma \ref{Lm: strip} to get $x_4^R\in \mathcal{S}_{\hat C}$, i.e. $|x_{4,\parallel}^R|\leq 2\chi,\ |x_{4,\perp}^R|\leq \hat C$. Then we use \eqref{DerLR} to get, on the section $\{x^R_{4,\parallel}=-\frac{\chi}{2}, v^R_{4,\parallel}<0\}$, that
\begin{equation}
\begin{aligned}
& x^L_{1,\parallel}=\frac{1}{1+\mu} x^R_{1,\parallel}(0)+O_+(\mu\chi)v^R_{1,\parallel}(0)-\frac{\mu\chi}{1+2\mu}=\frac{1}{1+\mu} x^R_{1,\parallel}(0)-\frac{\mu\chi}{1+2\mu}-O_+(\mu\chi),\\
& x^L_{1,\perp}=\frac{1}{1+\mu}x^R_{1,\perp}(0)+O(\mu\chi)v^R_{1,\perp}(0)+O(\mu/\chi)=O(\mu),\\
&v^L_{1,\parallel}\in \frac{1+\mu}{1+2\mu}[-\bar c_1,-c_1]-\sqrt{-2E_3}+O(\mu),\quad v^L_{1,\perp}=O(1/\chi),\quad  \mathrm{as}\ 1/\chi\ll \mu\to 0.\\
\end{aligned}
\end{equation}
We integrate $\frac{dv_1}{dt}$ again over time $O(\chi)$ to get
\begin{equation}
\begin{aligned}
& x^L_{1,\parallel}=\frac{1}{1+\mu} x^R_{1,\parallel}(0)-\frac{\mu\chi}{1+2\mu}-O_+(\mu\chi),\quad x^L_{1,\perp}=O(\mu),\\
&v^L_{1,\parallel}\in \frac{1+\mu}{1+2\mu}[-\bar c_1,-c_1]-\sqrt{-2E_3}+O(\mu),\quad  v^L_{1,\perp}=O(1/\chi), \text{ as } \frac{1}{\chi}\ll \mu\to 0
\end{aligned}
\end{equation}
when arriving at the section $\{x^L_{4,\parallel}=\frac{\chi}{2}, v_{4,\parallel}^L>0\}$ where the $-O_+(\mu\chi)$ term in $x_{1,\parallel}^L$ has absorbed a new $-O_+(\mu\chi)$ contribution since $v_{1,\parallel}^L< 0$. Again it follows from Lemma \ref{Lm: tilt} and the energy conservation that
\[v^L_{4,\parallel}=\sqrt{2E_4}+O(1/\chi)=\sqrt{-2E_3}+O(\mu),\quad v_{4,\perp}^R=O(1/\chi).\]

{\bf Step 3, piece $(V).$}

We then apply \eqref{DerLR} and $-x_4^L\in\mathcal{S}_{\hat C}$ (Lemma \ref{Lm: strip}) to get that on the section $\{x^L_{4,\parallel}=\frac{\chi}{2}\}$,
\begin{equation}\label{Eqx1v1right}
\begin{aligned}
x^R_{1,\parallel}&=\frac{1+\mu}{1+2\mu}\left(\frac{x^R_{1,\parallel}(0)}{1+\mu}-\frac{\mu\chi}{1+2\mu}\right)-\frac{\mu\chi}{1+2\mu}-O_+(\mu\chi)\\
&=\frac{x^R_{1,\parallel}(0)}{1+2\mu}-\frac{\mu(2+3\mu)\chi}{(1+2\mu)^2}-O_+(\mu\chi),\\
x^R_{1,\perp}&=O(\mu),\quad v^R_{1,\perp}=O(1/\chi),\\
v^R_{1,\parallel}&\in\frac{1}{1+2\mu}[-\bar c_1,-c_1]-2\sqrt{-2E_3^*}+O(\dt+\mu),\\
\end{aligned}
\end{equation}
 as $1/\chi\ll \mu\to 0$, where the extra $O(\mu)$ in $v_{1,\parallel}^R$ comes from the oscillation of $E_3$ established in Lemma \ref{LmGMC0}(a), %which is an uniform estimate in $\mu,\chi$ as stated in Lemma \ref{LmGMC0},
 and $O(\dt)$ is the deviation of the initial value $E_3$ from Gerver's value $E^*_3$, which is bounded by $C_3\dt$. Finally, we get the same estimate as \eqref{Eqx1v1right} when arriving at the section $\{x^R_{4,\parallel}=-2,\ v^R_{4,\parallel}>0\}$ with a new $-O_+(\mu\chi)$ added to $x_{1,\parallel}^R$. This completes one application of $\cP$.
The information that we need from $x_{1,\parallel}^R$ is that $x_{1,\parallel}^R<x^R_{1,\parallel}(0)$ after one application of $\cP.$
Indeed, it follows from the first row of \eqref{Eqx1v1right} and the assumption on $x_{1,\parallel}^R$ that
\begin{equation}
\label{Eqx1R//}
x^R_{1,\parallel}-x^R_{1,\parallel}(0)=\frac{2\mu\chi}{1+2\mu}-\frac{\mu(2+3\mu)\chi}{(1+2\mu)^2}-O_+(\mu\chi)=-O_+(\mu^2\chi)-O_+(\mu\chi)<0.
\end{equation}
{\bf Step 4, renormalization. }

One period in Gerver's construction consists of $\cR\circ\tilde\Glob\circ \cP^2$. We repeat the above procedure to get after $\cP^2$ (we use {\it double bar} for the orbit parameters),
\begin{equation}\label{Eqx1v1Renorm}
\begin{aligned}
& \bar{\bar x}^R_{1,\parallel}-x^R_{1,\parallel}(0)=-O_+(\mu\chi)<0,\quad \bar{\bar x}^R_{1,\perp}=O(\mu),\quad  \bar{\bar v}^R_{1,\perp}=O(1/\chi),\\
&\bar{\bar v}^R_{1,\parallel}\in\frac{1}{(1+2\mu)^2}[-\bar c_1,-c_1]-2\sqrt{-2E_3^*}-2\sqrt{-2E_3^{**}}+O(\dt+\mu),
\end{aligned}
\end{equation}
as $1/\chi\ll \mu\ll \dt\to 0$. The last step is to apply the renormalization $\cR$. Let us forget about the rotation by $\beta$ in Definition \ref{Def: renorm} for a moment and consider only the rescaling. We expect that
\[\cR(\bar{\bar v}^R_{1,\parallel})=\frac{1}{\sqrt{\lb}}\bar{\bar v}^R_{1,\parallel}\in[-\bar c_1,-c_1],\]  which is implied by
\[\bar{\bar v}^R_{1,\parallel}\in\frac{1}{(1+2\mu)^2}[-\bar c_1,-c_1]-2\sqrt{-2E_3^*}-2\sqrt{-2E_3^{**}}+O(\dt+\mu)\subset \sqrt{\lb}[-\bar c_1,-c_1],\]where $\lb$ is the renormalization factor in Definition \ref{Def: renorm}.
This implies \[c_1+\tilde c(\dt+\mu)\leq \frac{2}{\sqrt{\lb}-1}(\sqrt{-2E_3^*}+\sqrt{-2E_3^{**}})\leq \bar c_1-\tilde c(\dt+\mu)\] for some constant $\tilde c$ bounding the $O$ in the above estimates. We choose \[\bar c_1= \frac{4}{\sqrt{\lb}-1}(\sqrt{-2E_3^*}+\sqrt{-2E_3^{**}}), \quad c_1=\frac{1}{\sqrt{\lb}-1}(\sqrt{-2E_3^*}+\sqrt{-2E_3^{**}})\] so that the above inequality is satisfied uniformly for all sufficiently small $\mu,\dt,1/\chi$.
This completes the proof for $\cR(\bar{\bar v}^R_{1,\parallel})$.

{\bf Step 5, part (b), the estimates of $\cR(x_1).$}

This estimate follows by iterating \eqref{Eqx1R//} twice and applying the renormalization map $\cR.$

%We choose $\tilde C_1$ such that $\frac{1}{\tilde C_1}\mu\chi\leq  O_+(\mu\chi)\leq \tilde C_1\mu\chi$ in the $x_{1,\parallel}^R$ estimate in \eqref{Eqx1v1Renorm}. We also choose $\bar C_1$ to bound the $O$ in $x_{1,\perp}^R$ estimate of \eqref{Eqx1v1Renorm}. These $\tilde{C}_1,\ \bar C_1$ does not depend on $\mu,\chi$. This completes the proof of part (b). We then apply the renormalization $\cR$ to $x_1$ to get the estimate of $\tilde{\chi}$ and $\cR(x_1)$ in part (c) from Definition \ref{Def: renorm}.

Now let us take care of the rotation $\beta$ of $\cR(\bar{\bar v}_{1,\parallel})$ which is $\arctan\frac{\bar{\bar x}_{1,\perp}}{\bar{\bar x}_{1,\parallel}}=O(\mu/\chi)$ by definition. This produces an error of $O(\mu/\chi)$ to $\bar{\bar v}_{1,\parallel}$, which can be absorbed into the $O$ part of the estimate of $\bar{\bar v}_{1,\parallel}$ in \eqref{Eqx1v1Renorm} so that we leave our choice of $c_1,\ \bar c_1$ unchanged.

If $\tilde\Glob$ in the definition of $\cR$ were the identity, then the rotation Rot$(\beta)$ would set $x_{1,\perp}$ to zero. Applying $\tilde\Glob$ causes an error $O(\mu/\chi)$ to $x_{1,\perp}$ obtained by integrating the estimate of $\dot x_{1,\perp}=O(\frac{\mu}{\chi})$
over time $O(1)$. Since $\mu/\chi\ll \frac{1}{2\sqrt{\lambda \chi}}$, we get the estimate for $\cR( x_{1,\perp})$ in the statement.
The $\cR(x_{1,\parallel})$ estimate comes from the definition of $\tilde\chi=\lambda\chi$ and the cube.

\textbf{Step 6, bounding the angular momentum and vertical component of the velocity $ \bar{\bar v}_{1,\perp}$.}

 After $\cP^2$ and $\mathcal R\circ\tilde\Glob\circ \mathcal P^2$, we have angular momentum conservation \[\bar{\bar{G}}_1+\bar{\bar{G}}_3+\bar{\bar{G}}_4=0,\ \mathrm{and}\ \cR(\bar{\bar{G}}_1)+\cR(\bar{\bar{G}}_3)+\cR(\bar{\bar{G}}_4)=0.\] After renormalization $\cR(\bar{\bar E}_3)$ is now $-1/2+O(1/\sqrt\chi)$, and $|\bar{\bar x}_{4,\perp}|\leq 2$. The energy conservation shows that $|\cR(\bar{\bar v}_4)|\leq 1+O(\mu)$, so that we have \[|\cR(\bar{\bar{G}}_3)|,\ |\cR(\bar{\bar{G}}_4)|\leq 2C_0'+1.\] %using assumption $(i),\ (ii)$ in the lemma.  %So we get according to assumption $(iii)$\[|\cR(\bar{\bar{G}}_1)|\leq |\cR(\bar{\bar{G}}_3)|+|\cR(\bar{\bar{G}}_4)|\leq 4 ( C_4+2).\]

We get from the definition of angular momentum and \eqref{Eqx1v1Renorm} that
\begin{equation}\nonumber |\cR(\bar{\bar{ v}}_{1,\perp})|\leq \left|\cR(\bar{\bar{v}}^R_{1,\parallel})\frac{ \cR(\bar{\bar{x}}^R_{1,\perp})}{\cR(\bar{\bar{ x}}^R_{1,\parallel})}\right|+\left|\frac{ \cR(\bar{\bar{G}}_1)}{ \cR(\bar{\bar{x}}^R_{1,\parallel})}\right|\leq O\left(\frac{\mu}{\chi}\right)+\frac{2(2C_0'+1)}{\tilde\chi}\leq \frac{4(C_0'+1)}{\tilde\chi}.\end{equation}
This completes the proof of the $\cR(\bar{\bar v}_{1,\perp})$ estimate in part (c) by defining $C_1:=4(C_0'+1)$.
\end{proof}

\subsection{Choosing angular momentum, proof of Lemma \ref{LmChooseAM}}
\label{SSAngMom}
In this section, we prove Lemma \ref{LmChooseAM}. We first need two auxiliary results.
\begin{sublemma}
\label{LmHitTarget}
Let $S\subset U_1(\dt')$ and $\te_4$ be as in part $(a)$ of Lemma \ref{LmChooseAM}. Then there exists $\tilde{\ell}_3$ such that  $\pi_{e_4}\cP(\mathcal S(\te_4, \tilde{\ell}_3))=e_4^{**}.$ There are analogous statements for $S\subset U_2(\dt')$ and $S\subset U_0(\dt')$ as in part $(b)$ and part $(c)$ of Lemma \ref{LmChooseAM}.
\end{sublemma}
We give the proof of this sub lemma immediately we complete the proof of Lemma \ref{LmChooseAM}. The next sublemma is easy to prove.
\begin{sublemma}
\label{LmCover}
Let $F$ be a map from $\R^2$ to $\R^2$ such that
\begin{enumerate}
\item $F(a^*)=b^*$, for some $a^*,b^*\in \R^2$;
\item if $|F(z)-b^*|<R$ for some $R>0,$ then $\Vert dF(z)(X)\Vert \geq \brchi \Vert X\Vert,$ for all vectors $X\in T_z\R^2$ and for some $\ \bar\chi>1.$
\end{enumerate}
 Then for each $b$ such that
$|b-b^*|<R$ there exists $z$ such that $|z|<R/\brchi$ and $F(z)=b.$
\end{sublemma}
With the help of the two sub lemmas, we finish the proof of Lemma \ref{LmChooseAM}.
\begin{proof}[Proof of Lemma \ref{LmChooseAM}]
We consider part (a) first.
Pick a piece of $\dt'$-admissible surface $S\subset U_1(\dt)$ for $\dt'<\dt$.

Choose any ${\tilde e}_4\in (e^*_4-\delta ^{\prime}+\frac 1{\chi},e^*_4+\delta ^{\prime}-\frac 1{\chi})$.  By Sublemma \ref{LmHitTarget}, there exists ${\tilde \ell}_3$ such that $\pi_{e_4}{\mathcal P}({\mathcal S}({\tilde e}_4,{\tilde \ell}_3))=e_4^{**}$.  Let $\ell_3^{\prime}=\pi_{\ell_3}{\mathcal P}({\mathcal S}({\tilde e}_4,{\tilde \ell}_3))$.  Then ${\mathcal Q}_1({\tilde e}_4,{\tilde \ell}_3)=(e_4^{**},\ell_3^{\prime})$. Our coordinates allow us to treat ${\mathcal Q}_1$ as a map from ${\mathbb R}\times {\mathbb T}\to {\mathbb R}\times {\mathbb T}$.  Let ${\tilde {\mathcal Q}}_1:{\mathbb R}^2\to {\mathbb R}^2$ be the covering map of ${\mathcal Q}_1$.  We now apply Sublemma \ref{LmCover}, using $a^*=({\tilde e}_4,{\tilde \ell}_3)$, $b^*=(e_4^{**},\ell_3^{\prime})$, $F={\tilde {\mathcal Q}}_1$, $\bar \chi = c\chi$ (from Lemma \ref{LmKone}) and letting $R$ be the distance from $b^*$ to the boundary of ${\mathcal C}_2(\delta )$.  This gives us a surjective map, satisfying the expansion condition, from a subset of ${\mathcal C}_1(\delta^{\prime})$ to the open disk of radius $R$ around $b^*$.  To extend this map to other parts of ${\mathcal C}_2(\delta )$, we can apply Sublemma \ref{LmCover} again, using a different $b^*$, choosing each $b^*$ from a region to which the map has already been extended.  Because ${\mathcal C}_2(\delta )$ is open and connected, we can eventually extend our surjective map to all of ${\mathcal C}_2(\delta )$, although we might have to apply Sublemma \ref{LmCover} an infinite number of times.  Because of the expansion condition, and the fact that the diameter of ${\mathcal C}_2(\delta )$ is $O(1)$, the diameter of the pre-image $V_1({\tilde e}_4)$ is $O(1/\chi)$.  This establishes Lemma \ref{LmChooseAM}(a).

Part (b) is similar to part (a).  For part (c), we first apply Sublemma \ref{LmHitTarget} to find ${\tilde \ell}_3$ for each ${\tilde e}_4$ such that $\pi_{e_4}{\mathcal P}({\mathcal S}({\tilde e}_4,{\tilde \ell}_3))=e_4^{**}$ for a given admissible surface $S\subset U_0(\delta ^{\prime})$. This gives the first statement in part (c).  We next introduce the renormalization ${\mathcal R}$ based at the point ${\mathcal S}({\tilde e}_4,{\tilde \ell}_3)$ and obtain ${\mathcal P}{\mathcal R}\tilde {\mathbb G}$ satisfying $\pi_{e_4}{\mathcal P}{\mathcal R}\tilde {\mathbb G}({\mathcal S}({\tilde e}_4,{\tilde \ell}_3))=e_4^{**}+O(\mu)$. (We get from ${\mathcal S}({\tilde e}_4,{\tilde \ell}_3)$ to ${\mathcal P}({\mathcal S}({\tilde e}_4,{\tilde \ell}_3))$ by following the Hamiltonian flow, and we get to $\tilde {\mathbb G}({\mathcal S}({\tilde e}_4,{\tilde \ell}_3))$ by following the first part of that same flow.  ${\mathcal P}{\mathcal R}$ continues the flow on a rotated, rescaled, reflected orbit, but those transformations do not change $e_4$ or $\ell_3$, so if we continued the flow to the same section as ${\mathcal P}({\mathcal S}({\tilde e}_4,{\tilde \ell}_3))$, but rotated, rescaled, and reflected, we would still get $e_4^{**}$.  Instead, we continue a distance of $O(1)$ to a new section, mostly because of the rescaling, but also from the rotation, and this changes $e_4$ by $O(\mu)$, because of the interaction between $Q_3$ and $Q_4$.) Because $\delta^{\prime}\gg \mu$, we still have ${\mathcal Q}_0({\tilde e}_4,{\tilde \ell}_3)\in {\mathcal C}_2(\delta^{\prime})$. By Lemma \ref{LmKone} and Sublemma \ref{LmCover} again, we get a neighborhood $V_0(\tilde e_4)$ such that ${\mathcal Q}_0$ maps $V_0(\tilde e_4)$ surjectively to ${\mathcal C}_2(\delta)$. By Lemma \ref{LmKone}, the weakest expansion rate of $d{\mathcal P}$ restricted to the cone fields is a constant times $\chi$, hence the diameter of $V_0(\tilde e_4)$ is $O(1/\chi)$.  Since we have $O(1/\chi)\ll 1/\sqrt {\chi}$, we get that ${\mathcal R}$ is well-defined in a neighborhood of $V_0(\tilde e_4)$.

Part (d) is given in Lemma  \ref{Lm: landau} (b).

\end{proof}

\begin{proof}[Proof of Sublemma~\ref{LmHitTarget}.]
%We will apply Sublemma \ref{LmCover} to the map $\pi_{e_4}\cP(\mathcal S_1(\tilde e_4,\tilde\ell_3))$, which is a function of one variable by fixing $\tilde e_4$.

The idea is to apply the strong expansion of the Poincar\'{e} map in a neighborhood of the collisional orbit studied in Lemma \ref{Lm: nocollision}. Note that Delaunay coordinates regularize double collisions in the sense that none of the variables blows up at a double collision, so that our estimate of $d\Glob$ holds also for collisional orbits. We give the proof only for initial conditions on an admissible surface $S\subset U_1(\dt')$. The other cases are similar.

{\bf Step 1.}
We first show that there is a collisional orbit satisfying $x_4^L(t)=0$ at some time $t$ as $\ell_3$ varies.

%The proof of Lemma \ref{Lm: nocollision} shows that $x^R_4$ nearly returns to its initial position.
We apply the local map $\Loc$ to the admissible surface $S$ with $\tilde e_4$ fixed.
Sublemma \ref{KeepDirection} and its proof shows that if after the application of the local map we have
$\theta_4^+(0)=\pi-\bar\theta$, $0<\bar\theta<(\tilde \theta$ in Lemma \ref{Lm: loc}), then the $x^L_{4,\perp}$ coordinate is a large positive number of order $\bar\theta\chi$ when the orbit hits $\{x^L_{4,\parallel}=0\}$. Similarly, if $\theta_4^+(0)=\pi+\bar\theta$ then the orbit hits the line $\{x^L_{4,\parallel}=0\}$ so that its $x^L_{4,\perp}$ coordinate is a large negative number.
By the Intermediate Value Theorem there has to be an outgoing angle $\theta_4^+(0)$ leading to a collisional orbit with $x_4^L=0$. So it suffices to show that our admissible surface $S$ contains points $\bx_1, \bx_2$ such that
$\theta_4^+(\bx_1)=\pi-\bar\theta,$ $\theta_4^+(\bx_2)=\pi+\bar\theta.$

We have the expression $\theta^+_4=\pi+\tilde g_4+\arctan\frac{\tilde G_4}{\tilde L_4}$ (see \eqref{EqAsymp} for the formula and see Lemma \ref{Lm: glob} for the {\it tilde} notation). By direct calculation we find $d\theta_4^+=\tilde L_4\hat\brlin$ (see Lemma \ref{Lm: glob} for $\hat{\bar{\mathbf l}}$ and Notation \ref{NotAsymp} for $\theta^+$). Since $TS\subset \mathcal K_1$ and the cone $\mathcal K_1$ is centered at the plane $span\{w_1,\tilde w\}$ where $\tw=\frac{\partial}{\partial\ell_3}$ (Definition \ref{DefKone}). We get \[d\theta^+\cdot\left(d\Loc \frac{\partial}{\partial\ell_3}\right)=\tilde L_4\hat\brlin_1\cdot\left(\frac{1}{\mu}(\hat{\mathbf u}_1 (\hat\lin_1 \cdot\tw)+o(1))+O(1)\right)=c(\bx)/\mu,\]
where $c(\bx)\neq 0$ by Lemma \ref{LmNonDeg}(c).
So it is enough to vary $\ell_3$ in a $O(\mu)$ neighborhood of a point whose outgoing asymptote satisfies the assumption of Lemma \ref{Lm: loc} in order to get angles of outgoing asymptotes $\pi\pm\bar\theta$. Thus we get a collisional orbit for some point denoted by $(\tilde e_4,\hat\ell_3)$.

{\bf Step 2.} We next show that there exists $\ell_3$ such that $\pi_{e_4}(\cP(\mathcal S(\te_4,\ell_3)))$ is close to $e_4^{**}$ for fixed $\tilde e_4$. Now the function $\pi_{e_4}(\cP(\mathcal S(\te_4,\cdot)))$ is a function of one variable $\ell_3$ defined in a neighborhood of $\hat \ell_3$. %We will apply Sublemma \ref{LmCover} to this one-variable function.

Since $e_4=\sqrt{1+(G_4/L_4)^2}$ is not an injective function of $G_4$, we use $ G_4$ instead of $e_4$ and study the function $G_4(\cP(\mathcal S(\te_4,\ell_3)))$.

 %Lemma \ref{Lm: nocollision} verifies the assumption (1) in Sublemma \ref{LmCover}.% The assumption (2) of Sublemma \ref{LmCover} is verified by Lemma \ref{}

 Next we compute
 \begin{equation}\label{EqExpansion}
 \frac{d}{d\ell_3}G_4(\cP(\mathcal S(\te_4,\ell_3)))=dG_4 d\Glob d\Loc \frac{\partial}{\partial \ell_3}=\chi^2  (dG_4 w_1) \hat\brlin_1\cdot\left(d\Loc \frac{\partial}{\partial \ell_3}\right)+O(\chi)=\bar c(\bx)\frac{\chi^2}{\mu}+O(\chi),\end{equation}
 where $ \hat\brlin_1\cdot\left(d\Loc \frac{\partial}{\partial \ell_3}\right)$ is calculated in Step 1 and $dG_4 \tilde w=0$, $dG_4  w_1=1$. This derivative calculation holds provided {\bf AG} and {\bf AL} are satisfied so that we can apply Lemma \ref{Lm: loc} and \ref{Lm: glob}.

For the collisional orbit, its bouncing back orbit will intersect the section $\{x_{4,\parallel}^R=-2\}$ at a point that is within $O(\mu)$ distance from the initial point. To see this, we apply to \eqref{eq: Q4} the estimate of the difference of $G_4$ for two orbits in Lemma \ref{Lm: nocollision}, and the $O(\mu)$ estimate of angle of asymptotes and oscillations of $L_3,G_3,g_3$ in Lemma \ref{LmGMC0}(b).

%Suppose the collisional orbit in Step 1 occurs at $\ell_3=\hat \ell_3$. As we vary $\ell_3$, the same calculation as in Step 1 gives  $\hat\brlin_j\cdot\left(d\Loc \frac{\partial}{\partial \ell_3}\right)=\bar c_j(\bx)/\mu,\ \bar c_j(\bx_j) \neq 0$ and that $\brv_j$ contains nonzero $\partial/\partial e_4$ component.

%Therefore the projection of $\mathcal{P}=\mathbb G\circ\mathbb L$ to the $e_4$ component, i.e. $\bre_4(\ell_3, \te_4)$ as a function of $\ell_3$ is strongly expanding with derivative bounded from below by $\brc\chi^2/\mu$ provided %that the assumptions of Lemma \ref{Lm: position} are satisfied (for the orbits of interest this will always be the case according to Lemma \ref{Lm: strip}).

%We have the same strong expansion for $\bar G_4(\ell_3, \te_4)$ since our estimates of the $d\Loc,d\Glob$ are done using $G_4$ instead of $e_4$.
So we consider the image of an $\epsilon$ interval centered at $\hat\ell_3$ under the map $G_4(\cP(\mathcal S(\te_4,\cdot)))$. By increasing $\epsilon$ from zero, we see that the assumptions {\bf AG} and {\bf AL} are all satisfied provided the returning orbit has $|x^R_{4,\perp}|<C_0'$ on the section $\{x_{4,\parallel}^R=-2\}$. So we always have the estimate \eqref{EqExpansion} and we can keep increasing $\epsilon$ until the inequality $|x^R_{4,\perp}|<C_0'$ is violated.

Thus it follows from the strong expansion of the map $G_4(\cP(\mathcal S(\te_4,\cdot)))$ and Sublemma \ref{LmCover} that an $R$-neighborhood of
$G_4^{**}$ (corresponding to $e_4^{**}$) is covered if $\ell_3$ varies in an $\frac{R\mu}{\brc\chi^2}$-neighborhood of $\hat\ell_3$. Then we use the intermediate value theorem to find $\tilde\ell_3$ such that $\pi_{e_4}\cP(\mathcal S(\te_4,\tilde\ell_3))=e_4^{**}.$ This completes the proof. %Finally, to see $\cP(\mathcal S(\te_4, \tilde{\ell}_3))\in U_2(\delta')$, we refer to the proof of Lemma \ref{LmChooseAM}.

%{\bf Step 3.} We show that for the orbit just constructed $\cP(\tilde\ell_3, \te_4)\in U_2(\delta).$
%We apply Lemma \ref{Lm: glob} to get that $\theta_4^+=O(\mu)$. Therefore by Lemma \ref{LmLMC0}
%$\Loc(\te_4, \tilde\ell_3)$  has $(E_3, e_3, g_3)$ close to
%$\Ger_{\te_4, 2, 4}(E_3(\te_4, \tilde\ell_3), e_3(\te_4, \tilde\ell_3), g_3(\te_4, \tilde\ell_3)).$
%It follows that
%$$|E_3-E_3^{**}|<K\delta, \quad |e_3-e_3^{**}|<K\delta, \quad |g_3-g_3^{**}|<K\delta. $$
%Next Lemma \ref{LmGMC0} shows that after the application of $\Glob$, $(E_3, e_3, g_3)$ change little and
%$\theta_4^-$ becomes $O(\mu).$
\end{proof}

\section{The variational equation and its solution}\label{sct: var}
In this section, we first derive a formula for estimating the derivatives of $(I),(III),(V)$. This formula will reduce the derivative computation to the fundamental solution of the variational equation and two boundary terms, where the latter takes care of the issue that different orbits might take different time to travel between two consecutive sections. The rest of this section is devoted to estimating the variational equations and their fundamental solutions. This will give the estimates of $N_1,N_5,M$ in Proposition \ref{Prop: main}.

Let us first recall the notations. We use $\mathcal V=(\mathcal V_3;\mathcal V_1;\mathcal V_4)=(L_3,\ell_3,G_3,g_3;x_1,v_1;G_4,g_4)$ to denote the Delaunay coordinates. We use $\mathcal X=(\mathcal X_3;\mathcal X_1;\mathcal X_4)=(x_3,v_3;x_1,v_1;x_4,v_4)$ to denote the Cartesian coordinates. We use $\mathcal F=(\mathcal F_3,\mathcal F_1,\mathcal F_4)$ to denote the RHS of the Hamiltonian equation in Delaunay coordinates, i.e. $\frac{d\mathcal V}{d\ell_4}=\mathcal F$.

\subsection{Derivation of the formula for the boundary contribution}
Suppose that we want to compute the derivative of the Poincar\'{e} map between the sections $S^i$ and $S^f.$ We use $\mathcal V^i$ to denote the values of variables $\mathcal V$ restricted to the {\it initial} section $S^i$, while $\mathcal V^f$ means values of $\mathcal V$ on the {\it final} section $S^f$.  $\ell_4^i$ means the initial time and $\ell_4^f$ means the final time. We want to compute the derivative
$\cD$ of the Poincar\'{e} map along the orbit starting from $(\mathcal V^i_*, \ell_*^i)$ and ending at $(\mathcal V^f_*, \ell_*^f).$ We have
$\cD=dF_3 dF_2 dF_1$ where
$F_1$ is the Poincar\'{e} map between $S^i$ and $\{\ell_4=\ell_*^i\},$
$F_2$ is the flow map between the times $\ell_*^i$ and $\ell_*^f,$ and
$F_3$ is the Poincar\'{e} map between $\{\ell_4=\ell_*^f\}$ and $S^f.$ We have
$F_1=\Phi(\mathcal V^i, \ell_4(\mathcal V^i), \ell_*^i)$ where $\Phi(\mathcal V, a, b)$ denotes the flow map
starting from $\mathcal V$ at time $a$ and ending at time $b.$ Since
\[ \frac{\partial \Phi}{\partial \mathcal V}(\mathcal V_*^i, \ell_*^i, \ell_*^i)=\Id, \quad
\frac{\partial \Phi}{\partial a}=-\mathcal{F} \]
we have
$dF_1=\Id-\mathcal F(\ell_4^i)\otimes \frac{D \ell_4^i}{D \mathcal V^i}.$
Inverting the time we get
$dF_3=\left(\Id-\mathcal F(\ell_4^f)\otimes \frac{D \ell_4^f}{D \mathcal V^f}\right)^{-1}.$
Finally $dF_2=\frac{D\mathcal V(\ell_*^f)}{D\mathcal V(\ell_*^i)}$ is just the fundamental solution of the variational equation between the times
$\ell_*^i$ and $\ell_*^f.$ Thus we get
\begin{equation}
\cD
=\left(\Id-\mathcal F(\ell_4^f)\otimes \frac{D \ell_4^f}{D\mathcal V^f}\right)^{-1}\frac{D\mathcal V(\ell^f_4)}{D\mathcal V(\ell^i_4)}\left(\Id-\mathcal F(\ell_4^i)\otimes \frac{D \ell_4^i}{D \mathcal V^i}\right).
\label{eq: formald4}
\end{equation}
To invert $\Id-\mathcal F(\ell_4^f)\otimes \frac{D \ell_4^f}{D\mathcal V^f}$, we need $\left|\frac{D \ell_4^f}{D\mathcal V^f}\cdot \mathcal F(\ell_4^f)\right|<1$. Suppose this inequality is satisfied. Indeed for all the cases that we encounter in this paper, the inner product is at most $O(\mu)$. We use
\begin{equation}
\label{EqInversion}
\begin{aligned}
\left(\Id-\mathcal F(\ell_4^f)\otimes \frac{D \ell_4^f}{D\mathcal V^f}\right)^{-1}%&=\mathrm{Id}+\sum_{n= 1}^\infty\left(\mathcal F(\ell_4^f)\otimes \frac{D \ell_4^f}{D\mathcal V^f}\right)^n\\
%&=\mathrm{Id}+\mathcal F(\ell_4^f)\otimes \frac{D \ell_4^f}{D\mathcal V^f}\sum_{n= 0}^\infty\left(\left(\frac{\partial \ell_4}{\partial\mathcal V}\right)^R\cdot \mathcal{F}^R\right)^n\\
&= \mathrm{Id}+\frac{1}{1-\frac{D \ell_4^f}{D\mathcal V^f}\cdot \mathcal F(\ell_4^f)}\mathcal F(\ell_4^f)\otimes \frac{D \ell_4^f}{D\mathcal V^f}.
\end{aligned}\end{equation}
\begin{Def}\label{DefBoundary}
We call the two terms $dF_1,\ dF_3$ the {\rm boundary contributions. }
\end{Def}

\subsection{Estimates of the variational equation}
Recall $\su(\ell_4),\ \sv(\ell_4)$ defined in Lemma \ref{Lm: derU} and Lemma \ref{Lm: ham} respectively and define further
$$\begin{cases}
&\su^R=\frac{1}{\chi^3}+\frac{\mu}{|\ell_4|^3+1},\\
&\su^L= \frac{1}{\chi^3},
\end{cases}\quad
\begin{cases}
&\sv^R=\frac{1}{\chi^2}+\frac{\mu}{|\ell_4|^3+1},\\
& \sv^L=\frac{1}{\chi^2},
\end{cases}
\quad
\begin{cases}
&\sw^R=\frac{1}{\chi}+\frac{\mu}{|\ell_4|^3+1},\\
&\sw^L=\frac{1}{\chi}
\end{cases}.
 $$
 When the super-scripts $R,L$ are omitted, we use $\su,\sv,\sw$ to represent either the right or left case depending on the context.

The lemmas in this and the next two sections will be under the standard assumption {\bf AG}. For the same reason as Corollary \ref{Cor: ham}, we can use all the conclusions of Lemma \ref{Lm: ham}, since its assumptions are implied by {\bf AG} due to Lemma \ref{Lm: strip} and Lemma \ref{Lm: position}.

We start with the following auxiliary estimate. We use the notation $\partial$ to denote the partial derivative with respect to the 12 Delaunay variables and by $\nabla_{\mathcal V}$ the covariant derivative with respect to the ten variables $\mathcal V$, where $L_4$ is solved for on the zero energy level. In particular, we have $\nabla_{\mathcal V}=\frac{\partial}{\partial \mathcal V}+\frac{\partial}{\partial L_4}\otimes \nabla_{\mathcal V} L_4 $.  Note that for the covariant derivative, we think of $\ell_4$ as time and do not take derivatives with respect to it.

\begin{Lm} \label{Lm: dL4} Assume {\bf AG}, then
we have the following estimates:
\begin{itemize}
\item[(a)]  $\nabla_{\mathcal V}L_4 =(1,0_{1\times 9})+O\left(\mu, \su,\su,\su;\frac{1}{\mu\chi^2},\frac{1}{\chi^3}, \mu,\frac{\mu}{\chi}; \sv,\sv\right),$\\
\item[(b)] $\nabla_{\mathcal V}(dt/d\ell_4) =-3L_3^2(1,0_{1\times 9})+O\left(\mu,\su,\su,\su;\frac{1}{\mu\chi^2},\frac{1}{\chi^3}, \mu,\frac{\mu}{\chi}; \sw,\sw\right).$\\
\item[(c)] Corollary \ref{Cor: ham} can be simplified as $$\mathcal F=(0,-1,0_{1\times 8})+O\left(\su,\mu,\su,\su;\mu,\frac{\mu}{\chi},\frac{1}{\mu\chi^2},\frac{1}{\chi^3}; \sv,\sv\right).$$
 \end{itemize}
\end{Lm}
\begin{proof}

We use the same argument as in the proof of Lemma \ref{Lm: ham} to estimate the angles among the vectors $x_4,x_1,\frac{\partial x_4}{\partial G_4},$ {\it etc}. Part (a) follows directly from equation \eqref{eq: L4}. That is, for each of the ten variables $\mathcal V$, we apply $\nabla_{\mathcal V}$ to both sides of equation (6.1), and then apply the above formula for $\nabla_{\mathcal V}$ to $\nabla_{\mathcal V}(E_1+U)$.  This results in a term with $\nabla_{\mathcal V}L_4$ on the right side of the equation, so we bring that term to the left side and solve for $\nabla_{\mathcal V}L_4$.
Part (b) follows from equation \eqref{eq: dt/dl4} and equation \eqref{eq: L4}.

Part (a) and (b) differ only in their $G_4$ and
$g_4$ components. The estimates of the $G_4,g_4$ components
in part (a) is the same as that of Lemma \ref{Lm: ham}.
However, for part (b) we have a $\frac{\partial U}{\partial L_4}$ term in \eqref{eq: dt/dl4}. As a result we do not have the almost orthogonality of $\frac{\partial x_4}{\partial G_4}$ with $x_1$ as we did in the proof of Lemma \ref{Lm: ham}. %In the right case of part (b), there is also a contribution of order $\mu/(|\ell_4|^3+1)$ resulting in a worse estimate.
\end{proof}

We also need to figure out the order of magnitude of each entry of the RHS of the variational equation.

\begin{Lm}\label{Lm: var}
%%%%%We denote by $v(\ell_4)=\frac{\mu\sG}{\chi^2}+\frac{\mu}{|\ell_4|^3+1}$, $u(\ell_4)=\frac{1}{\chi^3}+\frac{\mu}{|\ell_4|^3+1}$ and $w(\ell_4)=\frac{1}{\chi}+\frac{\mu}{|\ell_4|^3+1}$ as before in Lemma \ref{Lm: derU}, \ref{Lm: dL4} and Corollary \ref{Cor: ham}.
Assume {\bf AG}, then\begin{itemize}
\item[(a)]
\begin{equation}\nabla_{\mathcal V}\mathcal F \lesssim\left[\begin{array}{llll|llll|ll}
\su&\su&\su&\su&\frac{\su}{\chi}&\frac{\su}{\chi}&\mu \su&\frac{\su}{\chi}&\su&\su\\
\mu&\su&\su&\su&\frac{1}{\mu\chi^2}&\frac{1}{\chi^3}&\mu&\frac{\mu}{\chi}&\sw&\sw\\
\su&\su&\su&\su&\frac{\su}{\chi}&\frac{\su}{\chi}&\mu \su&\frac{\su}{\chi}&\su&\su\\
\su&\su&\su&\su&\frac{\su}{\chi}&\frac{\su}{\chi}&\mu \su&\frac{\su}{\chi}&\su&\su\\
\hline
\mu&\su\mu&\su\mu&\su\mu&\frac{1}{\chi^2}&\frac{\mu}{\chi^3}&\mu&\frac{\mu^2}{\chi}& \mu \sw&\mu\sw\\
\frac{\mu}{\chi}&\frac{\su\mu}{\chi}&\frac{\su\mu}{\chi}&\frac{\su\mu}{\chi}&\frac{1}{\chi^3}&\frac{\mu}{\chi^4}&\frac{\mu^2}{\chi}&\mu&\frac{\mu  \sw}{\chi}&\frac{\mu  \sw}{\chi}\\
\frac{1}{\mu\chi^2}&\frac{\su}{\chi}&\frac{\su}{\chi}&\frac{\su}{\chi}&\frac{1}{\mu\chi^3}&\frac{1}{\chi^4}&\frac{1}{\chi^2}&\frac{1}{\chi^3}&\frac{1}{\chi^2}&\frac{1}{\chi^2}\\
\frac{1}{\chi^2}&\frac{\su}{\chi}&\frac{\su}{\chi}&\frac{\su}{\chi}&\frac{1}{\mu\chi^4}&\frac{1}{\mu\chi^3}&\frac{\mu}{\chi^2}&\frac{\mu}{\chi^3}&\frac{1}{\chi^2}&\frac{1}{\chi^2}\\
%&\frac{\mu\sG}{\chi^3}\\
\hline
\sw&\su&\su&\su&\frac{\sw}{\chi} &\frac{\sw}{\chi}&\mu \sw&\frac{ \sw}{\chi}&\sw&\sw\\
\sw&\su&\su&\su&\frac{\sw}{\chi} &\frac{\sw}{\chi}&\mu \sw&\frac{\sw}{\chi}&\sw&\sw
\end{array}\right]\nonumber
\end{equation}
\item[(b)]
In addition we have in the right case
\[\nabla_{\mathcal V^R_4}\mathcal{F}^R_4 =
\frac{1}{\chi}\frac{\xi}{(1-\xi)^3} \left[
\begin{array}{cc}
-\frac{L^4\sign(v_{4,\parallel})}{(G^2+L^2)}&L^3\\
\frac{- L^5 }{(G^2+L^2)^2}& \frac{ L^4\sign(v_{4,\parallel})}{(G^2+L^2)}\\
\end{array}
\right]+O\left(\frac{\mu}{\chi}+\frac{\mu}{|Q_4|^2}\right),
\]
where $\xi=\frac{|x_4|}{\chi}.$\\
and in the left case
\[\nabla_{\mathcal V^L_4} \mathcal{F}^L_4=
-\frac{1}{\chi} \frac{\xi}{(1-\xi)^3}\left[\begin{array}{cc}
L^2 \sign(v_{4,\parallel})& L^3\\
- L& - L^2\sign(v_{4,\parallel})\\
\end{array}
\right]+O\left(\frac{\mu}{\chi}\right),
\mathrm{\ where\ } \xi=\frac{|x_4|}{\chi}.\]

\end{itemize}
\end{Lm}
\begin{proof}
The proof is organized as follows. We start with a formal computation which gives the formula for estimating each block of the matrices. Next we work on the estimate of each block.  In the left case, a near collision may occur. Since we treat $\ell_4$ as the new time, we do not take derivatives with respect to $\ell_4$ when deriving the variational equation, so in the Hamiltonian equations as well as the variational equation we need only $C^0$ dependence on $\ell_4$, which is always true (see \eqref{eq: delaunay4} and \eqref{eq: hypul}) even when we have a collision between $Q_1$ and $Q_4$. In the estimate of the rows related to $G_4$ and $g_4$ in the left case, we will need Lemma \ref{LmSmallu}.

$\bullet$ \textbf{A formal computation.}

Using our notation, the two matrices are $\frac{d \mathcal F}{d \mathcal V}$.
They are the coefficient matrices of the variational equations $\frac{d}{d\ell_4} \dt \mathcal V=\frac{d \mathcal F}{d \mathcal V}\dt \mathcal V$.
We split each of the two matrices into nine blocks corresponding to $\frac{\partial \mathcal F_i}{\partial \mathcal V_j}$, where $i,j=3,1,4$.

We have $\frac{d \mathcal F}{d \mathcal V}=\nabla_{\mathcal V}\mathcal F $. Notice that $\mathcal F=\frac{dt}{d\ell_4}J\frac{\partial H}{\partial {\mathcal V}}$ where $J$ is the standard symplectic matrix. Then we get the formal expression to calculate the two matrices:
\begin{equation}\label{eq: formalvar}  \nabla_{\mathcal V_j} \mathcal F_i =J_i\frac{\partial H}{\partial \mathcal V_i}\otimes \left(\nabla_{\mathcal V_j}\frac{dt}{d\ell_4}\right)+\frac{dt}{d\ell_4}\left(\nabla_{ \mathcal V_j}J_i\frac{\partial H}{\partial \mathcal V_i}\right).\end{equation}

Note that $\nabla_{\mathcal V}\frac{dt}{d\ell_4}$ is done in Lemma \ref{Lm: dL4} and $J_i\frac{\partial H}{\partial \mathcal V_i}$ is done in Corollary \ref{Cor: ham},
the term $\frac{dt}{d\ell_4}=O(1)$ and the new term we need to consider is $\nabla_{ \mathcal V_j}J_i\frac{\partial H}{\partial \mathcal V_i}$. For $i,j=3,1,4$, we have
\begin{equation}\label{eq: formalvar2}
\begin{aligned}
\nabla_{ \mathcal V_j}J_i\frac{\partial H}{\partial \mathcal V_i}&=\frac{\partial }{\partial \mathcal V_j}J_i\frac{\partial H}{\partial \mathcal V_i}+\frac{\partial }{\partial L_4}J_i\frac{\partial H}{\partial \mathcal V_i}\otimes \nabla_{\mathcal V_j} L_4 \\
&=\frac{\partial \mathcal X_j}{\partial \mathcal V_j}\frac{\partial }{\partial \mathcal X_j}\left(J_i\frac{\partial H}{\partial \mathcal X_i} \frac{\partial \mathcal X_i}{\partial \mathcal V_i}\right)+\frac{\partial }{\partial L_4}\left(J_i\frac{\partial H}{\partial \mathcal X_i} \frac{\partial \mathcal X_i}{\partial \mathcal V_i}\right)\otimes \nabla_{\mathcal V_j} L_4 \\
&=\frac{\partial \mathcal X_j}{\partial \mathcal V_j}J_i\frac{\partial^2H}{\partial \mathcal X_j \partial \mathcal X_i}\frac{\partial \mathcal X_i}{\partial \mathcal V_i}+J_i\frac{\partial H}{\partial \mathcal X_i}\frac{\partial^2 \mathcal X_i}{\partial \mathcal V_j\partial \mathcal V_i}\\
&+\left(\frac{\partial \mathcal X_4}{\partial  L_4}J_i\frac{\partial^2H}{\partial \mathcal X_4 \partial \mathcal X_i}\frac{\partial \mathcal X_i}{\partial \mathcal V_i}+J_i\frac{\partial H}{\partial \mathcal X_i}\frac{\partial^2 \mathcal X_i}{\partial L_4\partial \mathcal V_i}\right)\otimes \nabla_{\mathcal V_j}L_4 , \\
\end{aligned}\end{equation}
where $J_i$ is the standard symplectic matrix in the $i$ component.
We know by Lemma \ref{Lm: position} that $\frac{\partial \mathcal X_3}{\partial \mathcal V_3}=O(1)$,
$\frac{\partial \mathcal X_1}{\partial \mathcal V_1}=\mathrm{Id}_{4}$
and $\frac{\partial \mathcal X_4}{\partial L_4},\frac{\partial \mathcal X_4}{\partial \mathcal V_4}=O(\ell_4)$ according to Lemma \ref{Lm: dx/dDe}.
Moreover, $\frac{\partial H}{\partial \mathcal X_j}$, $\frac{\partial^2H}{\partial \mathcal X_i \partial \mathcal X_j}$ are done in Lemma \ref{Lm: derU}, $\frac{\partial^2 \mathcal X_j}{\partial \mathcal V_i\partial \mathcal V_j}$ and $\frac{\partial^2 \mathcal X_j}{\partial L_4\partial \mathcal V_j}$ are done in Lemma \ref{Lm: 2ndderivative}, and finally, $\frac{\partial L_4}{\partial \mathcal V_i}$ is done in Lemma \ref{Lm: dL4}. Now every term in \eqref{eq: formalvar} and \eqref{eq: formalvar2} is already estimated. What we need to do below is to find the leading term for each matrix entry among all the terms above.

In the following we analyze the two matrices blockwise. We will handle the left and right cases simultaneously. %The main difference is that Lemma \ref{Lm: U}(d) does not apply to the left case.

$\bullet \nabla_{\mathcal V_3} \mathcal F_3 $.

For this block the $(2,1)$ entry is special. All the remaining entries are done together.

Using the Hamiltonian equations \eqref{eq: ham}, we see the $(2,1)$ entry is
\[\nabla_{L_3}\left(\frac{dt}{d\ell_4}\left(\frac{m_3k_3^2}{L_3^3}+\frac{\partial U}{\partial L_3}\right)\right)\]\[=-\nabla_{L_3}\left(m_3k_3^2\left(1+L_3^2\left(\frac{v_1^2}{2m_{1}} -\frac{k_{1}}{ |x_1|}\right)+3UL_3^2\right)-L_3^6\frac{\partial U}{\partial L_4}+L_3^3\frac{\partial U}{\partial L_3}+h.o.t.\right)\]
The leading term is $\nabla_{L_3} L_3^2 \cdot\left(\frac{v_1^2}{2m_{1}} -\frac{k_{1}}{ |x_1|}\right)=O(\mu)$
since
$$m_1=O(1/\mu),\ L_3=O(1),\ v_1=O(1),\ |x_1|=O(\chi).$$
All the other subleading terms involve derivatives of $U$, which are at most $\sw$ coming from $\nabla_{L_3}\frac{\partial U}{\partial L_4}$. This completes the estimate of the $(2,1)$ entry. The other three entries in the second row are estimated by $\su$ mainly contributed by $\frac{\partial U}{\partial x_3}$ (Lemma \ref{Lm: derU}).

For the first, third and fourth rows, we use formula \eqref{eq: formalvar}. The first summand in \eqref{eq: formalvar} contributes $(\su,\su,\su)\otimes (\su,1,\su,\su)$ to the three rows.
The second summand is given by \eqref{eq: formalvar2}.
The first and second terms after the third equal sign in \eqref{eq: formalvar2} have the same estimates,
$\frac{\partial U}{\partial x_3}$, $\frac{\partial^2 U}{\partial x_3^2}\lesssim
\su$, as we get in Lemma \ref{Lm: derU},
since $\frac{\partial \mathcal X_3}{\partial \mathcal V_3},\ \frac{\partial^2 \mathcal X_3}{\partial \mathcal V_3^2}=O(1)$ and $ \nabla_{\mathcal V_3} L_4 =O(1)$ using Lemma \ref{Lm: dL4}. The summand $J_3\frac{\partial H}{\partial \mathcal X_3}\frac{\partial^2 \mathcal X_3}{\partial L_4\partial \mathcal V_3}\otimes \nabla_{\mathcal V_3}L_4 =0$ since  $\frac{\partial^2 \mathcal X_3}{\partial L_4\partial \mathcal V_3}=0$.
The third term in \eqref{eq: formalvar2}
is estimated as $\frac{\mu}{|\ell_4|^3+1}$ in the right case and $\frac{\mu}{\chi^3}$ in the left case using the estimate $\frac{\partial^2 U}{\partial x_3\partial x_4}$ in Lemma \ref{Lm: derU} and the fact that $\frac{\partial \mathcal X_4}{\partial L_4}=O(\ell_4)$. So in summary, all the entries in the three rows are bounded by $\su.$

%{\small\[\frac{\partial }{\partial L_3}\left(\frac{dt}{d\ell_4}\frac{\partial U}{\partial \ell_3}\right)=-\frac{\partial }{\partial L_3}\left[\left(-L_3^3\left(1+L_3^2\left(\frac{1}{2m_{1}} v_1^2-\frac{k_{1}}{ |x_1|}\right)+3UL_3^2\right)-L_3^6\frac{\partial U}{\partial L_4}+h.o.t\right)\frac{\partial U}{\partial \ell_3}\right]\]}

$\bullet \nabla_{\mathcal V_1}\mathcal F_3$.

The first summand in \eqref{eq: formalvar} gives $(\su,1,\su,\su)\otimes (\frac{1}{\mu\chi^2},\frac{1}{\chi^3}, \mu,\frac{\mu}{\chi})$. The second summand in \eqref{eq: formalvar}  is now reduced to $J_3\frac{\partial^2U}{\partial x_1 \partial x_3}\frac{\partial x_3}{\partial \mathcal V_3}+\frac{\partial x_4}{\partial  L_4}J_3\frac{\partial^2U}{\partial x_4 \partial x_3}\frac{\partial x_3}{\partial \mathcal V_3}\otimes \nabla_{\mathcal V_1} L_4 $ since $\frac{\partial \mathcal X_1}{\partial \mathcal V_1}=Id$ and $\frac{\partial^2\mathcal X_3}{\partial\mathcal V_1\partial\mathcal V_3}=\frac{\partial^2 \mathcal X_3}{\partial L_4\partial \mathcal V_3} =0$. The two terms are estimated using
$\frac{\partial^2U}{\partial x_1 \partial x_3}\lesssim
\frac{1}{\chi^4}$ given by Lemma \ref{Lm: derU},
$\nabla_{\mathcal V_1}L_4 \lesssim \left(\frac{1}{\mu\chi^2},\frac{1}{\chi^3}, \mu,\frac{\mu}{\chi}\right)$
by Lemma \ref{Lm: dL4} and $\left(\frac{\partial x_4}{\partial  L_4}J_j\frac{\partial^2U}{\partial x_4 \partial x_3}\frac{\partial x_3}{\partial \mathcal V_3}\right)\lesssim \su$ since we have
$\frac{\partial \mathcal X_4}{\partial L_4}=O(\ell_4)$, and
$\frac{\partial^2H}{\partial \mathcal X_4 \partial \mathcal X_3}\lesssim \frac{\mu}{\ell_4^4+1}$ in the right case and $\frac{\mu}{\chi^4}$ in the left case using Lemma \ref{Lm: derU}. Therefore the estimate of the block is given by the max of $(\su,1,\su,\su)\otimes (\frac{1}{\mu\chi^2},\frac{1}{\chi^3}, \mu,\frac{\mu}{\chi})$ and $1/\chi^4$.

$\bullet  \nabla_{\mathcal V_4}\mathcal F_3 $.

The second row is handled in a similar manner to the $ \nabla_{ \mathcal V_3}\mathcal F_3$ block.
%without using \eqref{eq: formalvar}.
Thus
\begin{equation}\nonumber\begin{aligned}
& \nabla_{\mathcal V_4}\left(\frac{dt}{d\ell_4}\left(\frac{m_3k_3^2}{L_3^3}+\frac{\partial U}{\partial L_3}\right)\right)=- \nabla_{\mathcal V_4}\left(m_3k_3^2\left(1+3L_3^2\left(\frac{v_1^2}{2m_{1}} -\frac{k_{1}}{ |x_1|}\right)+3UL_3^2\right)\right.\\
&\left.-L_3^6\frac{\partial U}{\partial L_4}+L_3^3\frac{\partial U}{\partial L_3}+h.o.t.\right)=- \nabla_{\mathcal V_4}\left(3UL_3^2-L_3^6\frac{\partial U}{\partial L_4}+L_3^3\frac{\partial U}{\partial L_3}+h.o.t.\right)
\end{aligned}\end{equation}
The leading term is given by $ \nabla_{\mathcal V_4}\frac{\partial U}{\partial L_4}=
\left(\frac{\partial \mathcal X_4}{\partial  L_4}J_4\frac{\partial^2U}{\partial x_4^2 }
 \nabla_{\mathcal V_4}x_4 \right)\lesssim \sw$ using Lemma \ref{Lm: derU}.

Next, the first summand in \eqref{eq: formalvar} gives the estimate $(\su,\su,\su)\otimes \left(\sw,\sw\right)$ for the first, third and fourth row.
This is smaller than what we have stated in the lemma. It remains to consider \eqref{eq: formalvar2} which is reduced to
$\frac{\partial x_4}{\partial \mathcal V_4}J_3\frac{\partial^2U}{\partial x_4 \partial x_3}\frac{\partial x_3}{\partial \mathcal V_3}+\frac{\partial x_4}{\partial  L_4}J_3\frac{\partial^2U}{\partial x_4 \partial x_3}\frac{\partial x_3}{\partial \mathcal V_3}\otimes \nabla_{\mathcal V_4} L_4 $
 since $\frac{\partial x_3}{\partial \mathcal V_4}=\frac{\partial x_3}{\partial  L_4}=0$. The first summand has the estimate $\frac{\mu}{\ell_4^3+1}$ in the right case and $\frac{\mu}{\chi^3}$ in the left case since
we have the estimate $\frac{\partial x_4}{\partial \mathcal V_4}=O(\ell_4)$ and $\frac{\partial^2H}{\partial \mathcal X_4 \partial \mathcal X_3}\lesssim \frac{\mu}{\ell_4^4+1}$ in the right case and $\frac{\mu}{\chi^4}$ in the left case in Lemma \ref{Lm: derU}. So we use $\su$ to bound this first summand as stated in the lemma. For the second summand in \eqref{eq: formalvar2}, we estimate $\nabla_{\mathcal V_4}L_4 $
as $(\sv,\sv)$ using Lemma \ref{Lm: dL4}, and $\left(\frac{\partial x_4}{\partial  L_4}J_4\frac{\partial^2U}{\partial x_4 \partial x_3}
\frac{\partial x_3}{\partial \mathcal V_3}\right)$ has the same estimate as the first summand.
So the second summand is much smaller than the first summand.

{\it For the next three blocks $\nabla_{ \mathcal V_j} \mathcal F_1,\ j=3,1,4$, using $\frac{\partial \mathcal X_1}{\partial\mathcal V_1}=\Id$, \eqref{eq: formalvar2} is reduced to
\begin{equation}\label{Eq1j}\frac{\partial \mathcal X_j}{\partial \mathcal V_j}J_1\frac{\partial^2H}{\partial \mathcal X_j \partial \mathcal X_1}+\frac{\partial x_4}{\partial  L_4}J_1\frac{\partial^2H}{\partial x_4 \partial \mathcal X_1}\otimes \nabla_{\mathcal V_j} L_4 . \end{equation}}
$\bullet  \nabla_{\mathcal V_3}\mathcal F_1 $.

The first summand in \eqref{eq: formalvar} gives $ \left(\mu,\frac{\mu}{\chi},\frac{1}{\mu\chi^2},\frac{1}{\chi^3}\right)\otimes (1,\su,\su,\su)$.
The first summand in \eqref{Eq1j} has the same estimate as
$\frac{\partial^2H}{ \partial x_3\partial x_1}=\frac{\partial^2U}{ \partial x_3\partial x_1}\lesssim \frac{1}{\chi^4},\ \frac{\partial^2H}{\partial \mathcal X_3\partial v_1}=0$ using Lemma \ref{Lm: derU}, whose contribution to the current block is $\left[\begin{array}{c}
0_{2\times 4}\\
(1/\chi^4)_{2\times 4}\end{array}\right]$.%Indeed, we need to compare the above two contributions to get a larger one for our matrices. The left matrix is straightforward. For the right matrix, to compare $u(\ell_4)$ with $\frac{1}{\chi^4}$, we compare their integrals $\int_{2}^{\frac{\chi}{2}}\cdot d\ell_4$.
%For $(8,1)$ it turns out that
%{\small\[ \int_2^{\frac{\chi}{2}} \left(\mu,\frac{\mu\sG}{\chi},\frac{1}{\mu\chi^2},\frac{\sG}{\chi^3}\right)\otimes (1,u(\ell_4)_{1\times 3})d\ell_4=\left(\mu,\frac{\mu\sG}{\chi},\frac{1}{\mu\chi^2},\frac{\sG}{\chi^3}\right)\otimes (\chi,\mu_{1\times 3})\geq \int_2^{\frac{\chi}{2}}\frac{1}{\chi^4}d\ell_4\sim \frac{1}{\chi^3},\]}
 % we need use the first three rows of the first summand in \eqref{eq: formalvar} as the correct estimates for the right matrix.
%We have $\mu\sG\ll 1$ in the case of zero total angular momentum and $\mu\sG\gg 1$ in the nonzero case. We choose the latter case since it gives
% a worse estimate.

  Next, we consider the second summand in \eqref{Eq1j}. We have
$\nabla_{\mathcal V_3} L_4 \lesssim (1,\su,\su,\su)$ using Lemma \ref{Lm: dL4}. Next we consider $\frac{\partial x_4}{\partial  L_4}J_1\frac{\partial^2H}{\partial x_4 \partial \mathcal X_1}$. This is a vector of four entries whose first two entries are $\frac{\partial^2H}{\partial  L_4 \partial v_1}=\frac{\partial v_1}{\partial  L_4}=0$ and whose last two entries are bounded by $\frac{1}{\chi^2}$, since we have $\frac{\partial x_4}{\partial L_4}=O(\ell_4)$, and $\frac{\partial^2H}{\partial x_4 \partial x_1}\lesssim\frac{1}{\chi^3}$ using Lemma \ref{Lm: derU}. So we use $ \left(\mu,\frac{\mu}{\chi}\right)\otimes (1,\su,\su,\su)$ as the estimate for the first two rows and use the max of $\left(\frac{1}{\mu\chi^2},\frac{1}{\chi^3}\right)\otimes (1,(\su)_{1\times 3})$, $(1/\chi^4)_{2\times 4}$ and $(\frac{1}{\chi^2},\frac{1}{\chi^2})\otimes(1,\su,\su,\su)$ as the estimate for the last two rows.

$\bullet  \nabla_{\mathcal V_1}\mathcal F_1 $.

The first summand in \eqref{eq: formalvar} gives $ \left(\mu,\frac{\mu}{\chi},\frac{1}{\mu\chi^2},\frac{1}{\chi^3}\right)\otimes \left(\frac{1}{\mu\chi^2},\frac{1}{\chi^3}, \mu,\frac{\mu}{\chi}\right)$. The first summand in \eqref{Eq1j} is $\left[\begin{array}{cc}\frac{\partial^2 H}{\partial x_1\partial v_1}&\frac{\partial^2 H}{\partial v_1\partial v_1}\\
-\frac{\partial^2 H}{\partial x_1^2}&-\frac{\partial^2 H}{\partial x_1\partial v_1}\end{array}\right]=\left[\begin{array}{cc}0&\frac{1}{m_1}\mathrm{Id}\\
\left(\frac{3k_1 x_1\otimes x_1}{|x_1|^5}\right)-\frac{k_1 \mathrm{Id}}{|x_1|^3}&0\end{array}\right]$,
where $\frac{\partial^2 H}{\partial x_1^2}$ is given by $\frac{\partial^2}{\partial x_1^2}\frac{k_1}{|x_1|}$.
We compare the two matrices using $x_1=O(\chi,\mu)$ to get the first three rows.
Finally we consider the second summand in \eqref{Eq1j}. We notice that
$\nabla_{\mathcal V_1} L_4 \lesssim
\left(\frac{1}{\mu\chi^2},\frac{1}{\chi^3}, \mu,\frac{\mu}{\chi}\right)$. The term
$\left(\frac{\partial x_4}{\partial  L_4}J_1\frac{\partial^2H}{\partial x_4 \partial \mathcal X_1}\right)\lesssim \left(0,0,\frac{1}{\chi^2},\frac{1}{\chi^2}\right)$ as in the previous paragraph.
This gives us the last row.

$\bullet  \nabla_{\mathcal V_4}\mathcal F_1 $.

The first summand in \eqref{eq: formalvar} gives $\left(\mu,\frac{\mu}{\chi},\frac{1}{\mu\chi^2},\frac{1}{\chi^3}\right)\otimes \left(\sw,\sw\right)$ using Lemma \ref{Lm: dL4} and Corollary \ref{Cor: ham}. This gives the first two rows in both matrices.
The first two rows of \eqref{Eq1j}  are zero because
$\frac{\partial^2 H}{\partial  x_4\partial v_1}=0$. For the last two rows in \eqref{Eq1j} we first have $\left(\frac{\partial x_4}{\partial  \mathcal V_4}\right)\left(J_1\frac{\partial^2U}{\partial x_4 \partial x_1}\right)
\lesssim 1/\chi^2$ and $\frac{\partial x_4}{\partial  L_4}J_1\frac{\partial^2U}{\partial x_4 \partial x_1}\lesssim1/\chi^2$
as in the previous paragraph, so the tensor part is
$O\left(\sv/\chi^2\right)$. So for the last two rows we use the estimate $1/\chi^2$. %Moreover, for the first summand in \eqref{Eq1j} we already have $J_1\frac{\partial^2U}{\partial x_4 \partial x_1}=O(1/\chi^3)$ using Lemma \ref{Lm: derU}. Though $\frac{\partial x_4}{\partial  \mathcal V_4}$ can be as large as $O(\chi), $ usingthe same argument as the proof of Lemma \ref{Lm: ham} and Corollary \ref{Cor: ham} (the almost orthogonality of $\frac{\partial x_4}{\partial  \mathcal V_4}$ with $x_4$ and $x_1$), we find $\left(J_1\frac{\partial^2U}{\partial x_4 \partial x_1}\right)\left(\frac{\partial x_4}{\partial  \mathcal V_4}\right)\lesssim \sv/\chi$. %In the left case, we see that this $O(1/\chi^3)$ gives the last two rows in the current block since it is larger than other contributions. In the right case, we need to compare $\frac{\sG}{\chi^3} \sw$, $ 1/\chi^2 \sv$, and $\frac{1}{\chi} \sv$. It turns out that $\sv/\chi$ dominates.

$\bullet  \nabla_{\mathcal V_3}\mathcal F_4 $.

The first summand in \eqref{eq: formalvar} gives $ \left(\sw,\sw\right)\otimes(1,\su,\su,\su)$.  The first summand in \eqref{eq: formalvar2} is given by
$\frac{\partial x_3}{\partial  \mathcal V_3}\left(J_3\frac{\partial^2U}{\partial x_4 \partial x_3}\right)\frac{\partial x_4}{\partial  \mathcal V_4} \lesssim \su$ using Lemma \ref{Lm: derU}. This gives the second, third and fourth columns of the block.
%This gives the remaining three columns.

Next, the second summand in \eqref{eq: formalvar2} vanishes since $\frac{\partial ^2x_4}{\partial \mathcal V_3\partial \mathcal V_4}=0$. %We have $\frac{\partial H}{\partial \mathcal X_4}\lesssim\frac{1}{\chi^2}+\frac{\mu}{\ell_4^4+1}$ in the right case and $\lesssim \frac{1}{\chi^2}$ in the left case using Lemma \ref{Lm: derU}, and $\frac{\partial ^2\mathcal X_4}{\partial \mathcal V_3\partial \mathcal V_4}=0=\frac{\partial ^2\mathcal X_4}{\partial L_4\partial \mathcal V_4}\frac{\partial L_4}{\partial \mathcal V_3}\lesssim \ell_4(1, (\su)_{1\times 3})$ using Lemma \ref{Lm: dL4} and \ref{Lm: 2ndderivative}. This gives the first column as $\sw$.

It remains to consider the third summand in \eqref{eq: formalvar2}. We have
$ \nabla_{\mathcal V_3}L_4 \lesssim (1,(\su)_{1\times 3})$. Next $J_4\frac{\partial U}{\partial x_4}\frac{\partial x_4}{\partial L_4\partial  \mathcal V_4}\lesssim \ell_4\left(\frac{1}{\chi^2}+\frac{\mu}{\ell_4^4+1}\right)\lesssim \sw$ using Lemma \ref{Lm: derU} and \ref{Lm: 2ndderivative}.
Next, $\frac{\partial x_4}{\partial  L_4}\left(J_4\frac{\partial^2U}{\partial x_4 \partial x_4}\right)\frac{\partial x_4}{\partial  \mathcal V_4}\lesssim \sw$. So the third summand in \eqref{eq: formalvar2} has the estimates $(\sw)_{1\times 2}\otimes(1,(\su)_{1\times 3})$ and contributes to the first column of the block along with the the first summand of \eqref{eq: formalvar}.

$\bullet  \nabla_{\mathcal V_1}\mathcal F_4 $.

The first summand in \eqref{eq: formalvar} gives $ \left(\sw,\sw\right)\otimes\left(\frac{1}{\mu\chi^2},\frac{1}{\chi^3}, \mu,\frac{\mu}{\chi}\right)$.   The first summand in \eqref{eq: formalvar2}
$\left(J_4\frac{\partial^2U}{\partial x_4 \partial x_1}\right)\frac{\partial \mathcal X_4}{\partial  \mathcal V_4} \lesssim \frac{1}{\chi^2}$
 using Lemma \ref{Lm: derU}. The second summand in  \eqref{eq: formalvar2} vanishes since $\frac{\partial^2 x_4}{\partial \mathcal V_1\partial\mathcal V_4}=0$.
Finally, we consider the third summand in \eqref{eq: formalvar2}.
We have $\nabla_{\mathcal V_1} L_4 \lesssim \left(\frac{1}{\mu\chi^2},\frac{1}{\chi^3}, \mu,\frac{\mu}{\chi}\right)$.
The estimate of  $\frac{\partial x_4}{\partial  L_4}\left(J_4\frac{\partial^2U}{\partial x_4 \partial x_4}\right)\frac{\partial x_4}{\partial  \mathcal V_4}+J_4\frac{\partial U}{\partial x_4}\frac{\partial x_4}{\partial L_4\partial  \mathcal V_4}\lesssim \sw$ was done in the previous block.% Again the second summand in \eqref{eq: formalvar2} has the same estimates as the first summand in \eqref{eq: formalvar}.

So the estimate of this block is to take the larger between $(1/\chi^2)_{2\times 4}$ and $\left(\sw,\sw\right)\otimes\left(\frac{1}{\mu\chi^2},\frac{1}{\chi^3}, \mu,\frac{\mu}{\chi}\right)$.

$\bullet  \nabla_{\mathcal V_4}\mathcal F_4 $.

The leading contribution is given by the first and second summands in \eqref{eq: formalvar2},
\[\frac{\partial x_4}{\partial \mathcal V_4}J_4\frac{\partial^2U}{\partial x_4 \partial x_4}\frac{\partial x_4}{\partial \mathcal V_4}+J_4\frac{\partial U}{\partial x_4}\frac{\partial^2 x_4}{\partial \mathcal V_4\partial \mathcal V_4}\lesssim\sw\]
using Lemma \ref{Lm: derU}. We next show the other summands are small.
The first summand in \eqref{eq: formalvar} gives $ \left(\sw,\sw\right)\otimes\left(\sw,\sw\right)$. Next we consider the second summand in \eqref{eq: formalvar2}.
We have $\nabla_{\mathcal V_4} L_4 \lesssim \left(\sv,\sv\right)$.
Next,
$\frac{\partial x_4}{\partial  L_4}\left(J_4\frac{\partial^2U}{\partial x_4 \partial x_4}\right)\frac{\partial x_4}{\partial  \mathcal V_4}+\left(J_4\frac{\partial U}{\partial x_4}\right)\frac{\partial^2 x_4}{\partial  L_4\partial\mathcal V_4}\lesssim \sw$ is
estimated as before. So the first and third summands in \eqref{eq: formalvar2} are much smaller than $\sw$.

{\bf Part (a) is complete now. Finally, we show part (b). }

 According to the last bullet point, the leading terms in $\nabla_{\mathcal V_4}\mathcal F_4$ come from \[\frac{dt}{d\ell_4}\left(\frac{\partial \mathcal X_4}{\partial  \mathcal V_4}\left(J_j\frac{\partial^2H}{\partial \mathcal X_4 \partial \mathcal X_4}\right)\frac{\partial \mathcal X_4}{\partial  \mathcal V_4}+J_4\frac{\partial H}{\partial \mathcal X_4}\frac{\partial^2 \mathcal X_4}{\partial \mathcal V_4\partial \mathcal V_4}\right)=L_4^3\left(\frac{\partial }{\partial G_4}, \frac{\partial }{\partial g_4}\right)\left[\begin{array}{c}
\frac{\partial x_4}{\partial g_4}\cdot\frac{\partial U}{\partial x_4}\\
-\frac{\partial x_4}{\partial G_4}\cdot\frac{\partial U}{\partial x_4}
\end{array}
\right].\]
Let us now look at $U^R$ in \eqref{eq: hamR}.  Only those terms in $U^R$ containing both $x_4$ and $x_1$ can be as large as $O(1/\chi)$ according to Lemma \ref{Lm: derU}. So we only need to consider the following three terms in $U^R$, \[-\left(\frac{1}{\mu\left|x_1+\frac{\mu}{1+2\mu}x_4+\frac{\mu}{1+\mu} x_3\right|}+
\frac{1}{\left|x_1+\frac{\mu}{1+2\mu}x_4-\frac{1}{1+\mu}x_3\right|}
+\frac{1}{\left|x_1-\frac{1+\mu}{1+2\mu}x_4\right|}\right).\] When we take two derivatives with respect to $x_4$, a
$\mu^2$ factor will multiply the first two terms, so that the first two terms would be $O(\mu)$ compared to the third term.  So the leading contribution to $\frac{\partial^2U^R}{\partial x_4^2}$ is given by $\frac{\partial^2}{\partial x_4^2}\frac{-1}{\left|x_1-\frac{1+\mu}{1+2\mu}x_4\right|}$. The same analysis for $U^L$ in \eqref{eq: hamL} shows the leading contribution to $\frac{\partial^2U^L}{\partial x_4^2}$ is given by $\frac{\partial^2}{\partial x_4^2}\frac{-1}{\left|x_1+\frac{\mu}{1+\mu}x_3+\frac{1+\mu}{1+2\mu}x_4\right|}$.

%Recall that Lemmas~\ref{Lm: gradient} and~\ref{Lm: 2ndder} tell  us that
%\[\frac{\partial V_R}{\partial Q_4}=O\left(\frac{1}{\chi^2}+\frac{\mu}{|\ell_4|^3+1}\right),\quad \frac{\partial^2 V_R}{\partial Q_4^2}=O\left(\frac{1}{\chi^3}+\frac{\mu}{\ell_4^4}\right).\]
Consider the $(9,9)$ entry. The main contribution to this entry comes from
\begin{equation}\label{Key55R}
L_4^3\frac{\partial }{\partial G_4}\left(\frac{\partial x_4}{\partial g_4}\cdot\frac{\partial U}{\partial x_4}\right)=(1+O(\mu))L_4^3\frac{\partial }{\partial G_4}\left(\frac{\frac{\partial x_4}{\partial g_4} \cdot (x_4-(1+O(\mu)x_1)}{|x_4-(1+O(\mu)x_1)|^3}\right) .
\end{equation}
The numerator on the RHS equals
\[(1+O(\mu))L_4^3\frac{\partial}{\partial G_4} \left( \frac{\partial x_4}{\partial g_4} \cdot x_4\right)-(1+O(\mu))L_4^3
\frac{\partial^2 x_4}{\partial G_4 \partial g_4} \cdot x_1. \]
The first term is $O(\chi)$ due to Lemma \ref{Lm: dx/dDe}(c) so the main contribution comes from the second term which is $O(\chi^2)$ using Lemma \ref{Lm: 2ndderivative}.
We use the same argument for the other entries to get \begin{equation}\label{EqTwistDiff}\nabla_{\mathcal V_4^R}\mathcal{F}^R_4 =-L_3^3
\left[\begin{array}{cc}
\frac{\partial^2 x_4}{\partial G\partial g}\cdot \frac{x_1}{|x_4-x_1|^3}& \frac{\partial^2 x_4}{\partial g^2}\cdot \frac{x_1}{|x_4-x_1|^3}\\
-\frac{\partial^2 x_4}{\partial G^2}\cdot \frac{x_1}{|x_4-x_1|^3}&-\frac{\partial^2 x_4}{\partial G\partial g}\cdot \frac{x_1}{|x_4-x_1|^3}\\
\end{array}
\right]+O\left(\frac{\mu}{\chi}+\frac{\mu}{|x_4|^3}\right).
\end{equation}

Using Lemma \ref{Lm: 2ndderivative}
we see that the
$(9,9)$ entry equals
\[-\frac{L_4^5}{\sqrt{L_4^2+G_4^2}}\frac{\chi \sinh u}
{|x_4-x_1|^3}
+O\left(\frac{\mu}{\chi}+\frac{\mu}{|x_4|^3}\right)
. \]
Recall that $L_3=L_4(1+o(1))$ (due to \eqref{eq: L4}) and $\sinh u=\sign (u)\frac{|\ell_4| L_4}{\sqrt{L_4^2+G_4^2}}$
(due to \eqref{eq: hypul}).
Since Lemma \ref{Lm: position} implies that
$|x_4|=|\ell_4|/L_4^2 (1+o(1))$
we get that the $O(1/\chi)$-term in $(9,9)$ is asymptotic to
\[ -\frac{L^4 \sign(u)}{L^2+G^2} \frac{\chi |x_4|}{(\chi-|x_4|)^3} .\]
Since $u$ and $v_{4,\parallel}$ have opposite signs we obtain the asymptotics of the $O(1/\chi)$-term
claimed in part (b) of Lemma \ref{Lm: var} for the $(9,9)$ entry.
The analysis of other entries of $\nabla_{\mathcal V_4^R} \mathcal{F}^R_4 $ is similar.

Next, we consider the left case. The argument is the same except for the following differences. First, the error term in \eqref{Key55R} is now $O(\mu/\chi)$ since $\mu/|x_4|^3$ should be replaced by $1/\chi^3$ as usual. Next $U^L$ is roughly $\frac{1}{|x_4+x_1|}$ up to some $\mu$ error, which differs from $U^R$ by a ``-" sign.
Then we have
that the asymptotic expression of \eqref{EqTwistDiff}  follows directly from Lemma \ref{Lm: 2ndderivative}(c).

\end{proof}

\subsection{Estimates of the solution of the variational equations}\label{subsct: SolnOfVar}
In this section, we give the proof of the estimates of matrices $N_1,\, N_3,\, N_5$ and the $(I)_{44},(III)_{44},(V)_{44}$ blocks in Proposition \ref{Prop: main}.

From one Poincar\'{e} section to the next, it takes time of order $O(\chi)$. The main body of the proof is to show that {\it the fundamental solutions of the variational equations are estimated by {\bf three steps} of Picard iterations.} %Then we use Mathematica to complete the two steps of Picard iterations. %An easy way to see the matrices $N,M$ from the estimates in Lemma \ref{Lm: var} is to integrate over time $O(\chi)$ directly in each entries of the matrices of Lemma \ref{Lm: var} then add $\mathrm{Id}$ to it. Let us justify this simple-minded calculation.
\subsubsection{The asymptotics of the $(N_1)_{44},(N_3)_{44},(N_5)_{44}$  blocks in Proposition \ref{Prop: main} (b).}
The blocks are obtained by integrating the leading terms in the estimates of $\nabla_{\mathcal V_4}\mathcal F_4 $ in part (b) of Lemma \ref{Lm: var}. After a rescaling of time, the problem is reduced to finding the fundamental solution of a linear ODE system defined by constant $2\times 2$ matrices. %See Lemma 6.2 in Section 6.2 \cite{DX} for more details where the calculations are exactly the same as here.
\subsubsection{The matrix $N_3$ for the piece $(III)$.}

Let us first explain how to get the matrix $N_3$. Since the right matrix of Lemma \ref{Lm: var} has constant entries, which we denote by $K$ temporarily, $N_3$ can be estimated by
the fundamental solution of the ODE $X'=K\cdot X,$ that is, by
$X(\chi)=e^{K\chi}=\sum_{n=0}^\infty \frac{1}{n!}(K\chi)^n$.
Note that $K$ has positive entries. We claim that in fact
\begin{equation}
\label{ConstCoef2Terms}
e^{K\chi}-\Id_{10}=O(K \chi+\frac{1}{2}(K\chi)^2+(K\chi)^3).
\end{equation}
Indeed a brute force
force calculation shows that
$(K\chi)^4\leq C_3 (K\chi+(K\chi)^2+(K\chi)^3).$ This allows us to get inductively that
\begin{equation}
\label{KnK2+K1}
(K\chi)^n\leq C_n (K\chi+(K\chi)^2+(K\chi)^3) \text{ where }C_n=C_3(1+C_3)^{n-4}.
\end{equation}
Summing the series for $e^{K\chi}$ we obtain \eqref{ConstCoef2Terms}. All entry except (6,5) and (7,8) appear in $K\chi+(K\chi)^2$.

We remark that the computation can be done either by computer or by hand. Note that the 1st, 3rd and 4th rows, the 9th and 10th rows, the 2nd, 3rd and 4th columns and the 9th and 10th columns are the same respectively, so we can reduce the size of the matrices from $10\times 10$ to $7 \times 7.$

%Let us discuss briefly how we use computer. Each entry of the product $(\chi K)(\chi K)$ has ten summands. However, we are only interested the leading term for large $\chi$ and small $\mu$ since in the statement of Proposition \ref{Prop: main}, we only need to bound $N_1,N_5, N_3$ by some matrices in the sense of $\lesssim$. So we write simple codes to compare the ten summands %evaluated at some large value of $\chi$ and small value of $\mu$,  and pick out the summand giving the largest value. Namely consider the product $C=AB$, instead of using $\sum_k A_{ik}B_{kj}$ as $C_{ij}$, we use $\max_k A_{ik}B_{kj}.$ %If there are $k_1,k_2$ such that $A_{ik_1}B_{k_1j}\sim A_{ik_2}B_{k_2j}$ are two leading terms, we add up the two terms as $C_{ij}$, and so on.

%it is enough to use the first three terms $\Id_{10}+K\chi+\frac{1}{2}(K\chi)^2$ as the estimate of $M$.  It is enough to check that $(K\chi)^3=O(K\chi+\frac{1}{2}(K\chi)^2)$. This enables us to bring down the power by 1, so all the summands of $e^{K\chi}$ eventually reduces to the sum of the first three terms and the factor $1/n!$ ensures the convergence.
\subsubsection{The matrices $N_1, N_5$. }
We first explain the strategy of reducing the estimate of the fundamental solution to three steps of Picard iterations.

{\bf Step 1, The strategy.}

Denote the ODE by $\frac{dY}{dt}=\Lambda(t) Y$
with the initial condition $Y(0)=\mathrm{Id}_{10}$.
Using the Picard iteration, the solution is
\begin{equation}\label{eq: picard}
\begin{aligned}
Y(t)&=\mathrm{Id}+\int_{0}^{t}\Lambda\cdot Y(s )\,ds=\mathrm{Id}+\int_{0}^{t}\Lambda dt+\int_{0}^{t }\Lambda\(\int_{0}^{s}\Lambda(\tau)d\tau\)\,ds+\cdots \\
:&=\Id+I_1(t)+I_2(t)+\cdots
\end{aligned}
\end{equation}
where $I_i$ is the $i$-th iterated integral. We will show that $Z(t)=\mathrm{Id}+c_1I_1(t)+c_2I_2(t)+\cdots+c_nI_n(t)$ with properly chosen $c_n=c_{n-1}=\cdots=c_3>c_2>c_1>1$, satisfies the inequality $Z'\geq \Lambda(t) Z(t)$ or equivalently \begin{equation*}
\begin{aligned}\mathrm{Id}+c_1I_1(t)+\cdots +c_nI_n(t)&=Z(t)\geq \mathrm{Id}+\int_0^t\Lambda(s) Z(s)\,ds\\ &\geq \mathrm{Id}+I_1(t)+c_1I_2(t)+c_2I_3(t)+\cdots +c_nI_{n+1}(t),
\end{aligned}\end{equation*}
\begin{equation}\label{EqCheck}
c_n I_{n+1}(t)\leq (c_1-1)I_1(t)+(c_2-c_1) I_2(t)+(c_3-c_2) I_3(t).\end{equation} Then by the Gronwall inequality, we get that $Y(t)\leq  Z(t)$.

 {\bf Step 2, Checking \eqref{EqCheck}. }

We next show how to compute the matrix products.
The following observations allow us to reduce \eqref{EqCheck} to computing products of constant matrices, which simplifies
the calculation significantly. In $\su,\sv,\sw$, we replace $\frac{\mu}{\ell_4^3+1}$ by $\frac{\mu}{|\ell_4|^3}$ with $\ell_4$ lying between $1$ and $O(\chi)$. Recall that $\frac{\mu}{|\ell_4|^3}$ is the correct bound of terms of the form $\frac{\mu |x_3|}{|x_4|^3}$ in Lemma \ref{Lm: position}, while $\frac{\mu}{\ell_4^3+1}$ was used to show that the denominator cannot be zero.

%For $N_1$, $\ell_4$ goes from $1$ to $O(\chi).$ In $u(\ell_4),v(\ell_4)$, the integral of $\frac{\mu}{\ell_4^3}$ is always greater than the integral of the other term $1/\chi^3,\ \mu\sG/\chi^2$ respectively, so we can get rid of the other term in $u,v$ when doing integration.
For $N_1$, we pick a small constant $\epsilon_0$ which is independent of $\mu,\chi$, so that $\int_{\ell_4^i}^{\ell_4^i+\epsilon_0} \frac{\mu}{\ell^3_4}d\ell_4=\frac{\mu\epsilon_0}{(\ell^i_4)^3}+O(\epsilon_0^2)$ where $\ell_4^i=O(1)\neq 0$ is the initial $\ell_4.$ Inequality \eqref{EqCheck} holds for $\ell_4\in[\ell_4^i,\ell_4^i+\epsilon_0]$ for $\epsilon_0$ small enough. For $\ell_4\geq \ell_4^i+\epsilon_0$, we have $\int_{\ell_4^i}^{\ell_4} \frac{\mu}{s^3}ds=\frac{\mu}{2(\ell^i_4)^2}-\frac{\mu}{2(\ell_4)^2}=O(\mu),$ as $\ell_4\to\infty,\ \mu\to 0$. So we replace all the integrals $\int_{\ell_4^i}^{\ell_4} \frac{\mu}{s^3}ds$ by $\mu$ in the sense of ``$\sim$".
After integration in \eqref{EqCheck}, there are no terms of the form $1/\ell_4^k,\ k>0$.

Notice that we can decompose right matrix in Lemma \ref{Lm: var}
as $K+\frac{\mu}{\ell^3_4}B$, where $K$ and $B$ do not depend on $\ell_4,$
and $K$ is exactly the same as in \eqref{ConstCoef2Terms}.
%the left matrix in Lemma \ref{Lm: var} that we used to get $M$.  %Its integration for $\ell_4$ is $\sim \ell_4 K+ \mu B.$
We have
\begin{equation*}
\begin{aligned}
I_1(\ell_4)&\lesssim \ell_4 K+ \mu B,\quad I_2(\ell_4)\lesssim \ell_4^2 K^2+\mu\ell_4 KB+\mu BK+\mu^2 B^2,\\
I_3(\ell_4)&\lesssim \ell_4^3K^3+\mu\ell_4^2K^2B+\mu\ell_4 KBK+\mu^2\ell_4KB^2\\
&+\mu\ln\ell_4  BK^2+\mu^2(BKB+B^2K)+\mu^3 B^3.
\end{aligned}
\end{equation*}
To simplify the the proof, we note that $I_3(\ell_4)$ is bounded by $\frac{1}{3!}\ell_4^3K^3+\eps I_1(\ell_4)+\eps I_2(\ell_4)$ where $\eps $ can be chosen to be arbitrarily small, provided $\mu$ and $1/\chi$ are small enough. So $I_4(\ell_4)$ is bounded by $$\int_1^{\ell_4} (K+\frac{\mu}{t^3} B)(t^3K^3+\eps I_1(t)+\eps I_2(t))\,dt\leq \frac{1}{4!} \ell_4^4K^4+\mu \ell_4 BK^3 +( \eps I_2(\ell_4)+\eps I_3(\ell_4)).$$
It turns out that we have $\mu \chi BK^3\ll \chi^4K^4$, so we get $I_4(\ell_4)\leq \frac{1+\eps}{4!} \ell_4^4K^4 +(\eps  I_2(\ell_4)+\eps  I_3(\ell_4)).$
Inductively, we have
 $$I_n(\ell_4)\leq C_n \ell_4^nK^n+ (\eps  I_{n-2}(\ell_4)+\eps  I_{n-1}(\ell_4))$$
 where $C_n$ satisfies $C_{n+1}\leq (\frac{1}{n+1}+\frac{\eps}{n-2}) C_n$. We can bound $C_{n}\leq C( 1+2\eps)^n/n!$. Moreover, by $\chi^4K^4\leq C\chi^3K^3$, we get $\chi^nK^n\leq C^{n-3}\chi^3K^3$ where $C$ is independent of $\mu$ or $\chi$. So we get
 \begin{equation}\label{EqIn}
I_n(\chi)\leq C\frac{(C(1+2\eps))^n}{n!}\chi^3K^3+ (\eps I_{n-2}(\chi)+\eps I_{n-1}(\chi)).\end{equation}
By applying \eqref{EqIn} recursively, we can bound $$I_n(\chi)\leq C\frac{(C(1+2\eps))^n}{n!}\chi^3K^3+O(\eps)\chi^3K^3+O(\eps) I_1(\chi)+O(\eps) I_2(\chi)$$
where the $O$s depend on $n$ but not on $\mu,\chi$.
We choose $n$ (independent of $\mu$ and $\chi$) so that $C\frac{(C(1+2\eps))^n}{n!}$ is smaller than $1/100$.  We then choose $\mu$, without changing $n$, so that all the $O(\eps)s$ are smaller than $1/100$.  We can replace $\chi^3K^3$ by $I_3(\chi)$, because $\frac 16\chi^3K^3<I_3(\chi)$.  Then we see that \eqref{EqCheck} is satisfied if we choose $c_1=2,c_2=3,c_3=4$.

\vskip 0.1in

 For $N_5$, we integrate $\ell_4$ from $O(\chi)$ to $O(1)$, using only $1/\chi$ in $\sw(\ell_4)$ when doing integration since its integral dominates the other term. Again  we can decompose the right matrix in Lemma \ref{Lm: var} as $K+\frac{\mu}{\ell^3_4}B$, whose integration for $\ell_4$ from $\frac{\chi}{2}$ to 1 is $\sim (\frac{\chi}{2}-\ell_4)K+ \frac{\mu}{\ell_4^2}B.$
%\begin{equation}\nonumber
%\begin{aligned}
%&I_1=(\chi-\ell_4)A+\frac{\mu}{\ell_4^2}B,\quad %I_2=(\chi-\ell_4)^2A^2+\frac{\mu}
%{\ell_4}AB+\mu\int_\chi^{\ell_4}\frac{(\chi-\ell_4)}%{\ell_4^3}d\ell_4 BA+\frac{\mu^2}{\ell_4}B,\\
%&I_3=(\chi-\ell_4)^3A^3+\mu\ln \frac{\ell_4}{\chi} A^2B+\mu\iint\frac{\chi-\ell_4}{\ell_4}ABA+\frac{\mu^2}{\ell_4^3}AB^2+\mu\int \frac{\chi-\ell_4}{\ell_4^3}BA^2\\
%&+\frac{\mu^2 }{\ell_4^3}BAB+\mu^2\int\frac{1}{\ell_3}\int\frac{\chi-\ell_4}{\ell_4^3}B^2A+\frac{\mu^3}{\ell_4^6}B^3
%\end{aligned}
%\end{equation}
\begin{equation}\nonumber
\begin{aligned}
I_1(\ell_4)&\lesssim(\chi-\ell_4)K+\frac{\mu}{\ell_4^2}B,\quad I_2(\ell_4)\lesssim(\chi-\ell_4)^2K^2+\frac{\mu}{\ell_4}KB+\mu \frac{(\chi-\ell_4)}{\ell_4^2} BK+\frac{\mu^2}{\ell_4^4}B^2,\\
I_3(\ell_4)&\lesssim(\chi-\ell_4)^3K^3+\mu\ln \frac{\ell_4}{\chi} K^2B+\mu(\frac{\chi}{\ell_4}-\log\ell_4)KBK+\frac{\mu^2}{\ell_4^3}KB^2+\mu \frac{(\chi-\ell_4)^2}{\ell_4^2}BK^2\\
&+\frac{\mu^2 }{\ell_4^3}BKB+\mu^2\frac{\chi-\ell_4}{\ell_4^4}B^2K+\frac{\mu^3}{\ell_4^6}B^3.
\end{aligned}
\end{equation}
We next prove that three steps of Picard iterations give the correct estimate of the fundamental solution.
It can be verified that $$I_3(\ell_4)\leq \frac{1}{3!}(\chi-\ell_4)^3K^3+\mu\frac{(\chi-\ell_4)^2}{\ell_4^2}BK^2+\eps(I_1(\ell_4)+I_2(\ell_4)).$$ In other words, the new contributions from $I_3$ come mainly from $K^3$ and $BK^2$. Moreover, we have  \begin{equation}\label{EqIterationN5}
\begin{aligned} \frac{\mu^2}{\ell_4^4} B^2+
\mu\frac{(\chi-\ell_4)}{\ell_4}KBK\leq \eps((\chi-\ell_4) K+\frac{\mu}{\ell_4^2} B+(\chi-\ell_4)^2 K^2).\end{aligned}
\end{equation}
This inequality allows us to remove higher powers of $B$ and to keep only the $K^j$ and $BK^j$ terms.

Next by splitting the integral into integrals over $[1,\chi/2]$ and $[\chi/2,\chi]$, we have for $t\in [1,\chi]$
$$\int_{t}^\chi\frac{(\chi-s)^n}{s^k}\,ds \leq  100\cdot 2^nt^{-k+1}(\chi-t)^n$$
for $n\geq 1$ and $k= 2,3,4,5. $

Let us now consider $I_4$:
\begin{equation*}
\begin{aligned}
I_4(\ell_4)&\leq \int^{\chi}_{\ell_4} \left(K+\frac{\mu}{t^3}B\right)\left(\frac{1}{3!}(\chi-t)^3K^3+\mu\frac{(\chi-t)^2}{t^2}BK^2+\eps(I_1(t)+I_2(t))\right)dt\\
&\leq \frac{1}{4!}(\chi-\ell_4)^4K^4+ 100\cdot 2^3\frac{1}{3!}\mu\frac{(\chi-\ell_4)^3}{\ell_4^2}BK^3+100\cdot 2^2\mu\frac{(\chi-\ell_4)^2}{\ell_4} KBK^2\\
&+100\cdot 2^2\mu^2\frac{(\chi-\ell_4)^2}{\ell_4^4}B^2K^2
+\eps (I_2(\ell_4)+I_3(\ell_4))\\
&\leq  \frac{1}{4!}(\chi-\ell_4)^4K^4+ 100\cdot 2^3\frac{1}{3!}\mu\frac{(\chi-\ell_4)^3}{\ell_4^2}BK^3+\eps (I_2(\ell_4)+I_3(\ell_4))\\
&+100\cdot 2^3\eps\left[(\chi-\ell_4)^2K^2+(\chi-\ell_4)^3K^3+(\chi-\ell_4)^4K^4+\mu \frac{(\chi-\ell_4)}{\ell_4^2}BK+\mu\frac{(\chi-\ell_4)^2}{\ell_4^2}BK^2\right]
\end{aligned}
\end{equation*}
where in the last $\leq$ we use \eqref{EqIterationN5}.
Inductively, we get
\begin{equation*}
\begin{aligned}
I_n(\ell_4)&\leq \frac{C^n}{n!}(\chi-\ell_4)^nK^n+\frac{C^n}{(n-1)!}\mu\frac{(\chi-\ell_4)^{n-1}}{\ell_4^2}BK^{n-1}+\eps(I_{n-2}(\ell_4)+I_{n-1}(\ell_4))\\
&+\eps C^n\left(\sum_{j=1}^n(\chi-\ell_4)^jK^j\right)+(C\eps)^{n-2}\mu \frac{B}{\ell_4^2}\left(\sum_{j=1}^{n-2}(\chi-\ell_4)^jK^j\right),
\end{aligned}
\end{equation*}
where $(C\eps)^{n-2}$ appears since each application of \eqref{EqIterationN5} gives rise to a multiple of $C\eps$.
Further argument is similar to the $N_1$ case. \qed

\section{Estimates of the boundary contribution}\label{sct: boundary} In this section, we work on all the boundary contributions (see Definition \ref{DefBoundary} and equation \eqref{eq: formald4}) for the maps $(I),\ (III),\ (V)$.
\subsection{Boundary contribution for $(I)$}\label{SSbd1}
\begin{proof}[Computation of matrix $(I)$ in Proposition \ref{Prop: main}]
By \eqref{eq: formald4}, $(I)$ is a product of three matrices \eqref{eq: formald4} and we already know the matrix $N_1$, i.e. the solution of the variational equation. It remains to work out the two matrices for boundary contributions. The expression for $x^R_{4,\parallel}$ is the following (see Appendix \ref{appendixa})
\begin{equation}\label{eq: section}x^R_{4,\parallel}=-\cos g_4 L_4^2(\cosh u_4-e_4)+\sin g_4(L_4G_4\sinh u_4).\end{equation}
For fixed $x^R_{4,\parallel}=-\frac{\chi}{2}$ or $-2$, we can solve $\ell_4$ as a function of $L_4,G_4,g_4$.
The bounds for $L_4,G_4$ have been obtained in Lemma \ref{LmEnergyCons}, \ref{Lm: position}(a) and \ref{Lm: strip}.
So we get the following using the implicit function theorem and Lemma \ref{Lm: dx/dDe}.
\begin{equation}\label{eq: dell4}
\begin{aligned}
\left(\frac{\partial \ell_4}{\partial L_4}, \frac{\partial \ell_4}{\partial G_4}, \frac{\partial \ell_4}{\partial g_4}\right)&=-\frac{1}{\frac{\partial x_{4,\parallel}}{\partial \ell_4}}\left(\frac{\partial x_{4,\parallel}}{\partial L_4},\frac{\partial x_{4,\parallel}}{\partial G_4},\frac{\partial x_{4,\parallel}}{\partial g_4}\right).\\
\left(\frac{\partial \ell_4}{\partial L_4}, \frac{\partial \ell_4}{\partial G_4}, \frac{\partial \ell_4}{\partial g_4}\right)^R\Big|_{x_{4,\parallel}^R=-\frac{\chi}{2}}&\lesssim
(\chi, 1, 1),\quad \left(\frac{\partial \ell_4}{\partial L_4}, \frac{\partial \ell_4}{\partial G_4}, \frac{\partial \ell_4}{\partial g_4}\right)^R\Big|_{x_{4,\parallel}^R=-2}\lesssim
(1, 1, 1).
\end{aligned}\end{equation}
Note that $\ell_4$ depends on all other variables including $L_4$, so using Corollary \ref{Cor: ham}, and Lemma \ref{Lm: dL4}, we obtain for the section, $x_{4,\parallel}^R=-2$,
\begin{equation}\label{eq: bd-2}
 \begin{aligned}
\left(\frac{\partial \ell_4}{\partial \mathcal V}\right)^R\Big|_{x_{4,\parallel}^R=-2}&\lesssim\left(1, \mu,\mu,\mu;\frac{1}{\mu\chi^2},\frac{1}{\chi^3}, \mu,\frac{\mu}{\chi};1, 1\right)_{1\times 10},\\
\mathcal{F}^R\Big|_{x_{4,\parallel}^R=-2}&\lesssim\left(\mu,1,\mu,\mu;\mu,\frac{\mu}{\chi},\frac{1}{\mu\chi^2},\frac{1}{\chi^3}; \mu,\mu \right)^T_{1\times 10}
\end{aligned}\end{equation}
and for the section $x_{4,\parallel}^R=-\frac{\chi}{2}$,
\begin{equation}\label{eq: bd-x/2RI}
\begin{aligned}
\left(\frac{\partial \ell_4}{\partial \mathcal V}\right)^R\Big|_{x_{4,\parallel}^R=-\frac{\chi}{2}}&\lesssim\left(\boldsymbol \chi, \frac{1}{\chi^2},\frac{1}{\chi^2},\frac{1}{\chi^2};\frac{1}{\mu\chi},\frac{1}{\chi^2}, \mu\chi,\mu;1, 1\right)_{1\times 10},\\
 \mathcal{F}^R\Big|_{x_{4,\parallel}^R=-\frac{\chi}{2}}&\lesssim\left(\frac{1}{\chi^3},\mathbf 1, \frac{1}{\chi^3},\frac{1}{\chi^3};\mu,\frac{\mu}{\chi},\frac{1}{\mu\chi^2},\frac{1}{\chi^3} ;\frac{1}{\chi^2},\frac{1}{\chi^2} \right)^T_{1\times 10},
 \end{aligned}
 \end{equation}
 where the two entries in bold font are estimates in the sense of $\sim$ rather than $O$. The $\mathbf 1$ entry in $\mathcal F^R\Big|_{x_{4,\parallel}^R=-\frac{\chi}{2}}$ is already established in Corollary \ref{Cor: ham}. To get the $\boldsymbol \chi$ entry in
 $\left(\frac{\partial \ell_4}{\partial \mathcal V}\right)^R\Big|_{x_{4,\parallel}^R=-\frac{\chi}{2}}$, we use the $(1,1),(1,2)$ entries in
 $\mathcal D$ from  \eqref{eq: d/dDe}. The result is $-\frac{\frac{\partial x_{4,\parallel}}{\partial L_4}}{\frac{\partial x_{4,\parallel}}{\partial \ell_4}}\simeq -2\ell_4/L_4\simeq \chi/L_4^3$, where the last equality
 is obtained by setting $Q_{\parallel}=-\frac{\chi}{2}$ in \eqref{eq: Q4}. In this case $u>0,\ell_4<0$.
Denote
\begin{equation}\label{eqULDef}
\begin{aligned}
&l:=\left(\mathbf 1, \frac{1}{\chi^3},\frac{1}{\chi^3},\frac{1}{\chi^3};\frac{1}{\mu\chi^2},\frac{1}{\chi^3}, \mu,\frac{\mu}{\chi};\frac{1}{\chi}, \frac{1}{\chi}\right)_{1\times 10},\\
 &u:=\left(\frac{1}{\chi^3},\mathbf 1, \frac{1}{\chi^3},\frac{1}{\chi^3};\mu,\frac{\mu}{\chi},\frac{1}{\mu\chi^2},\frac{1}{\chi^3} ;\frac{1}{\chi^2},\frac{1}{\chi^2} \right)^T_{1\times 10}.
 \end{aligned}
 \end{equation}
Then \eqref{eq: bd-x/2RI} gives
\begin{equation}\label{eqULI}\frac{1}{\chi}\left(\frac{\partial \ell_4}{\partial \mathcal V}\right)^R\Big|_{x_{4,\parallel}^R=-\frac{\chi}{2}}\lesssim l,\qquad \mathcal{F}^R\Big|_{x_{4,\parallel}^R=-\frac{\chi}{2}}\lesssim u. \end{equation}

Define
\begin{equation}\label{eqULIboth}
\begin{aligned}
&u_1^i=\mathcal{F}^R\Big|_{x_{4,\parallel}^R=-2},\quad l_1^i=\left(\frac{\partial \ell_4}{\partial \mathcal V}\right)^R\Big|_{x_{4,\parallel}^R=-2},\\ &u_1^f=(1+O(\mu))\mathcal{F}^R\Big|_{x_{4,\parallel}^R=-\frac{\chi}{2}}\lesssim u,\quad l_1^f=\frac{1}{\chi}\left(\frac{\partial \ell_4}{\partial \mathcal V}\right)^R\Big|_{x_{4,\parallel}^R=-\frac{\chi}{2}}\lesssim l,
\end{aligned} \end{equation}
where the inequlities follow from \eqref{eqULI} and \eqref{eq: bd-x/2RI}.
Then $(I)=(\mathrm{Id}-\mathcal{F}^R\otimes
\left(\frac{\partial \ell_4}{\partial\mathcal V}\right)^R)^{-1}N_1(\Id-u^i_1\otimes l_1^i)$ as claimed in Proposition \ref{Prop: main}. To invert $\mathrm{Id}-\mathcal{F}^R\otimes
\left(\frac{\partial \ell_4}{\partial\mathcal V}\right)^R$, we use \eqref{EqInversion} and verify that $\left(\frac{\partial \ell_4}{\partial\mathcal V}\right)^R\cdot \mathcal{F}^R=O(\mu)$. So we get $(I)$ as claimed in Proposition \ref{Prop: main}.

Finally, we show the $\beta$ rotation of the section $\{x^R_{4,\parallel}=-2, \ v^R_{4,\parallel}>0\}$ to  the section $\{(\mathrm{Rot}(-\beta)\cdot x_{4})^R_\parallel=-2, \ v^R_{4,\parallel}>0\}$ after applying $\cR$ in Definition \ref{def: sct} is negligible. Instead of \eqref{eq: section}, we need to use the expression $\cos\beta\cdot x^R_{4,\parallel}-\sin\beta\cdot x^R_{4,\perp}=-2$ and convert $x^R_4$ into Delaunay variables. Since we have $\ell^R_4=O(1)$ here, and $\beta=O(\mu/\chi)$ since $x_1\in\mathcal S_{\mu\hat C}$, we get a correction of order $O(\mu/\chi)\cdot \left(\frac{\partial \ell_4}{\partial \mathcal V}\right)^R$ to $\left(\frac{\partial \ell_4}{\partial \mathcal V}\right)^R$ in \eqref{eq: bd-2}, which is negligible.
\end{proof}
\subsection{Boundary contribution for $(III)$}\label{subsct: bdrIII}
\begin{proof}[Computation of matrix $(III)$ in Proposition \ref{Prop: main}]
For the matrix $(III)$, the solution for the variational equation is given by $N_3$. We only need to work out the two boundary terms on the sections $\left\{x^R_{4,\parallel}=-\frac{\chi}{2}, \ v^R_{4,\parallel}<0\right\},\ \left\{x^L_{4,\parallel}=\frac{\chi}{2},\ \ v^L_{4,\parallel}>0\right\}.$

In \eqref{eq: formald4}, the variables $\mathcal{V}^i,\ \mathcal{V}^f$ should carry superscript $L$ for matrix $(III)$ since we did not compose a coordinate change between the left and right variables in \eqref{eq: formald4}. However, the section $\left\{x^R_{4,\parallel}=-\frac{\chi}{2}, \ v^R_{4,\parallel}<0\right\}$ is defined using variables with superscript $R$, so we first need to express it using left variables.
We use the matrix $R\cdot L^{-1}$ to get $\mathcal{X}^R=R\cdot L^{-1}\mathcal{X}^L$. This implies
\beq
\beal
&x^R_{4,\parallel}=x^L_{1,\parallel}+\frac{1}{1+\mu}x^L_{4,\parallel}=x^L_{1,\parallel}+\frac{1}{1+\mu}(\cos g_4 L_4^2(\cosh u_4-e_4)-\sin g_4(L_4G_4\sinh u_4))=-\frac{\chi}{2}.
\enal
\eeq
So we get the following using the implicit function theorem and Appendix Lemma \ref{Lm: dx/dDe}.
\begin{equation}\label{eq: dell4}
\begin{aligned}
\left(\frac{\partial \ell_4}{\partial L_4}, \frac{\partial \ell_4}{\partial G_4}, \frac{\partial \ell_4}{\partial g_4},\frac{\partial \ell_4}{\partial x_{1,\parallel}}\right)^L &=-\frac{1}{\frac{\partial x^L_{4,\parallel}}{\partial \ell_4}}\left(\frac{\partial x_{4,\parallel}}{\partial L_4},\frac{\partial x_{4,\parallel}}{\partial G_4},\frac{\partial x_{4,\parallel}}{\partial g_4},-(1+\mu)\right)^L,\\
\left(\frac{\partial \ell_4}{\partial L_4}, \frac{\partial \ell_4}{\partial G_4}, \frac{\partial \ell_4}{\partial g_4},\frac{\partial \ell_4}{\partial x_{1,\parallel}}\right)^L\Big|_{x_{4,\parallel}^R =-\frac{\chi}{2}}&\lesssim(\chi, 1,1,1).\\
\end{aligned}\end{equation}
Using Corollary \ref{Cor: ham} and Lemma \ref{Lm: dL4}, we obtain
\begin{equation}\label{eq: bd-x/2RIII}
\begin{aligned}
\left(\frac{\partial \ell_4}{\partial \mathcal V}\right)^L\Big|_{x_{4,\parallel}^R=-\frac{\chi}{2}}&\lesssim\left(\boldsymbol \chi,\frac{1}{\chi^2},\frac{1}{\chi^2},\frac{1}{\chi^2};1,\frac{1}{\chi^2}, \mu\chi,\mu;1, 1\right)_{1\times 10},\\
\mathcal{F}^L\Big|_{x_{4,\parallel}^R=-\frac{\chi}{2}}&\lesssim\left(\frac{1}{\chi^3},\mathbf 1, \frac{1}{\chi^3},\frac{1}{\chi^3};\mu,\frac{\mu}{\chi},\frac{1}{\mu\chi^2},\frac{1}{\chi^3} ;\frac{1}{\chi^2},\frac{1}{\chi^2} \right)^T_{1\times 10},\\
\end{aligned}\end{equation}
where $\mathbf 1$ and $\boldsymbol\chi$ are estimates in the sense of $\sim$, having the same values as that in
\eqref{eq: bd-x/2RI}.
Denote
\begin{equation}\label{eqL'Def}
\begin{aligned}
&l':=\left(\mathbf 1, \frac{1}{\chi^3},\frac{1}{\chi^3},\frac{1}{\chi^3};\frac{1}{\chi},\frac{1}{\chi^3}, \mu,\frac{\mu}{\chi};\frac{1}{\chi}, \frac{1}{\chi}\right)_{1\times 10}, \end{aligned}
 \end{equation}
 which is different from $l$ in its fifth entry.
Then \eqref{eq: bd-x/2RIII} becomes
\begin{equation}\label{eqUL'}\frac{1}{\chi}\left(\frac{\partial \ell_4}{\partial \mathcal V}\right)^L\Big|_{x_{4,\parallel}^R=-\frac{\chi}{2}}\lesssim l',\qquad
\mathcal{F}^L\Big|_{x_{4,\parallel}^R=-\frac{\chi}{2}}\lesssim u.
\end{equation}

For the section $\left\{x^L_{4,\parallel}=\frac{\chi}{2},\ v^L_{4,\parallel}>0\right\}$, the estimate is exactly the same as the case $\left\{x^R_{4,\parallel}=-\frac{\chi}{2},\ v^L_{4,\parallel}<0\right\}$ in $(I)$, i.e. $u_1^f$ and $l_1^f$, and we get the same result
as \eqref{eqULIboth}
\begin{equation}\label{eq: ulIII}
\begin{aligned}
 u_3^f:=\mathcal{F}^L\Big|_{x_{4,\parallel}^L=\frac{\chi}{2}}\lesssim u,
\qquad l_3^f:=\frac{1}{\chi}\left(\frac{\partial \ell_4}{\partial \mathcal V}\right)^L\Big|_{x_{4,\parallel}^L=\frac{\chi}{2}} \lesssim l.
\end{aligned}
 \end{equation}
 However, we note that the $\chi$ entries in $\left(\frac{\partial \ell_4}{\partial \mathcal V}\right)^L\Big|_{x_{4,\parallel}^R=-\frac{\chi}{2}}$ and $\left(\frac{\partial \ell_4}{\partial \mathcal V}\right)^L\Big|_{x_{4,\parallel}^L=\frac{\chi}{2}}$ have the same expression $\frac{-2\ell_4}{L_4}$, by the same calculation as in Section \ref{SSbd1}. So we get that the former is actually $-\chi$ while the latter is $\chi.$ This proves the sign differences of $l_3^i$ and $l_3^f$ in Proposition 5.2(a.2).

We obtain the matrix $(III)=(\Id+\chi u^f_3\otimes l_3^f)N_3(\Id-\chi u^i_3\otimes l_3^i)$ in Proposition \ref{Prop: main}
 by defining
 \begin{equation}\label{eqUL'III} l_3^i:=\frac{1}{\chi}\left(\frac{\partial \ell_4}{\partial \mathcal V}\right)^L\Big|_{x_{4,\parallel}^R=-\frac{\chi}{2}}\lesssim l',\quad u_3^i:=\mathcal{F}^L\Big|_{x_{4,\parallel}^R=-\frac{\chi}{2}}\lesssim u,
 \end{equation}
where the inequalities follow from \eqref{eqUL'}.
\end{proof}
\subsection{Boundary contribution for $(V)$}\label{SSbd5}
\begin{proof}[Computation of matrix $(V)$ in Proposition \ref{Prop: main}]
For the matrix $(V)$, the solution of the variational equation is given by $N_5$. We only need to get two boundary contributions. Notice the section $\left\{x^L_{4,\parallel}=\frac{\chi}{2},\ v^L_{4,\parallel}>0\right\}$ is defined using left variables. However, we need to express the boundary contributions in \eqref{eq: formald4}. The estimate is exactly the same as that for the section $\left\{x^R_{4,\parallel}=-\frac{\chi}{2},\ v^R_{4,\parallel}<0\right\}$ of $(III)$, i.e. $u_3^i$ and $l_3^i,$ though this time we need to use $\mathcal{X}^L=L\cdot R^{-1}\mathcal{X}^R$. We get the same result as \eqref{eqUL'III}
\begin{equation}\label{eq: ulV}
\begin{aligned}
u_5^i:=\mathcal{F}^R\Big|_{x_{4,\parallel}^L=\frac{\chi}{2}}\lesssim u,\qquad
l_5^i:=\frac{1}{\chi}\left(\frac{\partial \ell_4}{\partial \mathcal V}\right)^R\Big|_{x_{4,\parallel}^L=\frac{\chi}{2}}\lesssim  l'.
\end{aligned}
\end{equation}

For the section $\left\{x^R_{4,\parallel}=-2\right\}$, the estimate is exactly the same as the estimate in the $\left\{x^R_{4,\parallel}=-2\right\}$ case of $(I)$, i.e. $u_1^i$ and $l_1^i$ in \eqref{eqULIboth}. Defining
\begin{equation}\label{eq: ulV-2}
\begin{aligned}
u_5^f:=\mathcal{F}^R\Big|_{x_{4,\parallel}^R=-2},\qquad l_5^f:=\left(\frac{\partial \ell_4}{\partial \mathcal V}\right)^R\Big|_{x_{4,\parallel}^R=-2}\end{aligned}
\end{equation}
we get $(V)=(\Id+\chi u^f_5\otimes l_5^f)N_5(\Id-u^i_1\otimes l_5^i)$ as claimed in Proposition \ref{Prop: main}.
The signs of $l_5^i$ and $l_5^f$ are analyzed in the same way as the cases $(I)$ and $(III)$.
\end{proof}

\section{Estimates of the matrices $(II),(IV)$ for switching foci}\label{sct: switch}
In this section, we study the matrices $(II)$ and $(IV)$ in Proposition \ref{Prop: main}.
\subsection{A simplifying computation} We start with a formal calculation, which liberates us from calculating the $\mathcal V_3$ part.
Both $R\cdot L^{-1}$ are $L\cdot R^{-1}$ can be represented as $\left[\begin{array}{cc}\mathrm{Id}_{4\times 4}&0\\
0& T_\mu\end{array}\right]$ for a $8\times 8$ matrix $T_\mu.$  We need to multiply $\frac{\partial \mathcal V^L}{\partial \mathcal X^L}$ on the left and $\frac{\partial \mathcal X^R}{\partial \mathcal V^R}$ on the right to get $(II)=\frac{\partial \mathcal V^L}{\partial \mathcal V^R}$ as follows
\begin{equation}\label{eq: II}
\begin{aligned}
&\frac{\partial \mathcal V^L}{\partial \mathcal X^L}L\cdot R^{-1}\frac{\partial \mathcal X^R}{\partial \mathcal V^R}\\
&=\left[\begin{array}{cc}\frac{\partial \mathcal V_3}{\partial \mathcal X_3}&0\\
0& \frac{\partial (\mathcal V_1,\mathcal V_4)}{\partial (\mathcal X_1,\mathcal X_4)}\end{array}\right]^L_{10\times 12}\left[\begin{array}{cc}\mathrm{Id}_{4\times 4}&0\\
0& T_\mu\end{array}\right]_{12\times 12} \left[\begin{array}{cc}\frac{\partial \mathcal X_3}{\partial \mathcal V_3}&0\\
\frac{\partial (\mathcal X_1,\mathcal X_4)}{\partial \mathcal V_3}& \frac{\partial (\mathcal X_1,\mathcal X_4)}{\partial (\mathcal V_1,\mathcal V_4)}\end{array}\right]^R_{12\times 10}\\
%&=\left[\begin{array}{cc}\mathrm{Id}&0\\
%\frac{\partial (\mathcal V_1,\mathcal V_4)^L}{\partial (\mathcal X_1,\mathcal X_4)^L}T_\mu\frac{\partial (\mathcal X_1,\mathcal X_4)^R}{\partial \mathcal V_3^R}
%& \frac{\partial (\mathcal V_1,\mathcal V_4)^L}{\partial (\mathcal X_1,\mathcal X_4)^L}T_\mu\frac{\partial (\mathcal X_1,\mathcal X_4)^R}{\partial (\mathcal V_1,\mathcal V_4)^R}\end{array}\right]_{10\times 10}\\
%&=\left[\begin{array}{cc}\mathrm{Id}&0\\
%0&0\end{array}\right]_{10\times 10}+
%\left[\begin{array}{cc}0&0\\
%0& \frac{\partial (\mathcal V_1,\mathcal V_4)}{\partial (\mathcal X_1,\mathcal X_4)}\end{array}\right]^L_{10\times 12}\left[\begin{array}{cc}0&0\\
%0& T_\mu\end{array}\right]_{12\times 12} \left[\begin{array}{cc}0&0\\
%\frac{\partial (\mathcal X_1,\mathcal X_4)}{\partial \mathcal V_3}& \frac{\partial (\mathcal X_1,\mathcal X_4)}{\partial (\mathcal V_1,\mathcal V_4)}\end{array}\right]_{12\times 10}^R\\
&=\left[\begin{array}{cc}\mathrm{Id}&0\\
0& \frac{\partial (\mathcal V_1,\mathcal V_4)}{\partial (\mathcal X_1,\mathcal X_4)}\end{array}\right]^L_{10\times 12}\left[\begin{array}{cc}\mathrm{Id}&0\\
0& T_\mu\end{array}\right]_{12\times 12} \left[\begin{array}{cc}\mathrm{Id}&0\\
\frac{\partial (\mathcal X_1,\mathcal X_4)}{\partial \mathcal V_3}& \frac{\partial (\mathcal X_1,\mathcal X_4)}{\partial (\mathcal V_1,\mathcal V_4)}\end{array}\right]_{12\times 10}^R.\\
\end{aligned}
\end{equation}
We have the same calculation for $(IV)=\frac{\partial \mathcal V^R}{\partial \mathcal V^L}=\frac{\partial \mathcal V^R}{\partial \mathcal X^R}R\cdot L^{-1}\frac{\partial \mathcal X^L}{\partial \mathcal V^L}$.
In the following, we only need to figure out the matrices $\frac{\partial (\mathcal X_1,\mathcal X_4)}{\partial (\mathcal V_3,\mathcal V_1,\mathcal V_4)}$ and $\frac{\partial (\mathcal V_1,\mathcal V_4)}{\partial (\mathcal X_1,\mathcal X_4)}$.

\subsection{From Delaunay to Cartesian coordinates}
In this section we compute
$$\frac{\partial (\mathcal X_1;\mathcal X_4)}{\partial (\mathcal V_3,\mathcal V_1,\mathcal V_4)}=\frac{\partial (x_1,v_1,x_4,v_4)}{\partial(L_3,\ell_3,G_3,g_3,x_1,v_1,G_4,g_4)}.$$
This computation is restricted to the section $\{x^R_{4,\parallel}=-\frac{\chi}{2}\}$ for matrix $(II)$ and to the section $\{x^L_{4,\parallel}=\frac{\chi}{2}\}$ for matrix $(IV)$. The key observation to obtain the tensor structure of the following sublemma is explained in Remark \ref{RkParallel} (2).
\begin{sublemma}\label{sublm: dX/dV}
Assume {\bf AG}, then
\begin{itemize}
\item[(a)] on the section $\{x_{4,\parallel}^R=-\frac{\chi}{2}\}$ the matrix $\frac{\partial(\mathcal X_1,\mathcal X_4)^R}{\partial \mathcal V^R}$ in \eqref{eq: II} is an $8\times 10$ matrix of the form
\begin{equation}
\begin{aligned}
&\frac{\partial(x_1,v_1,x_4,v_4)^R}{\partial (L_3,\ell_3,G_3,g_3,x_1,v_1,G_4,g_4)^R}=\chi u_i\otimes l_i+\left[\begin{array}{cc|cc}
0_{4\times 4}&\mathrm{Id}_{4\times 4} &0_{4\times 1}&0_{4\times 1}\\
\hline
0_{1\times 4}&0_{1\times 4} & 0& 0\\
0_{1\times 4}&0_{1\times 4} &O(1)&O(1)\\
\ \ \ \breve{l}_i&&O\left(\frac{1}{\chi}\right)&O\left(\frac{1}{\chi}\right)\\
0_{1\times 4}&0_{1\times 4} &0&0
\end{array}\right]_{8\times 10}\label{eq: Car14/De314}\end{aligned}\end{equation}
where we have the estimates \begin{equation}\nonumber\begin{aligned}&u_i=\left(0_{1\times 5},\frac{L_4}{2m_4^2k_4^2},0,\frac{1}{\chi}\right)^T_{1\times 8},\qquad \breve{l}_i\lesssim\(1,\frac{1}{\chi^3},\frac{1}{\chi^3},\frac{1}{\chi^3};\frac{1}{\mu\chi^2},\frac{1}{\chi^3},\mu,\frac{\mu}{\chi}\)_{1\times 8},\\
%\begin{aligned}&
&l_{i}=\left(\frac{G_4k_4m_4}{L_4(G_4^2+L_4^2)},O\left(\frac{1}{\chi^3}\right)_{1\times 3};O\left(\frac{1}{\mu\chi^2},\frac{1}{\chi^3},\mu,\frac{\mu}{\chi}\right);\frac{-k_4m_4}{G_4^2+L_4^2},\frac{-k_4m_4}{L_4}\right)_{1\times 10}^R%\\
%&\lesssim\left(\mathbf 1,\frac{1}{\chi^3},\frac{1}{\chi^3},\frac{1}{\chi^3};\frac{1}{\mu\chi^2},\frac{\sG}{\chi^3},\mu,\frac{\mu\sG}{\chi};\mathbf 1,\mathbf 1\right)_{1\times 10}
\end{aligned}
\end{equation} and $l_i$ converges to $\hat\brlin$ defined in Lemma \ref{Lm: glob} as $1/\chi\ll\mu\to 0$.
\item[(b)] On the section $\{x_{4,\parallel}^L=\frac{\chi}{2}\}$ the matrix $\frac{\partial(\mathcal X_1,\mathcal X_4)^L}{\partial \mathcal V^L}$ for $(IV)$ has the same form with the same $u_i$, and $l_i$ replaced by \[l_{i'}=\(0_{1\times 8},\frac{-k_4m_4}{L_4^2},\frac{-k_4m_4}{L_4}\right)+O\left(\frac{1}{\chi}, \left(\frac{1}{\chi^4}\right)_{1\times 3};\frac{1}{\mu\chi^3},\frac{1}{\chi^4}, \frac{\mu }{\chi},\frac{\mu}{\chi^2}; \frac{1}{\chi^3},0\)_{1\times 10}.\]
\end{itemize}
\end{sublemma}
\begin{proof} We trivially have $\left(\frac{\partial \mathcal X_1}{\partial (\mathcal V_3,\mathcal V_4)}\right)^{R,L}=0$, and $\left(\frac{\partial \mathcal X_1}{\partial \mathcal V_1}\right)^{R,L}=\mathrm{Id}_4$ since the variables $\mathcal{X}_1=(x_1,v_1)$ are not transformed to Delaunay variables and they are independent of $\mathcal{V}_{3,4}$. It remains to obtain $\frac{\partial \mathcal X_4}{\partial \mathcal V}.$

{\bf Step 1, formal derivations.}

In the following calculation, we use \eqref{eq: dell4}. The formal calculation works for both cases, left and right, so we omit the superscripts. As before, we use $\partial$ to denote the partial derivative with respect to all 12 variables and $\nabla$ to denote the covariant derivative with respect to the 10 variables with $L_4$ and $\ell_4$ eliminated. Since we are restricted to the section $x_{4,\parallel}=\pm\chi/2$, we will solve $\ell_4$ as functions of $L_4$ and $G_4,g_4$ and we use $\frac{\dt \ell_4}{\dt L_4},\frac{\dt \ell_4}{\dt G_4},\frac{\dt \ell_4}{\dt g_4}$ for the corresponding partial derivatives.
%Variables $L_4,\ \ell_4$ are eliminated from the list of variables, so they need to be paid special attention.
\begin{equation}\label{eq: dX/dV}\begin{aligned}
\nabla_{\mathcal V} \mathcal X_4&=\frac{\partial \mathcal X_4}{\partial (L_4,\ell_4)}\nabla_{\mathcal V} (L_4,\ell_4)+\left(0_{4\times 8}\left|\frac{\partial \mathcal X_4}{\partial (G_4,g_4)}\right.\right)\\
&=\left(\frac{\partial \mathcal X_4}{\partial L_4}, \frac{\partial \mathcal X_4}{\partial \ell_4}\right)\left(\begin{array}{c}
\nabla_{\mathcal V}  L_4\\
\frac{\dt \ell_4}{\dt L_4}\nabla_{\mathcal V}   L_4+\left(0_{1\times 8};\frac{\dt \ell_4}{\dt G_4},\frac{\dt \ell_4}{\dt g_4}\right)
\end{array}\right)
+\left(0_{4\times 8}\left|\frac{\partial \mathcal X_4}{\partial G_4},\frac{\partial \mathcal X_4}{\partial g_4}\right.\right)\\
&=\left(\frac{\partial \mathcal X_4}{\partial L_4}+ \frac{\dt \ell_4}{\dt L_4}\frac{\partial \mathcal X_4}{\partial \ell_4}\right)\otimes \nabla_{\mathcal V}   L_4+
\frac{\partial \mathcal X_4}{\partial \ell_4}\otimes \left(0_{1\times 8};\frac{\dt \ell_4}{\dt G_4},\frac{\dt \ell_4}{\dt g_4}\right)
+\left(0_{4\times 8}\left|\frac{\partial \mathcal X_4}{\partial G_4},\frac{\partial \mathcal X_4}{\partial g_4}\right.\right)\\
&=\left(\frac{\partial \mathcal X_4}{\partial L_4}-\frac{\frac{\partial x_{4,\parallel}}{\partial L_4}}{\frac{\partial x_{4,\parallel}}{\partial \ell_4}}\frac{\partial \mathcal X_4}{\partial \ell_4}\right)\otimes \nabla_{\mathcal V}   L_4
-\frac{1}{\frac{\partial x_{4,\parallel}}{\partial \ell_4}}\frac{\partial \mathcal X_4}{\partial \ell_4}\otimes \left(0_{1\times 8};\frac{\partial x_{4,\parallel}}{\partial G_4},\frac{\partial x_{4,\parallel}}{\partial g_4}\right)+\left(0_{4\times 8}\left|\frac{\partial \mathcal X_4}{\partial G_4},\frac{\partial \mathcal X_4}{\partial g_4}\right.\right) \\
%\end{aligned}
%\end{equation}}
%{\small\begin{equation}\begin{aligned}\nonumber
&=\left(\frac{\partial \mathcal X_4}{\partial L_4}-\frac{\frac{\partial x_{4,\parallel}}{\partial L_4}}{\frac{\partial x_{4,\parallel}}{\partial \ell_4}}\frac{\partial \mathcal X_4}{\partial \ell_4}\right)\otimes \nabla_{\mathcal V}   L_4
+\left(0_{4\times 8}\left|\frac{\partial \mathcal X_4}{\partial G_4}-\frac{\frac{\partial x_{4,\parallel}}{\partial G_4}}{\frac{\partial x_{4,\parallel}}{\partial \ell_4}}\frac{\partial \mathcal X_4}{\partial \ell_4},\frac{\partial \mathcal X_4}{\partial g_4}-\frac{\frac{\partial x_{4,\parallel}}{\partial g_4}}{\frac{\partial x_{4,\parallel}}{\partial \ell_4}}\frac{\partial \mathcal X_4}{\partial \ell_4}\right.\right)\\
%&\sim \frac{\partial \mathcal X_4}{\partial L_4}\otimes \frac{\partial  L_4}{\partial \mathcal V}+\left(0_{4\times 8}\left|\frac{\partial \mathcal X_4}{\partial G_4},\frac{\partial \mathcal X_4}{\partial g_4}\right.\right)\\
&\sim \(0,-\frac{G_4L_4^2\ell_4}{m_4k_4(G_4^2+L_4^2)}+O(1),\frac{k_4m_4}{L_4^2},\frac{G_4k_4m_4}{L_4(G_4^2+L_4^2)}\)^T\otimes \nabla_{\mathcal V}  L_4 +\\
&\quad\(0_{4\times 8}\left|\begin{array}{cc}
0&0\\
\frac{L^3_4\ell_4/m_4k_4}{(G^2_4+L^2_4)}+O(1)&\frac{L_4^2\ell_4}{m_4k_4}+O(1)\\
\frac{1}{\chi}&\frac{1}{\chi}\\
-\frac{k_4m_4}{G_4^2+L_4^2}&-\frac{k_4m_4}{L_4}
\end{array}\right.\)_{4\times 10}
\end{aligned}
\end{equation}
where in the last step, we use Lemma \ref{Lm: dx/dDe},
and choose the sign $\sigma=$sign$(u)$ to be $+$ for both part (a) and (b) of the Lemma. Actually, the terms $\frac{\frac{\partial x_{4,\parallel}}{\partial (L_4,G_4,g_4)}}{\frac{\partial x_{4,\parallel}}{\partial \ell_4}}\frac{\partial \mathcal X_4}{\partial \ell_4}$ are small compared to the corresponding $\frac{\partial \mathcal X_4}{\partial (L_4,G_4,g_4)}$ due to the smallness of $\frac{\partial (x_{4,\perp},v_4)}{ \partial \ell_4}$ in \eqref{eq: d/dDe}.

 It is easy to see from the above calculation that the first row is zero since the first entry of $\mathcal X_4$
 is $x_{4,\parallel}$. This also follows from the fact that we are restricted on the sections $\{x^R_{4,\parallel}=-\frac{\chi}{2}\}$
 and $\{x^L_{4,\parallel}=\frac{\chi}{2}\}$ so that $x_{4,\parallel}$ is a constant. We already have the tensor structure in the first summand of the last line of \eqref{eq: dX/dV}. Next in the second summand of \eqref{eq: dX/dV}, the two nontrivial columns are nearly parallel.
The reason is that in equation \eqref{eq: d/dDe} the two vectors
 $\frac{\partial x_{4,\perp}}{\partial (L_4,G_4,g_4)}$ and $\frac{\partial v_{4,\perp}}{\partial (L_4,G_4,g_4)}$
 (the second and fourth rows in \eqref{eq: d/dDe}) are parallel with ratio of modulus $\frac{L_4^3\ell_4}{m_4^2k_4^2}$
 if we discard the $O(1)$ terms in the former (see Remark \ref{RkParallel}). The two terms $-\frac{\frac{\partial x_{4,\parallel}}{\partial G_4}}{\frac{\partial x_{4,\parallel}}{\partial \ell_4}}\frac{\partial \mathcal X_4}{\partial \ell_4}$ and $-\frac{\frac{\partial x_{4,\parallel}}{\partial g_4}}{\frac{\partial x_{4,\parallel}}{\partial \ell_4}}\frac{\partial \mathcal X_4}{\partial \ell_4}$ are obviously parallel.

 {\bf Step 2, the case $\frac{\partial(\mathcal X_1,\mathcal X_4)^R}{\partial \mathcal V^R}$ on the section $\{x_{4,\parallel}^R=-\frac{\chi}{2}\}$.}

 Orbit parameters in this step should carry a superscript $R$ which we omit for simplicity.
We define the last row of the above calculation \eqref{eq: dX/dV} as the vector $l_i$. That is,

\begin{equation}
\beal\label{eq: li}l_{i}&:=
\(\frac{G_4k_4m_4}{L_4(G_4^2+L_4^2)}\)\cdot \nabla_{\mathcal V}  L_4 +\(0_{1\times 8};
-\frac{k_4m_4}{G_4^2+L_4^2},-\frac{k_4m_4}{L_4}\).
\enal
\end{equation}

We get the estimate of $l_i$ stated in the lemma
using Lemma \ref{Lm: dL4} for the section $\{x^R_{4,\parallel}=-\frac{\chi}{2}\}$. Moreover, since the first entry in $\nabla_{\mathcal V}   L_4 $ is $\nabla_{L_3}  L_4=1+O(\mu)$ and the last two entries are $O(1/\chi^2)$, we
see that $l_i\to \hat\brlin$ defined in Lemma \ref{Lm: glob} when we take limit $1/\chi\ll\mu\to 0$.

Then the second row of the last line of \eqref{eq: dX/dV} is $(-\frac{L_4^3\ell_4}{m_4^2k_4^2} +O(1))l_i+(0_{1\times 8};O(1),O(1))$. For the third row, we define a vector $\breve{l}_i=\left(1,\frac{1}{\chi^3},\frac{1}{\chi^3},\frac{1}{\chi^3};\frac{1}{\mu\chi^2},\frac{1}{\chi^3},\mu,\frac{\mu}{\chi}\right)_{1\times 8}$ as the first 8 entries of $l_i$, so the third row is $(\breve{l}_i;1/\chi,1/\chi)_{1\times 10}$.

Finally, we define \[u_i=\left(0,0,0,0;0,\frac{L_4}{2m_4^2k_4^2}, 0,\frac{1}{\chi}\right)^T_{1\times 8},\]
which, after removing the first four zeros, is almost parallel to the last two columns in the last row of \eqref{eq: dX/dV},
where we used \eqref{eq: Q4}  to get that $L_4^2\ell_4\simeq -\frac{\chi}{2}$
when restricted to the section $\left\{x_{4,\parallel}^R=-\frac{\chi}{2}\right\}$.
This completes the proof of part (a).

{\bf Step 3, the case of $\frac{\partial(\mathcal X_1,\mathcal X_4)^L}{\partial \mathcal V^L}$ on the section $\{x_{4,\parallel}^L=\frac{\chi}{2}\}$.}
 This case follows from the same formal calculation \eqref{eq: dX/dV}. However, since the variables are to the left of the section $x^L_{4,\parallel}=\frac{\chi}{2}$, we have $G_4^L=O\left(\frac{1}{\chi}\right)$ in \eqref{eq: li} according to Lemma \ref{Lm: tilt}(b).
Thus we get an improved $l_{i'}$ in place of $l_i$ by applying Lemma \ref{Lm: dL4} to \eqref{eq: li}. We also have $L_4^2(-\ell_4)\simeq \frac{\chi}{2}$ using \eqref{eq: Q4l} for $u>0, \ell_4<0$ to the left of the section $x^L_{4,\parallel}=\frac{\chi}{2}$. So $u_i$ in the left case has the same expression as in the right case. This proves part (b).
\end{proof}
\subsection{From Cartesian to Delaunay coordinates}
In this section we compute  $\frac{\partial(\mathcal V_1;\mathcal V_4)}{\partial (\mathcal X_1,\mathcal X_4)}=\frac{\partial(x_1,v_1,G_4,g_4)}{\partial (x_1,v_1,x_4,v_4)}.$
The key observation to get the tensor structure is explained in Remark \ref{RkParallel} (3).
\begin{sublemma}\label{sublm: dV/dX}
Assume {\bf AG}, then
\begin{itemize}
\item[(a)]on the section $\{x_{4,\parallel}^R=-\frac{\chi}{2}\}$
the matrix $\frac{\partial(\mathcal V_1;\mathcal V_4)^L}{\partial (\mathcal X_1,\mathcal X_4)^L}$ in \eqref{eq: II} is a $6\times 8$ matrix of the following form
\begin{equation}
\begin{aligned}
&\frac{\partial (x_1,v_1,G_4, g_4)^L}{\partial(x_1,v_1,x_4,v_4)^L }=\chi u_{iii}\otimes l_{iii}+\left[\begin{array}{c|cccc}
\mathrm{Id}_{4\times 4}&0_{4\times 1}&0_{4\times 1}&0_{4\times 1}&0_{4\times 1}\\
\hline
0_{1\times 4}&0&0&0&0\\
0_{1\times 4}&O\left(\frac{1}{\chi^2}\right)&O\left(\frac{1}{\chi^2}\right)& O(1)&O(1)
\end{array}\right]_{6\times 8},
\end{aligned}
\end{equation}
where we have estimates {\small\[u_{iii}=\left(0,0,0,0; 1, \frac{1}{L_4}+O\(\frac{1}{\chi^2}\)\right)^T_{1\times 6},\ l_{iii}=\left(0_{1\times 4}; O\(\frac{1}{\chi^2}\), \frac{-m_4k_4}{\chi L_4}, O\(\frac{1}{\chi}\), -\frac{1}{2}\right)_{1\times 8}.\]}
\item[(b)] On the section $\{x_{4,\parallel}^L=\frac{\chi}{2}\}$ the matrix $\frac{\partial(\mathcal V_1;\mathcal V_4)^R}{\partial (\mathcal X_1,\mathcal X_4)^R}$ for $(IV)$ has the same form with $u_{iii}$ replaced by
\[u_{iii'}=\left(0,0,0,0; 1, \frac{L_4}{G_4^2+L_4^2}\right)^T_{1\times 6},\]
and $l_{iii}$ replaced by $l_{iii'}=-l_{iii}$.
\end{itemize}
\end{sublemma}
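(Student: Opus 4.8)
The plan is to compute $\dfrac{\partial(x_1,v_1,G_4,g_4)}{\partial(x_1,v_1,x_4,v_4)}$ block by block, exactly as in Sublemma \ref{sublm: dX/dV} but in the opposite direction. Since $(x_1,v_1)$ are carried over unchanged and $G_4,g_4$ are intrinsic invariants of the two-body orbit $(x_4,v_4)$ (the signed angular momentum and the direction of the Runge--Lenz vector), we get for free the $\mathrm{Id}_{4\times 4}$ block, the $0$ block above it, and the two $0_{1\times 4}$ blocks expressing $\partial G_4/\partial(x_1,v_1)=\partial g_4/\partial(x_1,v_1)=0$. So everything reduces to the $2\times 4$ matrix $\partial(G_4,g_4)/\partial(x_4,v_4)$, and the only real content is to exhibit its rank-one leading part. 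Throughout I would use the a priori information available on the two midway sections: by Lemma \ref{Lm: position}, Lemma \ref{Lm: strip} and Lemma \ref{Lm: tilt} we have $|x_4|=O(\chi)$ with $|x_{4,\parallel}|\simeq\chi/2$ (via \eqref{eq: Q4}, $L_4^2\ell_4\simeq\mp\chi/2$), $x_{4,\perp}=O(\mu\sG)$, the velocity $v_4$ nearly horizontal with $|v_{4,\parallel}|\simeq m_4k_4/L_4$ and $v_{4,\perp}=O(\mu\sG/\chi)$; moreover in case (a) the orbit of $Q_4$ is to the left of the midway sections so $G_4^L,g_4^L=O(\mu\sG/\chi)$ by Lemma \ref{Lm: tilt}(b), whereas in case (b) it is to the right so only $G_4^R=O(1)$ by \eqref{eq: GLGR}.

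For the $G_4$-row I would differentiate the explicit expression $G_4=v_4\times x_4=v_{4,\parallel}x_{4,\perp}-v_{4,\perp}x_{4,\parallel}$ directly, obtaining
\[\frac{\partial G_4}{\partial(x_{4,\parallel},x_{4,\perp},v_{4,\parallel},v_{4,\perp})}=(-v_{4,\perp},\,v_{4,\parallel},\,x_{4,\perp},\,-x_{4,\parallel}).\]
Substituting the bounds above, every entry except the last two is $O(\mu\sG)$-small, $|v_{4,\parallel}|\simeq m_4k_4/L_4$, and $|x_{4,\parallel}|\simeq\chi/2$; dividing by $\chi$ identifies this row with $\chi\cdot 1\cdot l_{iii}$, where $l_{iii}=\bigl(0_{1\times4};O(\mu\sG/\chi^2),-m_4k_4/(\chi L_4),O(\mu\sG/\chi),-1/2\bigr)$ in case (a), and the orientation reversal in $L\cdot R^{-1}$ versus $R\cdot L^{-1}$ (together with the $\mathrm{sign}(u)$ flip) turns this into $l_{iii'}=-l_{iii}$ in case (b). In particular the $G_4$-row is purely the rank-one term, which is why the $G_4$-row of the remainder matrix vanishes.

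For the $g_4$-row I would use $g_4=\arg(\mathbf e_4)$ with the Runge--Lenz vector $\mathbf e_4=\frac{1}{k_4}(v_4\times \mathbf L_4)-\hat x_4$, $\mathbf L_4=m_4G_4\hat z$ (equivalently, inverting the Delaunay formula \eqref{eq: Q4l}), together with the far-point relation $g_4=\arg(x_4)+\arctan(G_4/L_4)+\mathrm{const}+O(1/|x_4|)$ coming from the hyperbolic true-anomaly asymptotics. Differentiating, the dominant contribution is $\frac{1}{1+(G_4/L_4)^2}\,\frac{dG_4}{L_4}=\frac{L_4}{L_4^2+G_4^2}\,dG_4$, i.e. exactly $\bigl(\text{$g_4$-component of }u_{iii}\bigr)$ times the $G_4$-row already computed, with $\frac{L_4}{L_4^2+G_4^2}=\frac1{L_4}+O(G_4^2/L_4^3)$, which is $\frac1{L_4}+O(\mu^2\sG^2/\chi^2)$ in case (a) and is kept exactly as $-\frac{L_4}{G_4^2+L_4^2}$ (after the sign bookkeeping) in case (b). This is precisely the transpose of the ``parallel rows'' observation used in Sublemma \ref{sublm: dX/dV} and recorded in Remark \ref{RkParallel}: the two rows $\partial G_4/\partial(x_4,v_4)$ and $\partial g_4/\partial(x_4,v_4)$ are parallel up to lower order, which is what produces the tensor factor. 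The leftover pieces --- $d(\arg x_4)=\frac{1}{|x_4|^2}(-x_{4,\perp},x_{4,\parallel})$ in the $x_4$-slots, the $dL_4$-dependence of $\arctan(G_4/L_4)$ (with $dL_4$ estimated from energy conservation as in Lemma \ref{Lm: dL4}), and the $O(1/|x_4|)$ tail --- are all subleading and assemble into the remainder matrix, whose $g_4$-row is $\bigl(0_{1\times4};O(1/\chi^2),1/\chi^2,O(1),O(1)\bigr)$. Finally the factorization \eqref{eq: II} combines this with Sublemma \ref{sublm: dX/dV} and the matrices $R\cdot L^{-1},\,L\cdot R^{-1}$ of Proposition \ref{Prop: main} to yield $(II)$ and $(IV)$.

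The main obstacle is not conceptual --- the parallelism of the two rows is immediate from the far-point asymptotics, so the heart of the matter is cheap --- but bookkeeping: pinning down all the signs and orientations (left versus right focus, the $\mathrm{sign}(u)$ in Lemma \ref{Lm: tilt}(a), the $\pm$ in $R\cdot L^{-1}$) so that $l_{iii'}=-l_{iii}$ and the $G_4$-dependence distinguishing $u_{iii}$ from $u_{iii'}$ come out correctly, and checking that each remainder entry really has the advertised order, which requires feeding the precise derivative formulas of Appendix \ref{appendixa} (Lemmas \ref{Lm: dx/dDe} and \ref{Lm: De/Ca}) and the sharp a priori bounds, rather than crude ones, into the chain rule.
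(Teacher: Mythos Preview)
Your approach is essentially the same as the paper's: both reduce to the $2\times4$ block $\partial(G_4,g_4)/\partial(x_4,v_4)$, compute the $G_4$-row by direct differentiation of $G_4=v_4\times x_4$, and obtain the $g_4$-row as a multiple of the $G_4$-row via the far-point relation $\partial g_4=\mathrm{sign}(u)\frac{L_4}{G_4^2+L_4^2}\partial G_4+O(1/\chi^2,1/\chi^2,1,1)$ (which the paper simply quotes from Lemma~\ref{Lm: De/Ca}). One small correction to your bookkeeping: the sign reversal $l_{iii'}=-l_{iii}$ has nothing to do with the matrices $L\cdot R^{-1}$ versus $R\cdot L^{-1}$ (those act on the \emph{other} side of the factorization \eqref{eq: II}); it comes purely from the fact that in case~(b) you are in right coordinates on the section $\{x_{4,\parallel}^L=\chi/2\}$, where $x_{4,\parallel}^R\simeq-\chi/2<0$ and $v_{4,\parallel}^R>0$, so both $P_\parallel$ and $Q_\parallel$ flip sign relative to case~(a).
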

\begin{proof}The only nontrivial part of this matrix is $\frac{\partial(G_4,g_4)}{\partial (x_4,v_4)}$. We consider first part (a), to the left of the section $\{x_{4,\parallel}^R=-\frac{\chi}{2}\}$, where the variables should carry superscript $L$ that we omit.  It follows from Lemma \ref{Lm: De/Ca}(b) that
$$\frac{\partial g_4}{\partial (x_4,v_4)}=\frac{L_4}{G_4^2+L_4^2}\frac{\partial G_4}{\partial (x_4,v_4)}+O\left(\frac{1}{\chi^2}, \frac{1}{\chi^2}, 1,1\right).$$
 This implies that the two rows in $\frac{\partial(G_4,g_4)^L}{\partial (x_4,v_4)^L}$ are almost parallel up to the $O$ term. Therefore we have the tensor structure in the lemma.

Next we define
\begin{equation}
\begin{aligned}
u_{iii}&=\left(0_{1\times 4}; 1, \frac{L_4}{G_4^2+L_4^2}\right)^T=\left(0_{1\times 4}; 1, \frac{1}{L_4}+O\left(\frac{1}{\chi^2}\right)\right)^T,\\
 l_{iii}&=\frac{1}{\chi}\left(0_{1\times 4}; \frac{\partial G_4}{\partial (x_4,v_4)}\right)=\left(0_{1\times 4}; O\(\frac{1}{\chi^2}\), \frac{-m_4k_4}{\chi L_4}, O\(\frac{1}{\chi}\), -\frac{1}{2}\right),
 \end{aligned}
 \end{equation}
where the entry $\frac{-m_4k_4}{\chi L_4}$ is obtained using the following formulas
$$\frac{\partial G}{\partial Q_\perp}=P_\parallel \text{ (by Lemma \ref{Lm: De/Ca}), } E_4=\frac{|P|^2}{2m_4}-\frac{k_4}{|Q|}=\frac{m_4k_4^2}{2L_4^2}, \quad
|P|\simeq |P_\parallel | \text{ and } P_\parallel<0.$$
This gives the matrix stated in the sublemma. In part (a), all the Cartesian and Delaunay variables are immediately to the left of the section $\{x_{4,\parallel}^R=-\frac{\chi}{2}\}$, so we have $G^L_4=O(1/\chi)$ using Lemma \ref{Lm: tilt} and $x^L_{4,\parallel}\simeq \frac{\chi}{2}.$

Next we consider part (b). It follows from Lemma \ref{Lm: De/Ca} that to the right of the section $\{x^L_{4,\parallel}=\frac{\chi}{2}\}$ the matrix $\frac{\partial(\mathcal V_1;\mathcal V_4)^R}{\partial (\mathcal X_1,\mathcal X_4)^R}$ has the same estimates as in the
left case with
\begin{equation}
\begin{aligned}
u_{iii'}&=\left(0_{1\times 4}; 1, \frac{L_4}{G_4^2+L_4^2}\right)^T,\\
 l_{iii'}&=\frac{1}{\chi}\left(0_{1\times 4}; \frac{\partial G_4}{\partial (x_4,v_4)}\right)=\left(0_{1\times 4};
O\(\frac{1}{\chi^2}\), \frac{m_4k_4}{\chi L_4}, O\(\frac{1}{\chi}\), \frac{1}{2}\right).
 \end{aligned}
 \end{equation}
We see that  $l_{iii'}$ gets a ``-" sign compared to $l_{iii}$ since both $P_\parallel$ and $Q_\parallel$ get ``-" signs.
\end{proof}
With the two sublemmas, we can complete the computation of the matrices $(II)$ and $(IV)$.
\begin{proof}[Computation of matrices $(II)$ and $(IV)$ in Proposition \ref{Prop: main}]
To be compatible with the formal derivation in \eqref{eq: II}, we add four zeros to $u_i$ as the new first four entries. We still denote the new vector of 12 components by $u_i$ as stated in Proposition \ref{Prop: main}. We also define a $12\times 10$ matrix $C=\left[\begin{array}{c}
\mathrm{Id}_{4\times 4}, 0_{4\times 6}\\
*
\end{array}\right]$ where $*$ is the $O(1)$ matrix in Sublemma \ref{sublm: dX/dV}.

Then consider Sublemma \ref{sublm: dV/dX}. To be compatible with the formal derivation in \eqref{eq: II}, we enlarge $u_{iii}, u_{iii'}$ by adding four zeros as the new first four entries to get vectors in $\R^{10}$. We define a $10\times 12$ matrix $A=\left[\begin{array}{cc}
\mathrm{Id}_{4\times 4}& 0_{4\times 8}\\
0_{6\times 4}&*
\end{array}\right]$, where $*$ is the $O(1)$ matrix of Sublemma \ref{sublm: dV/dX}.

Fitting these manipulations into \eqref{eq: II} gives
\[(II)=(\chi u_{iii}\otimes l_{iii}+A)R\cdot L^{-1}(\chi u_{i}\otimes l_{i}+C ),\;
(IV)=(\chi u_{iii'}\otimes l_{iii'}+A)L\cdot R^{-1}(\chi u_{i}\otimes l_{i'}+C). \qedhere \]
%in Proposition \ref{Prop: main}.
\end{proof}

\section{The local map}\label{sct: loc}
%In this section, we study the local map at both $\mathscr C^0$ and $\mathscr C^1$ level. The dynamics of the local map is almost the same as that in \cite{DX} (Section 9, 10 and 12 of \cite{DX}) for the reason that $Q_1$ is so far that it does not influence the interaction of $Q_3$ and $Q_4$ very much.

The section $\{|q_3-q_4|=\mu^\kappa\}$ ($1/3< \kappa< 1/2$) cuts the orbit for the local map into three pieces: $\{x_{4,\parallel}^R=-2\}\to \{|q_3-q_4|=\mu^\kappa\}$, $\{|q_3-q_4|=\mu^\kappa\}\to \{|q_3-q_4|=\mu^\kappa\}$ and $\{|q_3-q_4|=\mu^\kappa\}\to \{x_{4,\parallel}^R=-2\}$. We define three maps $\Loc^-,\Loc^0,\Loc^+$ corresponding to the three pieces and we have $\Loc=\Loc^+\circ\Loc^0\circ\Loc^-$.
\begin{Not}
\begin{itemize}
\item We use the superscript $+$ $($or $-)$ to denote the value of the orbit parameters exiting $($or entering$)$ the circle $|q_3-q_4|=\mu^\kappa$.
\item Also recall the coordinates $q_-,\,p_-$ for the relative motion and $q_+,\ p_+$ for the motion of the center of mass of $Q_3$ and $Q_4$ in \eqref{eq: relcm}, i.e. $q_\pm=\frac{1}{2}(q_3\pm q_4)$ and $p_\pm=p_3\pm p_4.$
\item We introduce the notation
$\mathbf q=(q_+,q_1),\ \mathbf p=(p_+,p_1)$
to handle the center of mass and the remote body simultaneously.
\end{itemize}
\end{Not}
The Hamiltonian for $\Loc^\pm$ is the following given in Lemma \ref{LmRightOut}
\begin{equation}\label{eq: hamRRDel2}
\begin{aligned}
&H=\left(\frac{v_1^2}{2m_{1R}} -\frac{k_{1R}}{ |x_1|}\right)-\frac{m_{3R}k_{3R}^2}{2L_3^2}+\frac{m_{4R}k_{4R}^2}{2L_4^2}
-\frac{\mu}{\left|\frac{x_3}{1+\mu}-x_4\right|}+V_{out}
\end{aligned}
\end{equation}
where $V_{out}(x_3,x_1,x_4)=O(\mu+\frac{\mu}{\chi^2}+\frac{1}{\chi^3}) $ and the coordinates are Delaunay coordinates from \eqref{eq: right}, and the Hamiltonian for $\Loc^0$ is given in Lemma \ref{LmRightIn}
\begin{equation}\label{eq: hamrel2}
\begin{aligned}
H( q_1, p_1;q_-,p_-; q_+,p_+)&=\left(\mu p_1^2-\frac{1+2\mu}{\mu |q_1|}\right)+\left(\frac{1+2\mu}{4}p_+^2-\frac{2}{|q_+|}\right)+\left(\frac{1}{4}p_-^2-\frac{\mu}{2|q_-|}\right)+V_{in},
\end{aligned}
\end{equation}
where $V_{in}(q_1, p_1;q_-; q_+,p_+)=\mu\langle p_1,p_+\rangle- \frac{3\langle q_+,q_-\rangle^2}{2|q_+|^5}+
\frac{|q_-|^2}{|q_+|^3}-\frac{\langle q_1,q_+\rangle}{|q_1|^3}+O\left(|q_-|^3+\frac{1}{|q_1|^3}\right),$
and the coordinates are the relative motion and center of mass coordinates \eqref{eq: relcm}.
\subsection{$\mathscr{C}^0$ control of the local map, proof of Lemma \ref{LmLMC0}}\label{SSC0Loc}
In this section, we obtain the $\mathscr{C}^0$ estimate of the local map, based on which we prove Lemma \ref{LmLMC0}.
 %for the two pieces $\Loc^\pm$, assume the initial conditions satisfy the assumption of Lemma \ref{LmLMC0}. %If we neglect the terms $-\frac{\mu}{\left|\frac{x_3}{1+\mu}-x_4\right|}+V_{out}$, then
%\begin{Lm}Assume the orbit parameters $\bx^\pm=( x_1, v_1;L_3,\ell_3,G_3,g_3;L_4,\ell_4,G_4,g_4)^R$ evaluated on the section
%$\{x_{4,\parallel}^R=-2,\ \pm v_{4,\parallel}^R< 0 \}$, satisfy $|x_1|\geq \chi$ and that all the other variables are $O(1)$ as $1/\chi\ll\mu\to 0$. Then we have $\by^-:=\Loc^-(\bx^-),\, \by^+:=(\Loc^+)^{-1}(\bx^+)\in \{|q_3-q_4|=\mu^\kappa\}$ and
%$$ \Loc^-(\bx^-)=\bx^-+O(\mu^{1-\kappa}),\ (\Loc^+)^{-1}(\bx^+)=\bx^++O(\mu^{1-\kappa}),\quad \mathrm{as\ }1/\chi\ll\mu\to 0.$$
%\end{Lm}
%\begin{proof}

Suppose the assumption ${\bf AL}$ for the local map is satisfied. Then the orbit parameters $\bx^\pm=( x_1, v_1;L_3,\ell_3,G_3,g_3;L_4,\ell_4,G_4,g_4)^{R,\pm}$ evaluated on the section $\{x_{4,\parallel}^R=-2,\ \pm v_{4,\parallel}^R< 0 \}$, satisfy $|x_1|\geq \chi$ and all the other variables are $O(1)$ as $1/\chi\ll\mu\to 0$.

%First for the two pieces $\Loc^\pm$, assume the initial conditions satisfy the assumption of Lemma \ref{LmLMC0}. %If we neglect the terms $-\frac{\mu}{\left|\frac{x_3}{1+\mu}-x_4\right|}+V_{out}$, then
%The time interval defining the maps $\Loc^\pm$ are $O(1)$.
First it is clear that the time interval defining the maps $\Loc^\pm$ is $O(1)$. After integrating the Hamiltonian equations obtained from the Hamiltonian \eqref{eq: hamRRDel2}, we see that $V_{out}$ gives only a perturbation of order $ O(\mu+\frac{\mu}{\chi^2}+\frac{1}{\chi^3}) $ to the Kepler motion.  Next, the contribution of the term $-\frac{\mu}{\left|\frac{x_3}{1+\mu}-x_4\right|}$ to the Hamiltonian equation is estimated as $\frac{\mu}{\left|\frac{x_3}{1+\mu}-x_4\right|^2}$ and  after integration its perturbation to the Kepler motion is estimated as $\int_{-2}^{\mu^\kappa}\frac{\mu}{|t|^{2}}\,dt=O(\mu^{1-\kappa})$ since relative velocity is nonzero and the orbit approaches close encounter nearly linearly in $t$. So the orbit parameters are Kepler motions with $o(1)$ perturbations as $1/\chi\ll\mu\to 0$. In particular, except for the two variables $\ell_3,\ell_4$, all the other orbit parameters undergo only a $O(\mu^{1-\kappa})(\ll\mu^\kappa)$ perturbation.

%\end{proof}
Next we study the dynamics inside $\{|q_3-q_4|=\mu^\kappa\}$. The next lemma shows that the map $\Loc^0$ is close to elastic collision.
\begin{Lm} \label{Lm: landau}
Suppose the initial orbit parameters on the section $\{x_{4,\parallel}^R=-2,\  v_{4,\parallel}^R< 0 \}$ satisfy ${\bf AL}$.
\begin{itemize}
\item[(a)] We have the following equations for orbits crossing the section $\{|q_3-q_4|=\mu^\kappa\}$, $\frac13<\kappa<\frac12,$ as $\mu\to 0$
\begin{equation}
\begin{cases}
%\begin{aligned}
&p_3^+=\frac{1}{2}\mathrm{Rot}(\al)(p_3^--p_4^-)+\frac{1}{2}(p_3^-+p_4^-)+O(\mu^{2(1-\kappa)}+\mu^{3\kappa-1})
,\\
&p_4^+=-\frac{1}{2}\mathrm{Rot}(\al)(p_3^--p_4^-)+\frac{1}{2}(p_3^-+p_4^-)+O(\mu^{2(1-\kappa)}+\mu^{3\kappa-1}),\\
&(\mathbf q^+,\mathbf p^+)=(\mathbf q^-,\mathbf p^-)+O(\mu^k),\\
&|q_3^--q_4^-|=\mu^{\kappa},\quad |q_3^+-q_4^+|=\mu^{\kappa},\\
%\end{aligned}
\end{cases}\label{eq: landau}
\end{equation}
where $\mathrm{Rot}(\al)=\left[\begin{array}{cc} \cos \al&  -\sin\al\\ \sin \al &\cos\al \end{array}\right]$, and
\begin{equation}
\label{Alpha}
\al=\pi+2\arctan\left(\frac{G_{in}}{\mu \mathcal{L}_{in}}\right), \text{ where }\ \frac{1}{4\mathcal{L}^{2}_{in}}=\frac{p_-^2}{4}-\frac{\mu}{2|q_-|},\quad G_{in}= 2p_-\times q_-.
\end{equation}
\item [(b)] We have $1/c<\mathcal{L}_{in}<c$ for some constant $c>1$. If $\al$ is bounded away from $0$ and $\pi$ by an angle independent of $\mu$ then $G_{in}=O(\mu)$
and the closest distance between $q_3$ and $q_4$ is bounded away from zero by $ \mu/c$ and from above by $\mu c$.

\item[(c)] If $\al$ is bounded away from $0$ and $\pi$ by an angle independent of $\mu$
then when measured on the boundary of the circle $|q_-|=2\mu^{\kappa}$,
the angle between $q_-$ and $p_-$ is $O(\mu^{1-\kappa})$.
\item[(d)] The time interval during which the orbit stays in the circle $|q_3-q_4|=\mu^{\kappa}$
is \[\Delta t =O(\mu^{\kappa}).\]
\end{itemize}
\end{Lm}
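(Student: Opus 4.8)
The plan is to treat the close-encounter piece as a perturbation of the two-body problem of $Q_3$ and $Q_4$, using the Hamiltonian \eqref{eq: hamrel} in the relative/center-of-mass coordinates $(q_-,p_-),(q_+,p_+),(q_1,p_1)$. First I would isolate, from \eqref{eq: hamrel}, the dominant piece governing $(q_-,p_-)$, namely $\frac14 p_-^2-\frac{\mu}{2|q_-|}$; this is (after rescaling) a Kepler problem whose ``charge'' is $O(\mu)$. The remaining terms of \eqref{eq: hamrel} — the $\frac{1+2\mu}{4}p_+^2$, the coupling $\mu\langle p_1,p_+\rangle$, the $\frac{|q_-|^2}{|q_+|^3}$-type tidal terms, and the $O(1/|q_1|^3)$ terms — are all small on the time scale of the close encounter: inside $|q_-|\le 2\mu^\kappa$ the tidal terms are $O(\mu^{2\kappa})$ and $O(1/\chi^3)$, and the close encounter lasts time $\Delta t=O(\mu^\kappa)$ (part (d), proven below). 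I would first establish (d): on $|q_-|=2\mu^\kappa$ the relative velocity $|p_-|$ is bounded below by a constant (since $e_-\ge 1$, the motion being hyperbolic/parabolic when $Q_3,Q_4$ approach), so the sphere is traversed in time $O(\mu^\kappa)$; more precisely I'd use the Delaunay angle equation $\dot\ell_-=\pm 1/L_-^3$ together with $L_-=O(\mu^{1/2})$ after the natural rescaling, which is where the exponent $\kappa$ enters the error bounds.

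Next, for part (a), the strategy is a Gronwall/integration argument. Parametrize by the Delaunay angle $\ell_-$ (or by the physical time), write the equations for $(q_-,p_-)$ and for $(\mathbf q,\mathbf p)=(q_+,q_1,p_+,p_1)$ in the form $\dot{(\mathbf q,\mathbf p)}=O(\text{perturbation})$. Integrating over $\Delta t=O(\mu^\kappa)$: the tidal and remote-body forces contribute $O(\mu^{2\kappa}\cdot\mu^\kappa)+O(\mu^\kappa/\chi^3)$, which is absorbed into $O(\mu^k)$ for suitable $k$ since $1/\chi\ll\mu$; this gives the third line of \eqref{eq: landau}. For $(p_3^\pm,p_4^\pm)$: since $p_3=p_+/2+p_-$ and $p_4=p_+/2-p_-$, and $p_+$ changes by $O(\mu^k)$, everything reduces to understanding how $p_-$ transforms across the sphere. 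For the isolated Kepler problem $\frac14 p_-^2-\frac{\mu}{2|q_-|}$, the incoming and outgoing asymptotic directions of $p_-$ differ by the scattering angle, and on the sphere of radius $\mu^\kappa$ the velocity is already within $O(\mu^{\kappa}/(\mu^\kappa)\cdot\mu)$... — more carefully, the deflection accumulated *outside* the collision region is negligible and the deflection accumulated on $[-2\mu^\kappa,2\mu^\kappa]$ of the true Kepler orbit is $\pi-\alpha$ up to $O(\mu^{1-\kappa})$ corrections from the fact that $|q_-^\pm|=\mu^\kappa$ is not quite $\infty$; combining the two error sources ($\mu^{3\kappa-1}$ from $\kappa<1/2$ controlling $\mu^{1-2\kappa}$ smallness of perturbations relative to the rescaled Kepler energy, and $\mu^{(1-2\kappa)/3}$) yields the stated $\mathrm{Rot}(\alpha)$ formula. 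The value of $\alpha$ in \eqref{Alpha} is then just the classical Rutherford scattering angle for the two-body problem with reduced energy $1/(4\mathcal L_{in}^2)$ and angular momentum $G_{in}=2 p_-\times q_-$: $\alpha=\pi-2\arctan(G_{in}/(\mu\mathcal L_{in}))$.

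For parts (b) and (c): $\mathcal L_{in}=O(1)$ follows from energy conservation — the total energy is zero and the individual energies of $Q_3,Q_4$ before encounter are $O(1)$ by assumption \textbf{AL}, so the relative kinetic energy, hence $1/(4\mathcal L_{in}^2)$, is bounded above and below. Given that, if $\alpha$ is bounded away from $0$ and $\pi$, then $\arctan(G_{in}/(\mu\mathcal L_{in}))$ is bounded away from $0$ and $\pi/2$, forcing $G_{in}/(\mu\mathcal L_{in})=O(1)$, i.e. $G_{in}=O(\mu)$; the perihelion distance of the relative Kepler orbit is then $\sim G_{in}^2/\mu\sim\mu$ up to constants depending on how far $\alpha$ is from the endpoints, giving the $\delta\mu\le d\le\mu/\delta$ bounds, which also feeds Lemma \ref{LmChooseAM}(c). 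For (c): $G_{in}=2p_-\times q_-=O(\mu)$ while $|p_-|,|q_-|$ are of order $1,\mu^\kappa$ respectively on the sphere, so the sine of the angle between $q_-$ and $p_-$ is $O(\mu)/(1\cdot\mu^\kappa)=O(\mu^{1-\kappa})$.

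The main obstacle I expect is the bookkeeping of the two competing error exponents $\mu^{(1-2\kappa)/3}$ and $\mu^{3\kappa-1}$ in part (a): one must carefully separate the deflection accumulated inside the sphere (where the Kepler term is comparable to the perturbations only after the $\mu^{1/2}$-rescaling, forcing the condition $\kappa<1/2$ so that $\mu^{1-2\kappa}\to 0$) from the truncation error of replacing the asymptotic scattering angle by the angle subtended over $|q_-|\in[\mu^\kappa\text{-sphere}]$ (forcing $\kappa>1/3$ so that $\mu^{3\kappa-1}\to 0$), and to verify that the coupling terms in \eqref{eq: hamrel} — in particular $\mu\langle p_1,p_+\rangle$ and the $x_3$-dependent remote terms — genuinely contribute only at these orders and do not degrade the center-of-mass conservation. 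The rest is a standard perturbative integration of a near-Kepler flow.
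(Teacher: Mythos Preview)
Your overall strategy---decompose \eqref{eq: hamrel} into the relative Kepler piece $\tfrac14 p_-^2-\tfrac{\mu}{2|q_-|}$ plus small tidal/remote perturbations, control the relative Delaunay variables over the short time $\Delta t=O(\mu^\kappa)$, and read off (b)--(d) from the formula for $\alpha$---is exactly the paper's approach. Two minor slips: the relative Delaunay variable satisfies $L=\mu\mathcal L_{in}=O(\mu)$ (not $O(\mu^{1/2})$; the Kepler term in \eqref{eq: hamrel} is $\mu^2/(4L^2)$, so $\dot\ell\sim \mu^2/L^3\sim 1/\mu$ while $\ell=O(\mu^{\kappa-1})$, which gives (d)), and $p_3=(p_++p_-)/2$, not $p_+/2+p_-$.

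The genuine gap is in how you obtain the two error exponents in (a). The $O(\mu^{3\kappa-1})$ is as you suggest: from \eqref{eq: hamrel} one gets $\dot G=O(\mu^{2\kappa})$, $\dot g=O(\mu^{2\kappa-1})$, so over $\Delta t=O(\mu^\kappa)$ the oscillation of $g$ and of $\arctan(G/L)$ is $O(\mu^{3\kappa-1})$---this is the perturbative drift of the scattering angle (and requires $\kappa>1/3$, not $\kappa<1/2$, to be small). But the $O(\mu^{(1-2\kappa)/3})$ does \emph{not} fall out of a single truncation estimate; it needs a case split on the size of $G$. Set $\gamma=(1/2-\kappa)/3$. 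If $G\le \mu^{\kappa+\gamma}$, then on the sphere $|q_-|=2\mu^\kappa$ one has $e\cosh u-1\sim \mu^{\kappa-1}$ while $e=\sqrt{1+(G/L)^2}\lesssim \mu^{\kappa+\gamma-1}$, forcing $\cosh u\gtrsim \mu^{-\gamma}$ and hence $e^{-2|u|}=O(\mu^{2\gamma})=O(\mu^{(1-2\kappa)/3})$; this bounds the discrepancy between the direction of $p_-$ on the sphere and its asymptotic direction $g\pm\arctan(G/L)$. If instead $G>\mu^{\kappa+\gamma}$, then $G/L\gg 1$ and one checks directly from \eqref{eq: delaunay4} that $p_{-,\perp}/p_{-,\parallel}=\cot g+O(\mu^{1-\kappa-\gamma})$, so the direction of $p_-$ changes by at most the (already controlled) oscillation of $g$ plus a term smaller than $\mu^{2\gamma}$. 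Without this dichotomy you cannot produce the exponent $(1-2\kappa)/3$: a naive estimate of $e^{-2|u|}$ valid only when $G\sim\mu$ gives a different power, and when $G$ is larger the asymptotic-direction picture breaks down entirely.
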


\begin{proof}

%we do not have assumptions in this lemma.
From \eqref{eq: hamrel2}, we decompose the Hamiltonian as $H=H_{rel}+\fh(\mathbf q,\mathbf p)$ where the Hamiltonian $H_{rel}$ governs the relative motion consisting of all the terms containing $q_-$ or $p_-$ in \eqref{eq: hamrel2}, as
\begin{equation}\label{eq: Hrel}H_{rel}=\frac{\mu^2}{4L_-^2}+\frac{|q_-|^2}{|q_+|^3}-\frac{3\langle q_+,q_-\rangle^2}{2|q_+|^5}+O(\mu^{3\kappa}+1/\chi^2),\end{equation}
where $\frac{\mu^2}{4L_-^2}=\frac{1}{4}p_-^2-\frac{\mu}{2|q_-|}$ is in Delaunay coordinates and $L_-=\mu\mathcal L_{in}$.
%The idea of the proof is to treat the relative motion as a perturbation of Kepler motion and then approximate the relative velocities by their asymptotic values for the Kepler motion.

Fix a small number $\delta_1.$ Below we derive several estimates valid for the first $\delta_1$ units
of time the orbit spends in the set $|	q_-|\leq \frac12\mu^k.$ We then show that $\Delta t\ll \delta_1.$
It will be convenient to measure time from when the orbit enters the set $|q_-|<\frac12\mu^k.$

Note that $H$ is preserved and $\dot{\fh}=O(1)$, which implies that $\frac{L}{\mu}$ is $O(1)$ and
moreover that this ratio does not change much for $t\in [0, \delta_1].$
Using the identity
$\frac{\mu^2}{4L^2}=\frac{p_-^2}{4}-\frac{\mu}{2|q_-|}$ we see that initially
$\frac{L}{\mu}$ is uniformly bounded from below for the orbits by Lemma \ref{LmLMC0}.
Thus there is a constant $\delta_2$ such that for $t\in [0, \delta_1]$
we have $\delta_2\mu\leq L(t) \leq \frac{\mu}{\delta_2}. $

From  \eqref{eq: Ol4}, we obtain
\begin{equation}\label{eq: delaunayscattering}O(\mu^\kappa)=|q_-|=\dfrac{L^2}{\mu}(e\cosh u-1),\end{equation}
%We also have the estimate $L=O(\mu), \ G=O(\mu)$ (We postpone the proof of this fact to the end of the proof of this lemma).
 with $\ell$ and $u$ related by $u-e\sinh u=\ell.$
This gives
\begin{equation}
\label{SectionEll}
\ell=O(\mu^{\kappa-1}).
\end{equation}
 Next
\begin{equation}\label{EqDotl} \dot{\ell}=\dfrac{\partial H}{\partial L}=-\dfrac{\mu^2}{2L^3}
-\dfrac{\partial H_{rel}}{\partial q_-}
\dfrac{\partial q_-}{\partial L}=
-\dfrac{\mu^2}{2L^3}+O(\mu^\kappa)O(\mu^{\kappa-1})=-\dfrac{\mu^2}{2L^3}+O(\mu^{2\kappa-1}). \end{equation}
Since the leading term here is at least
$\frac{\delta_2^3}{2\mu}$ while $\ell=O(\mu^{\kappa-1})$, we obtain part (d) of the lemma.
In particular the estimates derived above are valid for the time the orbit
spends in $|q_-|\leq \frac12\mu^\kappa.$

Next, without using any control on $G$ (using the inequality $\left|\frac{\partial e}{\partial G}\right|=\frac{1}{L}\frac{G/L}{e}\leq \frac{1}{L}$), we have
\begin{equation}
\begin{aligned}
\label{DerG}
\dot{L}&=-\dfrac{\partial H}{\partial q_-}\dfrac{\partial q_-}{\partial \ell}=O(\mu^{\kappa+1}),\\
\dot{G}&=-\dfrac{\partial H}{\partial q_-}\dfrac{\partial q_-}{\partial g}=O(|q_-|^2)=O(\mu^{2\kappa}), \\
 \dot{g}&=
\dfrac{\partial H}{\partial q_-} \dfrac{\partial q_-}{\partial G}=
O(\mu^\kappa)O(\mu^{\kappa-1})=O(\mu^{2\kappa-1}).
\end{aligned}
\end{equation}
Integrating over time $\Delta t=O(\mu^{\kappa})$ we get that
the oscillation of
$g$ and $\arctan\frac{G}{L}$ are $O(\mu^{3\kappa-1}).$

We are now ready to derive the first two equations of \eqref{eq: landau}. It is enough to show that $p_-^+=R(\al) p_-^-+O(\mu^{2(1-\kappa)}+\mu^{3\kappa-1})$, where $\al=2\arctan \frac{G}{L}$ is the angle formed by the two asymptotes of the Kepler hyperbolic motion. We first have $|p_-^+|=|p_-^-|+O(\mu^\kappa)$ using the total energy conservation. It remains to show the expression of $\al$.
Let us denote  $\phi=\arctan\frac{G}{L}$ and
    \begin{equation}\label{EqDelMom}p_-=(p_1,p_2),\quad (p_1,p_2)=R(g)\left(-\dfrac{\mu}{L}\dfrac{\sinh u}{1-e\cosh u},-\dfrac{ \mu G}{ L^{2}}\dfrac{\cosh u}{1-e\cosh u}\right)\end{equation}
     using \eqref{eq: delaunay4}. We have by \eqref{SectionEll} that
$e^{|u|}\sim \ell\sim \mu^{\kappa-1}$. Thus
\begin{equation}\label{Eqpg}
\begin{aligned}
\dfrac{p_2}{p_1}&=\dfrac{-\frac{\mu G}{L^2}\cosh u \cos g-\frac{\mu}{L}\sinh u\sin g}{\frac{\mu G}{L^2}\cosh u \sin g-\frac{\mu}{L}\sinh u\cos g}=\dfrac{\frac{G}{L}\pm \tan g}{\pm 1-\frac{G}{L}\tan g}+e^{-2|u|} E(G/L, g, u))\\
&=
\tan(g\pm \phi)+O(\mu^{2(1-\kappa)}),\end{aligned} \end{equation}
where the sign $\pm$ is taken as sign$(u)$ and $E$ is a $O(1)$ function as $|u|\to \infty$.
Since $\arctan$ is globally Lipschitz,
this completes the proof of part (a) by choosing $\al=2\phi$.

From the Hamiltonian equations for $\dot{\mathbf q}$, we obtain
\begin{equation}
\label{CmFixed}
\mathbf q^+=\mathbf q^-+O(\mu^{\kappa}).
\end{equation}
We also have $q_-^+=q_-^-+O(\mu^{\kappa})$ due to to the definition of the sections $\{|q_-^\pm|=\frac12\mu^{\kappa}\}$.
This proves the last two equation in \eqref{eq: landau}.  Plugging \eqref{CmFixed} into the Hamiltonian equation for $\dot{\mathbf p}$ we see that $ \mathbf p^+=\mathbf p^-+O(\mu^{\kappa})$.
This completes the proof of part (a).

The first claim of part (b) has already been established. The estimate of $G$ follows from the formula for $\al.$
The estimate of the closest distance follows from the fact that if $\alpha$ is bounded away from $0$ and $\pi$ then the $q_-(t)$ orbit is a small perturbation of Kepler motion and for Kepler motion the closest distance is of order $G.$ We integrate the $\dot G$ equation \eqref{DerG} over time $O(\mu^\kappa)$ to get that the total variation $\Delta G$ is at most $\mu^{3\kappa}$, which is much smaller than $\mu$. So $G$ is bounded away from 0 by a quantity of order $O(\mu)$.

Finally we get part (c) from the fact that $G=\mu^\kappa|v_-|\sin\measuredangle(p_-,q_-)=O(\mu).$
\end{proof}

Now we are ready to prove Lemma \ref{LmLMC0}.
\begin{proof}[Proof of Lemma \ref{LmLMC0}]

Since we assume the outgoing asymptote $\bar\theta^+$ is close to $\pi$, we get that the orbit under consideration has to intersect the section $|q_3-q_4|=\mu^{\kappa}$ and also achieve $|q_3-q_4|=O(\mu)$ by Lemma \ref{Lm: landau}(b). Indeed,  it is enough to show that  $p_-^+\times p_-^-$ is bounded away from 0 for both collisions. For the first collision $4(1+\varepsilon_0\varepsilon_1)(p_-^-\times p_-^+)=8(\sqrt{5+2\varepsilon_0\varepsilon_1} - 3(\varepsilon_1 - \varepsilon_0)> 8\sqrt{5} - 3$. For the second collision $\varepsilon_0^2(p_-^-\times p_-^+) = 2(\varepsilon_0+\varepsilon_1)(1+\sqrt{2})>2(1+\sqrt{2})$.

 With the same initial $E_3,e_3,g_3,e_4$, we determine a solution of the Gerver map. We have shown at the beginning of the section that the orbit parameters $(E_3,e_3,g_3,e_4)$ have an oscillation of order $O(\mu^{1-\kappa})$ and the $\ell_3$ variable is solved from the implicit function $|q_3-q_4|=\mu^{\kappa}$. We get that the $p^{-}_{3,4},q_{3,4}^-$ at collision in Gerver's case is close to those values measured on the section $|q_3-q_4|=\mu^{\kappa}$ in the $\mu>0$ case. Here we note that the coordinate change between Cartesian and Delaunay outside the section $|q_3-q_4|=\mu^{\kappa}$ is not singular.
Letting $\mu=0$ in the first two equations of \eqref{eq: landau} we obtain the equations
of elastic collisions. Namely, both the kinetic energy and momentum conservation hold:
$$|p_3^+|^2+|p_4^+|^2=|p_3^-|^2+|p_4^-|^2,\quad p_3^++p_4^+=p_3^-+p_4^- .$$
On the other hand, the Gerver map $\Ger$ in Lemma~\ref{LmLMC0} is also
defined through elastic collisions. If we could show that the rotation angle $\al$ in the $\mu>0$ case is close to Gerver's case, we then could show that the outgoing information $p^{+}_{3,4},q_{3,4}^+$ are close in both cases. We then complete the proof using the fact that the orbit outside $|q_3-q_4|=\mu^\kappa$ is an $O(\mu^{1-\kappa})$ small perturbation of the Kepler motion after running the orbit up to the section $\{x_4=-2\}$. By converting $p_4^+,q_4^+$ into Delaunay coordinates, we can express the outgoing asymptote $\bar\theta^+$ as a function of $p_4^+,q_4^+$, therefore a function of $\al,p_{3,4}^-,q_{3,4}^-$ using \eqref{eq: landau} where $\mu=0$ corresponds to Gerver's case. To compare the angle $\al$, it is enough to show that  the outgoing asymptote $\bar\theta^+$ as a function of $\al$ has non degenerate derivative so that we can apply the implicit function theorem to solve $\al$ as a function of $\bar\theta^+$ and the initial conditions. In fact we have $d\theta^+=c\brlin$ ($c\neq 0$) and from Corollary \ref{Cor} we have $\frac{\partial}{\partial \al}=\mathbf u$, hence $\frac{d\bar\theta^+}{d\al}=c \brlin \cdot \mathbf{u}$, which is non vanishing due to Lemma \ref{LmNonDeg}. Here the vectors $\brlin$ and $\mathbf{u}$ are in Lemma \ref{Lm: loc} and \ref{Lm: glob} with subscripts omitted.   So the assumption $|\bar\theta^+-\pi|\leq\tilde\theta$ implies that $\al$ in \eqref{eq: landau} is $\tilde\theta$-close to its value in Gerver's case. %(In case of Lemma \ref{Lm: boundQ3}, we use the same proof to get $O(\bar\theta)$ closeness.)

\end{proof}
\subsection{$\mathscr C^1$ control of the local map, proof of Lemma \ref{Lm: loc}}\label{SSLocC1}
To study the $\mathscr C^1$ estimate of the local map, we first show that $\Loc^+$ and $\Loc^-$ are negligible and we then focus on $\Loc^0$.
\begin{Lm} \label{Lm: 2pieces}Consider the maps $\Loc^\pm$ under the assumption of Lemma \ref{Lm: loc}.
Then the vectors $\brlin_j,\brrlin_j,\ j=1,2,$ are almost left invariant by $d\Loc^+ $and $\mathrm{span}\{\brv_{3-j},\brrv_{3-j}\}$ is almost right invariant by $d\Loc^-$ in the following sense: as $1/\chi\ll\mu\to 0$
\[d\Loc^-_2\cdot \mathrm{span}\{\brv_{1},\brrv_{1}\}=\mathrm{span}\{\brv_{1},\brrv_{1}\}+o(1),\]
\[d\Loc^-_1d\cR\cdot \mathrm{span}\{\brv_{2},\brrv_{2}\}=d\cR \cdot \mathrm{span}\{\brv_{2},\brrv_{2}\}+o(1),\]\[ \brlin_j\cdot d\Loc_j^+=\brlin_j+o(1),\ \brrlin_j\cdot d\Loc_j^+=\brrlin_j+o(1),\]
 \[d\tilde{\mathbb G}\cdot \mathrm{span}\{\brv_{2},\brrv_{2}\}=\mathrm{span}\{\brv_{2},\brrv_{2}\}+o(1).\]
\end{Lm}
\begin{proof} The proof is again to use \eqref{eq: formald4}  to reduce the proof into two boundary terms and the fundamental solution of the variational equation. We use the Hamiltonian \eqref{eq: hamRRDel2}. The potential $-\frac{\mu}{\left|\frac{x_3}{1+\mu}-x_4\right|}$ contributes a term of order $-\frac{\mu}{\left|\frac{x_3}{1+\mu}-x_4\right|^3}$ to the variational equation and its contribution to the fundamental solution is estimated as the integral $\int_{-2}^{\mu^\kappa}\frac{\mu}{|t|^{3}}\,dt=O(\mu^{1-2\kappa})$ since the relative velocity is of order 1 approaching close encounter. Moreover, the terms of $V_{out}$, all of which are also in $U^R$, contribute only $O(\mu)$ to the fundamental solution.  (If we let $\chi$ go to infinity and set $\ell_4 = O(1)$, then all the entries in the matrix in the statement of Lemma 7.3(a) become $\mu$, $\mu^2$, or 0.) So the fundamental solution is estimated as Id$+c e_{2,1}+O(\mu^{1-2\kappa})$ where $e_{2,1}$ is the matrix whose $(2,1)$ entry is 1 and 0 otherwise, and $c$ is a constant. For the boundary terms in \eqref{eq: formald4}, the estimate  $\mathcal F$ of the Hamiltonian equations is $(0,-1,0_{1\times 8})+O(\mu^{1-2\kappa})$, given in Section \ref{SSC0Loc} for $\{|q_3-q_4|=\mu^\kappa\}$ as well as Corollary \ref{Cor: ham} for the section $\{x^R_{4,\parallel}=-2\}$. It remains to estimate the term $\nabla_{\mathcal V}\ell_4 $ in \eqref{eq: formald4}. This estimate on the section $\{x^R_{4,\parallel}=-2\}$ is given in Section \ref{SSbd1} and \ref{SSbd5} as equation \eqref{eq: bd-2}. We next show the estimate of $\nabla_{\mathcal V}\ell_4$ on the section $\{|q_3-q_4|=\mu^\kappa\}$. We have
\begin{equation}\nabla_{\mathcal V}\ell_4=-\left(\dfrac{\partial |q_3-q_4|}{\partial \ell_4}\right)^{-1}\nabla_{\mathcal V} |q_3-q_4| =-\dfrac{(q_3-q_4)\cdot\nabla_{\mathcal V}(q_3-q_4) }{(q_3-q_4)\cdot\frac{\partial (q_3-q_4)}{\partial \ell_4}}\label{eq: l4}\end{equation}
By Lemma \ref{Lm: landau}(c) we know that the angle formed by $q_3-q_4$ and $p_3-p_4$ is $O\left(\mu^{1-\kappa}\right)$. Thus in \eqref{eq: l4} we can replace $q_3-q_4$ by $p_3-p_4$
making an error of $O\left(\mu^{1-\kappa}\right)$ error. Hence
\[\nabla_{\mathcal V}\ell_4=\dfrac{(p_3-p_4)\cdot \nabla_{\mathcal V}(q_3-q_4)}{(p_3-p_4)\cdot\frac{\partial q_4}{\partial \ell_4}}+O(\mu^{1-\kappa}).\]
Note that
$\frac{\partial q_4}{\partial \ell_4}$ is parallel to $p_4.$ Using the information about $v_3$ and $v_4$ from Appendix \ref{subsection: gerver}
we see that $\langle v_3, v_4\rangle\neq \langle v_4, v_4\rangle.$ Therefore
the denominator in \eqref{eq: l4} is bounded away from zero and so
\[\nabla_{\mathcal V}\ell_4=O(1,1,1,1;\mu^{1-\kappa},\mu^{1-\kappa},\mu^{1-\kappa},\mu^{1-\kappa};1,1).\]
We also need to make sure the second component $\nabla_{\ell_3}\ell_4 $ is not close to -1, so that $\mathrm{Id}- \mathcal{F} \otimes \nabla_{\mathcal V}\ell_4$ is invertible on the section $|q_3-q_4|=\mu^{\kappa}$ .
In fact, we have
$\nabla_{\ell_3} \ell_4\to -\frac{(p_3-p_4)\cdot p_3}{(p_3-p_4)\cdot p_4}$ as $\mu\to 0$. The fact that $\nabla_{\ell_3}\ell_4$ is not close to $-1$ is then verified using the information in Appendix \ref{subsection: gerver}.

For the derivative $d\tilde{\mathbb G}$, the fundamental solution is estimated as  Id$+c e_{2,1}+o(1)$ and the two boundary terms are both given by the estimates \eqref{eq: bd-2} and  Corollary \ref{Cor: ham} on the section $\{x^R_{4,\parallel}=-2\}$.  Now the statement of the lemma can be checked with this explicit information.

\end{proof}
This lemma shows that we can replace $\Glob$ by $\Loc^-\circ\Glob\circ \Loc^+$ and the estimate of Lemma \ref{Lm: glob} still holds. So in the following it is enough to study $\Loc^0$ instead of $\Loc$ to prove Lemma \ref{Lm: loc}.

\begin{proof}[Proof of Lemma \ref{Lm: loc}]
As before we use the formula~(\ref{eq: formald4}).
We need to consider the integration of the variational equations and also the boundary contribution.

{\it Recall that the subscripts $-$ and $+$ mean relative motion and center of mass motion respectively, and the superscripts $-$ and $+$ mean incoming and outgoing respectively. In the following, we are most interested in the relative motion, so we drop the \text{subscript} $-$ of $q_-,p_-,\mathcal L_-,G_-,g_-$ for simplicity without leading to confusion. Note that in the following discussion, the symbol $q_1$ has two different meanings, but it is always clear from context.  If $q_1$ and $q_2$ both appear in an equation, then they mean the horizontal and vertical components respectively of $q_-$. If $q_1$ appears in $\mathbf q$ or with $p_1$ without $q_2$, then it means $Q_1 - Q_2$, the position of body 1.
 }

{\bf Step 1, the Hamiltonian equations, the variational equations and the boundary contributions.}

It is convenient to use the variable $\mathcal L=L/\mu$. Lemma \ref{Lm: landau} says that $1/c<\mathcal L<c$ and $\mu/c\leq G\leq c\mu$ for some $c>1$ if the rotation angle $\al$ is bounded away from $0$ and $\pi$. We also have $g,\mathbf{q},\mathbf{p},p=O(1)$ and $q=O(\mu^\kappa)$.
From the Hamiltonian \eqref{eq: relcm}, we have $\dot\ell=-\frac{1}{2\mu \mathcal{L}^{3}}+O(\mu^{2\kappa-1})$ (see \eqref{EqDotl}). Using $\ell$ as the time variable we get from \eqref{eq: relcm} that the equations of motion take the following form
(recall that $\ell=O(\mu^{\kappa-1})$ due to \eqref{SectionEll}):
\begin{equation}\label{eq: Heqrel}
%\begin{aligned}
\begin{cases}
\frac{d\mathcal L}{d\ell}= -\mu^{-1}\frac{dt}{d\ell}\frac{\partial H}{\partial \ell}\sim \mu^{1+\kappa}\\
\frac{d G}{d\ell}=-\frac{dt}{d\ell}\frac{\partial H}{\partial g}\sim \mu^{1+2\kappa}\\
\frac{d g}{d\ell}=\frac{dt}{d\ell}\frac{\partial H}{\partial G}\sim \mu^{2\kappa}\\
\end{cases}
\begin{cases}
\frac{d q_+}{d\ell}=\frac{dt}{d\ell}\left(\frac{1+2\mu}{2}p_++\mu p_1\right)\sim\mu\\
\frac{d p_+}{d\ell}=\frac{dt}{d\ell}\left(\frac{2q_+}{|q_+|^3}+O(\mu^{2\kappa}+\frac{1}{\chi^2})\right)\sim \mu\\
\frac{d q_1}{d\ell}=\frac{dt}{d\ell}\left(\frac{1}{2}\mu p_1+\mu p_+\right) \sim \mu^2\\
\frac{d p_1}{d\ell}=\frac{dt}{d\ell}\left(\frac{1+2\mu q_1}{\mu|q_1|^3}+O(\frac{1}{\chi^3})\right)\sim \frac{1}{\chi^2},\\
\end{cases}
%\end{aligned}
\end{equation}
and we have $\frac{d}{d\ell}({\bf q},{\bf p})=O(\mu)$. In the first three equations, the main contribution to $H$ comes from $|q|^2$ and $|q_+\cdot q|^2$, both of which are $O(\mu^{2\kappa})$. We have the estimate
$\left|\left(\frac{\partial}{\partial \mathcal L},\frac{\partial}{\partial \ell},\frac{\partial}{\partial G},\frac{\partial}{\partial g}\right) q\right|=O(\mu^\kappa, \mu, \mu^{\kappa-1},\mu^\kappa)$
using \eqref{eq: delaunayscattering} for $q=(q_1,q_2)$ up to a rotation by $g$.
 In fact, the $\frac{\partial}{\partial \ell}$ amounts to dividing by the scale of $\ell$, i.e. $\mu^{-1+\kappa}$. The derivatives $\frac{\partial}{\partial \mathcal L},\frac{\partial}{\partial g}$ do not change the order of magnitude. Finally since $G=O(\mu)$, the $\frac{\partial}{\partial G}$ amounts to dividing by $\mu.$
%The estimates of the second and third equations follow from \eqref{DerG}
%and \eqref{Derg} while the estimate of the other three equations is similar. In the first three equations,

Next  we analyze the variational equations.
The same rules as those used to obtain \eqref{eq: Heqrel} apply here.
%It is convenient to make the rescaling $L=\mu \mathcal{L}$. Then we have
%\begin{equation}
%\left[\begin{array}{c}
%\dfrac{d\delta \mathcal{L}}{d\ell}\\
%\dfrac{d\delta G}{d\ell}\\
%\dfrac{d\delta g}{d\ell}
%\end{array}\right]
%=O\left(\left[\begin{array}{ccc}
%\mu^{1+\kappa}& \mu^{\kappa}&\mu^{1+\kappa}\\
%\mu^{1+2\kappa}&\mu^{2\kappa}&\mu^{1+2\kappa}\\
%\mu^{2\kappa}&\mu^{2\kappa-1}&\mu^{2\kappa}
%\end{array}\right]\right)
%\left[\begin{array}{c}
%\delta \mathcal{L}\\
%\delta G\\
%\delta g\\
%\end{array}\right]+O\left(\left[\begin{array}{cc}
%\mu^{1+\kappa}&0\\
%\mu^{1+2\kappa}&0\\
%\mu^{2\kappa}&0\\
%\end{array}\right]\right)
%\left[\begin{array}{c}
%\delta Q_+\\
%\delta \mathbf{p}\\
%\end{array}\right]
%.\label{eq: varrel}
%\end{equation}
\begin{equation}
\dfrac{d}{d\ell}\left[\begin{array}{c}
\delta \mathcal L\\
\delta G\\
\delta g\\
\delta \mathbf{q}\\
\delta \mathbf{p}\\
\end{array}\right]=O\left(\begin{array}{ccccc}
\mu^{1+\kappa}& \mu^{\kappa}&\mu^{1+\kappa}&\mu^{1+\kappa}&0 \\
\mu^{1+2\kappa}&\mu^{2\kappa}&\mu^{1+2\kappa}&\mu^{1+2\kappa}&0\\
\mu^{2\kappa}&\mu^{2\kappa-1}&\mu^{2\kappa}& \mu^{2\kappa}&0\\
\mu&\mu^{2\kappa}&\mu^{2\kappa+1}&\mu^{2\kappa+1}&\mu\\
\mu&\mu^{2\kappa}&\mu^{2\kappa+1}&\mu&0\\
\end{array}\right)
\left[\begin{array}{c}
\delta \mathcal L\\
\delta G\\
\delta g\\
\delta \mathbf{q}\\
\delta \mathbf{p}\\
\end{array}\right].
\label{eq: varrel}
\end{equation}
We need to integrate this equation over time $\mu^{\kappa -1}$. %\begin{proof}[Proof of the claim]We denote by $x_1(t), x_2(t)$ the first entry of the vectors $X_1(t)$ and $X_2(t)$ respectively. Assume also that $M_1+\eps\leq M_2$ for any small $\eps$. We get $x_i'(t)=\sum_j m_{1j,i} x_{j,i}$, $i=1,2$. We have $m_{1,j,1}< m_{1,j,2}$ for all $j$. Suppose $s\geq 0$ is the maximal time such that $X_1(t)\leq X_2(t), t\in [0,s]$. So we get $(x_2-x_1)'=\sum_j m_{1j,2} x_{j,i}-\sum_j m_{1j,1} x_{j,i}> 0$ for $t\in [0,s]$, if $X_1(0)\leq X_2(0)$. This means that $x_2-x_1$ is increasing. This is also true for all the entries so we have $(X_2-X_1)'>0$. Therefore $X_2-X_1$ is increasing and we can extend $t$ beyond $s$ such that $X_2(t)>X_1(t)$. The contradiction implies that we get $X_2\geq X_1$ for all $t>0$. Letting $\eps\to 0$, we get the claim. \end{proof}
Thus we compare  the solution to the variational equation with a constant linear
ODE of the form $X'=AX.$
Its solution has the form $X(\mu^{\kappa-1})=\sum_{n=0}^\infty \frac{(A\mu^{\kappa-1})^n}{n!}$.
We will show that
%\label{TwoTermPicard}
$(A\mu^{\kappa-1})^3\leq C_3((A\mu^{\kappa-1})+(A\mu^{\kappa-1})^2).$ Then we have
$$(A\mu^{\kappa-1})^n\leq C_n((A\mu^{\kappa-1})+(A\mu^{\kappa-1})^2), \quad C_n=C_3(1+C_3)^n.$$
Hence  $X(\mu^{\kappa-1})\leq \mathrm{Id}+C((A\mu^{\kappa-1})+(A\mu^{\kappa-1})^2)$. We next integrate the variational equations over time $O(\mu^{\kappa-1})$ to get the estimate of its fundamental solution  \begin{equation}
\Id_{11}+
O\left(\begin{array}{ccc|cc}
\mu^{2\kappa}& \mu^{2\kappa-1}&\mu^{2\kappa}&\mu^{2\kappa}&\mu^{3\kappa}\\
\mu^{3\kappa}&\mu^{3\kappa-1}&\mu^{3\kappa}&\mu^{3\kappa}&\mu^{4\kappa}\\
\mu^{3\kappa-1}&\mu^{3\kappa-2}&\mu^{3\kappa-1}&\mu^{3\kappa-1}&\mu^{4\kappa-1}\\
\hline
\mu^\kappa&\mu^{3\kappa-1}&\mu^{3\kappa}&\mu^{2\kappa}&\mu^{\kappa}\\
\mu^\kappa&\mu^{3\kappa-1}&\mu^{3\kappa}&\mu^{\kappa}&\mu^{2\kappa}\\
\end{array}\right).\label{eq: fundrel}
\end{equation}

Next, we compute the boundary contribution using the formula~(\ref{eq: formald4}).
In terms of the Delaunay variables inside the circle $|q|=\frac12\mu^{\kappa}$, we have
\begin{equation}\dfrac{\partial \ell}{\partial (\mathcal{L},G,g,\mathbf{q},\mathbf{p})}=-\left(\dfrac{\partial |q|}{\partial\ell}\right)^{-1}\dfrac{\partial |q|}{\partial (\mathcal{L},G,g,\mathbf{q},\mathbf{p})}=(O(\mu^{\kappa-1}),O(\mu^{\kappa-2}),0,0,0).\label{eq: boundaryrel}\end{equation}
Indeed, due to \eqref{eq: delaunayscattering}
we have
$\frac{\partial |q|}{\partial g}=0$, $\frac{\partial |q|}{\partial\ell}=O(\mu)$, $\frac{\partial |q|}{\partial \mathcal{L}}=O(\mu^{\kappa})$
and $\frac{\partial |q|}{\partial G}=O(\mu^{\kappa-1}).$ Combining this with \eqref{eq: Heqrel} we get
\begin{equation}
\label{EqBCLocRel}
\begin{aligned}
&\left(\dfrac{\partial }{\partial \ell}(\mathcal{L},G,g,\mathbf{q},\mathbf{p})\right)\otimes\dfrac{\partial \ell}{\partial (\mathcal{L},G,g,\mathbf{q},\mathbf{p})}\\
&=O(\mu^{1+\kappa},\mu^{1+2\kappa},\mu^{2\kappa},\mu,\mu)\otimes O(\mu^{\kappa-1},\mu^{\kappa-2},0,0,0).
\end{aligned}
\end{equation}

{\bf Step 2, the analysis of the relative motion part.}

The structure of $d\Loc_0$ comes mainly from the relative motion part, on which we now focus. We neglect the $\mathbf q,\mathbf{p}$ part and will study it in the last step.

{\it Substep 2.1, the strategy. }

Using \eqref{eq: formald4} we obtain the derivative matrix
\begin{equation}
\begin{aligned}
&\dfrac{\partial (\mathcal{L},G,g)^+}{\partial (\mathcal{L},G,g)^-}=
%\left[\begin{array}{ccc}
%1+\mu^{2\kappa}& \mu^{2\kappa-1}&0\\
%\mu^{3\kappa}&1+\mu^{3\kappa-1}&0\\
%\mu^{3\kappa}&\mu^{3\kappa-1}&1
%\end{array}\right]^{-1}\\
\left(\Id_3+O\left(\begin{array}{ccc}
\mu^{2\kappa}& \mu^{2\kappa-1}&0\\
\mu^{3\kappa}&\mu^{3\kappa-1}&0\\
\mu^{3\kappa-1}&\mu^{3\kappa-2}&0
\end{array}\right)\right)^{-1} \times \\
&\left(\Id_3+O\left(\begin{array}{ccc}
\mu^{2\kappa}& \mu^{2\kappa-1}&\mu^{2\kappa}\\
\mu^{3\kappa}&\mu^{3\kappa-1}&\mu^{3\kappa}\\
\mu^{3\kappa-1}&\mu^{3\kappa-2}&\mu^{3\kappa-1}
\end{array}\right)\right)
\left(\Id_3-O\left(\begin{array}{ccc}
\mu^{2\kappa}& \mu^{2\kappa-1}&0\\
\mu^{3\kappa}&\mu^{3\kappa-1}&0\\
\mu^{3\kappa-1}&\mu^{3\kappa-2}&0
\end{array}\right)\right)\\
&=\Id_3+O\left(\begin{array}{ccc}
\mu^{2\kappa}& \mu^{2\kappa-1}&\mu^{2\kappa}\\
\mu^{3\kappa}&\mu^{3\kappa-1}&\mu^{3\kappa}\\
\mu^{3\kappa-1}&\mu^{3\kappa-2}&\mu^{3\kappa-1}
\end{array}\right):=\Id_3+P.\label{eq: dervar}
\end{aligned}
\end{equation}
For the position variables $q$, we are only interested in the angle $\Theta:=\arctan\left(\frac{q_2}{q_1}\right)$ since the length $|(q_1,q_2)|=\frac12\mu^\kappa$ is fixed when restricted to the circle. We split the derivative matrix as follows:
\begin{equation}
\dfrac{\partial (\Theta,p)^+}{\partial (\Theta,p)^-}=\dfrac{\partial (\Theta,p)^+}{\partial (\mathcal{L},G,g)^+}\dfrac{\partial (\mathcal{L},G,g)^+}{\partial (\mathcal{L},G,g)^-}\dfrac{\partial (\mathcal{L},G,g)^-}{\partial (\Theta,p)^-}=
%\dfrac{\partial (\theta_-,v_-)^+}{\partial (\mathcal{L},G,g)^+}(\Id+P)\dfrac{\partial (\mathcal{L},G,g)^-}{\partial (\theta_-,v_-)^-}.
\label{eq: dersplitting3}
\end{equation}
$$ \dfrac{\partial (\Theta,p)^+}{\partial (\mathcal{L},G,g)^+}\dfrac{\partial (\mathcal{L},G,g)^-}{\partial (\Theta,p)^-}+
\dfrac{\partial (\Theta,p)^+}{\partial (\mathcal{L},G,g)^+} P \dfrac{\partial (\mathcal{L},G,g)^-}{\partial (\Theta,p)^-}=I+II. $$
In the following, we prove

{\bf Claim: }\begin{equation}\label{EqI&II}
I=\frac{1}{\mu}O(1)_{1\times 3}\otimes \frac{\partial G^-}{\partial (\Theta,p)^-}+O(1),\ II=\frac{1}{\mu}O(\mu^{3\kappa-1})_{1\times 3}\otimes \frac{\partial G^-}{\partial (\Theta,p)^-}+O(\mu^{3\kappa-1}).\end{equation}
We will give the expressions of $O(1)$ terms explicitly.

{\it Substep 2.2, the estimate of $I$ in the splitting \eqref{eq: dersplitting3}.}

Using equations \eqref{eq: delaunayscattering} and \eqref{EqDelMom}
we obtain
\begin{equation}
\label{LocRelCartDel}
\dfrac{\partial (\Theta,p)^+}{\partial (\mathcal{L},G,g)^+}=
O\left(\begin{array}{ccc}
1&\mu^{-1}&1\\
1&\mu^{-1}&1\\
1&\mu^{-1}&1
\end{array}\right).
\end{equation}
Next, we consider the first term in \eqref{eq: dersplitting3},
\begin{equation}
I =\dfrac{\partial (\Theta,p)^+}{\partial \mathcal{L}^{+}}\otimes \dfrac{\partial \mathcal{L}^{-}}{\partial (\Theta,p)^-}
+\dfrac{\partial (\Theta,p)^+}{\partial G^{+}}\otimes \dfrac{\partial G^{-}}{\partial (\Theta,p)^-}+\dfrac{\partial (\Theta,p)^+}{\partial g^{+}}\otimes \dfrac{\partial g^{-}}{\partial (\Theta,p)^-}.
\label{eq: dersplitingid}
\end{equation}
Using the expressions
$\frac{1}{4\mathcal{L}^{2}}=\frac{p^2}{4}-\frac{\mu}{2|q|},\  G=p\times q=|p|\cdot|q|\sin\measuredangle (p,q)$,
we see that
\begin{equation}
\label{RelLocDelCart}
\dfrac{\partial \mathcal{L}^{-}}{\partial (\Theta,p)^-}=O(1),\quad \dfrac{\partial G^{-}}{\partial (\Theta,p)^-}=(O(\mu^\kappa),O(\mu^\kappa)).
\end{equation}
%Next, we  have $\frac{\partial (\Theta_-,v_-)^+}{\partial g^{+}}=(O(1),O(1))$ from equations \eqref{eq: delaunayscattering} and \eqref{EqDelMom}.
It only remains to get the estimate of $\frac{\partial g^{-}}{\partial (\Theta,p)^-}.$ We claim that
 \begin{equation}
\dfrac{\partial g^-}{\partial (\Theta,p)^-}=
%O\left(\dfrac{1}{\mu}\right)
\left[\frac{\partial}{\partial G^-}\arctan\left(\frac{G^-}{\mu \mathcal{L}}\right)\right]
\dfrac{\partial G^-}{\partial (\Theta,p)^-}+O(1)=O(1/\mu)\dfrac{\partial G^-}{\partial (\Theta,p)^-}+O(1).\label{eq: derg}
\end{equation}
%The physical meaning of this estimate is as follows. By changing $G^-$, the outgoing asymptote gets changed significantly.
We use equation \eqref{Eqpg} to get
%\[\dfrac{p_2}{p_1}=\dfrac{\sin g \sinh u\pm\frac{G}{\mu \mathcal{L}}\cos g\cosh u}{\cos g \sinh u\mp\frac{G}{\mu \mathcal{L}}\sin g\cosh u}=\dfrac{\tan g\pm\frac{G}{\mu \mathcal{L}}}{1\mp\frac{G}{\mu \mathcal{L}} \tan g} +e^{-2|u|}E(G/\mu \mathcal{L},g,u),\]
%where $E$ is a smooth function satisfying
%$E(G/\mu \mathcal{L},g,u)$ is a function of $G/\mu \mathcal{L},g,u$ to make it an equality. Especially we have
%$\frac{\partial E}{\partial g}=O(1)$ as $\ell\to\infty$.
%Therefore we get
\[g=\arctan\left(\dfrac{p_2}{p_1}-e^{-2|u|} E(G/(\mu\mathcal L), g, u)\right)-\mathrm{sign}(u)\arctan\dfrac{G}{\mu \mathcal{L}}\ \mathrm{as}\ |u|\to\infty.\]
We have $e^{-2|u|}\sim (1+(G/\mu\cL)^2)/\ell^2\sim (1+(G/\mu\cL)^2)\mu^{2(1-\kappa)}\mathcal L^4$ using \eqref{eq: delaunayscattering} and \eqref{eq: hypul}, and $E(\cdot,\cdot,\cdot)$ has $O(1)$ derivatives as $|u|\to \infty$.
%The implicit function theorem implies the term $e^{-2u}E(G/\mu \mathcal{L},g)$ is negligible when we consider the derivative $\dfrac{\partial g}{\partial (Q_-,v_-)}.$
We choose the $\mathrm{sign}(u)=-$ for the incoming orbit parameters, thus we get
\begin{equation}\nonumber\begin{aligned}
\dfrac{\partial g}{\partial (\Theta,p)}\left(1+O(e^{-2|u|})\right)&=  \dfrac{\partial \arctan\frac{p_2}{p_1}}{\partial (\Theta,p)}+\left(\dfrac{\partial\arctan\frac{G}{\mu \mathcal{L}}}{\partial \mathcal{L}}+O(e^{-2|u|})\right)\dfrac{\partial \mathcal{L}}{\partial (\Theta,p)}\\
&+\left(\frac{\partial\arctan\frac{G}{\mu \mathcal{L}}}{\partial G}+O(e^{-2|u|}/\mu)\right)\dfrac{\partial G}{\partial (\Theta,p)} + O(e^{-2|u|}) \end{aligned}\end{equation}
proving \eqref{eq: derg}.
%where the $1/\mu$ comes from $\frac{\partial}{\partial G}\arctan\frac{G}{\mu \mathcal{L}}$
%and all other terms are $O(1)$.

Plugging \eqref{LocRelCartDel}, \eqref{RelLocDelCart} and \eqref{eq: derg} back into \eqref{eq: dersplitingid} we get the estimate of $I$ in \eqref{EqI&II}. More explicitly,
$I=\frac{1}{\mu}\mathbf U\otimes \frac{\partial G^-}{\partial (\Theta,p)^-}+\mathbf B$, where
%Therefore%, in the splitting~(\ref{eq: dersplitingid}), we obtain the form
\begin{equation}
\begin{aligned}
\mathbf U=&\mu \dfrac{\partial (\Theta,p)^+}{\partial G^{+}}+\mu\dfrac{\partial\arctan\frac{G^-}{\mu \mathcal{L}^{-}}}{\partial G^-}\dfrac{\partial (\Theta,p)^+}{\partial g^{+}}+O(\mu^{1-\kappa})\\
\mathbf B=&\dfrac{\partial (\Theta,p)^+}{\partial \mathcal{L}^{+}}\otimes\dfrac{\partial \mathcal{L}^{-}}{\partial (\Theta,p)^-}+\\
&\dfrac{\partial (\Theta,p)^+}{\partial g^{+}}\otimes \left(\dfrac{\partial \arctan\frac{p_2^-}{p^-_1}}{\partial (\Theta,p)^-}+\dfrac{\partial\arctan\frac{G^-}{\mu \mathcal{L}^{-}}}{\partial \mathcal{L}^{-}}\dfrac{\partial \mathcal{L}^{-}}{\partial (\Theta,p)^-}\right)+O(\mu^{1-\kappa}).
\end{aligned}\label{eq: local2bp}
\end{equation}
%Since the expression in parenthesis of the first term is $O(1),$ $I$ has the rate of growth required in Lemma \ref{LmDerLoc}.
{\it Substep 2.3, the estimate of $II$ in the splitting \eqref{eq: dersplitting3}.}

Now we study the second term in \eqref{eq: dersplitting3}
\begin{equation}
\begin{aligned}
II&=O\left(\begin{array}{ccc}
1&\mu^{-1}&1\\
1&\mu^{-1}&1\\
1&\mu^{-1}&1
\end{array}\right)\cdot O
\left(\begin{array}{ccc}
\mu^{2\kappa}& \mu^{2\kappa-1}&\mu^{2\kappa}\\
\mu^{3\kappa}&\mu^{3\kappa-1}&\mu^{3\kappa}\\
\mu^{3\kappa-1}&\mu^{3\kappa-2}&\mu^{3\kappa-1}
\end{array}\right)
\dfrac{\partial (\mathcal{L},G,g)^-}{\partial (\Theta,p)^-}\\
&=O\left(\begin{array}{ccc}
\mu^{3\kappa-1}& \mu^{3\kappa-2}&\mu^{3\kappa-1}\\
\mu^{3\kappa-1}&\mu^{3\kappa-2}&\mu^{3\kappa-1}\\
\mu^{3\kappa-1}&\mu^{3\kappa-2}&\mu^{3\kappa-1}
\end{array}\right)
\dfrac{\partial (\mathcal{L},G,g)^-}{\partial (\Theta,p)^-}\\
&=\mu^{3\kappa-1}\left[O(1)_{1\times 3}\otimes \dfrac{\partial \mathcal{L}^{-}}{\partial (\Theta,p)^-} +O(\mu^{-1})_{1\times 3}\otimes \dfrac{\partial G^{-}}{\partial (\Theta,p)^-}
+O(1)_{1\times 3}\otimes \dfrac{\partial g^{-}}{\partial (\Theta,p)^-}\right]
\end{aligned}\label{eq: addpert}
\end{equation}
where we use that $\mu^{2\kappa}<\mu^{3\kappa-1}$ and $\mu^{2\kappa-1}<\mu^{3\kappa-2}$,
since $\kappa<1/2$.
The first summand in \eqref{eq: addpert}
is $O(\mu^{3\kappa-1})$. Applying \eqref{eq: derg}, we get the estimate of $II$ in \eqref{EqI&II}.
We then obtain
$$I+II=\frac{1}{\mu}(\mathbf U+O(\mu^{3\kappa-1}))\otimes \frac{\partial G^-}{\partial (\Theta,p)^-}+\mathbf B+O(\mu^{3\kappa-1}).$$
{\it Substep 2.4, going from $\Theta$ to $q$.}

We use the variable $\Theta$ for the relative position $q$ and we have $\frac{\partial G^-}{\partial (\Theta,p)^-}=O(\mu^\kappa)$. %To get back to $Q$, i.e.
To obtain $\frac{\partial (q,p)^+}{\partial (q,p)^-}$, we use
$q=\frac12\mu^\kappa(\cos\Theta,\sin\Theta)=(x,y),\ \Theta=\arctan\frac{y}{x},\ |q| d\Theta=-\sin\Theta dx+\cos\Theta dy.$
So we have the estimate $\frac{\partial q^+}{\partial (\mathcal L,G,g)^+}=O(\mu^\kappa)\frac{\partial \Theta^+}{\partial (\mathcal L,G,g)^+}=O(\mu^{\kappa-1})$. To get $\frac{\partial -}{\partial q^-},$ we transform polar coordinates to Cartesian, $\frac{\partial -}{\partial q^-}=\frac{\partial -}{\partial (r,\Theta)^-}\frac{\partial(r,\Theta)^-}{\partial q^-}$, where $r=|q^-|=\frac12\mu^\kappa$.
Therefore we have $\frac{\partial r^-}{\partial q^-}=0,\ \frac{\partial -}{\partial q^-}=\frac{2}{\mu^\kappa}\frac{\partial -}{\partial \Theta^-}(-\sin\Theta^-,\cos\Theta^-).$ So we have the estimate $\frac{\partial G^-}{\partial q^-}=O(1)$, and $\frac{\partial \mathcal{L}^-}{\partial q^-}=\frac{\partial \mathcal{L}^-}{\partial \Theta^-}=0$ since in the expression $\frac{1}{4\mathcal{L}^{2}}=\frac{p^2}{4}-\frac{\mu}{2|q|}$, the
angle $\Theta$ plays no role. Finally, we have $\frac{\partial }{\partial q^-}\arctan \frac{p_2^-}{p^-_1}=0$. Applying these estimates to \eqref{eq: local2bp} and \eqref{eq: addpert}  we get \begin{equation}\label{EqQvpm}\dfrac{\partial (q,p)^+}{\partial (q,p)^-}=\dfrac{1}{\mu}(O(\mu^\kappa)_{1\times 2}, O(1)_{1\times 2})\otimes (O(1)_{1\times 2},O(\mu^\kappa)_{1\times 2})+O(1)_{4\times 4}.\end{equation}
%In particular, the estimate of $\mathbf B+O(\mu^{3\kappa-1})=O(1)$ instead of $O(\mu^{-\kappa})$ is due to
%$\frac{\partial \mathcal{L}^-}{\partial q^-}=\frac{\partial \mathcal{L}^-}{\partial \Theta^-}=0$ and $\frac{\partial p^-}{\partial q^-}=\frac{\partial p^-}{\partial \Theta^-}=0.$
%It remains to show that other entries of the derivative matrix are $O(1).$

{\bf Step 3, the contribution from the motion of the center of mass.}

{\it Substep 3.1, the decomposition.}

Consider the following decomposition %using \eqref{EqBCLocRel}
\begin{equation}
\begin{aligned}
\cD:=&\dfrac{\partial(\Theta,p,\mathbf q,\mathbf{p})^+}{\partial(\Theta,p,\mathbf{q},\mathbf{p})^-}=\dfrac{\partial(\Theta,p;\mathbf{q},\mathbf{p})^+}{\partial(\mathcal{L},G,g;\mathbf{q},\mathbf{p})^+}\dfrac{\partial(\mathcal{L},G,g;\mathbf{q},\mathbf{p})^+}{\partial (\mathcal{L},G,g;\mathbf{q},\mathbf{p})(\ell^f) }\\
&\dfrac{\partial (\mathcal{L},G,g;\mathbf{q},\mathbf{p})(\ell^f)}{\partial (\mathcal{L},G,g;\mathbf{q},\mathbf{p})(\ell^i)} \dfrac{\partial (\mathcal{L},G,g;\mathbf{q},\mathbf{p})(\ell^i)}{\partial (\mathcal{L},G,g;\mathbf{q},\mathbf{p})^-}\dfrac{\partial (\mathcal{L},G,g;\mathbf{q},\mathbf{p})^-}{\partial (\Theta,p;\mathbf{q},\mathbf{p})^-}\\
:=&\left[\begin{array}{cc}
M&0\\
0&\Id_8
\end{array}\right]
\left[\begin{array}{cc}
A&0\\
B&\Id_8
\end{array}\right]
\left[\begin{array}{cc}
C&D\\
E&F
\end{array}\right]
\left[\begin{array}{cc}
A'&0\\
B'&\Id_8
\end{array}\right]
\left[\begin{array}{cc}
N&0\\
0&\Id_8
\end{array}\right]\\
=&\left[\begin{array}{cc}
MACA'N+MADB'N&MAD\\
(BC+E)A'N+(BD+F)B'N&BD+F
\end{array}\right].
\end{aligned}\label{eq: matrices}
\end{equation}
Each of the above matrices is $11\times 11$.

{\it Substep 3.2, the estimate of each block. }

The matrix $M=\frac{\partial(\Theta,p)^+}{\partial(\mathcal L,G,g)^+}$ is given by \eqref{LocRelCartDel} and
$N=\frac{\partial(\mathcal L,G,g)^-}{\partial(\Theta,p)^-}$ by  \eqref{RelLocDelCart}, \eqref{eq: derg}  $$M=O\left(\begin{array}{ccc}
1& \mu^{-1}&1\\
1& \mu^{-1}&1\\
1& \mu^{-1}&1
\end{array}\right),\quad
N=\left(\begin{array}{cccc}
O(1)_{1\times 3}\\
\frac{\partial G^-}{\partial (\Theta,p)^-}\\
O(\frac{1}{\mu})\frac{\partial G^-}{\partial (\Theta,p)^-}+O(1)\\
\end{array}\right).$$
$C,D,E,F$ form the matrix \eqref{eq: fundrel}, the fundamental solution of the variational equation,
$$\left(\begin{array}{c|c}
C&D\\
\hline
E&F\end{array}\right)=	
\Id_{11}+
O\left(\begin{array}{ccc|cc}
\mu^{2\kappa}& \mu^{2\kappa-1}&\mu^{2\kappa}&(\mu^{2\kappa})_{1\times 4}&(\mu^{3\kappa})_{1\times 4}\\
\mu^{3\kappa}&\mu^{3\kappa-1}&\mu^{3\kappa}&(\mu^{3\kappa})_{1\times 4}&(\mu^{4\kappa})_{1\times 4}\\
\mu^{3\kappa-1}&\mu^{3\kappa-2}&\mu^{3\kappa-1}&(\mu^{3\kappa-1})_{1\times 4}&(\mu^{4\kappa-1})_{1\times 4}\\
\hline
(\mu^\kappa)_{4\times 1}&(\mu^{3\kappa-1})_{4\times 1}&(\mu^{3\kappa})_{4\times 1}&(\mu^{2\kappa})_{4\times 4}&(\mu^{\kappa})_{4\times 4}\\
(\mu^\kappa)_{4\times 1}&(\mu^{3\kappa-1})_{4\times 1}&(\mu^{3\kappa})_{4\times 1}&(\mu^{\kappa})_{4\times 4}&(\mu^{2\kappa})_{4\times 4}\\
\end{array}\right).
$$
  $A,B,A',B'$ are given by \eqref{EqBCLocRel}, boundary contributions,
	$$\left[\begin{array}{c|c}
A&0\\
\hline
B&\Id_8
\end{array}\right],\left[\begin{array}{c|c}
A'&0\\
\hline
B'&\Id_8
  \end{array}\right]=\mathrm{Id}_{11}+
O(\mu^{1+\kappa},\mu^{1+2\kappa},\mu^{2\kappa};\mu_{1\times8 })
\otimes O(\mu^{\kappa-1},\mu^{\kappa-2},0;0_{1\times 8}).
$$

{\it Substep 3.3, the estimate of the first block $MACA'N+MADB'N$ in $\mathcal D$.}
By \eqref{eq: dervar}
$$ACA'=\Id_3+P=\Id_3+O\left(\begin{array}{ccc}
\mu^{2\kappa}& \mu^{2\kappa-1}&\mu^{2\kappa}\\
\mu^{3\kappa}&\mu^{3\kappa-1}&\mu^{3\kappa}\\
\mu^{3\kappa-1}&\mu^{3\kappa-2}&\mu^{3\kappa-1}
\end{array}\right)$$
(Recall that \eqref{eq: dervar} is the part of $\frac{\partial (\mathcal{L},G,g)^+}{\partial (\mathcal{L},G,g)^-}$
without considering the motion of the center of mass),
and by \eqref{EqI&II} and \eqref{eq: local2bp}
\begin{equation}\label{CD-Main}
MACA'N=M(\Id_3+P)N=\frac{1}{\mu}\left(\mathbf{U}+O\left(\mu^{3\kappa-1}\right)\right)
\otimes \frac{\partial G^-}{\partial (\Theta,p)^-}
+\mathbf B+O\left(\mu^{3\kappa-1}\right).
\end{equation} %together with the matrices $M,A,C,A',N$
%Here \eqref{CD-Main} is exactly the same matrix as \eqref{EqQvpm}.
Indeed, using the notation of \eqref{eq: dersplitting3}, we have $I=MN$ and $II=MPN$.
The estimates of $I$ and $II$ are given in  \eqref{EqI&II}.

Next we claim that
\begin{equation}\label{CD-Error}
  MADB'N=
  O\left(\mu^{3\kappa-2}\right)\frac{\partial G^-}{\partial (\Theta,p)^-}+
O\left(\mu^{3\kappa-1}\right)
\end{equation}
so it can be absorbed into the error terms of \eqref{CD-Main}.
To this end we split $N=N_1+N_2,$ $A=\mathrm{Id}+A_2$ where $A_2=
O(\mu^{1+\kappa},\mu^{1+2\kappa},\mu^{2\kappa})
\otimes O(\mu^{\kappa-1},\mu^{\kappa-2},0).
$ and
$$
N_1=\left(
\begin{array}{c} 0_{1\times 3} \\
\frac{\partial G^-}{\partial (\Theta,p)^-}\\
O(\frac{1}{\mu})\frac{\partial G^-}{\partial (\Theta,p)^-}
\end{array}\right), \quad
N_2=\left(\begin{array}{c} O(1)_{1\times 3} \\ 0_{1\times 3} \\
  O(1)_{1\times 3} \end{array}\right).$$Thus
$MADB'N=MDB'N+MA_2 DB'N.$ Let us work on the first term.
A direct computation shows that
$ DB'=O\left(\begin{array}{ccc} \mu^{3\kappa} & \mu^{3\kappa-1} & 0 \\
  \mu^{4\kappa} & \mu^{4\kappa-1} & 0 \\
  \mu^{4\kappa-1} & \mu^{4\kappa-2} & 0 \end{array} \right),
\quad MDB'=O\left(\mu^{4\kappa-1}_{3\times 1}, \mu^{4\kappa-2}_{3\times 1}, 0_{3\times 1}\right). $
Now it is easy to see that $MDB' N_1$ can be absorbed into the first term in \eqref{CD-Error}
and $MDB' N_2$ can be absorbed into the second term. The key is that $N_1$ has rank one and
the second row of $N_2$ is zero. The analysis of $MA_2DB'N$ is even easier since a direct
computation shows that $DB'$ dominates
$A_2 DB'$ componentwise.
This proves \eqref{CD-Error} and shows that
$MACA'N+MADB'N$ has the same asymptotics as \eqref{CD-Main}.

%$CA'$ has the same estimate as $\Id_3+P$ while $DB'=O(\mu^{2\kappa+1},\mu^{3\kappa+1},\mu^{3\kappa})\otimes O(\mu^{\kappa-1},\mu^{\kappa-2},0)$ is much smaller than $\Id_3+P$ component wise. Therefore we have that $ACA'+ADB'=\Id_3+P$, and
% $$MACA'N+MADB'N=M(ACA'+ADB')N=M(\Id_3+P)N=I+II$$
%in \eqref{eq: matrices} has the same estimate as $MACA'N$ in \eqref{CD-Main} using  \eqref{EqI&II}.

{\it Substep 3.4, estimate of the remaining blocks in $\mathcal D$.}

The following estimates are obtained by a direct computation
\begin{equation}\nonumber
\begin{aligned}
BD+F&=
O(\mu_{1\times 8})\otimes O(\mu^{\kappa-1},\mu^{\kappa-2},0)O\left(\begin{array}{cc}
(\mu^{2\kappa})_{1\times 4}&(\mu^{3\kappa})_{1\times 4}\\
(\mu^{3\kappa})_{1\times 4}&(\mu^{4\kappa})_{1\times 4}\\
(\mu^{3\kappa-1})_{1\times 4}&(\mu^{4\kappa-1})_{1\times 4}\\
\end{array}\right)
+\Id_8\\
&
+O\left(\begin{array}{cc}
(\mu^{2\kappa})_{4\times 4}&(\mu^{\kappa})_{4\times 4}\\
(\mu^{\kappa})_{4\times 4}&(\mu^{2\kappa})_{4\times 4}\\
\end{array}\right)=
\Id_8+O\left(\mu^{\kappa}\right)_{8\times 8}.\\
BC+E&=
O(\mu_{1\times 8})\otimes O(\mu^{\kappa-1},\mu^{\kappa-2},0)O\left(\begin{array}{ccc}
\mu^{2\kappa}& \mu^{2\kappa-1}&\mu^{2\kappa}\\
\mu^{3\kappa}&\mu^{3\kappa-1}&\mu^{3\kappa}\\
\mu^{3\kappa-1}&\mu^{3\kappa-2}&\mu^{3\kappa-1}
\end{array}\right)\\
&+\left(
(\mu^\kappa)_{8\times 1},(\mu^{3\kappa-1})_{8\times 1},(\mu^{3\kappa})_{8\times 1}
\right)=
O\left(
(\mu^{\kappa})_{8\times 1}, (\mu^{4\kappa-2})_{8\times 1},(\mu^{4\kappa-1})_{8\times 1}
\right).
\end{aligned}\end{equation}
Accordingly using \eqref{RelLocDelCart} and \eqref{eq: derg} for $N$, and arguing the same way
as substep 3.3 we get
\begin{equation}\label{eq: dercm}(BC+E)A'N+(BD+F)B'N=\dfrac{1}{\mu}[O(\mu^{\kappa})]_{1\times 8}\otimes \dfrac{\partial G^-}{\partial(\Theta,p)_-^-}+O(\mu^\kappa),\quad MAD=[O(\mu^{3\kappa-1})]_{3\times 8}.
 \end{equation}

{\it Substep 3.5, completing the asymptotics of $\cD$.}

Substeps 3.1--3.4 above can be summarized as follows:
%To summarize, we get the resulting derivative estimate as
\begin{equation}\label{eq: derloc}
%\begin{equation}
%\eqref{eq: matrices}
\cD=\dfrac{1}{\mu}(\mathbf{U}+O(\mu^{3\kappa-1}); O(\mu^{\kappa})_{1\times 8})\otimes \left(\dfrac{\partial G^-}{\partial(\Theta,p)_-^-};0_{1\times 8}\right)+\left(\begin{array}{c|c}
\mathbf B&0\\
\hline
0&\Id_8
\end{array}\right)+O\left(\mu^{3\kappa-1}\right). \end{equation}
%\end{equation}
Finally, when we use the coordinates $(q_-,p_-)$ instead of $(\Theta_-,p_-)$ as we did in Substep 2.4, %we get
%$\mathbf U+O(\mu^{3\kappa-1})=O(\mu^{\kappa}_{1\times 2}, 1_{1\times 2})$ and $\mathbf B+O\left(\mu^{3\kappa-1}\right)=O(1)$, and $\frac{\partial G^-}{\partial(q,p)_-^-}= O( 1_{1\times 2}
%,\mu^{\kappa}_{1\times 2})$ in terms of the coordinates $(q_-,p_-,\mathbf{q},\mathbf{p})$.
%Hence,
it follows from \eqref{EqQvpm} that we get $$\frac{\partial(q_-,p_-,\mathbf{q},\mathbf{p})^+}{\partial(q_-,p_-,\mathbf{q},\mathbf{p})^-}=\frac{1}{\mu}O(\mu^{\kappa}_{1\times 2}, 1_{1\times 2}; \mu^{\kappa}_{1\times 8})\otimes O\left( 1_{1\times 2}
,\mu^{\kappa}_{1\times 2};0_{1\times 8}\right)+O(1).$$
This is the structure of $d\Loc$ stated in the lemma. \end{proof}

It remains to obtain explicit asymptotics of the leading terms in Lemma \ref{Lm: loc}.
%For the asymptotes of the vector, functional and matrix
%in $d\Loc$ we study them in the following Corollary \ref{Cor} by examining more carefully the above derivations.
%The above proof actually gives us more information. \
Below we use the Delaunay variables $(L_3,\ell_3,G_3,g_3; x_1,v_1; G_4,g_4)^\pm$ as the orbit parameters \textit{outside} the circle $|q_-|=2\mu^\kappa$ and add a subscript $in$ to the Delaunay variables \textit{inside} the circle. We relate $C^0$ estimates of Lemma~\ref{Lm: landau} to the $C^1$
estimates obtained above. Namely consider the following equation which is obtained by
discarding the $o(1)$ errors in~\eqref{eq: landau}
\begin{equation}
\label{LandauLead}
q_-^+=0,\ p_-^+=R(\al)p_-^-,\quad \mathbf{q}^+=\mathbf{q}^-,\ \mathbf{p}^+=\mathbf{p}^-,
\end{equation}
where $\al$ is given by \eqref{Alpha}. We have the following corollary saying that $d\Loc$ can be obtained by taking the derivative directly in \eqref{LandauLead}.
\begin{Cor}
\label{Cor} The vectors $\hat u_j,\ \hat\lin_j$ in Lemma \ref{Lm: loc} can be computed directly from \eqref{eq: landau} evaluated at the $j$-th Gerver collision point $j=1,2$ as follows:
\begin{equation}
\begin{aligned}
 \hat u_j&=\frac{\partial \mathcal V^+}{\partial\al}h=\frac{\partial \mathcal V^+}{\partial \mathcal X^+}\frac{\partial \mathcal X^+}
{\partial (q_-,p_-,\mathbf{q},\mathbf{p})^+} \frac{\partial (q_-,p_-,\mathbf{q},\mathbf{p})^+}{\partial \al}h, \\
\hat\lin_j&=\frac{\partial G_{in}}{\partial \mathcal V^-}=\frac{\partial G_{in}}{\partial (q_-,p_-,\mathbf{q},\mathbf{p})^-}\frac{\partial (q_-,p_-,\mathbf{q},\mathbf{p})^-}{\partial \mathcal X^-}\frac{\partial \mathcal X^-}{\partial \mathcal V^-},
\end{aligned}\label{Equl}
\end{equation}
where $h = \lim_{\mu \to 0} \mu \frac{\partial\alpha}{\partial G_{in}} = 2|v_3 - v_4|\sin^2(\alpha/2)$.  Here $v_3$ and $v_4$ are the velocities of bodies 3 and 4 at the j-th Gerver collision point (see Appendix B), where $|v_3 - v_4|$ is the same before and after the elastic collision.
%and $\hat B_j$ is the explicit derivative of $\mathcal V^+$ with respect to $\mathcal V^-$ without going through $\alpha$ $($treating $\al$ as a constant$)$.
See Notation \ref{NotVX} for the use of $\mathcal V$ and $\mathcal X$.
\end{Cor}

\begin{proof}
%we rewrite \eqref{eq: landau}
%in terms of the coordinates of the relative motion and motion of mass center as follows
We begin by computing the rank 1 terms in the expression for $\cD.$
To get \eqref{Equl} we need to multiply the vector by
%In \eqref{Equl}, the derivatives
$\frac{\partial (L_3,\ell_3,G_3,g_3; q_1,p_1;G_4,g_4)^+}{\partial (q_3,p_3;q_1,p_1;q_4,p_4)^+}\frac{\partial (q_3,p_3;q_1,p_1;q_4,p_4)^+}
{\partial (q_-,p_-,\mathbf{q},\mathbf{p})^+}$ %in $\hat{\mathbf u}$
and the linear functional by $\frac{\partial (q_-,p_-,\mathbf{q},\mathbf{p})^-}{\partial (q_3,p_3;q_1,p_1;q_4,p_4)^-}\frac{\partial (q_3,p_3;q_1,p_1;q_4,p_4)^-}{\partial (L_3,\ell_3,G_3,g_3;q_1,p_1; G_4,g_4)^-}.$
%in $\lin$ are obvious. We focus on the remaining part.

For the map \eqref{LandauLead} we have
\[\dfrac{\partial (\mathbf{q},\mathbf{p})^+}{\partial (\mathbf{q},\mathbf{p})^-}=\Id_8,\ \dfrac{\partial (\mathbf{q},\mathbf{p})^+}{\partial (q_-,p_-)^-}=\dfrac{\partial (q_-,p_-)^+}{\partial (\mathbf{q},\mathbf{p})^-}=0, \ \dfrac{\partial(\mathbf{q},\mathbf{p})^+}{\partial \al}=\dfrac{\partial G_{in}}{\partial (\mathbf{q},\mathbf{p})^-}=0\]
%from \eqref{LandauLead} and \eqref{Alpha} for $G_{in}$,
which agrees with the corresponding blocks in \eqref{eq: derloc} up to an $o(1)$ error as $\mu\to 0$.

It remains to compare $\frac{\partial (q_-,p_-)^+}{\partial (q_-,p_-)^-}$.  Now the expression for $\lin_j$ follows from \eqref{CD-Main}.
%It is easy to see from \eqref{eq: local2bp} that the expression for $\lin$ in \eqref{Equl} is true.
Differentiating \eqref{LandauLead} we get
$\frac{\partial(q_-,p_-)^+}{\partial \al}=\left(0,\frac{\partial p_-^+}{\partial\al}\right).$
Thus to get the expression of $\hat{\mathbf u}$ in \eqref{Equl}, it is enough  to show
(cf.  \eqref{eq: local2bp}) that for the map \eqref{LandauLead} we have
\begin{equation}\label{EqCompareal}
\dfrac{\partial p_-^+}{\partial\al}\left(\dfrac{\partial \al}{\partial G_{in}}\right)=\left(\dfrac{\partial p_-^+}{\partial G^{+}}+\dfrac{\partial\arctan\frac{G^-}{\mu \mathcal{L}^{-}}}{\partial G^-}\dfrac{\partial p_-^+}{\partial g^{+}}\right),\quad G_{in}=G^-.
\end{equation}
Write $p_-^+=\bbV(G^+, \mu\cL, g^+)$ where $G^+$ and $g^+$ depend on $G^-$ as follows.
First, $G^+=G^-.$ Second, \eqref{eq: delaunay4}
gives
$ \arctan\left(\frac{p_2^\pm}{p_1^\pm}\right)\sim g^\pm-\arctan\left(\frac{G^\pm}{\mu\cL}\right), \quad \arctan\left(\frac{p_2^+}{p_1^+}\right)\sim \arctan\left(\frac{p_2^-}{p_1^-}\right)+\alpha,
$
where $\sim$ means that the difference between the LHS and the RHS is $O\left(e^{-2u}\right).$
Thus $g^+\sim g^-+\alpha$ and so
$ \frac{\partial p_-^+}{\partial G^-}=\frac{\partial \bbV}{\partial G^+}+
\frac{\partial \bbV}{\partial g^+} \frac{\partial g^+}{\partial G^-}\sim
\frac{\partial \bbV}{\partial G^+}+
\frac{\partial \bbV}{\partial g^+} \frac{\partial \alpha}{\partial G^-}
$
proving \eqref{EqCompareal}.

%Actually we have using \eqref{eq: delaunayscattering} and geometric consideration \[v^+_-=R\left(\al\right)v_-^-+O(e^{-2|u|})=R\left(\beta\right)(|v_-^-|,0)+O(e^{-2|u|}),\quad e^{-|u|}\simeq \mu^{\kappa}, \mathrm{\ where }\]
%\[\al= 2\arctan \frac{G_{in}}{\mu\mathcal L},\ \beta=g+\arctan\frac{G_{in}}{\mu \mathcal L},\ g=\arctan\frac{G_{in}}{\mu \mathcal L}+\arctan\frac{p_2^-}{p_1^-}+O(\mu^{2\kappa}), v_-=(p_1,p_2).\]
%We take the $G_{in}$ derivative directly and neglect $e^{-2|u|}$ term in the $v_-^+$ expression above to get \eqref{EqCompareal}. The $e^{-2|u|}$ term is negligible as we did in the proof of Lemma \ref{LmDerLoc}.

%To complete the proof of the corollary we have to show that the formula for $\hat B$ is obtained by taking the derivatives of \eqref{LandauLead} with respect to variables different from $G^-.$ This is done by comparing \eqref{Equl} with  \eqref{eq: local2bp} similarly to the derivation of \eqref{Equl}.
\end{proof}
The next corollary says that the small remainders in \eqref{eq: landau} are also $C^1$ small if the derivative is taken along the direction with small change of $G_{in}^-$.
\begin{Cor}\label{cor: 2}
Let $\gm(s):(-\eps,\eps)\to\R^{10} $ be a $C^1$ curve such that $\Gamma=\gm'(0),\ \|\Gamma\|=1$ and
$\frac{d(G^-_{in}\circ\gamma)(0)}{d s}=d G_{in}^-\cdot \Gamma=O(\mu)$ then when taking the derivative with respect to $s$ in the equations
\[\begin{cases}
&|p_3^+|^2+|p_4^+|^2=|p_3^-|^2+|p_4^-|^2+o(1),\\
&\mathbf p^+=\mathbf p^-+o(1),\\
&\mathbf q^+=\mathbf q^-+o(1),
\end{cases}
\]
obtained from equation \eqref{eq: landau},
the error terms are also $o(1)$  as $\mu\to 0$ after taking the directional derivative along the direction $\Gamma$.
\end{Cor}
\begin{proof}
For the motion of the mass center, it follows from Corollary \ref{Cor} and \eqref{eq: derloc} that
$$\dfrac{\partial (\mathbf{q},\mathbf{p})^+}{\partial (q_-,p_-,\mathbf{q},\mathbf{p})^-}=\dfrac{1}{\mu}\dfrac{\partial (\mathbf{q},\mathbf{p})^+}{\partial\al}\otimes \tilde\lin+(0_{8\times 4},\Id_{8})+o(1),\quad \mathrm{
where}\ \tilde\lin=\frac{\partial \al}{\partial G_{in}^-}\frac{\partial G^-_{in}}{\partial (q_-,p_-,\mathbf{q},\mathbf{p})^-}.$$
Here $\frac{\partial \al}{\partial G_{in}^-}=O(1)$. We already obtained in equation \eqref{eq: derloc} that $\frac{\partial (\mathbf{q},\mathbf{p})^+}{\partial\al}=O(\mu^{\kappa})$, so
our assumption $\frac{d (G^-_{in}\circ\gamma)(0)}{d s}=d G^-_{in}\cdot \Gamma=O(\mu)$ implies that
\begin{equation}
\label{SmallLin}
\tilde \lin \frac{\partial (q_-,p_-,\mathbf q,\mathbf p)^-}{\partial \mathcal V^-}\cdot\Gamma=O(1)d G^-_{in}\cdot \Gamma=O(\mu)
\end{equation}
which suppresses the $1/\mu$ term. Here $\frac{\partial (q_-,p_-,\mathbf q,\mathbf p)^-}{\partial \mathcal V^-}=O(1)$ by Lemma \ref{Lm: dx/dDe}.
This proves the last two identities of the corollary.

To derive the first equation it is enough to show $\frac{d}{ds}(|p_-^+|^2-|p_-^-|^2)=o(1)$
since we already have the required estimate for the velocity of the center of mass.
%we use the fact that
%the Hamiltonian \eqref{eq: relcm} is preserved. Namely
We use the fact that the RHS \eqref{eq: hamrel2} is the same
in incoming and outgoing variables (superscripts $+$ and $-$ respectively) by the total energy conservation.
In \eqref{eq: hamrel2}, the terms involving only $\mathbf{q},\mathbf{p}$ are handled using the result of the previous paragraph. The term $-\frac{\mu}{|q_-|}$ vanishes when taking the derivative since $|q_-|=\frac12\mu^\kappa$ is constant.  All the remaining terms have
$q_-$ to the power 2 or higher. We have $\frac{d q_-^-}{d s}=O(1)$ due to $\|\Gamma\|=1$ and $\frac{d q_-^+}{d s}=O(1)$ due to \eqref{SmallLin}.
Therefore after taking the derivative with respect to $s$, any term involving $q_-$ is of order $O(\mu^\kappa)$. This completes the proof of the energy conservation part.
\end{proof}
%The following Corollary is not used in this paper, but will be used in our forthcoming paper on four-body problem.
%Notice in the proof of the Lemma \ref{LmDerLoc}, we first establish the structure of the derivative of the local map in Lemma \ref{LmDerLoc} for the relative motion part, then we show
%\begin{Cor}
%Lemma
%\end{Cor}

\subsection{Proof of Lemma~\ref{LmNonDeg}(c)}\label{SSTrans}
In this section we work out the $O(1/\mu)$ term in the local map and prove Lemma~\ref{LmNonDeg}(c).
%We need that $span\{w_i,\tilde{w}\}$ does not lie in $Ker\lin_{3-i}$ in order to check the nondegeneracy condition. Any vector in $span\{w_i,\tilde{w}\}$ has the form of $(0,*,0,0,*,*)$. We will pick a vector in the span of the form $(0,0,0,0,*,*)$ to show it does not lie in the $Ker \lin_{3-i}$. For this purpose, we only need to work out $\dfrac{\partial G_{in}}{\partial G_4}, \dfrac{\partial G_{in}}{\partial g_4}$ for the functional $\lin$. \\

\begin{proof}
\textbf{Before collision, $\hat\lin=\nabla_{\mathcal V^-} G^-_{in}$.}

In this calculation, every variable should carry a superscript $-$, and we omit it for simplicity. To verify $\hat\lin_i\cdot w_{3-i}\neq 0$ and $\hat\lin_i\cdot \tw\neq 0$ for $i=1,2$ in Lemma~\ref{LmNonDeg}(c), noting that $\tilde w=(0,1,0_{1\times 8})$ and $w=(0_{1\times 8},*,*)$, it is enough to work out the three entries $\nabla_{\ell_3}{G_{in}},\nabla_{G_4}{G_{in}},\nabla_{g_4}{G_{in}}$ in $\hat\lin$.
%To simplify the computation, we treat $L_3^-,\ell_3^-, G_3^-, g_3^-$ as fixed.
%This will be enough for us to check that $span\{w,\tilde{w}\}$ does not lie in the $Ker\lin$ since the first four entries of $w$ are zero according to equation~(\ref{eq: ul}). When we consider $\dfrac{\partial G_{in}}{\partial G_4^-},\dfrac{\partial G_{in}}{\partial g_4^-}$,
According to Corollary \ref{Cor} we can differentiate the asymptotic expression of Lemma \ref{Lm: landau}.
We have
$\left(\nabla_{G_4}{G_{in}},\nabla_{g_4}{G_{in}}\right)=$
\[-(p_3-p_4)\times\left(\frac{\partial}{\partial G_4},\frac{\partial}{\partial g_4}\right)q_4-(p_3-p_4)\times\left(\dfrac{\partial q_4}{\partial \ell_4}\right)\cdot\left(\nabla_{G_4}{\ell_4},\nabla_{g_4}{\ell_4}\right)
+O(\mu^{\kappa}),\] where $O(\mu^{\kappa})$ comes from $\left(\left(\frac{\partial}{\partial G_4},\frac{\partial}{\partial g_4}\right)(p_3-p_4)\right)\times (q_3-q_4)$ and dominates $\frac{\partial q_4}{\partial L_4}\cdot\nabla_{G_4,g_4} L_4=O(\mu)$. Indeed, we have $\frac{\partial q_4}{\partial L_4}=O(1)$ and $\frac{\mu}{|\frac{x_3}{1+\mu}-x_4|}=\frac{\mu}{|q_3-q_4|}=\mu^{1-\kappa}$, and using \eqref{eq: hamRR} with $H=0,$ the leading contribution to $\nabla_{G_4,g_4} L_4$ is given by $\nabla_{G_4,g_4}\left(\frac{1}{|x_4|}-\frac{1}{\left|x_4+\frac{\mu x_3}{1+\mu}\right|}\right)=O(\mu)$ where we use Newton-Leibniz $\frac{1}{|x_4|}-\frac{1}{\left|x_4+\frac{\mu x_3}{1+\mu}\right|}=\mu\int_0^1 \frac{(x_4+t\frac{\mu x_3}{1+\mu})}{\left|x_4+t\frac{\mu x_3}{1+\mu}\right|^3}\cdot \frac{x_3}{1+
\mu}dt$ and Lemma \ref{Lm: dx/dDe} for $\nabla_{G_4,g_4}x_4=O(1)$.

We next eliminate $\ell_4$ using the relation $|q_3-q_4|=\mu^{\kappa}$:
\[\left(\nabla_{ G_4}\ell_4,\nabla_{ g_4}\ell_4\right)=-\left(\dfrac{\partial |q_3-q_4|}{\partial \ell_4}\right)^{-1}\left[\left(\dfrac{\partial |q_3-q_4|}{\partial G_4},\dfrac{\partial |q_3-q_4|}{\partial g_4}\right)+\dfrac{\partial |q_3-q_4|}{\partial L_4}\left(\nabla_{G_4,g_4}L_4\right)\right]\]
\[=-\dfrac{(q_3-q_4)\cdot\left(\frac{\partial q_4}{\partial G_4},\frac{\partial q_4}{\partial g_4}\right)}{(q_3-q_4)\cdot\frac{\partial q_4}{\partial \ell_4}}+O(\mu)=-\dfrac{(p_3-p_4)\cdot\left(\frac{\partial q_4}{\partial G_4},\frac{\partial q_4}{\partial g_4}\right)}{(p_3-p_4)\cdot\frac{\partial q_4}{\partial \ell_4}}+O(\mu^{1-\kappa}).\]
Here we replaced $q_3-q_4$ by $p_3-p_4$, using the fact that the two vectors form
an angle of order $O(\mu^{1-\kappa})$ by Lemma \ref{Lm: landau}(c).
Therefore
\[\left(\nabla_{G_4}{G_{in}},\nabla_{g_4}{G_{in}}\right)=-(p_3-p_4)\times\left(\dfrac{\partial }{\partial G_4},\dfrac{\partial }{\partial g_4}\right)q_4\]\[+(p_3-p_4)\times\dfrac{\partial q_4}{\partial \ell_4}\left(\dfrac{(p_3-p_4)\cdot\left(\frac{\partial q_4}{\partial G_4},\frac{\partial q_4}{\partial g_4}\right)}{(p_3-p_4)\cdot\frac{\partial q_4}{\partial \ell_4}}\right)
+O(\mu^{\kappa}+\mu^{1-2\kappa}).\]
Similarly, we get
\[\nabla_{ \ell_3} G_{in}=(p_3-p_4)\times\dfrac{\partial q_3}{\partial \ell_3}+(p_3-p_4)\times\dfrac{\partial q_4}{\partial \ell_4}\left(\dfrac{(p_3-p_4)\cdot\frac{\partial q_3}{\partial \ell_3}}{(p_3-p_4)\cdot\frac{\partial q_4}{\partial \ell_4}}\right)
+O(\mu^{\kappa}+\mu^{1-2\kappa}).\]
We use {\sc Mathematica}
to work out the three entries  and check directly that $\hat\lin_i\cdot w_{3-i}\neq 0$ and $\hat\lin_i\cdot \tw\neq 0$ for $i=1,2$ using Lemma \ref{Lm: glob}.
%If $Q_4$ becomes the captured particle and $Q_4$ becomes the traveler, we have
%\[\hat{\mathbf{u}}_2=[-1.5483, 6.18306, 0.806395, 4.5916, 0.341503, -0.0269643].\]

\textbf{After collision, $\hat{\mathbf{u}}=\frac{\partial \mathcal V^+}{\partial \al}$.}
In equation~(\ref{eq: landau}), we let $\mu\to 0$.
% We need to eliminate $\ell_4^+$ using the condition $|Q_3^+-Q_4^+|=\mu^{\kappa}$.  This is nothing but equation~(\ref{eq: l4}).
Applying the implicit function theorem to \eqref{eq: landau} with $\mu=0$ we obtain
\begin{equation}
\begin{aligned}
\dfrac{\partial}{\partial \alpha}(q_3^+,p^+_3;q_1^+,p^+_1;q_4^+,p_4^+)
&=\dfrac{1}{2}\left(0,0,R\left(\frac{\pi}{2}+\al\right)(p^-_3-p^-_4);0,0,0,0;0,0,-R\left(\frac{\pi}{2}+\al\right)(p^-_3-p^-_4)\right)^T\\
&=\dfrac{1}{2}\left(0,0,R\left(\frac{\pi}{2}\right)(p^+_3-p^+_4);0,0,0,0;0,0,-R\left(\frac{\pi}{2}\right)(p^+_3-p^+_4)\right)^T.
\end{aligned}\nonumber
\end{equation}
where $R(\pi/2+\al)=\frac{dR(\al)}{d\al}$ and $\nabla_{\mathcal V^+} \ell_4^+ $ is given
by \eqref{eq: l4}.
Again we use {\sc Mathematica} to work out $\frac{\partial \mathcal V^+}{\partial \al}$ and check directly that $\brlin_i\cdot \hat{\mathbf{u}}_i\neq 0$ for $i=1,2$ using Lemma \ref{Lm: glob}.

To obtain a symbolic sequence with any order of symbols $3,4$ as claimed in the main theorem, we notice that the only difference is that the outgoing relative velocity changes sign $(p_3^+-p_4^+)\to -(p_3^+-p_4^+)$. So we only need to send $\hat{\mathbf{u}}\to -\hat{\mathbf{u}}$.
%We point out that we renormalize the vector $\hat{\mathbf{u}}_2$ according to the discussion in Section~\ref{subsection: reflection}.
\end{proof}

\subsection{Proof of Lemma~\ref{LmNonDeg}(a)\&(b)}\label{subsection: local3}
In this section, we prove Lemma \ref{LmNonDeg}(a)\&(b). Since we have already obtained $\lin$ and $\mathbf u$ in $d\Loc$ and $\brlin,\brrlin,\brv, \bar{\brv}$ in $d\Glob$, one way to prove Lemma \ref{LmNonDeg} is to work out the matrix $B$ explicitly using an argument similar to that in Corollary  \ref{Cor}. In that case, the current section is not necessary. However, in this section, we use a different approach, which simplifies the computation and has clear physical and geometrical meaning. We first abbreviate $d\Loc$ in Lemma \ref{Lm: loc}(a) as  $d\Loc(\bx)=\frac{1}{\mu} \bv_{j,\mu}\otimes \lin_{j,\mu} +B_{j,\mu}$ using the subscript $\mu$ to absorb the $\mu$-depending $o(1)$s. Similarly, we write $d\Glob=\chi^2 \brv_{j,\mu}\otimes \brlin_{j,\mu} +4\chi \brrv_{j,\mu}\otimes \brrlin_{j,\mu} +O(\mu\chi)$.
\begin{Lm} Consider $\bx_\mu\in U_j(\dt)$, j=1,2 and $|\bar\theta_4^+-\pi|<\tilde\theta$ as in Lemma \ref{Lm: loc}.
Suppose the vector $\tilde{\Gamma}_{j,\mu}\in span\{\brv_{3-j}, \brrv_{3-j}\}\subset T_{\bx_\mu} U_j(\dt)$ for $j=2$, and $\tilde{\Gamma}_{j,\mu}\in span\{d\cR\brv_{3-j}, d\cR\brrv_{3-j}\}\subset T_{\bx_\mu} U_j(\dt)$ for $j=1$, satisfies
$\brlin_{j}(d\Loc \tilde{\Gamma}_{j,\mu})=0$ and $\Vert\tilde{\Gamma}_{j,\mu}\Vert_\infty=1.$
Then we have
\begin{enumerate}
\item[(a)]$\lin_{j,\mu}(\tilde\Gamma_{j,\mu})=O(\mu)$ as $\mu\to 0$,
\item[(b)] the limits $\lim_{\mu\to 0}\tilde{\Gamma}_{j,\mu}$ and $\lim_{\mu\to 0} d\Loc \tilde{\Gamma}_{j,\mu} $ exist, and $\lim_{\mu\to 0}\tilde{\Gamma}_{j,\mu}$ is continuous in $\hat\bx$ $($see Lemma \ref{Lm: loc}$)$ and $\lim_{\mu\to 0} d\Loc \tilde{\Gamma}_{j,\mu} $ is continuous in $\hat\bx$ and $\bar\theta_4^+$,
%\[ \Gamma_{3-j}=\lim_{\mu\to 0}\tilde{\Gamma}_{\mu} \text{ and }
%\lim_{\mu\to 0} d\Loc \tilde{\Gamma}_{\mu}=\Delta_j,
%\]
\item[(c)]$\displaystyle\hat\brlin_j(\lim_{\dt,\tilde\theta\to0}\lim_{\mu\to 0} d\Loc \tilde{\Gamma}_{j,\mu})=0$.
\end{enumerate}
\label{Lm: limit}
\end{Lm}
\begin{proof}
%Now let us consider $\mu>0,\chi=\infty$.
%After the action of the Local map, we obtain a plane spanned by $u$ and $B\Gamma'$ up to an error of order $O(\mu^{3\kappa-1})$ where
For simplicity, we give the proof in the case of $j=2$ without needing the renormalization. The other case $j=1$ is completely analogous.
Denote
$\Gamma'_{2,\mu}=\lin_{2,\mu}(\brv_{1,\mu})\brrv_{1,\mu}-\lin_{2,\mu}(\brrv_{1,\mu})\brv_{1,\mu}\in Ker \lin_{2,\mu}$ and let $v_\mu$ be a vector in $\Span(\brv_{1}, \brrv_{1})$ such that
$v_\mu\to v$ as $\mu\to 0$ and $\lin_{2,\mu}(v_\mu)=1.$
Suppose that
$\tilde\Gamma_{2,\mu}=a_\mu v_\mu+b_\mu \Gamma'_{2,\mu}$
then
\begin{equation}
\label{GammaBasis}
d\Loc(\tilde\Gamma_{2,\mu})=\dfrac{a_\mu}{\mu} \lin_{2,\mu}(v_\mu) \mathbf u_{2,\mu}+a_\mu B_{2,\mu}(v_\mu)+b_\mu B_{2,\mu} \Gamma'_{2,\mu}.
\end{equation}
So $\brlin_2(d\Loc(\tilde\Gamma_{2,\mu}))=0$ implies that
\begin{equation}
\label{EqAmu}
a_\mu=-\mu \dfrac{b_\mu \brlin_2(B_{2,\mu}\Gamma'_{2,\mu})}{\lin_{2,\mu}(v_\mu) \brlin_2(\mathbf u_{2,\mu})+\mu\brlin _2B_{2,\mu}(v_\mu)}.
\end{equation}
The denominator is not zero since $\lin_{2,\mu}(v_\mu)=1$ and $\brlin_2(\mathbf u_{2,\mu})\neq 0$ using Lemma \ref{LmNonDeg}(c).
Therefore $a_\mu=O(\mu)$ and $b_\mu=O(1)$ using $\|\Gamma_{2,\mu}\|_\infty=1$.
Hence $\tilde\Gamma_{2,\mu}=b_\mu \Gamma'_{2,\mu}+O(\mu)$ and $\lin_{2,\mu}(\tilde\Gamma_{2,\mu})=O(\mu).$ The continuous dependence on variables in part (b) follows from part (a) of Lemma \ref{Lm: loc} and \ref{Lm: glob}. Now the remaining statements of
the lemma follow from equations \eqref{GammaBasis} and \eqref{EqAmu}.
\end{proof}

To check the nondegeneracy condition, it is enough to know the following.
\begin{Lm} Let $\bx_\mu\in U_j(\delta)$ and $|\bar\theta_4^+-\pi|<\tilde\theta\ll 1$ be as in Lemma \ref{Lm: loc}.
If we take the directional derivative at $\bx_\mu$ of the local map along a direction
$\Gamma_{j,\mu}\in span\{\brv_{3-j},\brrv_{3-j}\}\subset T_{\bx_\mu} U_j(\dt)$ for $j=2$ and $\Gamma_{j,\mu}\in span\{d\cR\brv_{3-j},d\cR\brrv_{3-j}\}\subset T_{\bx_\mu} U_j(\dt)$ for $j=1$, such that $\brlin_j\cdot (d\Loc \Gamma_{j,\mu})=0,\ j=1,2,$ then
%for the both collisions $i=1,2$ we have
$\lim_{1/\chi\ll\mu \to 0}\frac{\partial E_3^+}{\partial \Gamma_{j,\mu}}$
is a continuous function of both $\bx$ and $\bar\theta_4^+$, where $E_3^+$ $($respectively $\bar\theta_4^+$$)$ is the energy of $Q_3$ $($respectively the outgoing asymptote of $Q_4)$ after the close encounter with $Q_4$.
If we take further limits $\dt\to 0$ and $\tilde\theta\to 0$, we have
$$\lim_{\dt,\tilde\theta\to 0}\;\;\;\lim_{1/\chi\ll\mu \to 0}\dfrac{\partial E_3^+}{\partial \Gamma_{j,\mu}}\neq 0,\quad j=1,2.$$
\label{Lm: local3}
\end{Lm}
Now we can check the nondegeneracy condition.
\begin{proof}[Proof of Lemma~\ref{LmNonDeg}(a)\&(b).]
We prove (b1) and (b2). The proofs of (a1) and (a2) are similar and are left to the reader.
To check (b2), $de_4$, we differentiate $e_4=\sqrt{1+(G_4/L_4)^2}$ to get
$de_4=\frac{1}{e_4}\left(\frac{G_4}{L^2_4}dG_4-\frac{G^2_4}{L_4^3}dL_4\right).$ Thus Lemma \ref{Lm: glob} gives
$de_4 w=\frac{G_4}{L^2_4}\neq 0$ as claimed.

Next we check (b1) which is equivalent to the following condition
\begin{equation} \det\left(\begin{array}{rr} \hat\brlin_{2}(\hat{\mathbf{u}}_{2}) & \hat\brlin_{2}(\hat B_2\Gamma'_2)
) \cr
                                            \hat\brrlin_{2}(\hat{\mathbf{u}}_{2}) & \hat\brrlin_{2}(\hat B_2 \Gamma'_2)
                                           \end{array}\right)\neq 0. \label{eq: nondeg}\end{equation}
where $\Gamma'_2=\hat\lin_{2}(\tw)w_{1}-\hat\lin_{2}(w_{1})\tw.$ The vector $\Gamma'_2\neq 0$ due to Lemma \ref{LmNonDeg}(c).

Let $\Gamma_2$ be a vector satisfying $\hat\brlin_2\cdot (d\Loc\Gamma_2)=0$ and
chosen as follows.
$d\Loc \Gamma_2$ is a vector in $span\{\hat{\mathbf{u}}_2,\hat B_2\Gamma'_2\}$, so it
can be represented as $d\Loc \Gamma_2=b \hat{\mathbf{u}}_2+b'\hat B_2\Gamma'_2.$
Thus we can take $b=-\hat\brlin_2\cdot \hat B_2\Gamma'_2$ and $b'=\hat\brlin_2(\hat{\mathbf{u}}_2)$
to ensure that $d\Loc \Gamma_2\in Ker \hat\brlin_2$.
Note that we have $b'\neq 0$ by Lemma \ref{LmNonDeg}(c).
Hence
\[\det\left(\begin{array}{rr} \hat\brlin_2(\hat{\mathbf{u}}_2) & \hat\brlin_2(\hat B_2\Gamma'_2) \cr
                                           \hat \brrlin_2(\mathbf u_2) & \hat\brrlin_2(\hat B_2\Gamma'_2)
                                            \end{array}\right)=\dfrac{1}{b'}\det\left(\begin{array}{rr} \hat\brlin_2(\hat{\mathbf{u}}_2) & \hat\brlin_2(d\Loc \Gamma_2) \cr
                                            \hat\brrlin_2(\hat{\mathbf{u}}_2) & \hat\brrlin_2(d\Loc \Gamma_2)
                                            \end{array}\right)=\hat\brrlin_2(d\Loc \Gamma_2)
                                            \]
where the last equality holds since $\hat\brlin_2(d\Loc \Gamma_2)=0.$
By Lemma \ref{Lm: glob} $\hat\brrlin_i=(1,0_{1\times 9})$. Therefore
$\hat\brrlin_2(d\Loc \Gamma_2)=\frac{\partial E_3^+}{\partial \Gamma_2}$
and so (b2) follows from Lemma~\ref{Lm: local3}.
\end{proof}
It remains to prove Lemma~\ref{Lm: local3}. It is more convenient for us to work with polar coordinates.
We need the following quantities.
\begin{Def}
 $\psi$: polar angle, related to $u$ by $\tan\frac{\psi}{2}=\sqrt{\frac{1+e}{1-e}}\tan\frac{u}{2}$ for an ellipse. We choose the positive $y$ axis as the axis $\psi=0$. $E$: energy; $e:$~eccentricity;
$G$: angular momentum, $g$: argument of apapsis.
\end{Def}
Recall the formula $r=\frac{G^2}{1-e\cos\psi}$ for conic sections in which the periapsis lies on the axis $\psi=\pi$.
In our case we have as $1/\chi\ll\mu\to 0$
\begin{equation}
\label{PolarGen}
\begin{cases}&r_3^\pm=\dfrac{(G^\pm_3)^2}{1-e^\pm_3\sin(\psi_3^\pm+g^\pm_3)}+o(1),\\
&r_4^\pm=\dfrac{(G^\pm_4)^2}{1-e_4^\pm\sin(\psi_4^\pm-g^\pm_4)}+o(1).
\end{cases}
\end{equation}
\label{Lm: polar}

%We shall use the following equations to describe the local map in polar coordinates.
%\begin{equation}
%\label{eq: polar1}
%E_3^++E_4^+=E_3^-+E_4^-+o(1),
%\end{equation}
%\begin{equation}
%\label{eq: polar2}
%G_3^++G_4^+=G_3^-+G_4^-+o(1),
%\end{equation}
%\begin{equation}
%\label{eq: polar3}
%\dfrac{e_3^+}{G_3^+}\cos(\psi_3^++g_3^+)+\dfrac{e_4^+}{G_4^+}\cos(\psi_4^--g_4^{-})=\dfrac{e_3^-}{G_3^-}\cos(\psi_3^-+g_3^-)+\dfrac{e_4^-}{G_4^-}\cos(\psi_4^--g_4^{-})+o(1),
%\end{equation}
%\begin{equation}
%\label{eq: polar4}
%(\vec{r}_3)^++(\vec{r}_4)^+=(\vec{r}_3)^-+(\vec{r}_4)^-+o(1),
%\end{equation}
%\begin{equation}
%\label{eq: polar5}
%|(\vec{r}_3)^--(\vec{r}_4)^-|=\mu^\kappa,\quad |(\vec{r}_3)^+-(\vec{r}_4)^+|=\mu^\kappa,
%\end{equation}

%\eqref{PolarGen} is just
%the polar coordinate representation of conic sections after proper rotation.
%(The values of where $g^\pm_{3,4}$ in the Gerver's case can be found in the Appendix~\ref{subsection: gerver} and~\ref{subsection: numerics}.)  \eqref{eq: polar5} reflects our choice of Poincare sections.

\begin{Lm}\label{Lm: polarsection}
Under the assumptions of Corollary \ref{cor: 2} we have
%and if in addition we use the equations $|\vec r_3^\pm-\vec r_4^\pm|=\mu^\kappa$, then we have in the limit $\mu\to 0$ that
\[\dfrac{d r_3^+}{d s}=\dfrac{d r_4^+}{d s}+o(1),\quad \dfrac{d r_3^-}{d s}=\dfrac{d r_4^-}{d s}+o(1),
\quad\dfrac{d \psi_3^+}{d s}=\dfrac{d \psi_4^+}{d s}+o(1),\quad\dfrac{d \psi_3^-}{d s}=\dfrac{d \psi_4^-}{d s}+o(1).\]
Moreover in \eqref{PolarGen} the $o(1)$ terms are also $C^1$ small when taking the derivative with respect to $s$.
\end{Lm}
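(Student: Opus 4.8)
The statement to prove (Lemma~\ref{Lm: polarsection}) asserts that, along a $\mathscr C^1$ curve $\gm(s)$ satisfying the hypotheses of Corollary~\ref{cor: 2}, the radial and angular coordinates of $Q_3$ and $Q_4$ on the collision sphere $\{|q_3-q_4|=\mu^\kappa\}$ have, to leading order, the same $s$-derivative, and moreover that the $o(1)$ remainders in the polar representations \eqref{PolarGen} are small in the $\mathscr C^1$ sense. The plan is to extract these four identities from the collision-sphere constraint together with the $\mathscr C^0$/$\mathscr C^1$ control already established in Lemma~\ref{Lm: landau} and Corollary~\ref{cor: 2}.

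First I would set up coordinates. Write $q_3-q_4$ in polar form: on the sphere $|q_3-q_4|=\mu^\kappa$, so the relative position $q_-=\tfrac12(q_3-q_4)$ has modulus $\mu^\kappa$ and is thus described by a single angle $\Theta_-$. Express $q_3=q_++q_-$ and $q_4=q_+-q_-$, so that $r_3,\psi_3$ and $r_4,\psi_4$ are functions of $(q_+,\Theta_-)$, with the $\Theta_-$-dependence entering only through a shift of size $O(\mu^\kappa)$ relative to the dominant term $q_+$. Concretely, $q_3=q_+ + O(\mu^\kappa)$ and $q_4 = q_+ + O(\mu^\kappa)$, so $r_3 = |q_+| + O(\mu^\kappa)$, $r_4 = |q_+| + O(\mu^\kappa)$, and similarly for the polar angles; hence $r_3-r_4 = O(\mu^\kappa)$ and $\psi_3-\psi_4 = O(\mu^\kappa)$, which already gives the $\mathscr C^0$ versions. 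For the $\mathscr C^1$ statement I would differentiate in $s$. By Corollary~\ref{cor: 2} (and the proof of Corollary~\ref{Cor}, in particular the smallness \eqref{SmallLin} $\lin\cdot\Gamma=O(\mu)$ forced by the hypothesis $\partial G_{in}^-/\partial s=O(\mu)$), the $\mathscr C^0$ relations $(\mathbf q,\mathbf p)^+=(\mathbf q,\mathbf p)^-+o(1)$ and the energy relation are $\mathscr C^1$-small, so $\partial q_+/\partial s$ is the same before and after collision up to $o(1)$, and $\partial q_-/\partial s = O(1)$ both before and after (the latter again using \eqref{SmallLin}). Therefore differentiating $r_3=|q_++q_-|$ and $r_4=|q_+-q_-|$ in $s$: the two results differ only by terms that are linear in $\partial q_-/\partial s = O(1)$ multiplied by a factor coming from the $q_-$-dependence of $|q_\pm\mp q_-|$ evaluated at $|q_-|=\mu^\kappa$, hence of size $O(\mu^\kappa)=o(1)$. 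The same argument applies to $\psi_3,\psi_4$. This yields all four claimed derivative identities.

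For the final assertion, that the $o(1)$ terms in \eqref{PolarGen} are $\mathscr C^1$-small when taking $s$-derivatives, I would track the origin of those $o(1)$ terms: they come from (i) the $O(\mu)$ discrepancy between the coordinate system \eqref{eq: right} and the identity, (ii) the difference between the true orbit of $(x_3,v_3)$, $(x_4,v_4)$ and the pure Kepler conic on the sphere — controlled by Lemma~\ref{Lm: landau} parts (a),(c) which bound the perturbations of the Delaunay parameters by powers of $\mu$ over the collision time $\Delta t = O(\mu^\kappa)$ — and (iii) the $O(1/\chi)$ influence of $Q_1$. In each case the estimate is obtained by integrating a variational-type bound against a derivative that is $O(1)$ by Corollary~\ref{cor: 2}; since the $\mathscr C^0$ bound is already a positive power of $\mu$ (or $1/\chi$) and differentiation costs at most a bounded factor by the controlled $\mathscr C^1$ estimates, the derivatives of the remainders remain $o(1)$. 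I would phrase this as: apply $\partial/\partial s$ to the exact relation $r_3^\pm = (G_3^\pm)^2/(1-e_3^\pm\sin(\psi_3^\pm+g_3^\pm)) + R_3^\pm$ with $R_3^\pm$ the exact error, note $R_3^\pm = O(\mu^{\min(\kappa,\,1-2\kappa)}+1/\chi)$ by the computations behind Lemma~\ref{Lm: landau}, and observe that $\partial R_3^\pm/\partial s$ inherits the same order because the quantities it depends on have $s$-derivatives bounded independently of $\mu,\chi$ by Corollary~\ref{cor: 2}.

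The main obstacle I anticipate is the bookkeeping in step (ii) of the last paragraph: one must verify that the Kepler-conic approximation of the $Q_3$ and $Q_4$ orbits on the collision sphere is $\mathscr C^1$-stable along the curve $\gm(s)$, i.e. that differentiating the error terms in $s$ does not amplify them by a negative power of $\mu$. This is exactly the kind of subtlety that Corollary~\ref{cor: 2} is designed to handle — the hypothesis $\partial G_{in}^-/\partial s = O(\mu)$ is what prevents the dangerous $1/\mu$ amplification through the $\arctan(G_{in}/\mu\mathcal L)$ terms — so the proof reduces to carefully invoking that corollary (and its proof) at each place where a derivative of an $o(1)$ quantity appears, rather than to any new estimate. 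The remaining steps are routine polar-coordinate manipulations.
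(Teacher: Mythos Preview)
Your argument has a genuine gap in the central step. You assert that differentiating $r_3=|q_++q_-|$ and $r_4=|q_+-q_-|$ in $s$, the two derivatives differ only by a term of the form $O(1)\cdot O(\mu^\kappa)$. But a direct computation gives
\[
\frac{dr_3}{ds}-\frac{dr_4}{ds}=(\hat n_3-\hat n_4)\cdot\frac{dq_+}{ds}+(\hat n_3+\hat n_4)\cdot\frac{dq_-}{ds},\qquad \hat n_j=\frac{q_j}{|q_j|}.
\]
The first bracket is indeed $O(\mu^\kappa)$, but $\hat n_3+\hat n_4=2\hat q_++O(\mu^\kappa)$ is $O(1)$, not $O(\mu^\kappa)$. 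Hence with only $\partial q_-/\partial s=O(1)$ you get an $O(1)$ term, not $o(1)$. The same issue occurs for the angular derivatives. The bound $\partial q_-/\partial s=O(1)$ that you cite is simply the a priori bound and does not use the hypothesis $\partial G_{in}^-/\partial s=O(\mu)$ in any essential way.

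What is actually needed---and this is where the hypothesis enters nontrivially---is the stronger fact $\dfrac{d}{ds}(\vec r_3-\vec r_4)=o(1)$. The paper obtains it by a short geometric argument: the section constraint $|\vec r_3-\vec r_4|=\mu^\kappa$ forces $(\vec r_3-\vec r_4)\perp\dfrac{d}{ds}(\vec r_3-\vec r_4)$; since $G_{in}=O(\mu)$, the relative velocity $\dot{\vec r}_3-\dot{\vec r}_4$ is nearly parallel to $\vec r_3-\vec r_4$ (Lemma~\ref{Lm: landau}(c)); differentiating $G_{in}$ and using $|\vec r_3-\vec r_4|=\mu^\kappa$ yields $(\dot{\vec r}_3-\dot{\vec r}_4)\times\dfrac{d}{ds}(\vec r_3-\vec r_4)=o(1)$. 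The near-parallelism plus the perpendicularity then force $\bigl|\dfrac{d}{ds}(\vec r_3-\vec r_4)\bigr|=o(1)$, and the radial/angular conclusions follow. For the ``$+$'' side one must first transfer $\partial G_{in}^+/\partial s=O(\mu)$ across the collision using the variational estimates \eqref{eq: dervar}, \eqref{eq: matrices}, and verify $\dfrac{d}{ds}(\dot{\vec r}_3-\dot{\vec r}_4)^+=O(1)$ from the boundedness of $d\Loc\,\Gamma$; you do not address either of these steps.

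Your treatment of the final assertion (that the remainders in \eqref{PolarGen} are $\mathscr C^1$-small) is on the right track and close to the paper's argument, which integrates the variational equations for \eqref{eq: hamRR} and invokes \eqref{eq: pertout}.
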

\begin{proof}
To prove the statement about \eqref{PolarGen}, we use the Hamiltonian \eqref{eq: hamRR}.
We have seen in the beginning of the proof of Lemma \ref{Lm: 2pieces} that $\frac{-\mu}{|q_3-q_4|}$ gives an $O(\mu^{1-2\kappa})$ perturbation to the variational equations.  This shows that the perturbation to the Kepler motion is $C^1$ small.

Next we consider the derivatives $\frac{d  r_{3,4}^\pm}{d s}$.
We consider first the case of ``$-$". From the condition
$|\vec r_3-\vec r_4|=\mu^\kappa$, for the Poincar\'{e} section we get
$(\vec r_3-\vec r_4)\cdot \frac{d }{d s}(\vec r_3-\vec r_4)=0,$
hence $(\vec r_3-\vec r_4)\perp \frac{d }{d s}(\vec r_3-\vec r_4)$.
We also know the angular momentum for the relative motion is
$G_{in}=(\dot{\vec r}_3-\dot{\vec r}_4)\times (\vec r_3-\vec r_4)=O(\mu), $
which implies $\dot{\vec r}_3-\dot{\vec r}_4$ is almost parallel to $\vec r_3-\vec r_4$ by Lemma \ref{Lm: landau}(c), hence $\frac{d }{d s}(\vec r_3-\vec r_4)$ is almost perpendicular to $\dot{\vec r}_3-\dot{\vec r}_4$.
The condition $\frac{d G^-_{in}}{d s}=O(\mu)$ reads
\[\left(\dfrac{d }{d s}(\dot{\vec r}_3-\dot{\vec r}_4)\right)\times (\vec r_3-\vec r_4)+(\dot{\vec r}_3-\dot{\vec r}_4)\times \left(\dfrac{d }{d s}(\vec r_3-\vec r_4)\right)=O(\mu).\]
Since the first term is $O(\mu^\kappa)$ due to our choice of the Poincar\'e section we %conclude
see $(\dot{\vec r}_3-\dot{\vec r}_4)\times \left(\frac{d }{d s}(\vec r_3-\vec r_4)\right)=o(1). $
Since $\frac{d }{d s}(\vec r_3-\vec r_4)$ is almost perpendicular to $(\dot{\vec r}_3-\dot{\vec r}_4)$ by the analysis above,
we get $\frac{d }{d s}(\vec r_3-\vec r_4)=o(1)$.
Taking the radial and angular part of this vector identity and using that $r_4=r_3+o(1),$ $\psi_4=\psi_3+o(1)$ we get the ``$-$" part of
the lemma.

To repeat the above argument for ``+" variables, we first need to establish that $\frac{d G^+_{in}}{ds}=O(\mu).$ Indeed, using equations \eqref{eq: dervar} and \eqref{eq: matrices} we get
\begin{align*}
\dfrac{\partial G^+_{in}}{\partial s}&=\dfrac{\partial G^+_{in}}{\partial(\mathcal{L},G_{in},g,\mathbf{q},\mathbf{p})^-}\dfrac{\partial(\mathcal{L},G_{in},g,\mathbf{q},\mathbf{p})^-}{\partial s}\\
&=O(\mu^{3\kappa},1, \mu^{3\kappa},\mu^{3\kappa}_{1\times 4},\mu^{3\kappa}_{1\times 4})\cdot O(1,\mu,1,1_{1\times 4},1_{1\times 4})=O(\mu).
\end{align*}

It remains to show $\left(\frac{d }{d s}(\dot{\vec r}_3-\dot{\vec r}_4)\right)=O(1)$ in the $``+"$ case. Since we know it is true in the ``$-$" case, the ``+" case follows, because the directional derivative of the local map $d\Loc\Gamma$ is bounded due to our choice of
$\Gamma$.
\end{proof}
We are now ready to describe the computation of Lemma \ref{Lm: local3}.
We will use the following set of equations which follows
from \eqref{LandauLead}.
%Using this lemma, we get the following set of equations from equation \eqref{eq: polar} by taking limit $\mu\to 0$, which are valid not only in the $C^0$ sense but also in the $C^1$ sense when taking $\psi$ derivative. (The $C^1$-ness of the first three equations are established in Corollary \ref{cor: 2}.)
\begin{equation}\label{eq: polarcollision1}
E_3^++E_4^+=E_3^-+E_4^-, \end{equation}
\begin{equation}\label{eq: polarcollision2}
G_3^++G_4^+=G_3^-+G_4^-, \end{equation}
\begin{equation}\label{eq: polarcollision3}
\dfrac{e_3^+}{G_3^+}\cos(\psi_3^++g_3^+)+\dfrac{e_4^+}{G_4^+}\cos(\psi_4^--g_4^{-})=\dfrac{e_3^-}{G_3^-}\cos(\psi_3^-+g_3^-)+\dfrac{e_4^-}{G_4^-}\cos(\psi_4^--g_4^{-}),
\end{equation}
\begin{equation}\label{eq: polarcollision4}
\dfrac{(G^+_3)^2}{1-e^+_3\sin(\psi_3^++g^+_3)}=\dfrac{(G_3^-)^2}{1-e_3^-\sin(\psi_3^-+g^-_3)},
\end{equation}
\begin{equation}\label{eq: polarcollision6}
\dfrac{(G^+_3)^2}{1-e^+_3\sin(\psi_3^++g^+_3)}=\dfrac{(G^+_4)^2}{1-e_4^+\sin(\psi_4^+-g^{+}_4)},
\end{equation}
\begin{equation}\label{eq: polarcollision7}
\dfrac{(G_3^-)^2}{1-e_3^-\sin(\psi_3^-+g^-_3)}=\dfrac{(G_4^-)^2}{1-e_4^-\sin(\psi_4^--g_4^{-})}, \end{equation}
\begin{equation}\label{eq: polarcollision9}
\psi^+_3=\psi^-_3,\quad \psi_4^-=\psi_3^-,\quad \psi_4^+=\psi_3^+,\quad x_1^+=x_1^-,\quad v_1^+=v_1^-.
\end{equation}
In the above equations we have dropped $o(1)$ terms for brevity.
We would like to emphasize that the above approximations hold not only in the $C^0$ sense but also
in the $C^1$ sense when we take the derivatives along directions satisfying the conditions of Corollary \ref{cor: 2}.
\eqref{eq: polarcollision1} is the approximate conservation of energy, \eqref{eq: polarcollision2} is the approximate
conservation of angular momentum and \eqref{eq: polarcollision3} follows from the approximate conservation of momentum as follows.
Represent the position vector as $\vec{r}= r \hat{e}_r$. Then the velocity is $\dot{\vec{r}}=\dot{r}\hat{e}_r+ r\dot{\psi} \hat{e}_\psi.$  Conservation of momentum gives
$(\dot{\vec{r}}_3)^-+(\dot{\vec{r}}_4)^-=(\dot{\vec{r}}_3)^++(\dot{\vec{r}}_4)^+.$
Taking the radial component and using the polar representation of the ellipse
$ r=\frac{G^2}{1-e\sin(\psi+g)},$ we get
\[\dot{r}=\frac{G^2 }{(1-e\sin(\psi+g))^2} e\cos (\psi+g) \dot{\psi}=\dfrac{r^2}{G^2} e\cos(\psi+g) \dfrac{G}{r^2}=
\dfrac{e}{G}\cos(\psi+g).\]

The possibility of differentiating these equations
is justified in Corollary \ref{cor: 2}. The remaining equations reflect the fact
that $Q_3^\pm$ and $Q_4^\pm$ are all close to each other. The possibility of differentiating these equations
is justified by Lemma \ref{Lm: polarsection}.
%where the fourth and fifth equations are $Q_3^++Q_4^+=Q_3^-+Q_4^-$, which implies $r_3^+=r_3^-$ and $\psi_3^+=\psi_3^-$ using Lemma \ref{Lm: polarsection}.
%The sixth and seventh equations are in fact $r_3^-=r_4^-$ and $r_3^+=r_4^+$.
We set the total energy to be zero. So we get $E_4^\pm=-E_3^\pm$. This eliminates $E_4^\pm$. Then we also eliminate
$\psi_{4}^\pm$ by setting them equal to $\psi_3^\pm$.

\begin{proof}[Proof of the Lemma~\ref{Lm: local3}]
Lemma \ref{Lm: limit} and Corollary \ref{Cor} show that the assumption of Lemma \ref{Lm: local3} implies that the direction $\Gamma$ along which we take the directional derivative satisfies $\frac{\partial G_{in}}{\partial \Gamma}=O(\mu)$. So we can directly take derivatives in equations \eqref{eq: polarcollision1}-\eqref{eq: polarcollision7}.
Recall that we need to compute $dE_3^+(d\Loc \Gamma)$ where
$\Gamma\in Ker\mathbf l_j\cap$span$\{w_{3-j},\tilde w\}$.  Lemma \ref{Lm: glob} tells us that
in Delaunay coordinates we have
\begin{equation}
\label{wtw}
\tw=(0,1,0_{1\times 8}), \quad  w=(0_{1\times 8},1,a) \text{ where }a=\frac{-L_4^{-}}{(L^{-}_4)^2+ (G_4^{-})^2} .
\end{equation}
The formula $\tan\frac{\psi}{2}=\sqrt{\frac{1+e}{1-e}}\tan\frac{u}{2}$ which relates $\psi$ to $\ell$ through $u$
shows that \eqref{wtw} also holds if we use $(L_3, \psi_3, G_3, g_3;x_1,v_1; G_4, g_4)$ as coordinates.
Hence $\Gamma$ has the form $(0,1,0_{1\times 6},c,ca)$.
To find  the constant $c$ we use \eqref{eq: polarcollision7}.

%Note that the expression $dE_3^+(d\Loc \Gamma)$does not involve $d\psi_3^+.$

Using \eqref{eq: polarcollision9}, we can replace $\psi_3^+=\psi_3^-=\psi_4^+=\psi_4^-$ by $\psi$, and get rid of $x_1$ and $v_1$.
Let $\bL$ denote the projection
of $\mathbb L$ to the variables $(E_3, G_3, g_3, G_4, g_4)$ and let $\bf \Gamma$ denote the projection of $\Gamma$ to the variables $(E_3,\psi, G_3, g_3, G_4, g_4)$. Thus we need to find
$dE_3^+(d\bL\bf \Gamma).$
%Due to the special form of $\Gamma$, we consider $\gm$ lying in the intersection of the hyperplances $E_4^-=-E_3^-=const$, $G_3=const$, $g_3=const$, where the constants are fixed by Gerver's values in the Appendix.
To this end we write the remaining equations (\eqref{eq: polarcollision2}, \eqref{eq: polarcollision3}, \eqref{eq: polarcollision4}, and
\eqref{eq: polarcollision6})
formally as $\mathbf{F}(Z^+,Z^-)=0$, where $Z^+=(E_3^+,G_3^+,g_3^+,G_4^+,g_4^+)$ and $Z^-=(E_3^-,\psi,G_3^-,g_3^-,G_4^-,g_4^-)$. We have
$\dfrac{\partial\mathbf F}{\partial Z^+}
d\bL\mathbf{\Gamma}+\dfrac{\partial\mathbf F}{\partial Z^-}\mathbf{\Gamma}=0.$ However, $\frac{\partial\mathbf F}{\partial Z^+}$ is not invertible since $\mathbf F$ involves
only four equations while $Z^+$ has five variables.
To resolve this problem we notice that by definition of $\Gamma$ we have
  $\bar{\mathbf l}\cdot d\bL \bf\Gamma=0$, where $\bar{\mathbf l}=\left(\frac{ G_4^+/L_4^{+}}{(L_4^{+})^2+(G_4^{+})^2},0,0, \frac{-1}{(L_4^{+})^2+(G_4^{+})^2},\frac{1}{L_4^{+}}\right)$ by Lemma \ref{Lm: glob}.
%So we form a matrix of $5\times 5$ by
% $\left[\begin{array}{c}\bar{\mathbf l}\\
%\dfrac{\partial\mathbf F}{\partial Z^+}
%\end{array}\right]$.
Thus we get
$$\left[\begin{array}{c}\bar{\mathbf l}\\
\dfrac{\partial\mathbf F}{\partial Z^+}
\end{array}\right] d\bL\mathbf{\Gamma}
=-\left[\begin{array}{c}0\\
\dfrac{\partial\mathbf F}{\partial Z^-}\mathbf{\Gamma}
\end{array}\right],\quad d\bL\mathbf{\Gamma}
=-\left[\begin{array}{c}\bar{\mathbf l}\\
\dfrac{\partial\mathbf F}{\partial Z^+}
\end{array}\right]^{-1}\left[\begin{array}{c}0\\\dfrac{\partial\mathbf F}{\partial Z^-}\mathbf{\Gamma}
\end{array}\right].$$
We only need to show that the entry $dE_3^+ d\bL\bf\Gamma$ is nonvanishing to prove Lemma~\ref{Lm: local3}.
It turns out this number is 0.376322 for the first collision and -1.86463 for the second collision. \end{proof}

\appendix

\section{Delaunay coordinates}\label{appendixa}
\subsection{Elliptic motion}\label{subsection: ellip}
%\subsection{The Delaunay coordinates}
%\label{subsection: delaunay}
The material of this section could be found in \cite{F,W}.
Consider the two-body problem with Hamiltonian
$H(P,Q)=\frac{|P|^2}{2m}-\frac{k}{|Q|},\quad (P,Q)\in \R^4.$
This system is integrable in the Liouville-Arnold sense when $H<0$.
So we can introduce the action-angle variables $(L,\ell, G,g)$
in which the Hamiltonian can be written as
\[H(L,\ell, G,g)=-\frac{mk^2}{2L^2},\quad (L,\ell, G,g)\in T^*\T^2.\]
The Hamiltonian equations are $\dot{L}=\dot{G}=\dot{g}=0,\quad \dot{\ell}=\frac{mk^2}{L^3}.$
We introduce the following notations: $E$-energy, $M$-angular momentum, $e$-eccentricity, $a$-semimajor axis, $b$-semiminor axis.

Then we have the following relations which explain the physical and geometrical meaning of the Delaunay coordinates.
\[a=\frac{L^2}{mk}, \ b=\frac{LG}{mk},\ E=-\frac{k}{2a},\ -M=G,\ e=\sqrt{1-\left(\frac{G}{L}\right)^2}.\]
Moreover, $g$ is the argument of apapsis, $\ell$ is called the mean anomaly, and $\ell$ can be related to the polar angle $\psi$ through
the equations
\[    \tan\frac \psi 2 = \sqrt{\frac{1+e}{1-e}}\cdot\tan\frac u 2,\quad u-e\sin u=\ell.\]
We also have Kepler's law $\frac{a^3}{T^2}=\frac{1}{(2\pi)^2}$ which relates the semimajor axis
$a$ and the period $T$ of the ellipse.

Denoting the particle's position by $Q=(q_1, q_2)$ and its momentum by $P=(p_1,p_2)$ we have the following formulas
in the case $g=0$
\begin{equation}
\label{DelEll}
\begin{aligned}
q_1=\frac{L^2}{mk}\left(\cos u-\sqrt{1-\frac{G^2}{L^2}}\right), \quad &
q_2=\frac{LG}{mk} \sin u,
\\
p_1=-\frac{mk}{L}\frac{\sin u}{1-\sqrt{1-\frac{G^2}{L^2}} \cos u}, \quad &
p_2=\frac{mk}{L^2}\frac{G\cos u}{1-\sqrt{1-\frac{G^2}{L^2}}\cos u},
\end{aligned}
\end{equation}
where $u$ and $\ell$ are related by $u-e\sin u=\ell$. Here $g$ does not appear because the argument of apapsis is chosen to be zero. In the general case, we need to rotate the $(q_1,q_2)$ and
$(p_1,p_2)$ using the matrix
$\left[\begin{array}{cc}
\cos g& -\sin g\\
\sin g& \cos g
\end{array} \right].
$

Notice that the equation \eqref{DelEll} describes an ellipse with one focus at the origin and the other focus on the negative $x$-axis. We want to be consistent with \cite{G2}, i.e. we want $g=\pi/2$ to correspond to the
``vertical" ellipse with one focus at the origin and the other focus on the positive $y$-axis. Therefore we rotate the picture clockwise. So we use the Delaunay coordinates which are related to the Cartesian ones
through the equation
\begin{equation}
\begin{aligned}
Q_\parallel=&\frac{1}{mk}\left(L^2\left(\cos u-\sqrt{1-\frac{G^2}{L^2}}\right)\cos g+LG\sin u\sin g\right) , \\
Q_\perp=&\frac{1}{mk} \left(-L^2 \left(\cos u-\sqrt{1-\frac{G^2}{L^2}}\right)\sin g+LG\sin u\cos g\right).\\
\end{aligned}\label{eq: Q3}
\end{equation}
This is an ellipse focused at the origin with its other focus lying on the positive $y$ axis.
\subsection{Hyperbolic motion}\label{subsection: hyp}
%\subsubsection{Parametrization of orbits}
The above formulas can also be used to describe hyperbolic motion, where we need
to replace ``$\sin\to\sinh,$ $\cos\to \cosh$" and change signs properly (c.f.\cite{F, W}). Namely, we have
$a=\frac{L^2}{mk}, \ b=\frac{LG}{mk},\ E=\frac{k}{2a},\ -M=G,\ e=\sqrt{1+\left(\frac{G}{L}\right)^2}.$
\begin{equation}
\begin{aligned}
q_1=\frac{L^2}{mk}
\left(\cosh u-\sqrt{1+\frac{G^2}{L^2}}\right), \quad
& q_2=\frac{LG}{mk} \sinh u, \\
p_1=-\frac{mk}{L}\frac{\sinh u}{1-\sqrt{1+\frac{G^2}{L^2}} \cosh u}, \quad
& p_2=-\frac{mk}{L^2}\frac{G\cosh u}{1-\sqrt{1+\frac{G^2}{L^2}}\cosh u}.
\end{aligned}\label{eq: delaunay4}
\end{equation}
where $u$ and $\ell$ are related by
\begin{equation}u-e\sinh u=\ell, \text{ where } e=\sqrt{1+\left(\frac{G}{L}\right)^2}.
\label{eq: hypul}
\end{equation}

This hyperbola is symmetric with respect to the $x$-axis, opens to the right, and the particle moves counterclockwise on it when $u$ increases ($\ell$ decreases) in the case when minus the angular momentum $G=P\times Q<0$. The angle $g$ is defined to be the angle measured from the positive $x$-axis to the symmetric axis. There are two such angles that differ by $\pi$ depending on the orientation of the symmetric axis. This $\pi$ difference disappears after taking $\tan$, or in the symplectic form and the Hamiltonian equation after taking derivative so it does not matter which angle we choose.

When the particle moves to the right of the sections $\{x_{4,\parallel}^R=-\frac{\chi}{2}\}$ and $\{x_{4,\parallel}^L=\frac{\chi}{2}\}$ (Definition \ref{DefSection} and Figure 3), we have a hyperbola opening to the left and the particle moves
counter-clockwise.
%To get the picture studied in \cite{G1},
To achieve this, we rotate \eqref{eq: delaunay4} by an angle $\pi+g$. In this case, we choose $g$ to be the angle measured from the positive $x$-axis to the symmetric axis pointing to the perigee. %Thus we have
%we first reflect $(q_1,q_2)$ around the $y$-axis, then rotate it by an angle $g$, which is the angle formed by the $x$-axis with the symmetric axis of the hyperbola pointing to the opening.
%specified in the Appendix~\ref{subsection: numerics}.
%If we restrict $|g|<\pi/2$, then the particle moves anti-clockwise on the hyperbola as $u$ increases ($\ell$ decreases) due to the reflection.Thus we have
%Notice for large $u$ and $\ell$, we have $u\sim \ln \ell$. Substituting this into the expression for $q_1,q_2$, we find $q_1$ and $q_2$ grow linearly with respect to $\ell$.\\
%In our problem, we always assume the orbit of $Q_4$ is almost horizontal. That means, $q_1$ grows linearly, but $q_2$ is almost constant. i.e. $\cosh u$ grows linearly, but $\sinh u$ is almost constant.
\begin{equation}
\begin{aligned}
Q_\parallel=&\frac{1}{mk}\left(-\cos g L^2(\cosh u-e)+\sin g LG\sinh u\right), \\
Q_\perp=&\frac{1}{mk}\left(-\sin g L^2(\cosh u-e)-\cos g LG\sinh u \right).\\
P=&\frac{mk}{1-e\cosh u}\left(\frac{1}{L}\sinh u\cos g-\frac{G}{L^2}\sin g\cosh u, \right.\\
&\left.\frac{1}{L}\sinh u\sin g+\frac{G}{L^2}\cos g\cosh u\right).
\end{aligned}\label{eq: Q4}
\end{equation}
We see from \eqref{eq: hypul}, when $|u|$ is large, we have sign$(u)=-$sign$(\ell)$. We have three different choices of $g$ in this paper.
\begin{enumerate}
\item[(a)] When the particle $Q_4$ is moving to the right of the sections  $\{x_{4,\parallel}^R=-\frac{\chi}{2}\}$ and $\{x_{4,\parallel}^L=\frac{\chi}{2}\}$ and if  the incoming asymptote is horizontal, (see the arrows in Figure 1 and 2 for ``incoming" and ``outgoing"), then the particle comes from the left, and
as $u$ tends to $-\infty$,
the $y$-coordinate is bounded and $x$-coordinate is negative.
In this case we have $\tan g=\frac{G}{L}$, $g\in(-\pi/2,0).$ We use $u < 0$ to refer to this piece of orbit.
\item[(b)] When the particle $Q_4$ is moving to the right of the sections $\{x_{4,\parallel}^R=-\frac{\chi}{2}\}$ and $\{x_{4,\parallel}^L=\frac{\chi}{2}\}$ and if the outgoing asymptote is horizontal, then the particle escapes to the left, and
as $u$ tends to $+\infty$, the $y$-coordinate is bounded and $x$-coordinate is negative.
In this case we have $\tan g=-\frac{G}{L}, g\in(0,\pi/2)$. We use $u > 0$ to refer to this piece of orbit.
The above two cases can be unified as $$\tan g = -\mathrm{sign}(u)\frac{G}{L}, \ \mathrm{with\ } G < 0, L > 0.$$
\item[(c)] When the particle $Q_4$ is moving to the left of the sections $\{x_{4,\parallel}^R=-\frac{\chi}{2}\}$ and $\{x_{4,\parallel}^L=\frac{\chi}{2}\}$, we treat the motion as hyperbolic motion focused at $Q_1$.
We move the origin to $Q_1$. The hyperbola opens to the right. The particle $Q_4$ moves on the hyperbola counterclockwise with negative angular momentum $G$, we then rotate by angle $g$, and $g$ is the angle measured from the positive $x$-axis to the symmetric axis pointing to the opening of the hyperbola. The orbit has the following parametrization
\begin{equation}
\begin{aligned}
Q&=\frac{1}{mk}\left(\cos g L^2(\cosh u-e)-\sin g LG\sinh u,\right.\\
&\quad \left.\sin g L^2(\cosh u-e)+\cos g LG\sinh u\right).\\
P&=\frac{mk}{1-e\cosh u}\left(-\frac{1}{L}\sinh u\cos g+\frac{G}{L^2}\sin g\cosh u,\right.\\
&\quad \left. -\frac{1}{L}\sinh u\sin g-\frac{G}{L^2}\cos g\cosh u\right).
\end{aligned}\label{eq: Q4l}
\end{equation}
\end{enumerate}
%The position  we consider the case $u>0, g>0, G>0$. We should multiply the position by the factor $\frac{1}{mk}$ and the velocity by the factor $mk$.
%\[(-L^2\cosh u\cos g-LG\sinh u\sin g, -L^2\cosh u\sin g+LG\sinh u\cos g)+L^2 e(\cos g, \sin g).\]
%We can replace $u$ by $\ln (\mp \ell/e)$ when taking first and second order derivatives using Lemma A.1 of \cite{DX}.
%This simplifies as
%\[\ell/e(L^2 \cos g+LG\sin g, L^2 \sin g-LG \cos g)+L^2 e(\cos g, \sin g).\]
%since in this case $\sinh u\simeq \cosh u\simeq -\ell/e$

We note that the Delaunay coordinates have some singular behavior near double collision.
When we set $e=1$ in \eqref{eq: hypul}, we find $\ell=u^3+h.o.t.$ Hence $u$ as a function $\ell$ in a neighborhood of $0$ is only $C^0$ but not $C^1$.  One can verify that for $G=0$ and $\ell\neq 0$ the hyperbolic Delaunay coordinates still give a symplectic transformation, so we only have singular behavior when $G$ and $\ell$ are both close to zero. To control this singular behavior, we need the following estimates in Lemma \ref{Lm: var}.

\begin{Lm}[Lemma A.1 of \cite{DX}] \label{LmSmallu}In the hyperbolic Delaunay coordinates, as $G\to 0$, $u\to 0$ and $L$ being close to $1$, we have the following estimates of the first order derivatives
\[\left|\dfrac{\partial u}{\partial G}\right|\leq 2,\quad \left|\dfrac{\partial u}{\partial L}\right|\leq 2|G|\]
and the second order derivatives
\[\left|\dfrac{\partial Q}{\partial u}\frac{\partial^2 u}{\partial G^2}\right|\leq 4,\quad \left|\dfrac{\partial Q}{\partial u}\frac{\partial^2 u}{\partial L^2}\right|\leq 4G^2,\quad \left|\dfrac{\partial Q}{\partial u}\frac{\partial^2 u}{\partial G\partial L}\right|\leq 4|G|.\]
\end{Lm}

We next cite Lemma A.2 of \cite{DX} to simplify our calculation. The lemma implies that we can replace $\pm u$ by $\ln(\mp \ell/e)$ when taking first and second order derivatives.
\begin{Lm}[Lemma A.2 of \cite{DX}] \label{Lm: simplify}
Let $u$ be the function of $\ell, G$ and $L$ given by \eqref{eq: hypul} and let $\sigma=sign(u)$ when $|u|$ is large.
Then we can approximate $u$
by $\ln (-\sigma\ell/e)$ in the following sense.
\[\sigma u-\ln\frac{-\sigma\ell}{e}=O(\ln|\ell|/\ell),\quad \frac{\partial u}{\partial\ell}=\sigma 1/\ell+O(1/\ell^2),\]
\[\left(\frac{\partial }{\partial L},\frac{\partial }{\partial G}\right)\left(u+\sigma\ln e\right)=O(1/|\ell|),\quad \left(\frac{\partial }{\partial L},\frac{\partial }{\partial G}\right)^2\left(u+\sigma\ln e\right)=O(1/|\ell|),\]
The estimates above are uniform as long as $|G|\leq C,$ $1/C\leq L \leq C,$ $\ell>\ell_0$ for some constant $C>1$
and the implied constants in $O(\cdot)$ depend only on $C$ and $\ell_0.$
%when $u\gtrless 0$ and $|\ell|\to \infty$ for some constant $C$.\label{Lm: simplify}\\
\end{Lm}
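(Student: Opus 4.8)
This lemma is the restatement of Lemma A.1 of \cite{DX} quoted in its hypothesis, and the plan is to reproduce the proof given there: an elementary asymptotic analysis of the hyperbolic Kepler equation \eqref{eq: hypul}, $\ell = u - e\sinh u$ with $e = \sqrt{1+(G/L)^{2}}$. Throughout one stays in the stated regime $|G|\le K$, $1/K\le L\le K$, $\ell>\ell_0$, so that $e$ together with all of its $L$-- and $G$--derivatives is bounded and bounded away from zero, while $\ell$ is large; all implied constants then depend only on $K$ and $\ell_0$.

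First I would establish the zeroth--order asymptotics of $u$. Writing $e\sinh u = \tfrac{e}{2}(e^{u}-e^{-u})$, the term $\tfrac{e}{2}e^{|u|}$ dominates the right-hand side of Kepler's equation once $|\ell|$ is large, which forces $\sign(u)=-\sign(\ell)$ and $e^{|u|} = (2|\ell|/e)\bigl(1+O(|u|e^{-|u|})\bigr)$. A one-step bootstrap --- first the crude bound $|u|=O(\ln|\ell|)$, then re-substitution into the last display --- yields the asserted estimate $u\mp\ln(\mp\ell/e)=O(\ln|\ell|/\ell)$, the constant inside the logarithm being whatever is dictated by the normalization of \eqref{eq: hypul}.

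Next, for the $\ell$--derivative, differentiating $\ell = u - e\sinh u$ with $L,G$ fixed gives $\partial u/\partial\ell = 1/(1-e\cosh u)$; using $e^{2}\cosh^{2}u = e^{2}+e^{2}\sinh^{2}u = e^{2}+(\ell-u)^{2}$ together with $u=O(\ln|\ell|)$ from the previous step yields $e\cosh u = |\ell|+O(\ln|\ell|)$, hence $\partial u/\partial\ell = \pm 1/\ell + O(1/\ell^{2})$ with the sign fixed by $\sign(u)$. For the $L$-- and $G$--derivatives of $u\pm\ln e$: differentiating Kepler's equation in $G$ (resp.\ $L$) with $\ell$ fixed gives $\partial_G u = (\partial_G e)\,\sinh u/(1-e\cosh u)$, and since $\sinh u/(1-e\cosh u) = -\sign(u)/e + O(1/|\ell|)$ while $\partial_G\ln e = (\partial_G e)/e$, the $O(1)$ parts of $\partial_G u$ and of $\mp\partial_G\ln e$ cancel, leaving $\partial_G(u\pm\ln e)=O(1/|\ell|)$; likewise for $\partial_L$. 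The second-order estimates are obtained by differentiating these identities once more.

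The one genuinely delicate point --- and the step I expect to be the main obstacle --- is the bookkeeping for the second derivatives in the last step. Each further application of $\partial_L$ or $\partial_G$ produces a factor $\sinh u$ or $\cosh u$ of size $O(|\ell|)$, but such a factor is always matched by a denominator $(1-e\cosh u)$ of the same size, so term by term one only sees $O(1)$. To recover the claimed $O(1/|\ell|)$ one must verify that the $O(1)$ contributions cancel identically against the corresponding $\ln e$--terms --- equivalently, that the combination $u\pm\ln e$ has a convergent expansion in powers of $e^{-|u|}\sim 1/|\ell|$ with no constant term. This is precisely the cancellation already exploited in Appendix \ref{appendixa} to keep quantities like $\partial x_4/\partial(G_4,g_4)$ under control, and it can be checked directly, by hand or with the symbolic computation used in \cite{DX}.
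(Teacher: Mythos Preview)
The paper does not prove this lemma at all: it is simply quoted verbatim from \cite{DX} (``We cite Lemma A.1 of \cite{DX} to simplify our calculation''), so there is no in-paper proof to compare against. Your reconstruction is the standard argument and is essentially what appears in \cite{DX}: implicit differentiation of the hyperbolic Kepler equation $u-e\sinh u=\ell$, the identity $\partial u/\partial\ell=(1-e\cosh u)^{-1}$, and the cancellation of the $O(1)$ piece of $\partial_{L,G}u$ against $\mp\partial_{L,G}\ln e$ via $\sinh u/(1-e\cosh u)=-\sign(u)/e+O(1/|\ell|)$; the second-order estimates follow by one more differentiation and the same cancellation mechanism, exactly as you describe.

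One small remark: you correctly flag the constant inside the logarithm. From $u-e\sinh u=\ell$ one gets $e^{|u|}\sim 2|\ell|/e$, so strictly speaking $u\mp\ln(\mp 2\ell/e)=O(\ln|\ell|/|\ell|)$; the missing factor of $2$ is irrelevant for every application in the paper (only derivatives and leading asymptotics are ever used), which is presumably why the statement is written as it is.
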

\subsection{The derivative of Cartesian  with respect to Delaunay }
Next, we calculate the first order derivatives of the Cartesian variables with respect to the Delaunay variables. The assumption of the next lemma is met by Lemma \ref{Lm: tilt}.
\begin{Lm}\label{Lm: dx/dDe}
Assume that $|G|\leq C,$ $1/C\leq L \leq C$ for some $C>1$.
\begin{itemize}
\item[(a)] Assume further in the right case $g=-\sigma\arctan\frac{G}{L}+\eps$, where $\sigma=$sign$(u)$ and $\eps=O\left(\frac{\mu}{|\ell_4|^2+1}+\frac{1}{\chi}\right)$. Then we have the following estimate of the derivative of Cartesian coordinates with respect to the Delaunay coordinates as $\ell\to \infty$
\begin{equation}\label{eq: d/dDe}
\begin{aligned}&
\frac{\partial (Q_\parallel,Q_\perp,P_\parallel,P_\perp)}{\partial (L,\ell,G,g)}=
\cD
+\eps\cdot\left[\begin{array}{cc}
\mathrm{Rot}\(\frac{\pi}{2}\) &0\\
0&\mathrm{Rot}\(\frac{\pi}{2}\)
\end{array}\right]\cdot \cD +\left[\begin{array}{c}
O(1)_{2\times 4}\\
0_{2\times 4}
\end{array}\right]\\
&+O(\eps^2)\cdot \cD,
\mathrm{\ where\ }\cD=\left[\begin{array}{cccc}
\sigma\frac{2L \ell}{mk }&\sigma \frac{L^2}{ mk}& 0&0\\
  -\frac{G L^2 \ell}{mk (G^2 + L^2)}& 0& \frac{ L^3 \ell}{mk(G^2+  L^2)}& \sigma\frac{L^2\ell}{mk}\\
 \sigma \frac{k m }{L^2 }& -\frac{k m}{L \ell^2}&
0& 0\\
  \frac{G k m}{L (G^2 + L^2) }&0& -\frac{k m}{(G^2 + L^2) }& -\sigma\frac{k m }{L }
\end{array}\right] .
\end{aligned}\end{equation}% means the leading term, i.e. the first matrix.
\item[(b)]In the left case, if we assume $g,G=O(1/\chi)$ and $L=O(1)$,
then the estimates of the derivative are obtained by setting $G=O(1/\chi)$ in the above matrix.
\item[(c)] We have $\left|\frac{\partial Q}{\partial \ell}\right|=O(1),\quad \left|\frac{\partial Q}{\partial (L,G,g)}\right|=O(\ell),
\quad\frac{\partial Q}{\partial g}\cdot Q=0,\quad\frac{\partial Q}{\partial G}\cdot Q=O_{C^2(L, G, g)}(\ell). $
\end{itemize}
\end{Lm}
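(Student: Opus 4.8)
The plan is to obtain all three parts from the explicit parametrization of the hyperbolic Kepler orbit --- equation \eqref{eq: Q4} in the right case and \eqref{eq: Q4l} in the left case --- combined with Lemma \ref{Lm: simplify}, which lets us replace the transcendental function $u=u(L,G,\ell)$, defined by $u-e\sinh u=\ell$ with $e=\sqrt{1+G^2/L^2}$ (see \eqref{eq: hypul}), and its Delaunay derivatives by elementary quantities in $\ell$ up to controlled errors. First I would regard $(Q_\parallel,Q_\perp,P_\parallel,P_\perp)$ as explicit functions of $(L,G,g,u)$ and differentiate by the chain rule: $\partial_\ell$ acts only through $u$, with $\partial_\ell u=\pm 1/\ell+O(1/\ell^2)$; $\partial_g$ is the explicit infinitesimal rotation-reflection from \eqref{eq: Q4}; and $\partial_L,\partial_G$ act directly, through $e(L,G)$, and through $u(L,G,\ell)$, with $\partial_L(u\pm\ln e),\ \partial_G(u\pm\ln e)=O(1/|\ell|)$. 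Since $\cosh u$ and $|\sinh u|$ are both asymptotic to $|\ell|/e$ --- indeed $\sinh u=\sign(u)\cosh u+O(e^{-|u|})$ and $\sign(u)=-\sign(\ell)$ --- while $1-e\cosh u\sim-e\cosh u$, each partial derivative splits into a leading part (of size $O(\ell)$ in certain position-row entries and $O(1)$ in the others) plus genuinely subleading terms; collecting the leading coefficients yields the matrix $\cD$ in \eqref{eq: d/dDe}.

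For part (a) the hypothesis $g=\pm\arctan(G/L)+\eps$ is handled by factoring the rotation-reflection by $g$ in \eqref{eq: Q4} as $\mathrm{Rot}(\pm\arctan(G/L))\bigl(\mathrm{Id}+\eps\,\mathrm{Rot}(\tfrac\pi2)+O(\eps^2)\bigr)$. The inner angle $\pm\arctan(G/L)$ is precisely the value of $g$ making the incoming (resp.\ outgoing) asymptote horizontal, and combines with the intrinsic orientation of the hyperbola to produce the clean entries of $\cD$; the factor $\eps\,\mathrm{Rot}(\tfrac\pi2)$ then pulls out to give the summand $\eps\left[\begin{smallmatrix}\mathrm{Rot}(\pi/2)&0\\0&\mathrm{Rot}(\pi/2)\end{smallmatrix}\right]\cD$, and the $O(\eps^2)$ factor the third summand. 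The extra term $\left[\begin{smallmatrix}O(1)_{2\times4}\\0_{2\times4}\end{smallmatrix}\right]$ arises because the position contains the bounded contributions $-e$ inside $\cosh u-e$, whose $(L,G,g)$-derivatives are merely $O(1)$ instead of $O(\ell)$, whereas the momentum $P$ is itself $O(1)$ with all Delaunay derivatives $O(1/|\ell|)$ or smaller, so it receives no correction at the displayed orders. I would then verify the individual entries: e.g.\ $\partial_L(L^2\cosh u)$, with $\cosh u\sim|\ell|/e=|\ell|L/\sqrt{L^2+G^2}$, produces the $\pm\tfrac{2L\ell}{mk}$ in the $(1,1)$ slot, and similarly for the remaining fifteen entries.

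Part (b) is the identical computation starting from \eqref{eq: Q4l}: under $g,G=O(\mu\sG/\chi)$ and $L=O(1)$ one simply carries $G$ as a small quantity and chooses the sign in $\pm,\mp$ according to $\sign(u)=-\sign(\ell)$, and the answer is $\cD$ with $G$ set to this small value. For part (c), the bounds $|\partial Q/\partial\ell|=O(1)$ and $|\partial Q/\partial(L,G,g)|=O(\ell)$ are read off the first two rows of $\cD$ (the position-row entries of the $\ell$-column are $O(1)$, not $O(\ell)$). The identity $\partial_gQ\cdot Q=0$ is exact: writing $Q=\tfrac1{mk}R(g)(a,b)^{T}$ with $(a,b)$ independent of $g$ and $R(g)$ the orthogonal matrix in \eqref{eq: Q4}, one computes $R(g)^{T}\partial_gR(g)=\left(\begin{smallmatrix}0&1\\-1&0\end{smallmatrix}\right)$, hence $\partial_gQ\cdot Q=\tfrac1{m^2k^2}(a,b)\left(\begin{smallmatrix}0&1\\-1&0\end{smallmatrix}\right)(a,b)^{T}=0$. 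Finally $\partial_GQ\cdot Q=\tfrac12\partial_G|Q|^2$, and since $|Q|=\tfrac{L^2}{mk}(e\cosh u-1)$ by \eqref{eq: Ol4} (and $|Q|$ is independent of $g$), while differentiating $u-e\sinh u=\ell$ at fixed $\ell$ gives $\partial_Gu=\dfrac{(\partial_Ge)\sinh u}{1-e\cosh u}$, the algebra collapses: $\partial_G(e\cosh u)=(\partial_Ge)\dfrac{\cosh u-e}{1-e\cosh u}$, so $\partial_GQ\cdot Q=-\dfrac{L^4(\partial_Ge)}{m^2k^2}(\cosh u-e)=O(\cosh u)=O(\ell)$, and the $C^2$ refinement is immediate because each further $L$- or $G$-derivative of $\cosh u-e$ costs only a bounded factor relative to $O(\ell)$.

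The main obstacle is bookkeeping rather than a single hard idea: in most slots several terms are a priori of the leading size, and one must identify the exact cancellations --- most notably the collapse in part (c), where a would-be $O(\ell)$ contribution to $\partial_G(e\cosh u)$ reduces to $O(1)$ because the factor $e\cosh u-1$ in $\partial_G|Q|^2$ cancels the denominator $1-e\cosh u$ of $\partial_Gu$ --- and check that every error term supplied by Lemma \ref{Lm: simplify} is subleading in its position. This is routine but voluminous, and is of the same flavor as the matrix computations the paper elsewhere confirms with a computer.
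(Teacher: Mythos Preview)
Your proposal is correct and follows essentially the same approach as the paper's proof: both compute $\cD$ from \eqref{eq: Q4}/\eqref{eq: Q4l} at $g_0=\pm\arctan(G/L)$ using Lemma \ref{Lm: simplify}, isolate the $O(1)_{2\times4}$ block as the contribution of the bounded term $-e$ in $\cosh u-e$, and obtain the $\eps$-corrections from the expansion $\mathrm{Rot}(g_0+\eps)=\mathrm{Rot}(g_0)+\eps\,\mathrm{Rot}(\tfrac\pi2)\mathrm{Rot}(g_0)+O(\eps^2)$. Your treatment of part (c) is in fact more explicit than the paper's (which simply says ``direct calculation''): the skew-symmetry argument for $\partial_gQ\cdot Q=0$ and the cancellation $\partial_G(e\cosh u)=(\partial_Ge)\dfrac{\cosh u-e}{1-e\cosh u}$ that reduces $\partial_GQ\cdot Q$ to $O(\ell)$ are exactly the right mechanisms.
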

\begin{proof}

First we drop $e$ in \eqref{eq: Q4}, since it will contribute a $O(1)$ term in \eqref{eq: d/dDe}.
To obtain the leading term we just need to calculate
$\frac{\partial (\tQ_\parallel,\tQ_\perp,P_\parallel,P_\perp)}{\partial (L,\ell,G,g)}(L, \ell, G, -\sigma \arctan (G/L))$ where $\tQ$ refers to the RHS of \eqref{eq: Q4} with the $e$ term discarded. This derivative is obtained by
a straightforward calculation using the formulas \eqref{eq: Q4}, \eqref{eq: Q4l} with the help of Lemma \ref{Lm: simplify}. The calculations of $\frac{\partial \tQ}{\partial G},\frac{\partial \tQ}{\partial L}$ are presented  in detail in Lemma A.3 of \cite{DX}
and the other derivatives are similar.
To get the first correction term, i.e. the $O\left(\frac{\mu}{|\ell_4|^2+1}+\frac{1}{\chi}\right)$ part, let $g_0=\pm \arctan(G/L),$
$\eps=g-g_0.$ We use the relations
$ (\tQ_\parallel,\tQ_\perp,P_\parallel,P_\perp)(L, \ell, G, g)=
\mathrm{Rot}(g) (\tQ_\parallel,\tQ_\perp,P_\parallel,P_\perp)(L, \ell, G, 0) $
and
$ \mathrm{Rot}(g_0+\eps)=\mathrm{Rot}(g_0)+\eps \mathrm{Rot}(\pi/2)\mathrm{Rot}(g_0)+O(\eps^2)$
and notice that rotation by $\pi/2$ has the effect of interchanging the roles of $\parallel$ and $\perp.$
%we use the relation
%notice that the rotation matrix satisfies, $\frac{d}{dg}\mathrm{Rot}(g)=\mathrm{Rot}(\pi/2+g)$, so we get \[\mathrm{Rot}(g+\eps)=\mathrm{Rot}(g)+\eps \mathrm{Rot}(\pi/2+g)+O(\eps^2). \]
%Therefore, we only need to replace $g$ by $\pm\arctan G/L$ and $\eps$ by $\mu\sG/\ell$ to get the $O(\mu\sG/\ell)$ term in the lemma. We have the $O(1)$ correction as the derivatives about the $e$ term in \eqref{eq: Q4}.
This gives parts (a) and (b) of the lemma.

Part (c) follows by direct calculation from \eqref{eq: Q4} and Lemma \ref{Lm: simplify} (see Lemma A.3(a) of \cite{DX}).
\end{proof}
\begin{Rk}\label{RkParallel}
\begin{enumerate}
\item Part $(c)$ of Lemma \ref{Lm: dx/dDe} means $\frac{\partial Q}{\partial g}$ is almost parallel to $\frac{\partial Q}{\partial G}$. This plays an important role in our proof of Lemma \ref{Lm: ham} as well as in \cite{DX}. In fact, in \eqref{eq: d/dDe} the matrix has determinant 1 since it is symplectic. We look at the $\cD$ term in \eqref{eq: d/dDe}. The discussion remains true when the other terms are included. In $\cD$, the first, second and fourth columns have no obvious linear relations. However, the first and fourth columns have modulus $O(\ell)$ when $|\ell|$ is large. So the third column must be almost parallel to either the first or fourth column to get determinant one.
\item The second and fourth rows of $\mathcal D$ are almost parallel for similar reasons. This fact plays an important role in the proof of Sublemma \ref{sublm: dX/dV}.
\item The same argument can be applied to the inverse of the LHS of \eqref{eq: d/dDe}. We will see in Lemma \ref{Lm: De/Ca}
below that the two rows $\frac{\partial g}{\partial (Q,P)}$ and $\frac{\partial G}{\partial (Q,P)}$ are almost parallel, which is used in the proof of Sublemma \ref{sublm: dV/dX} to get the tensor structure.
\end{enumerate}
\end{Rk}
\subsection{The derivative of Delaunay with respect to Cartesian }
We could have inverted the matrix \eqref{eq: d/dDe} to get the result of this section. However, though the matrix \eqref{eq: d/dDe} is nonsingular, it is close to be singular since we have some large entries of $O(\chi)$. Therefore we calculate the derivatives $\frac{\partial (L,G,g)}{\partial (Q,P)}$ directly using known identities.
\begin{Lm}\label{Lm: De/Ca}
We have the following estimates about the derivatives of Delaunay variables with respect to the Cartesian variables.
%\begin{itemize}
%\item[(a)] In the right case, we have as $|\ell|\to\infty$
\begin{equation}\begin{aligned}
&\frac{\partial L}{\partial (Q,P)}=-\frac{L^3}{mk^2}\left(\frac{kQ}{|Q|^3},\frac{P}{m}\right),\quad \frac{\partial G}{\partial(Q,P) }=(-P_{\perp},P_{\parallel},Q_{\perp},-Q_{\parallel}),\\
&\frac{\partial g}{\partial (Q,P)}=\frac{1}{|P|^2}(0,0,-P_\perp,P_\parallel)-\\
&\mathrm{sign}(u)\left(\frac{L}{G^2+L^2}\frac{\partial G}{\partial (Q,P)}-\frac{G}{G^2+L^2}\frac{\partial L}{\partial (Q,P)}\right)+O(1/\ell^2),\quad \mathrm{as}\ |\ell|\to\infty.
\end{aligned}
\end{equation}
%\item[(b)] In the left case, we have the same expressions as part $(a)$ replacing $\mathrm{sign}(u)$ by $+$.
%\end{itemize}
\end{Lm}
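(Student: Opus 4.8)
The strategy is to write each of $L,G,g$ as an explicit function of the Cartesian variables $(Q,P)$, differentiate directly, and then use Lemma \ref{Lm: simplify} of Appendix \ref{subsection: hyp} to absorb the exponentially small remainders into the $O(1/\ell^2)$ error. Two of the three derivatives are immediate. Since $L$ is a function of the energy alone, $L$ is determined by $E:=\frac{|P|^2}{2m}-\frac{k}{|Q|}$ through the relation of Appendix \ref{subsection: hyp}, and $\frac{\partial L}{\partial E}=-\frac{L^3}{mk^2}$; combined with $\frac{\partial E}{\partial (Q,P)}=\left(\frac{kQ}{|Q|^3},\frac{P}{m}\right)$ this gives the first formula. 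For $G$: the scalar angular momentum $Q_\perp P_\parallel-Q_\parallel P_\perp$ is conserved along the Kepler flow and coincides with the Delaunay $G$ (as one checks from \eqref{eq: Q4}, e.g. by evaluating at periapsis), so $\frac{\partial G}{\partial(Q,P)}=(-P_\perp,P_\parallel,Q_\perp,-Q_\parallel)$ with no error term.

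The substantive step is the formula for $\partial g/\partial(Q,P)$. Starting from the momentum formula in \eqref{eq: Q4}, divide numerator and denominator of $P_\perp/P_\parallel$ by $\frac{1}{L}\sinh u$ and use $\coth u=\mathrm{sign}(u)+O(e^{-2|u|})$ to get
\[
\frac{P_\perp}{P_\parallel}=\frac{\sin g-\mathrm{sign}(u)\,\frac{G}{L}\cos g}{\cos g+\mathrm{sign}(u)\,\frac{G}{L}\sin g}+O(e^{-2|u|})=\tan\!\left(g-\mathrm{sign}(u)\arctan\tfrac{G}{L}\right)+O(e^{-2|u|}),
\]
hence $g=\arctan\frac{P_\perp}{P_\parallel}+\mathrm{sign}(u)\arctan\frac{G}{L}+O(e^{-2|u|})$. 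Differentiating this identity and inserting $\frac{\partial}{\partial(Q,P)}\arctan\frac{P_\perp}{P_\parallel}=\frac{1}{P^2}(0,0,-P_\perp,P_\parallel)$, together with $\frac{\partial}{\partial(Q,P)}\arctan\frac{G}{L}=\frac{L}{G^2+L^2}\frac{\partial G}{\partial(Q,P)}-\frac{G}{G^2+L^2}\frac{\partial L}{\partial(Q,P)}$ and the two derivatives already computed, yields exactly the claimed expression, provided the $O(e^{-2|u|})$ term and its Cartesian derivatives are $O(1/\ell^2)$. This last point is precisely what Lemma \ref{Lm: simplify} supplies: $u=\pm\ln(\mp\ell/e)+O(\ln|\ell|/\ell)$ so $e^{-2|u|}=O(1/\ell^2)$, while $\partial u/\partial\ell=O(1/\ell)$ and $\partial(u\pm\ln e)/\partial(L,G)=O(1/|\ell|)$; expanding the error through \eqref{eq: Q4} and using that the Cartesian derivatives of the Delaunay variables are bounded on the parameter range considered (which holds by the $\mathscr C^0$ bounds and by the already-established formulas for $\partial L,\partial G$), one gets that the error stays $O(1/\ell^2)$ in the $\mathscr C^1$ sense. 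Part (b) is identical, using the parametrization \eqref{eq: Q4l} of case (c) of Appendix \ref{subsection: hyp} in place of \eqref{eq: Q4}; the only change is that the factor $\mathrm{sign}(u)$ in front of $\arctan\frac{G}{L}$ comes out $+$, since to the left of the midway sections only one branch of $u$ is relevant (the left hyperbola opening the other way), and $g,G$ are in any case $O(\mu\mathscr G/\chi)$ there by Lemma \ref{Lm: tilt}(b).

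\textbf{Main obstacle.} The $L$- and $G$-derivatives are routine; the delicate part is the $g$-derivative, specifically (i) keeping track of all the signs in the relation among $g$, the momentum direction $\arctan(P_\perp/P_\parallel)$, and $\arctan(G/L)$ under the reflected hyperbola parametrizations of Appendix \ref{subsection: hyp}, distinguishing the incoming/outgoing branches via $\mathrm{sign}(u)$ in the right case and the definite-sign branch in the left case; and (ii) verifying that the $O(e^{-2|u|})$ remainder remains $O(1/\ell^2)$ after passing from Delaunay to Cartesian derivatives, which is where Lemma \ref{Lm: simplify} is essential.
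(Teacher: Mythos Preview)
Your proposal is correct and follows essentially the same route as the paper: derive $\partial L$ from the energy relation, $\partial G$ from $G=P\times Q$, and $\partial g$ by forming $P_\perp/P_\parallel$ from \eqref{eq: Q4}, applying the tangent-addition formula to isolate $g=\arctan(P_\perp/P_\parallel)+\mathrm{sign}(u)\arctan(G/L)+O(e^{-2|u|})$, and differentiating with Lemma \ref{Lm: simplify} controlling the remainder. One small point on part (b): your phrase ``only one branch of $u$ is relevant'' is not quite the reason---$u$ does take both signs as $Q_4$ rounds $Q_1$; the paper simply redoes the ratio $P_\perp/P_\parallel$ using the left parametrization \eqref{eq: Q4l} and observes that the resulting identity for $g$ carries the $+$ sign rather than $\mathrm{sign}(u)$, so it is cleaner to state it that way.
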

\begin{proof}
From the relation $\frac{mk^2}{2L^2}=\frac{P^2}{2m}-\frac{k}{|Q|}$ we get
$\frac{\partial L}{\partial (Q,P)}=-\frac{L^3}{mk^2}\left(\frac{kQ}{|Q|^3},\frac{P}{m}\right).$
We also have
\[G=P\times Q,\quad \frac{\partial G}{\partial(Q,P) }=(-P_{\perp},P_{\parallel},Q_{\perp},-Q_{\parallel}).\]
To get the derivative $\frac{\partial g}{\partial (Q,P)}$, we take the quotient $P_\perp/P_\parallel$ in \eqref{eq: Q4} and \eqref{eq: Q4l}, then apply the formula $\tan(\al\pm\beta)=\frac{\tan\al\pm\tan\beta}{1\mp \tan\al\tan\beta}$ to get that as $|u|\to\infty$
\begin{equation}\label{Eqg}
g=\arctan \frac{P_\perp}{P_\parallel}-\mathrm{sign}(u)\arctan \frac{G}{L}-e^{-2|u|}E(G/L,g,u)+o(e^{-2|u|})\quad(\mathrm{mod}\ \pi).
\end{equation}
where $E$ is a smooth function.
Hence
\begin{equation}\begin{aligned}
\partial g&=\frac{P_{\parallel}\partial P_{\perp}-P_{\perp}\partial P_{\parallel}}{P_{\parallel}^2+P_{\perp}^2}-\mathrm{sign}(u)\frac{L\partial G-G\partial L}{G^2+L^2}+O\left(\frac{1}{\ell^2}\right)\\
&=\frac{1}{P^2}(0,0,-P_\perp,P_\parallel)-\mathrm{sign}(u)\frac{L\partial G-G\partial L}{G^2+L^2}
+O\left(\frac{1}{\ell^2}\right).
\end{aligned}
\end{equation}
%In the left case, we use \eqref{eq: Q4l} to see that $g$ satisfies \eqref{Eqg} with sign$(u)$ replaced by $+$.
\end{proof}
\subsection{Second order derivatives}\label{subsubsection: 2ndderivative}
The following estimates of the second order derivatives are used in integrating the variational equation. %The lemma is given in \cite{DX} as Lemma A.5 there.
\begin{Lm}[Lemma A.5 of \cite{DX}]
Assume that $|G|\leq C,$ $1/C\leq L \leq C$ for some constant $C>1$.
\begin{itemize}
\item[(a)]  We have as $|\ell|\to \infty$
\begin{equation*}
\begin{aligned}\frac{\partial^2 Q}{\partial g^2}&=-Q,\quad \frac{\partial^2 Q}{\partial g\partial G}\perp \frac{\partial Q}{\partial G},
\quad \left(\frac{\partial}{\partial G},\frac{\partial}{\partial g}\right)\left(\frac{\partial |Q|^2}{\partial g}\right)=(0,0),\\
\frac{\partial^2Q}{\partial G^2}&=O(\ell),\quad \frac{\partial^2 Q}{\partial L^2}=O(\ell),\quad \frac{\partial^2 Q}{\partial L\partial G}=O(\ell).
\end{aligned}
\end{equation*}

\item[(b)] Under the conditions of Lemma \ref{Lm: dx/dDe}(a)
we have
\begin{equation}
\begin{aligned}
\frac{\partial^2 Q}{\partial G^2}&=\frac{L^2}{(L^2+G^2)^{3/2}}(L\cosh u, 2G \sinh u)+O(1),\\
\frac{\partial^2 Q}{\partial g\partial G}&=\left(\frac{L^2\sinh u}{\sqrt{L^2+G^2}},0\right)+O(1),\\
\frac{\partial^2 Q}{\partial g\partial L}&=\left(-\frac{GL\sinh u}{\sqrt{L^2+G^2}},-2 \sqrt{L^2+G^2}  \cosh u\right)+O(1),\\
\frac{\partial^2 Q}{\partial G\partial L}&=\frac{-L}{(L^2+G^2)^{3/2}}\left(LG \cosh u, (L^2+3G^2) \sinh u\right)+O(1).
%\mathrm{\ as\ } \ell\to\infty.
\end{aligned}\nonumber\end{equation}
\item[(c)] Under the conditions of Lemma \ref{Lm: dx/dDe}(b) we have
\begin{equation}
\begin{aligned}
&\frac{\partial^2 Q}{\partial G^2}=-\cosh u(1,0)+O(1), \quad
\frac{\partial^2 Q}{\partial g\partial G}=-L\sinh u(1,0)+O(1), \\
& \frac{\partial^2 Q}{\partial g\partial L}=L\sinh u(0,2)+O(1), \quad
\frac{\partial^2 Q}{\partial G\partial L}=\cosh u(0,1)+O(1).\\
\end{aligned}\nonumber\end{equation}
%as $\ell\to\infty.$
\end{itemize}
\label{Lm: 2ndderivative}
\end{Lm}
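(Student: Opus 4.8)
The plan is to prove all three parts by differentiating the explicit parametrizations of $Q$ in Delaunay coordinates, with the structural statements in part (a) reduced to elementary facts about the rotation by the argument of periapsis $g$, and the precise asymptotics in (b) and (c) reduced, via Lemma \ref{Lm: simplify}, to differentiating \eqref{eq: Q4} and \eqref{eq: Q4l} twice and then substituting $g=\pm\arctan(G/L)$.

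For part (a), I would first record that in every case ($Q_3$ elliptic, $Q_4$ hyperbolic in the right and left settings) the dependence on $g$ enters only through a planar rotation: $Q(L,\ell,G,g)=\mathrm{Rot}(g)\,\tilde Q(L,\ell,G)$, where $\tilde Q$ is the value at $g=0$ (up to a fixed reflection in the hyperbolic cases, which does not affect the argument). Differentiating in $g$ gives $\partial_g Q=\mathrm{Rot}(\tfrac\pi2)Q$, whence $\partial_g^2 Q=\mathrm{Rot}(\pi)Q=-Q$; $\partial_G\partial_g Q=\mathrm{Rot}(\tfrac\pi2)\partial_G Q$, which is orthogonal to $\partial_G Q$ since $\mathrm{Rot}(\tfrac\pi2)v\perp v$ for every $v$; and $\partial_g|Q|^2=2\langle Q,\mathrm{Rot}(\tfrac\pi2)Q\rangle\equiv 0$ (equivalently $|Q|=|\tilde Q|$ does not depend on $g$), so both $\partial_G$ and $\partial_g$ of it vanish identically. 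The remaining claims $\partial_G^2 Q=O(\ell)$ and $\partial_L^2 Q=O(\ell)$ follow by differentiating \eqref{eq: Q4} (resp.\ \eqref{eq: Q3}) twice: the only terms that can grow are the $\cosh u,\sinh u$ factors, which are $O(e^{|u|})=O(\ell)$ by \eqref{eq: hypul}, while Lemma \ref{Lm: simplify} bounds $\partial_{L,G}u$ and $\partial^2_{L,G}u$ by $O(1/\ell)$, so no extra growth is produced; in the elliptic case the same estimate even gives $O(1)$, so $O(\ell)$ holds a fortiori.

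For parts (b) and (c), I would start from \eqref{eq: Q4} in the right case (resp.\ \eqref{eq: Q4l} in the left case), drop the $e$-terms (which contribute $O(1)$ to the derivatives and will be absorbed below), and use Lemma \ref{Lm: simplify} to replace $\pm u$ by $\ln(\mp\ell/e)$ whenever we differentiate in $L,\ell,G$, so that $\partial_\ell u=\pm 1/\ell+O(1/\ell^2)$ and $\partial_{L,G}(u\pm\ln e)$, together with its second derivatives, are $O(1/\ell)$. This reduces each of $\partial^2_{GG}Q$, $\partial^2_{gG}Q$, $\partial^2_{gL}Q$, $\partial^2_{GL}Q$ to an explicit trigonometric expression in $L,G,g,u$. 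Evaluating at $g_0:=\pm\arctan(G/L)$, where $\cos g_0=L/\sqrt{L^2+G^2}$ and $\sin g_0=\pm G/\sqrt{L^2+G^2}$, and collecting terms yields the four displayed matrices; the correction coming from the true value $g=g_0+\varepsilon$ with $\varepsilon=O(\mu/(|\ell_4|^2+1)+\mu\sG/\chi)$ is $\varepsilon\cdot O(\ell)=\varepsilon\cdot O(\chi)=O(\mu\sG)$, which, together with the omitted $e$-terms treated the same way, is exactly the stated error. Part (c) is obtained identically, expanding the part (b) formulas around $G=g=0$ and using that $G$ and $g$ have size $O(\mu\sG/\chi)$, so the linear-in-$(G,g)$ corrections times $O(\ell)=O(\chi)$ are again $O(\mu\sG)$. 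The first-order analogue of this computation is carried out in Lemma A.2 of \cite{DX}, and the present one runs in parallel.

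The main obstacle will be purely computational bookkeeping rather than any conceptual difficulty: one must organize the algebra so that the orthogonality cancellations in (a) and the precise coefficients in (b)--(c) emerge correctly, and must keep careful track of which error terms --- those from Lemma \ref{Lm: simplify}, those from dropping the $e$-terms, and those from the $\varepsilon$-rotation --- are genuinely of the claimed orders $O(1)$ or $O(\mu\sG)$ at each matrix entry. I would verify the four matrices in (b) with a symbolic computation as a cross-check, as the paper does elsewhere for larger matrix products.
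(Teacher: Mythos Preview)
Your proposal is correct and follows essentially the same approach as the paper. Part (a) is dismissed there as ``trivial'' (your rotation argument being the natural justification), and for parts (b) and (c) the paper likewise differentiates \eqref{eq: Q4}--\eqref{eq: Q4l} at $g_0=\pm\arctan(G/L)$, defers the detailed algebra to Lemma~A.3 of \cite{DX}, and handles the $\varepsilon$-perturbation to $g$ via the same rotation argument as in Lemma~\ref{Lm: dx/dDe}.
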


\section{Gerver's mechanism}\label{section: gerver}
\subsection{Gerver's result in \cite{G2}}\label{subsection: gerver}
We summarize the result of \cite{G2} in the following table. Recall that the Gerver scenario deals with the limiting case
$\chi\to\infty, \mu\to 0$. Accordingly $Q_1$ disappears at infinity and there is no interaction between $Q_3$ and $Q_4.$
Hence both particles perform Kepler motions.
%The Hamiltonian is
%\[H=\frac{v_3^2}{2}+\frac{v_4^2}{2}-\frac{1}{|Q_3|}-\frac{1}{|Q_4|}\]
The shape of each Kepler orbit is characterized by energy, angular momentum and the argument of apapsis.
In Gerver's scenario, the incoming and outgoing asymptotes of the hyperbola are always horizontal and the semimajor of the ellipse is always vertical. So we only need to describe on the energy and angular momentum.
%\begin{equation}
\begin{center}
\begin{tabular}{|c|c|c|c|c|}
\hline
\  &\small{1st collision}  &\small{@$(-\eps_0\eps_1,\eps_0+\eps_1)$}  & \small{2nd collision}  &\small{$@(\eps_0^2,0)$}\\
\hline
 \  & $Q_3$ & $Q_4$ & $Q_3$ & $Q_4$ \\
\hline
energy  & $-\frac{1}{2}$ & $\frac{1}{2}$ & $-\frac{1}{2}\rightarrow -\frac{\eps_1^2}{2\eps_0^2}$ & $\frac{1}{2}\rightarrow \frac{\eps^2_1}{2\eps_0^2}$\\
\hline
{\small angular\,momentum}& $\eps_1\rightarrow -\eps_0$ & $p_1\rightarrow -p_2$ & $-\eps_0$ & $\sqrt{2}\eps_0$ \\
\hline
eccentricity & $\eps_0\rightarrow \eps_1$ &  & $\eps_1\rightarrow \eps_0$& \\
\hline
semimajor & $1$ & $-1$ & $1\rightarrow \left(\frac{\eps_0}{\eps_1}\right)^2$ &  $1\rightarrow -\frac{\eps^2_1}{\eps_0^2}$\\
\hline
semiminor & $\eps_1\rightarrow \eps_0$ & $p_1\rightarrow p_2$ & $\eps_0\rightarrow \frac{\eps_0^2}{\eps_1}$ & $\sqrt{2}\eps_0 \rightarrow \sqrt{2}\eps_1$\\
\hline
%apapsis argument & $0$ &
\end{tabular}
\end{center}
%\end{equation}
%$G=\sqrt{a}\sqrt{1-e^2}$\\
%if $a=1$, the semiminor serves as angular momentum in both hyperbolic and elliptic case.\\
Here $p_{1,2}=\frac{-Y\pm \sqrt{Y^2+4(X+R)}}{2},\quad R=\sqrt{X^2+Y^2},$
and $(X,Y)$ stands for the point where collision occurs (the parenthesis after $@$ in the table). We will call the two points the Gerver collision points. In the above table $\eps_0$ is a free parameter and $\eps_1=\sqrt{1-\eps_0^2}.$ At the collision points, the velocities of the particles are the following. For the first collision,
\[v_3^-=\left(\frac{-\eps_1^2}{\eps_0\eps_1+1}, \frac{-\eps_0}{\eps_0\eps_1+1}\right),\quad v_4^-=\left(1-\frac{Y}{Rp_1}, \frac{1}{Rp_1}\right).\]
\[v_3^+=\left(\frac{\eps_0^2}{\eps_0\eps_1+1}, \frac{\eps_1}{\eps_0\eps_1+1}\right),\quad v_4^+=\left(-1+\frac{Y}{Rp_2}, -\frac{1}{Rp_2}\right).\]

For the second collision,
\[v_3^-=\left(\frac{-\eps_1}{\eps_0}, \frac{-1}{\eps_0}\right),\ v_4^-=\left(1, \frac{\sqrt{2}}{\eps_0}\right),\quad v_3^+=\left(1, \frac{-1}{\eps_0}\right),\ v_4^+=\left(\frac{-\eps_1}{\eps_0}, \frac{\sqrt{2}}{\eps_0}\right).\]

\section{$\mathscr C^1$ control of the global map, proof of Lemma \ref{Lm: glob}}\label{appendixb}
In this Appendix, we derive Lemma \ref{Lm: glob} from Proposition \ref{Prop: main}. We split the proof into six steps.

\textbf{STEP 0:} {\it Preparations. Definitions of auxiliary vectors and simplification of the five matrices. }

We define some new auxiliary vectors. Recall that in the paragraph before Proposition \ref{Prop: main}, we introduced a convention to use {\bf bold} font to indicate that the estimate of the corresponding entry is actually $\sim$, not only $\lesssim.$

Below we use the following notational convention to make it easier for the reader to keep track of the computations.
\begin{Not}
A vector with $tilde,\ hat,\ bar$ means a $O(1/\chi), O(\mu), O(1)$ perturbation to the vector respectively.\end{Not}
\begin{Def}We define the following list of vectors and the matrix $S$.
\begin{equation}\nonumber
\begin{aligned}
\bullet\ \ \ \tilde u:=&N_3u=u+O\(\frac{1}{\chi}\)\lesssim\left(\frac{1}{\chi^2},-\mathbf 1,\frac{1}{\chi^2},\frac{1}{\chi^2};\mu,\frac{\mu}{\chi},\frac{1}{\mu\chi^2},\frac{\mu}{\chi^2};\frac{\mu}{\chi},\frac{\mu}{\chi}\right)^T,\\
\tilde l:=&lN_3=l+O\(\frac{1}{\chi}\)\lesssim\left(\mathbf 1,\frac{1}{\chi^2},\frac{1}{\chi^2},\frac{1}{\chi^2};\frac{1}{\mu\chi^2},\frac{1}{\chi^2},\mu,\frac{\mu}{\chi};\frac{1}{\chi},\frac{1}{\chi}\right),\\
\bullet\ \ \ \hat u:=&N_5u=u+O(\mu)\lesssim \left(\mu,-\mathbf 1,\mu,\mu;\mu,\frac{\mu}{\chi},\frac{\mu}{\chi},\frac{\mu}{\chi};\mu,\mu\right),\\
\hat l:=&lN_1=l+O(\mu)\lesssim \left(\mathbf 1,\mu,\mu,\mu;\frac{\mu}{\chi},\frac{\mu}{\chi},\mu,\frac{\mu}{\chi};\mu,\mu\right),\\
\bullet\ \ \dt u:=&AL\cdot R^{-1}u_i\sim AR\cdot L^{-1}u_{i'}\lesssim\left(0,0,0,0;0,\mu,0, \frac{1}{\chi};0,\frac{1}{\chi}\right)^T=O(\mu),\\
\dt l:=&l_{iii}L\cdot R^{-1}C\sim l_{iii'}R\cdot L^{-1}C\lesssim\left(\frac{1}{\chi},\frac{1}{\chi^4},\frac{1}{\chi^4},\frac{1}{\chi^4};\frac{1}{\chi^2},\frac{1}{\chi}, \frac{\mu}{\chi},\mu;\frac{1}{\chi},\frac{1}{\chi}\right)=O(\mu),\\
\end{aligned}
\end{equation}
\begin{equation}\nonumber
\begin{aligned}
\bullet\ \ \hat u_{iii}:=&u_{iii}+\dt u\lesssim\left(0,0,0,0;0,\mu,0,\frac{1}{\chi};1, 1\right)^T,\\
\hat l_i:=&l_i+\dt l\lesssim\left(1,\frac{1}{\chi^3},\frac{1}{\chi^3},\frac{1}{\chi^3};\frac{1}{\mu\chi^2},\frac{1}{\chi}, \mu,\mu; 1, 1\right),\\
\hat u_{iii'}:=&u_{iii'}+\dt u\lesssim\left(0,0,0,0;0,\mu,0,\frac{1}{\chi}; 1, 1\right)^T,\\
\hat l_{i'}:=&l_{i'}+\dt l\lesssim\left(\frac{1}{\chi},\frac{1}{\chi^4},\frac{1}{\chi^4},\frac{1}{\chi^4};\frac{1}{\chi^2},\frac{1}{\chi}, \frac{\mu}{\chi},\mu; 1, 1\right),\\
\bullet AL\cdot R^{-1}&C=S_2,\quad AR\cdot L^{-1}C= S_4,\quad \mathrm{\ where\ }S_2, S_4\lesssim S:=\\
&\left[\begin{array}{cc|cccc|c}
\mathrm{Id}_{4\times 4}&\ &0_{4\times 1}&0_{4\times 1}&0_{4\times 1}&0_{4\times 1}&0_{4\times 2}\\
\hline
0&0_{1\times 3}&1+O(\mu)&0&0&0&0_{1\times 2}\\
0&0_{1\times 3}&0&1+O(\mu)&0&0&O(\mu)_{1\times 2}\\
O(1)&O\left(\frac{1}{\chi^3}\right)_{1\times 3}&O\left(\frac{1}{\mu\chi^2}\right)&O\left(\frac{1}{\mu\chi^3}\right)&1+O(\mu)&O\left(\frac{\mu}{\chi}\right)&O\left(\frac{1}{\chi}\right)_{1\times 2}\\
0&0_{1\times 3}&0&0&0&1+O(\mu)&0_{1\times 2}\\
\hline
0&0_{1\times 3}&0&0&0&0&0_{1\times 2}\\
1+O(\mu)&O\left(\frac{1}{\chi^3}\right)_{1\times 3}&O\left(\frac{1}{\mu\chi^2}\right)&O\left(\frac{1}{\chi^2}\right)&O(\mu)&O(\mu)&O\left(\frac{1}{\chi}\right)_{1\times 2}
\end{array}\right].
\end{aligned}
\end{equation}\label{Def: ul}
%The convention is, $\tilde$ means a perturbation of $O(1/\chi),$ $\hat$ means a perturbation of $O(\mu)$, $\bar$ means a perturbation of $O(1)$.
\end{Def}

\begin{sublemma}\label{sublm: ul}
\begin{enumerate}
\item $l\cdot u,\ \tilde l\cdot u,\  l\cdot \tilde u\lesssim\frac{1}{\chi^2}$,
\item $l_{iii}\cdot L\cdot R^{-1}u_{i}= \frac{1+O(\mu)}{\chi}$,\quad  $l_{iii'}\cdot R\cdot L^{-1}u_{i'}=-\frac{1+O(\mu)}{\chi}$.
\end{enumerate}
\end{sublemma}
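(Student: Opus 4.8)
The plan is to prove both parts by directly pairing the explicit coordinate expressions recorded in the Definition above (together with Proposition~\ref{Prop: main}), exploiting the fact that in each of these contractions only a couple of entrywise terms can reach the claimed size.

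For part~(1), the key point is that the single $O(1)$ (``bold'') slot of $l$ sits in position~$1$ while that of $u$ sits in position~$2$, so the two $\sim 1$ entries are never multiplied together: $l_1u_1\sim\chi^{-3}$ and $l_2u_2\sim\chi^{-3}$. Running through the remaining eight entrywise products of $l$ and $u$, the only two of order $\chi^{-2}$ are $l_5u_5\sim(\mu\chi^2)^{-1}\cdot\mu$ and $l_7u_7\sim\mu\cdot(\mu\chi^2)^{-1}$; every other product is $O(\chi^{-3})$ or smaller once one uses $\sG\lesssim\sqrt\chi$ (Definition~\ref{DefsG}), e.g.\ $l_6u_8,\ l_8u_6\sim\mu\sG^2\chi^{-4}\lesssim\chi^{-3}$ and the last two slots contribute $\mu^2\sG^2\chi^{-3}$. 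Hence $l\cdot u\lesssim\chi^{-2}$. Since $\tilde l=lM$ and $\tilde u=Mu$ differ from $l$ and $u$ only by $O(\chi^{-1})$ in each slot, and in particular keep their bold entries in positions~$1$ and~$2$, the identical bookkeeping yields $\tilde l\cdot u\lesssim\chi^{-2}$ and $l\cdot\tilde u\lesssim\chi^{-2}$.

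For part~(2), I would substitute the sparse \emph{exact} forms from Proposition~\ref{Prop: main}(b.2): in the coordinates $(x_3,v_3;x_1,v_1;x_4,v_4)$ the row vector $l_{iii}$ (and $l_{iii'}=-l_{iii}$) is supported on the four slots $x_{4,\parallel},x_{4,\perp},v_{4,\parallel},v_{4,\perp}$ with entries $\bigl(O(\mu\sG/\chi^2),\,-m_4k_4/(\chi L_4),\,O(\mu\sG/\chi),\,-1/2\bigr)$, while $u_i=u_{i'}$ is supported only on the $x_{4,\perp}$ and $v_{4,\perp}$ slots. Multiplying $u_i$ by $L\cdot R^{-1}$ (resp.\ $u_{i'}$ by $R\cdot L^{-1}$) from \eqref{DerLR}, whose only couplings among the non-$(x_3,v_3)$ blocks are the $\pm\frac{2\mu}{1+2\mu}\Id_2$ and $\pm\Id_2$ off-diagonal $2\times2$ blocks, one checks that $L\cdot R^{-1}u_i$ is again supported on the ``$\perp$'' slots $x_{1,\perp},v_{1,\perp},x_{4,\perp},v_{4,\perp}$. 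Consequently the two $O(\mu\sG/\,\cdot\,)$ slots of $l_{iii}$, which live in the ``$\parallel$'' positions $x_{4,\parallel},v_{4,\parallel}$, are paired with zeros, and only the two exact entries $-m_4k_4/(\chi L_4)$ and $-1/2$ of $l_{iii}$ survive. Carrying out the two resulting $2\times2$ block products and substituting $m_\pm$ together with the constants $m_{4L},m_{4R},k_{4L},k_{4R}=1+O(\mu)$ from Section~\ref{sct: symp} and Proposition~\ref{Prop: main}(a.4) collapses everything to the asserted $m_-/\chi$ and $-m_+/\chi$.

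None of this poses a genuine obstacle --- the matrices of Proposition~\ref{Prop: main} were arranged precisely so that these contractions reduce to one or two surviving monomials. The only points demanding care are: keeping the $\pm/\mp$ sign conventions of \eqref{DerLR} straight (upper sign for $R\cdot L^{-1}$, lower for $L\cdot R^{-1}$); routing the nonzero $\perp$-entries of $u_i$ through the correct $2\times2$ blocks of \eqref{DerLR} so that they land exactly where the nonzero entries of $l_{iii}$ sit; and tracking the cancellation of the $m_4k_4$ factor --- the $m_+$ produced by the $x_4$-block of $L\cdot R^{-1}$ meets the right-side value $m_{4R}k_{4R}$, and $m_+/(m_{4R}k_{4R})=m_-$ --- so that the two $\chi^{-1}$ contributions combine into the coefficient recorded in the statement. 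Finally, the standing normalizations $\sG\lesssim\sqrt\chi$ and $1/\chi\ll\mu$ are exactly what render all the subleading terms in part~(1) negligible.
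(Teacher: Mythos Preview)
Your approach is correct and is exactly what the paper does: the paper's own proof says only ``All of these estimates are straightforward calculation using Proposition~\ref{Prop: main} and Definition~\ref{Def: ul}. Item~(2) is exact. It uses Proposition~\ref{Prop: main}(b2).'' You have simply written out that calculation, and your identification of the two surviving $1/\chi^2$ terms in part~(1) and of the $\perp$--versus--$\parallel$ matching in part~(2) is the heart of the matter.

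Two small remarks. In part~(1) you wrote $l_6u_8,\ l_8u_6$ where you meant the diagonal products $l_6u_6,\ l_8u_8$; the values $\mu\sG^2\chi^{-4}$ you quote are indeed those diagonal products, so this is only a slip of index. In part~(2), your structural argument is right (only positions~10 and~12 contribute), but tracking the exact constant is a little more delicate than you indicate: the $m_4,k_4,L_4$ in $u_i$ are the \emph{right} values while those in $l_{iii}$ are the \emph{left} values, and the identity you quote, $m_+/(m_{4R}k_{4R})=m_-$, handles only one of the two surviving contributions. Since the subsequent argument in the appendix uses item~(2) only through the relation $\sim$ (see the derivation of \eqref{eq: 5matrices}), the precise constant is immaterial and your conclusion stands; but if you want the exact equality as stated you should separate the left/right instances of $m_4,k_4,L_4$ and check the sign carefully.
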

\begin{proof}
All of these estimates come from straightforward calculations using Proposition \ref{Prop: main} and Definition \ref{Def: ul}.
Item (2) is exact. It uses Proposition \ref{Prop: main}(b2).
\end{proof}
Using the Definition \ref{Def: ul} and Sublemma \ref{sublm: ul}, we simplify the five matrices into sums as follows. Notice that
the factors $(\Id_{10}+u_1^i\otimes l_1^i)$ in $(I)$ and $(\Id_{10}+u_5^f\otimes l_5^f)$ in $(V)$ are both $O(1)$, so we do not include them in the following calculation until the final step for simplicity. We shall write $(\bar{I})$ and $(\bar{V})$ for the modified
matrices.
\begin{equation*}
\begin{aligned}
(III)&\lesssim(\Id_{10}+\chi u\otimes l)N_3(\Id_{10}+\chi u\otimes l')=(\Id_{10}+\chi u\otimes l)(N_3+\chi N_3u\otimes l')\\
&=(\Id_{10}+\chi u\otimes l)(N_3+\chi \tilde u\otimes l')=(N_3+\chi \tilde u\otimes l')+\chi u\otimes l(N_3+\chi \tilde u\otimes l')\\
&\sim N_3+\chi \tilde u\otimes l'+\chi u\otimes \tilde l,\\
\end{aligned}
\end{equation*}
\begin{equation}\label{eq: 5matrices}
\begin{aligned}
(II)&=(\chi u_{iii}\otimes l_{iii}+A)L\cdot R^{-1}(\chi u_{i}\otimes l_{i}+C )\\
&=\chi^2 u_{iii}\otimes l_{iii}L\cdot R^{-1}u_{i}\otimes l_{i}+\chi \dt u\otimes l_{i}+\chi u_{iii}\otimes \dt l+S \\
&\sim\chi \hat u_{iii}\otimes \hat l_{i}-\chi \dt u\otimes \dt l+S,\\
(IV)&\sim\chi \hat u_{iii'}\otimes \hat l_{i'}-\chi \dt u\otimes \dt l+S,\\
(\bar{I})&\lesssim (\Id_{10}+\chi u\otimes l)N_1=N_1+\chi u\otimes lN_1= N_1+\chi u\otimes \hat l,\\
(\bar{V})&\lesssim N_5(\Id_{10}+\chi u\otimes l')=N_5+N_5\chi u\otimes l'= N_5+\chi \hat u\otimes l',\\
\end{aligned}
\end{equation}
where for $(III)$, we used that $l(N_3+\chi \tilde u\otimes l')\lesssim\tilde l+\frac{1}{\chi}l'\sim \tilde l$ by Definition \ref{Def: ul}(first bullet point) and Sublemma \ref{sublm: ul}(1).

\textbf{STEP 1:} {\it Decomposing $(IV)(III)(II)$ into three summands.}
%In all the following sub lemmas and corollaries,  the vector form estimates will not be used in the proof, it is enough to know that they are $O(1), O(\mu), $ etc. The vector estimates are only used to derive the corresponding corollaries, which will be used in the proof.

We start with an auxillary estimate.
\begin{sublemma}
We have the following estimates as $1/\chi\ll\mu\to 0$.
\begin{enumerate}
%\item $(III)u_{iii}\lesssim\left(\frac{\mu\sG}{\chi^2},\mu\sG,\frac{\mu\sG}{\chi^2},\frac{\mu\sG}{\chi^2}; \mu^2\sG, \frac{\mu^2\sG^2}{\chi}, \frac{\mu\sG}{\chi^2}, \frac{\mu\sG}{\chi^2};  1, 1\right)^T=O(1),$\\
   % $l_{i'}(III)\lesssim\left(\frac{\mu\sG}{\chi}, \frac{\mu}{\chi^2},\frac{\mu}{\chi^2},\frac{\mu}{\chi^2};
    %\frac{\mu\sG}{\chi^2}, \frac{\mu\sG}{\chi^2},\frac{\mu^2\sG}{\chi},\frac{\mu^2\sG}{\chi};  1, 1\right)=O(1),$
\item $(III)\hat u_{iii}\lesssim\left(
\frac{1}{\chi^2}, 1,\frac{1}{\chi^2}, \frac{1}{\chi^2};\mu,\mu, \frac{1}{\chi}, \frac{1}{\chi}; \mathbf{1},
\mathbf{1}\right)^T=O(1),$\\
    $\hat l_{i'}(III)\lesssim\left(1, \frac{1}{\chi^2},\frac{1}{\chi^2},\frac{1}{\chi^2};
    \frac{1}{\chi}, \frac{1}{\chi},\mu,\mu;  \mathbf{1}, \mathbf{1}\right)=O(1),$
\item $\hat l_{i'}(III)\hat u_{iii}\to -2$.
% Remember that it was \frac{-2}{\tilde L_{4,j}^2}.
\end{enumerate}\label{sublm:l(III)u}
\end{sublemma}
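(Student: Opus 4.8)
The plan is to compute both quantities from the simplified expression
$(III)\sim M+\chi\,\tilde u\otimes l'+\chi\, u\otimes\tilde l$ established in \eqref{eq: 5matrices}, together with the definitions $\hat u_{iii}=u_{iii}+\dt u$ and $\hat l_{i'}=l_{i'}+\dt l$ from Definition \ref{Def: ul} and the asymptotics of Proposition \ref{Prop: main}(b), keeping only the leading order as $1/\chi\ll\mu\to 0$. First I would record the scalar estimates $l'\cdot\hat u_{iii}=O(\mu\sG/\chi)$ and $\tilde l\cdot\hat u_{iii}=O(\mu\sG/\chi)$: the only non-negligible entries of $\hat u_{iii}$ are the two $\mathbf 1$ entries in the $G_4,g_4$ slots (and entries of size $\mu,1/\chi$ in the $x_{1,\perp},v_{1,\perp}$ slots), and the $G_4,g_4$ entries of both $l'$ and $\tilde l$ are $O(\mu\sG/\chi)$. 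Hence
$(III)\hat u_{iii}=M\hat u_{iii}+\chi(l'\cdot\hat u_{iii})\tilde u+\chi(\tilde l\cdot\hat u_{iii})u=M\hat u_{iii}+O(\mu\sG)(\tilde u+u)$, and since $\tilde u,u\lesssim(1/\chi^2,\mathbf 1,\dots)^T$ the last two terms contribute only $O(\mu\sG)$, concentrated in the $\ell_3$ slot. For $M\hat u_{iii}$ the only $O(1)$ contributions come from the last two columns of $M$ weighted by the $\mathbf 1$ entries of $\hat u_{iii}$; by the $(M)_{44}$ block these produce exactly the two $\mathbf 1$ entries of the claimed bound in the $G_4,g_4$ rows, while the remaining rows of those columns, the $\Id_{10}$ part, and the $M-\Id_{10}$ estimate of Proposition \ref{Prop: main}(a5) give the stated $O(\mu\sG)$ entries. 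This proves the first estimate of part (1). The bound $\hat l_{i'}(III)=O(1)$ is obtained symmetrically by multiplying $\hat l_{i'}$ against $M+\chi\tilde u\otimes l'+\chi u\otimes\tilde l$ from the left: using $\hat l_{i'}\cdot\tilde u,\ \hat l_{i'}\cdot u=O(\mu\sG/\chi^2)$ the two rank-one terms contribute $O(\mu\sG/\chi)$ in the directions $l',\tilde l$, and the only $O(1)$ entry of $\hat l_{i'}(M-\Id_{10})$ comes from the two $\mathbf 1$ entries of $\hat l_{i'}$ hitting the first column of the last two rows of $M-\Id_{10}$.

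For part (2) I would combine the one-sided computations. From $(III)\hat u_{iii}=M\hat u_{iii}+O(\mu\sG)(\tilde u+u)$ and $\hat l_{i'}\cdot(\tilde u+u)=O(\mu\sG/\chi^2)$ the correction term contributes $O(\mu^2\sG^2/\chi^2)\to 0$, so $\hat l_{i'}(III)\hat u_{iii}=\hat l_{i'}M\hat u_{iii}+o(1)$. In the limit $1/\chi\ll\mu\to 0$ we have $\hat u_{iii}\to(0_{1\times 8};1,\tfrac{1}{\tilde L_{4,j}})^T$ and $\hat l_{i'}\to(0_{1\times 8},\tfrac{1}{\tilde L_{4,j}^2},-\tfrac{1}{\tilde L_{4,j}})$ by Proposition \ref{Prop: main}(b1)--(b2); every term of $\hat l_{i'}M\hat u_{iii}$ whose matrix indices fall outside the $G_4,g_4$ block carries a factor tending to zero (either an entry of $\hat l_{i'}$ or an entry of $\hat u_{iii}$) while the other factors stay bounded, so only the $(M)_{44}$ block survives. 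Thus $\hat l_{i'}(III)\hat u_{iii}\to\bigl(\tfrac{1}{\tilde L_{4,j}^2},-\tfrac{1}{\tilde L_{4,j}}\bigr)\,(M)_{44}\,\bigl(1,\tfrac{1}{\tilde L_{4,j}}\bigr)^T$, where by Proposition \ref{Prop: main}(b3) one has $L_{4,j}=\tilde L_{4,j}+O(\mu)$, so $(M)_{44}\to\begin{pmatrix}1/2&-\tilde L_{4,j}/2\\ 3/(2\tilde L_{4,j})&1/2\end{pmatrix}$. The explicit $2\times 2$ computation gives $(M)_{44}(1,1/\tilde L_{4,j})^T\to(0,\,2/\tilde L_{4,j})^T$ and then $(\tfrac{1}{\tilde L_{4,j}^2},-\tfrac{1}{\tilde L_{4,j}})\cdot(0,\tfrac{2}{\tilde L_{4,j}})^T=-\tfrac{2}{\tilde L_{4,j}^2}$, as claimed.

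The routine but delicate part is the entrywise bookkeeping in the first paragraph: one must confirm that $M\hat u_{iii}$ has no entry of size exceeding $O(\mu\sG)$ outside the $G_4,g_4$ block, and likewise for $\hat l_{i'}M$ outside $O(1)$. This reduces to inspecting the $x_{1,\perp}$, $v_{1,\perp}$, $G_4$, $g_4$ columns (weighted by $\mu,1/\chi,1,1$) and a few rows of $M-\Id_{10}$ from Proposition \ref{Prop: main}(a5), exactly in the style of the matrix-multiplication checks already run in Appendix \ref{appendixb}, and can be delegated to the same {\sc Mathematica} routine; I expect no genuine obstruction there, only care in tracking which $\mathbf 1$-entries matter. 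The only place exact values are needed is the short $2\times 2$ limit in part (2) displayed above.
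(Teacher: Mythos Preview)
Your proposal is correct and follows essentially the same approach as the paper: reduce via the decomposition $(III)\sim M+\chi\tilde u\otimes l'+\chi u\otimes\tilde l$, observe that the rank-one corrections are suppressed by the scalar products, and identify the surviving $\mathbf 1$ entries and the limit in part~(2) with the $2\times 2$ computation $(l_{i'}(9),l_{i'}(10))(M)_{44}(u_{iii}(9),u_{iii}(10))^T$ from Proposition~\ref{Prop: main}(b). Your write-up in fact spells out more of the bookkeeping than the paper's terse proof does.
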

\begin{Cor}\label{CorB1}
\begin{enumerate}
\item $\dt l(III) \hat u_{iii}\lesssim\frac{1}{\chi}$,
\item $\hat l_{i'}(III)\dt u\lesssim\frac{1}{\chi}$.
%\item $\hat l_{i'}(III)\hat u_{iii}\to \frac{-2}{\tilde L_{4,j}^2}$.
\end{enumerate}
\end{Cor}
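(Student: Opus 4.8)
The plan is to deduce both estimates from the component bounds already established, by a direct entry-by-entry multiplication. Sublemma \ref{sublm:l(III)u}(1) gives explicit ten-term bounds for the vectors $(III)\hat u_{iii}$ and $\hat l_{i'}(III)$, and Definition \ref{Def: ul} records the ten-term bounds for $\dt l$ and $\dt u$. Hence each of $\dt l\,(III)\,\hat u_{iii}$ and $\hat l_{i'}\,(III)\,\dt u$ is the dot product of two explicitly bounded vectors, and it suffices to verify that every one of the ten resulting products is $\lesssim 1/\chi$.

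First I would treat part (1) by pairing the $k$-th entry of $\dt l\lesssim\left(\frac{\sG}{\chi},\frac{\sG}{\chi^4},\frac{\sG}{\chi^4},\frac{\sG}{\chi^4};\frac{\sG}{\chi^2},\frac1\chi,\frac{\mu\sG}{\chi},\mu;\frac1\chi,\frac1\chi\right)$ with the $k$-th entry of $(III)\hat u_{iii}\lesssim\left(\frac{\mu\sG}{\chi^2},\mu\sG,\frac{\mu\sG}{\chi^2},\frac{\mu\sG}{\chi^2};\mu^2\sG,\mu,\frac{\mu\sG}{\chi^2},\frac1\chi;\mathbf 1,\mathbf 1\right)$. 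The only non-small entries of $(III)\hat u_{iii}$ are the last two (the $G_4,g_4$ slots), where $\dt l$ has entries of size $1/\chi$, so these two products are of order $1/\chi$ exactly. For $k\le 8$ one checks each product directly: the dominant ones are the $x_{1,\perp}$ slot, $\frac1\chi\cdot\mu=\frac\mu\chi$, and the $v_{1,\perp}$ slot, $\mu\cdot\frac1\chi$, both $\lesssim 1/\chi$; the remaining six products are smaller once one uses the crude angular-momentum bound $\sG\lesssim\sqrt\chi$ (so that the factors $\sG^2$ occurring are $\lesssim\chi$) together with $1/\chi\ll\mu\ll 1$. Summing the ten terms gives $\dt l\,(III)\,\hat u_{iii}\lesssim 1/\chi$.

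Part (2) is shorter because $\dt u\lesssim\left(0,0,0,0;0,\mu,0,\frac1\chi;0,\frac1\chi\right)^T$ is supported only in the $x_{1,\perp}$, $v_{1,\perp}$ and $g_4$ slots. Pairing those three entries of $\dt u$ with the corresponding entries of $\hat l_{i'}(III)\lesssim\left(1,\frac{1}{\chi^2},\frac{1}{\chi^2},\frac{1}{\chi^2};\frac{\mu\sG}{\chi^2},\frac1\chi,\mu,\mu;\mathbf 1,\mathbf 1\right)$ yields contributions $\frac1\chi\cdot\mu$, $\mu\cdot\frac1\chi$ and $\mathbf 1\cdot\frac1\chi$, each $\lesssim 1/\chi$, while all other slots contribute zero; hence $\hat l_{i'}\,(III)\,\dt u\lesssim 1/\chi$.

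There is no substantive obstacle here: this corollary merely isolates the residual cross-terms left over by the factorization entering the proof of Lemma \ref{Lm: glob}, and the only points needing care are correctly identifying which coordinate slots the sparse vectors $\dt u$ and $\dt l$ occupy, and invoking $\sG\lesssim\sqrt\chi$ (valid by Definition \ref{DefsG}, since $\sG$ is either $1$ or $\breps\sqrt\chi$) to absorb the few terms that carry a $\sG^2$.
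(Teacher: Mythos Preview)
Your proposal is correct and is exactly the approach the paper has in mind: the paper's proof (which covers both Sublemma~\ref{sublm:l(III)u} and this corollary jointly) says only that ``all of these is done by straightforward calculation'' using Proposition~\ref{Prop: main} and the expansion of $(III)$, and your entry-by-entry dot product of the bounds from Sublemma~\ref{sublm:l(III)u}(1) against the bounds for $\dt l,\dt u$ in Definition~\ref{Def: ul} is precisely that calculation made explicit. The only point worth remarking is that you correctly spotted the need to absorb the occasional $\sG^2$ factor via $\sG\lesssim\sqrt\chi$, a detail the paper leaves implicit.
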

\begin{proof}All of these are done by straightforward calculation using the information obtained in Proposition \ref{Prop: main} together with the calculation of $(III)$ in \eqref{eq: 5matrices}.
The $\mathbf 1$ entries in item (1) are actually $(N_3)_{44}(u_{iii}(9),u_{iii}(10))$ and $(l_{i'}(9),l_{i'}(10))(N_3)_{44}$ up to a $O(\mu)$ error. Item (2) is in fact $(l_{i'}(9),l_{i'}(10))(N_3)_{44}(u_{iii}(9),u_{iii}(10))$ up to a $O(\mu)$ error.
These terms can be calculated explicitly using part (b1), (b2), (b3) of Proposition \ref{Prop: main}.
\end{proof}
%\begin{Rk}Item 2, 3 have special structures. I maybe need it.\end{Rk}

%In the following, we put a ``$tilde$" to a vector to stand for a new vector perturbed by $O(1/\chi)$, a ``$hat$" for a perturbation of $O(\mu)$, ``$bar$" for a perturbation of $O(1)$, in which case, we always specify how the vector changes.

Then we consider
\begin{equation}\label{eq: (IV)(III)(II)}
\begin{aligned}
&(IV)(III)(II)\sim
(\chi \hat u_{iii'}\otimes \hat l_{i'}-\chi \dt u\otimes \dt l+S)(III)(\chi \hat u_{iii}\otimes \hat l_{i}-\chi \dt u\otimes \dt l+S)\\
&=\chi^2 \hat u_{iii'}\otimes \hat l_{i'}(III) \hat u_{iii}\otimes \hat l_{i}+(-\chi \dt u\otimes \dt l+S)(III)(\chi \hat u_{iii}\otimes \hat l_{i})\\
&+(\chi \hat u_{iii'}\otimes \hat l_{i'})(III)(-\chi \dt u\otimes \dt l+S)+(-\chi \dt u\otimes \dt l+S)(III)(-\chi \dt u\otimes \dt l+S).
\end{aligned}
\end{equation}
Define $v=\hat l_{i'}(III)(-\chi \dt u\otimes \dt l+S),\quad v'=(-\chi \dt u\otimes \dt l+S)(III) \hat u_{iii}.$
Both are of order $1$ by Corollary \ref{CorB1} and Sublemma \ref{sublm:l(III)u} (1).
%Moreover, Notice $\dt u\otimes \dt l(III)\dt u\otimes \dt l=O(\frac{\mu^3}{\chi})$.\\
From Sublemma \ref{sublm:l(III)u}(2) we get
\begin{equation}
\begin{aligned}
&\eqref{eq: (IV)(III)(II)}\sim\chi^2 \hat u_{iii'}\otimes \hat l_{i}+\chi v'\otimes \hat l_{i}+\chi \hat u_{iii'}\otimes v+(\chi \dt u\otimes \dt l-S)(III)(\chi \dt u\otimes \dt l-S)\\
&=\chi^2 \left(\hat u_{iii'}+\frac{1}{\chi} v'\right)\otimes \left(\hat l_{i}+\frac{1}{\chi}v\right)- v'\otimes v+(\chi \dt u\otimes \dt l-S)(III)(\chi \dt u\otimes \dt l-S)\\
&:=\chi^2 \tilde{\hat u}_{iii'}\otimes \tilde{\hat l}_{i}-v'\otimes v+(\chi \dt u\otimes \dt l-S)(III)(\chi \dt u\otimes \dt l-S),
\label{eq: (IV)(III)(II)new}
\end{aligned}
\end{equation}

where we have defined
$\tilde{\hat u}_{iii'}=u_{iii'}+\dt u+\frac{1}{\chi}v',\quad \tilde{\hat l}_{i}=l_i+\dt l+\frac{1}{\chi}v.$ It is important to stress that the coefficient of the $
\chi^2$ term is nonzero.

Next we consider $(V)(IV)(III)(II)(I)=(V)\eqref{eq: (IV)(III)(II)new}(I)$.

{\it In the following, we are going to show that $\chi^2 (V)\tilde{\hat u}_{iii'}\otimes \tilde{\hat l}_{i}(I)$ gives rise to the $\chi^2$ part of the main lemma \ref{Lm: glob}. The $(V)v'\otimes v(I)$ part will be absorbed into $O(\mu\chi)$ part. The last summand in \eqref{eq: (IV)(III)(II)new} will give rise to the $O(\chi)$ part together with a perturbation of order $O(\mu\chi)$, where the $O(\chi)$ part comes from $(V)S(III)S(I)$.}

\textbf{STEP 2:} {\it The first summand in \eqref{eq: (IV)(III)(II)new} gives the $O(\chi^2)$ contribution in Lemma \ref{Lm: glob}.}

The following sub lemma is needed for this step.
\begin{sublemma}\label{sublm: VX2I}
\begin{enumerate}
\item $l'\cdot \hat u_{iii'}\lesssim\frac{1}{\chi}$,%\ l^e\cdot u^e=\frac{1}{\chi^2},\ l\cdot u^e=l^e\cdot u=\frac{1}{\mu\chi^2}$,
\quad $\hat l_i\cdot u\lesssim \frac{1}{\chi^2}.$%$l_{i'}\cdot u_{iii}\lesssim 1$,\quad
\item $l'\cdot v'\lesssim \frac{\mu}{\chi},\quad v\cdot u\lesssim \frac{\mu}{\chi}$.
\end{enumerate}
\end{sublemma}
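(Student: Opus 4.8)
\textbf{Proof proposal for Sublemma \ref{sublm: VX2I}.}

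The plan is to verify the two pairs of estimates by direct substitution of the explicit expressions from Proposition \ref{Prop: main} and Definition \ref{Def: ul}, exploiting the sparsity of the vectors involved. For item (1), I would expand $l'\cdot\hat u_{iii'}$ entry by entry using $l'\lesssim(\mathbf 1,\frac{1}{\chi^3},\frac{1}{\chi^3},\frac{1}{\chi^3};\frac{1}{\chi},\frac{\sG}{\chi^3},\mu,\frac{\mu\sG}{\chi};\frac{\mu\sG}{\chi},\frac{\mu\sG}{\chi})$ and $\hat u_{iii'}=u_{iii'}+\dt u\lesssim(0_{1\times 8};\mathbf 1,\mathbf 1)^T$ together with the sharper data $\dt u\lesssim(0,\dots,0;0,\mu,0,\frac1\chi;0,\frac1\chi)^T$. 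Since $u_{iii'}$ is supported only on the last two coordinates (indices $9,10$) where $l'$ has entries $O(\mu\sG/\chi)$, and the index-$6,8$ contributions through $\dt u$ are $\mu\cdot\frac{\sG}{\chi^3}$ and $\frac1\chi\cdot\frac{\mu\sG}{\chi}$, the dominant term is $O(\mu\sG/\chi)$, giving the first estimate. For $\hat l_i\cdot u$, with $\hat l_i=l_i+\dt l\lesssim(1,\frac{1}{\chi^3},\frac{1}{\chi^3},\frac{1}{\chi^3};\frac{\sG}{\chi^2},\frac1\chi,\mu,\mu;1,1)$ and $u\lesssim(\frac{1}{\chi^3},\mathbf 1,\frac{1}{\chi^3},\frac{1}{\chi^3};\mu,\frac{\mu\sG}{\chi},\frac{1}{\mu\chi^2},\frac{\sG}{\chi^3};\frac{\mu\sG}{\chi^2},\frac{\mu\sG}{\chi^2})^T$, the candidate leading terms are the index-$2$ product $\frac{1}{\chi^3}\cdot\mathbf 1$, the index-$7$ product $\mu\cdot\frac{1}{\mu\chi^2}=\frac{1}{\chi^2}$, and the index-$9,10$ products $1\cdot\frac{\mu\sG}{\chi^2}$; these are all $O(\frac{1}{\chi^2}+\frac{\mu\sG}{\chi^2})\lesssim\frac{\mu\sG}{\chi^2}$ once one recalls the standing convention $1/\chi\ll\mu$ (so $\frac{1}{\chi^2}\lesssim\frac{\mu}{\chi}\cdot\frac1\chi$, and in the relevant regime $\sG\geq 1$), yielding the second estimate of item (1).

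For item (2), I would use the expressions $v'=(-\chi\dt u\otimes\dt l+S)(III)\hat u_{iii}$ and $v=\hat l_{i'}(III)(-\chi\dt u\otimes\dt l+S)$ derived in Step 1, together with Sublemma \ref{sublm:l(III)u}(1), which gives $(III)\hat u_{iii}=O(\mu\sG)$ and $\hat l_{i'}(III)=O(1)$, and Corollary \ref{CorB1}. Writing $v'=S\big((III)\hat u_{iii}\big)-\chi\dt u\,\big(\dt l\cdot (III)\hat u_{iii}\big)$, the first term is $S$ applied to an $O(\mu\sG)$ vector and the second term is $\chi\cdot O(\mu)\cdot O(1/\chi)=O(\mu)$ by Corollary \ref{CorB1}(1); pairing against $l'$ I would check that the $S$-contribution, namely $l'\cdot S\big((III)\hat u_{iii}\big)$, inherits an extra factor from the structure of $S$ (its nontrivial rows, indices $7$ and $9,10$, carry coefficients $O(\frac{1}{\mu\chi^2})$, $O(\mu)$, etc.) so that the whole thing is $O(\mu)$; similarly $l'\cdot(\chi\dt u)\lesssim\chi\cdot\frac{\mu\sG}{\chi}\cdot\frac{1}{\chi}$-type bounds together with the $O(1/\chi)$ scalar give $O(\mu)$. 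The estimate $v\cdot u\lesssim\mu$ is symmetric: $v=\hat l_{i'}(III)S-\chi\big(\hat l_{i'}(III)\dt u\big)\dt l$, the second summand is $O(\mu)$ by Corollary \ref{CorB1}(2), and $\big(\hat l_{i'}(III)S\big)\cdot u$ is bounded by pairing the $O(1)$ row vector $\hat l_{i'}(III)S$ against the sparse $u$, the dominant contribution again being the index-$7$ slot $\frac{1}{\mu\chi^2}$ times the $O(\mu)$-type entries of $\hat l_{i'}(III)S$ in that column.

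The main obstacle will be bookkeeping rather than any conceptual difficulty: each inner product is a sum of ten terms, several of which are genuinely competitive for the leading order, and one must consistently use the hierarchy $1/\chi\ll\mu\ll\dt\ll 1$ (and, in the nonzero-angular-momentum case, $\sG=\breps\sqrt\chi$) to discard the subleading ones. In particular I expect the delicate points to be (a) confirming that the $S$-contributions to $v$ and $v'$ do not spoil the $O(\mu)$ bound — this relies on the specific pattern of which rows/columns of $S$ are nonzero, as recorded in Definition \ref{Def: ul} — and (b) tracking the $\sG$-powers so that the claimed bounds hold uniformly in $\chi$ and $\mu$ in both the $G_0=0$ and $G_0\neq 0$ cases. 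As elsewhere in the paper, these products of sparse $10\times 10$ matrices and vectors can be cross-checked on the computer, but since only orders of magnitude are needed the verification is in principle routine by hand.
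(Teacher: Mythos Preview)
Your approach is the same as the paper's: direct componentwise verification using the vectors from Proposition \ref{Prop: main} and Definition \ref{Def: ul}. The paper gives no separate proof of this sublemma; it is simply one of the routine inner-product checks in Appendix \ref{appendixb}, done (as stated there) ``with the help of computer.'' Your outline for item (1), first estimate, and for item (2) is correct, and for item (2) you correctly identify that the $S$-contributions $l'\cdot S(III)\hat u_{iii}$ and $\hat l_{i'}(III)S\cdot u$ need to be controlled; these are precisely the quantities bounded in Corollary \ref{CorB2}, which appears just after this sublemma but is logically independent of it.

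There is one genuine slip in your argument for the second estimate of item (1). You correctly find that the index-$7$ term contributes $\mu\cdot\dfrac{1}{\mu\chi^2}=\dfrac{1}{\chi^2}$, and you then assert $\dfrac{1}{\chi^2}\lesssim\dfrac{\mu\sG}{\chi^2}$ ``once one recalls $1/\chi\ll\mu$ and $\sG\geq 1$.'' But $1/\chi\ll\mu$ gives only $\dfrac{1}{\chi^2}\lesssim\dfrac{\mu}{\chi}$, not $\dfrac{1}{\chi^2}\lesssim\dfrac{\mu}{\chi^2}$; in the zero-angular-momentum case $\sG=1$, the inequality $\dfrac{1}{\chi^2}\lesssim\dfrac{\mu\sG}{\chi^2}$ would require $\mu\gtrsim 1$, which is false. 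The honest bound from the componentwise data is $\hat l_i\cdot u\lesssim\dfrac{1}{\chi^2}+\dfrac{\mu\sG}{\chi^2}$. This does not affect the use of the sublemma in Step 2, equation \eqref{eq: chi^2}: what is actually needed there is $\chi\,\hat l_i\cdot u=O(\mu)$, and $\chi\cdot\dfrac{1}{\chi^2}=\dfrac{1}{\chi}\ll\mu$ by the standing convention. So the paper's stated bound is slightly sharper than what the crude componentwise product yields, but the weaker bound you can actually establish still suffices for the argument downstream.
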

We consider first the term $(\bar{V})(\chi^2 \tilde{\hat u}_{iii'}\otimes \tilde{\hat l}_{i})(\bar{I})$. We keep in mind that $N_1,N_5=O(\mu\chi).$ Define
\begin{equation}\label{eq: chi^2}\begin{aligned}
&\bar{ u}':=(\bar{V})\tilde{\hat u}_{iii'}=N_5\tilde{\hat u}_{iii'}+\chi \hat u\otimes l'\cdot\tilde{\hat u}_{iii'}
=(N_5\hat u_{iii'}+O(\mu))+ \hat u\(\chi l'\cdot\hat u_{iii'}+l'\cdot v'\)\\
&= N_5\hat u_{iii'}+O(1)\hat u+O(\mu),\\
&\bar{ l}':=\tilde{\hat l}_{i}(\bar{I})=\tilde{\hat l}_{i}N_1+\chi \tilde{\hat l}_{i}\cdot u\otimes \hat l=(\hat l_{i}N_1+O(\mu))+(\chi\hat l_{i}\cdot u+v\cdot u)\hat l =\hat l_{i}N_1+O(\mu).
\end{aligned}\end{equation}
We will analyze $\bar{ u}'$ and $\bar{ l}'$ in more detail in the final step.

\textbf{STEP 3:} {\it The second summand $v'\otimes v$ in \eqref{eq: (IV)(III)(II)new} gives  $(V)v'\otimes v(I)=O(\mu\chi)$.}

The following sub lemma is needed in this step.
\begin{sublemma}\label{sublmB5}
We have the following estimates.
\begin{enumerate}
\item $ N_5\dt u\lesssim\left(\frac{\mu}{\chi},\frac{1}{\chi},\frac{\mu}{\chi},\frac{\mu}{\chi};\frac{\mu}{\chi}, \mu,\frac{1}{\chi^2},\frac{1}{\chi};\frac{1}{\chi},\frac{1}{\chi}\right)^T=O(\mu)$,\\
$\dt l N_1\lesssim\left(\frac{1}{\chi},\frac{\mu}{\chi},\frac{\mu}{\chi},\frac{\mu}{\chi};\frac{1}{\chi^2}, \frac{1}{\chi},\frac{\mu}{\chi},\mu;\frac{1}{\chi},\frac{1}{\chi}\right)=O(\mu)$,\\
\item  $l'\cdot\dt u\lesssim\frac{1}{\chi^2},\quad \dt l \cdot u\lesssim\frac{\mu}{\chi^2}.$
%\item $\tilde l\cdot \dt u\lesssim \frac{\mu\sG}{\chi^2},\quad \dt l\cdot \dt u\lesssim \frac{\mu}{\chi}.$
\end{enumerate}
\end{sublemma}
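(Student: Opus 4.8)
The plan is to prove Sublemma \ref{sublmB5} by a direct, entrywise computation, using only the $\lesssim$-bounds already recorded in Proposition \ref{Prop: main}(a.2), (a.5) and in Definition \ref{Def: ul}; no exact (``bold'') entry of any matrix is needed, so the whole argument is mechanical and, exactly as for the other estimates in this appendix, can be carried out with the aid of {\sc Mathematica}, comparing monomials in $\mu$, $\sG$ and $1/\chi$ as explained in Section \ref{sct: var}.

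For item (1) I would write $N_5\dt u=\dt u+(N_5-\Id_{10})\dt u$ and $\dt l\,N_1=\dt l+\dt l\,(N_1-\Id_{10})$. By Definition \ref{Def: ul} the vector $\dt u$ is, up to $\lesssim$, supported on its $6$th, $8$th and $10$th coordinates, with sizes $\mu$, $1/\chi$, $1/\chi$; hence $(N_5-\Id_{10})\dt u$ is bounded entrywise by the sum of the $6$th, $8$th and $10$th columns of the matrix $N_5-\Id_{10}$ from Proposition \ref{Prop: main}(a.5), weighted by $\mu$, $1/\chi$, $1/\chi$ respectively. Reading off those three columns and keeping, in each of the ten slots, the largest resulting monomial (using $1/\chi\ll\mu\ll 1$ to decide the ordering), and then comparing with $\dt u$ itself, reproduces exactly the vector asserted for $N_5\dt u$; in particular the $6$th slot comes from $(N_5-\Id_{10})_{6,8}\cdot\tfrac1\chi=\mu\chi\cdot\tfrac1\chi=\mu$, while the $8$th, $9$th and $10$th slots are already saturated by $\dt u$ itself, giving the claimed $O(\mu)$ bound. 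The bound for $\dt l\,N_1$ is obtained symmetrically: one contracts the row $\dt l$, whose ten entries have the sizes listed in Definition \ref{Def: ul}, against the rows of $N_1-\Id_{10}$ from Proposition \ref{Prop: main}(a.5), column by column, and keeps the dominant monomial in each column; again every slot is $O(\mu)$, so $\dt l\,N_1=O(\mu)$.

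For item (2), $l'\cdot\dt u$ and $\dt l\cdot u$ are plain inner products of two $10$-vectors all of whose entry sizes are written down (in Proposition \ref{Prop: main}(a.2) and Definition \ref{Def: ul}); one multiplies the entries pairwise and takes the largest of the ten products. For $l'\cdot\dt u$ the only nonzero contributions come from coordinates $6$, $8$, $10$, and the largest is $\tfrac{\mu\sG}{\chi}\cdot\tfrac1\chi=\tfrac{\mu\sG}{\chi^2}$. For $\dt l\cdot u$ the largest products are those in coordinates $5$ and $6$, namely $\tfrac{\sG}{\chi^2}\cdot\mu$ and $\tfrac1\chi\cdot\tfrac{\mu\sG}{\chi}$, both equal to $\tfrac{\mu\sG}{\chi^2}$, while every other pairwise product ($\tfrac{\sG}{\chi^4}$, $\tfrac{\sG}{\chi^3}$, $\tfrac{\mu\sG}{\chi^3}$, etc.) is smaller once $1/\chi\ll\mu$ is used.

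Since each quantity in the Sublemma is a finite sum of at most ten explicit monomials, there is no genuine obstacle here; the only point requiring care is keeping the ordering $1/\chi\ll\mu$ straight when deciding which monomial dominates a given slot (for instance that $\sG/\chi^4\ll\mu\sG/\chi^2$ and $\sG/\chi^3\ll\mu\sG/\chi^2$), and verifying, as claimed, that no dominant monomial in $N_5\dt u$ or $\dt l\,N_1$ exceeds $\mu$ — which is immediate from the column/row readings above.
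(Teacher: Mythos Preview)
Your approach is correct and is exactly what the paper does: the paper gives no separate proof of this sublemma, indicating instead (at the start of Appendix~\ref{appendixb}) that all such estimates are obtained mechanically with {\sc Mathematica} from the $\lesssim$-bounds in Proposition~\ref{Prop: main} and Definition~\ref{Def: ul}. Your decomposition $N_5\dt u=\dt u+(N_5-\Id)\dt u$ and the pairwise-product check for item~(2) are precisely the right computations, and the $O(\mu)$ and $\mu\sG/\chi^2$ conclusions follow as you say.

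One small caveat: the claim that the computation ``reproduces exactly the vector asserted for $N_5\dt u$'' is slightly optimistic. For instance, in slot~$1$ the contribution $(N_5-\Id)_{1,10}\cdot\dt u_{10}=\mu\cdot\tfrac{1}{\chi}=\tfrac{\mu}{\chi}$ already dominates the asserted $\tfrac{\mu^2\sG}{\chi}$; the precise vector in the statement appears to carry a harmless typo in a couple of entries. This does not matter, since only the global $O(\mu)$ bound is used downstream (in showing $\hat{\dt u}=O(\mu)$), and that bound is unaffected.
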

Before considering $(\bar{V})v'\otimes v(\bar{I})$, we perform the following calculation.
\begin{equation}\begin{aligned}
&(\bar{V})\chi \dt u\otimes \dt l=(N_5+\chi \hat u\otimes l')\chi \dt u\otimes \dt l= \chi( N_5\dt u+\chi \hat u\otimes l'\cdot \dt u)\otimes \dt l:=\chi \hat{\dt u}\otimes \dt l,\\
&\chi \dt u\otimes \dt l(\bar{I})=\chi \dt u\otimes \dt l(N_1+\chi u\otimes \hat l)=\chi \dt u\otimes (\dt lN_1+\chi \dt l\cdot u\otimes \hat l):=\chi \dt u\otimes \hat{\dt l},
\end{aligned}\label{eq: VulI}
\end{equation}
We use Sublemma \ref{sublmB5} to conclude that $\hat{\dt u},\hat{\dt l}=O(\mu)$.

Next we consider $(\bar{V})v'\otimes v(\bar{I})$.
\begin{equation}
\begin{aligned}
&(\bar{V})v'\otimes v(\bar{I})=
(\bar{V})(-\chi \dt u\otimes \dt l+S)(III) \hat u_{iii}\otimes\hat l_{i'}(III)(-\chi \dt u\otimes \dt l+S)(\bar{I})\\
&=(-\chi \hat{\dt u}\otimes \dt l+(\bar{V})S)(III) \hat u_{iii}\otimes\hat l_{i'}(III)(-\chi \dt u\otimes \hat {\dt l}+S(\bar{I}))\\
&=\chi^2\hat{\dt u}\otimes \dt l[(III)\hat u_{iii}\otimes\hat l_{i'}(III)]\dt u\otimes \hat {\dt l}-\chi \hat{\dt u}\otimes \dt l[(III)\hat u_{iii}\otimes\hat l_{i'}(III)]S(\bar{I})\\
&-\chi (\bar{V})S[(III) \hat u_{iii}\otimes\hat l_{i'}(III)]\dt u\otimes \hat {\dt l}+(\bar{V})S[(III) \hat u_{iii}\otimes\hat l_{i'}(III)]S(\bar{I})\\
&\lesssim \hat{\dt u}\otimes \hat{\dt l}+ \hat{\dt u}\otimes\hat l_{i'}(III)S(\bar{I})+ (\bar{V})S(III) \hat u_{iii}\otimes \hat {\dt l}+
(\bar{V})S(III) \hat u_{iii}\otimes\hat l_{i'}(III)S(\bar{I})\\
\end{aligned}\label{eq: wtensorw}
\end{equation}
where in the last step we use Corollary \ref{CorB1}. The first term above is $O(\mu^2)$. To study the remaining three terms, we
continue the calculation in Sublemma \ref{sublm:l(III)u} to get %\begin{equation}
\begin{sublemma}We have the following estimates.
%\begin{aligned}
\begin{enumerate}
\item $\hat l_{i'}(III)S\lesssim\left(1,\frac{1}{\chi^2},\frac{1}{\chi^2},\frac{1}{\chi^2};\frac{1}{\chi},\frac{1}{\chi},\mu,\mu;\frac{1}{\chi},\frac{1}{\chi} \right)=O(1)$,\\
 $\hat l_{i'}(III)SN_1\lesssim\left(1,\mu,\mu,\mu;\frac{1}{\chi},\frac{1}{\chi},\mu,\mu;\mu,\mu \right)=O(1),$
\item $S(III)\hat u_{iii}\lesssim\left(\frac{1}{\chi^2},1,\frac{1}{\chi^2},\frac{1}{\chi^2};\mu,\mu,\frac{1}{\chi},\frac{1}{\chi};0,\frac{1}{\chi}\right)=O(1)$, \\
$N_5S(III)\hat u_{iii}\lesssim\left(\mu,1,\mu,\mu;\mu,\mu,\frac{1}{\chi},\frac{1}{\chi};\mu,\mu\right)=O(1)$.
\end{enumerate}
%\end{aligned}
%\end{equation}
\label{sublm: l(III)BC}
\end{sublemma}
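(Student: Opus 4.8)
The plan is to prove Sublemma \ref{sublm: l(III)BC} by direct computation, in exactly the style of the other estimates in this Appendix: every matrix and vector occurring in the statement already has an explicit entrywise bound, so each claimed estimate reduces to multiplying a vector by a $10\times 10$ matrix and retaining the dominant summand in each of the ten entries. No dynamics enters; the only input beyond bookkeeping is the explicit form of the factors.

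First I would assemble the ingredients. From \eqref{eq: 5matrices} we have $(III)\lesssim M+\chi\tilde u\otimes l'+\chi u\otimes\tilde l$, with $M,u,l,l'$ supplied by Proposition \ref{Prop: main}(a) and $\tilde u=Mu$, $\tilde l=lM$ by Definition \ref{Def: ul}. The matrix $S$ and the vectors $\hat l_{i'}$, $\hat u_{iii}$ are given explicitly in Definition \ref{Def: ul}, and $N_1,N_5$ are bounded entrywise in Proposition \ref{Prop: main}(a5). The crucial shortcut is that Sublemma \ref{sublm:l(III)u}(1) has already produced the two vectors $\hat l_{i'}(III)\lesssim(1,1/\chi^2,1/\chi^2,1/\chi^2;\mu\sG/\chi^2,1/\chi,\mu,\mu;\mathbf 1,\mathbf 1)$ and $(III)\hat u_{iii}\lesssim(\mu\sG/\chi^2,\mu\sG,\mu\sG/\chi^2,\mu\sG/\chi^2;\mu^2\sG,\mu,\mu\sG/\chi^2,1/\chi;\mathbf 1,\mathbf 1)^T$, where the two genuine $\sim$ (bold) entries come from the $2\times 2$ twist block $(M)_{44}$ of Proposition \ref{Prop: main}(b3) together with the exact entries of $l_{i'}$ and $u_{iii}$ from Proposition \ref{Prop: main}(b1)--(b2). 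Thus the only work remaining is to multiply these vectors by $S$, and then by $N_1$ (resp.\ $N_5$).

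For item (1), $\hat l_{i'}(III)S$ is the product of the row vector just recalled with $S$. Here I would exploit the structure of $S$: it has an $\Id_{4\times 4}$ block in the upper left, $1+O(\mu)$ entries on the ``diagonal'' in positions $5$--$8$ and $10$, a few $O(1)$ and $O(1/(\mu\chi^2))$ entries in rows $7$ and $10$, and the rest small; in particular its $9$-th row is identically zero. Each entry of the product is then a sum of at most ten terms, and as explained in Section \ref{sct: var} one uses the computer to select the dominant summand in each entry — an operation on $10\times 10$ matrices that could, in principle, be carried out by hand. Multiplying the resulting $O(1)$ vector by $N_1$ (bounded entrywise in Proposition \ref{Prop: main}(a5)) is another such step and yields $\hat l_{i'}(III)SN_1\lesssim(1,\mu,\mu,\mu;\sG/\chi^2,1/\chi,\mu,\mu;\mu,\mu)=O(1)$. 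Item (2) is symmetric: since the $9$-th row of $S$ vanishes, the $9$-th component of $S(III)\hat u_{iii}$ is exactly $0$ (a structural fact, not an estimate, as reflected in the stated vector), and the remaining components are computed from the column $(III)\hat u_{iii}$ of Sublemma \ref{sublm:l(III)u}(1); the output is $O(\mu\sG)$ because $\hat u_{iii}$ carries only $\mu$-size entries in positions $6,8$ besides the $\mathbf 1$ in positions $9,10$, while the last two columns of $S$ are of size $O(\mu\sG/\chi)$. Left-multiplication by $N_5$ from Proposition \ref{Prop: main}(a5) then finishes the sublemma.

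The step requiring the most care — and the only place where the coarse bounds might a priori be too lossy — is tracking the $\sim$ (bold) entries through these products: the surviving $O(1)$ and $\mathbf 1$ components of $\hat l_{i'}(III)$ and $(III)\hat u_{iii}$ must meet only those entries of $S$, $N_1$, $N_5$ that are themselves not large, so that no spurious power of $\chi$ is introduced; this is exactly why one keeps the explicit form of $(M)_{44},(N_1)_{44},(N_5)_{44}$ from Proposition \ref{Prop: main}(b3) rather than just $O(1)$. I expect no conceptual obstacle, only the need to be meticulous in this bookkeeping. As the authors remark around Lemma \ref{Lm: glob}, it is precisely these matrix products for which Mathematica is used for reliability, but the dimensions are small enough that the computation can be reproduced directly.
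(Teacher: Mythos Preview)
Your proposal is correct and follows exactly the paper's approach: the paper introduces this sublemma with the sentence ``we continue the calculation in Sublemma \ref{sublm:l(III)u}'' and, as stated at the outset of Appendix \ref{appendixb}, verifies such products with Mathematica; your plan of starting from the already-computed vectors $\hat l_{i'}(III)$ and $(III)\hat u_{iii}$ and multiplying by $S$ and then $N_1$ or $N_5$ is precisely this continuation. Your observation that the $9$-th row of $S$ vanishes, forcing the exact $0$ in item (2), is the one structural point worth recording.
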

\begin{Cor}\label{CorB2}
%\begin{enumerate}
 $\hat l_{i'}(III)S\cdot u\lesssim \frac{\mu}{\chi}$,\quad  $l'\cdot S(III) \hat u_{iii}\lesssim \frac{\mu}{\chi}.$
%\end{enumerate}
\end{Cor}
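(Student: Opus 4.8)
The proof of Corollary \ref{CorB2} amounts to evaluating the two scalar products $\hat l_{i'}(III)S\cdot u$ and $l'\cdot S(III)\hat u_{iii}$, the inputs being the entrywise estimates of $\hat l_{i'}(III)S$ and $S(III)\hat u_{iii}$ furnished by Sublemma \ref{sublm: l(III)BC} together with the explicit coordinates of the vectors $u$ and $l'$ recorded in Proposition \ref{Prop: main}(a.2). First I would dispose of the second bound: contracting $l'$ against $S(III)\hat u_{iii}$ slot by slot, the largest terms come from the $x_{1,\parallel}$- and $v_{1,\parallel}$-slots and are $\frac1\chi\cdot\mu^2\sG$ and $\mu\cdot\frac{\mu\sG}\chi$, i.e. both of order $\frac{\mu^2\sG}{\chi}$, while every remaining term (notably the $\mathbf 1$-slot of $l'$ against the $L_3$-component, which is only $\frac{\mu\sG}{\chi^2}$) is smaller by the factor $\frac1{\mu\chi}\ll1$. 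Thus the second estimate follows purely from the entrywise data of Sublemma \ref{sublm: l(III)BC} and the standing assumption $1/\chi\ll\mu$.

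The first estimate is the one requiring care. Contracting $\hat l_{i'}(III)S$ against $u$ slot by slot, the term realizing the claimed order $\frac{\mu\sG}{\chi^2}$ is the one pairing the $x_{1,\perp}$-entries, but the $\ell_3$-slot (which pairs the $\frac1{\chi^2}$-entry of $\hat l_{i'}(III)S$ with the bold $\mathbf 1$-entry of $u$) and the $v_{1,\parallel}$-slot (which pairs the $\mu$-entry of $\hat l_{i'}(III)S$ with the $\frac1{\mu\chi^2}$-entry of $u$) are both only bounded by $\frac1{\chi^2}$ from the crude estimates of Sublemma \ref{sublm: l(III)BC}, and $\frac1{\chi^2}$ exceeds $\frac{\mu\sG}{\chi^2}$ (e.g. in the zero angular momentum case). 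To close this gap I would return to the computer-assisted matrix multiplication behind Sublemma \ref{sublm: l(III)BC} (see also Sublemma \ref{sublm:l(III)u}) and extract sharper estimates for the $\ell_3$- and $v_{1,\parallel}$-components of $\hat l_{i'}(III)S$ than those sublemmas state, retaining leading coefficients rather than only orders of magnitude, and verify that after the contraction with $u$ these contributions are indeed $O(\mu\sG/\chi^2)$ — the relevant $\chi^{-2}$-order pieces cancelling. This cancellation check is the main, and essentially the only, obstacle; once it is carried out both scalar products are in hand and the corollary is immediate.

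With Corollary \ref{CorB2} established, it feeds directly into Step 3 of the proof of Lemma \ref{Lm: glob}. Combined with Sublemma \ref{sublm: l(III)BC} it is precisely what yields $(\bar V)S(III)\hat u_{iii}=O(\mu\sG)$ and $\hat l_{i'}(III)S(\bar I)=O(1)$, hence that the rank-one term $(\bar V)S(III)\hat u_{iii}\otimes\hat l_{i'}(III)S(\bar I)$ in \eqref{eq: wtensorw}, and likewise the other three summands there, contribute only $O(\mu\sG)$, hence $O(\mu\chi)$, to $d\Glob$ — rather than the $O(\chi)$ one would fear from the uncancelled bounds.
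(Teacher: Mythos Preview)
Your reading of the second bound is correct and matches the paper: the inner product $l'\cdot S(III)\hat u_{iii}$ is obtained by direct entrywise contraction of the vectors from Sublemma \ref{sublm: l(III)BC} and Proposition \ref{Prop: main}(a.2), the dominant contributions being the $x_{1,\parallel}$- and $v_{1,\parallel}$-slots, just as you say.

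For the first bound you have also correctly identified a real discrepancy: the crude entrywise product of $\hat l_{i'}(III)S$ (from Sublemma \ref{sublm: l(III)BC}) against $u$ yields only $O(1/\chi^2)$ from the $\ell_3$- and $v_{1,\parallel}$-slots, which in the zero-angular-momentum case ($\sG=1$) exceeds the stated $\mu\sG/\chi^2=\mu/\chi^2$. The paper gives no proof of the corollary beyond the implication that it follows from the sublemma, so this is a genuine overstatement in the paper, not a hidden cancellation you are missing.

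Where your proposal goes astray is in the remedy. You plan to reopen the computer-assisted matrix computation and track leading coefficients in the hope that the two $1/\chi^2$ contributions cancel. There is no indication such a cancellation exists, and you give no mechanism for one. More to the point, none is needed: look at how the corollary is actually used in Step~3. One needs only $\hat l_{i'}(III)S(\bar I)=\hat l_{i'}(III)SN_1+\chi\,(\hat l_{i'}(III)S\cdot u)\,\hat l=O(1)$, and since $\hat l=O(1)$ and $\hat l_{i'}(III)SN_1=O(1)$ by Sublemma \ref{sublm: l(III)BC}, the bound $\hat l_{i'}(III)S\cdot u\lesssim 1/\chi^2$ (hence $\chi$ times it is $O(1/\chi)$) already suffices. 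The stated $\mu\sG/\chi^2$ is stronger than required and apparently stronger than what the entrywise data give; the right fix is to weaken the statement to $1/\chi^2$, not to hunt for a cancellation.
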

Using the Sublemma \ref{sublm: l(III)BC} and Corollary \ref{CorB2}, we get
\begin{equation}\begin{aligned}\nonumber
&\hat l_{i'}(III)S(\bar{I})=\hat l_{i'}(III)SN_1+\chi \hat l_{i'}(III)S\cdot u\otimes \hat l=O(1),\\
&(\bar{V})S(III) \hat u_{iii}=N_5S(III) \hat u_{iii}+\chi \hat u\otimes l'\cdot S(III) \hat u_{iii}=O(1).
\end{aligned}
\end{equation}
Accordingly the fourth term in \eqref{eq: wtensorw} is $O(1)$ and the other terms are even smaller. Hence
$(\bar{V})v'\otimes v(\bar{I})=O(1)\ll O(\mu\chi)$.

%The second term in the last ``=" of \eqref{eq: wtensorw} above is now $\hat{\dt u}\otimes \hat l_{i'}(III)S(I)=O(\mu)$.
%Similarly, we get the third term in \eqref{eq: wtensorw} is $O(\mu^2\sG)$ and the last summand in \eqref{eq: wtensorw}
%is $O(\mu\sG).$
%Now we have shown that $(V)v'\otimes v(I)=O(\mu\sG)\ll O(\mu\chi)$.

\textbf{STEP 4:} {\it The last summand in \eqref{eq: (IV)(III)(II)new} gives the $O(\chi)$ contribution in Lemma \ref{Lm: glob} and a $O(\mu\chi)$ perturbation. }

To proceed, the following calculation is needed.
\begin{sublemma}\label{sublm: dtl(III)BC}
%\begin{equation}\begin{aligned}
We have the following estimates.
\begin{enumerate}
\item $(III)\dt u\lesssim\left(\frac{1}{\chi^3}, \frac{1}{\chi}, \frac{1}{\chi^3},\frac{1}{\chi^3};\frac{\mu}{\chi},\mu,\frac{1}{\chi^2},\frac{1}{\chi};\frac{1}{\chi},\frac{1}{\chi}\right)^T=O(\mu),$\\
$\dt l(III)\lesssim\left(\frac{1}{\chi}, \frac{1}{\chi^3},\frac{1}{\chi^3},\frac{1}{\chi^3};\frac{1}{\chi^2},\frac{1}{\chi},\frac{\mu}{\chi},\mu;\frac{1}{\chi},\frac{1}{\chi}\right)=O(\mu)$,
\item$\dt l(III)S\lesssim\left(\frac{1}{\chi},\frac{1}{\chi^3},\frac{1}{\chi^3},\frac{1}{\chi^3};\frac{1}{\chi^2},\frac{1}{\chi},\frac{\mu}{\chi},\mu;\frac{\mu}{\chi},\frac{\mu}{\chi}\right)=O(\mu)$,\\
$\dt l(III)SN_1\lesssim\left(\frac{1}{\chi},\frac{\mu}{\chi},\frac{\mu}{\chi},\frac{\mu}{\chi};\frac{1}{\chi^2},\frac{1}{\chi},\frac{\mu}{\chi},\mu;\frac{\mu}{\chi},\frac{\mu}{\chi}\right)=O(\mu)$,\\
\item
$S(III)\dt u\lesssim\left(\frac{1}{\chi^3},\frac{1}{\chi},\frac{1}{\chi^3},\frac{1}{\chi^3};\frac{\mu}{\chi},\mu,\frac{1}{\chi^2},\frac{1}{\chi};0,\frac{\mu}{\chi}\right)=O(\mu),$\\
$N_5S(III)\dt u\lesssim\left(\frac{\mu}{\chi},\frac{1}{\chi},\frac{\mu}{\chi},\frac{\mu}{\chi};\frac{\mu}{\chi},\mu,\frac{1}{\chi^2},\frac{1}{\chi};\frac{\mu}{\chi},\frac{\mu}{\chi}\right)=O(\mu).$
\end{enumerate}
%\end{aligned}\end{equation}
\end{sublemma}
\begin{Cor}\label{CorB3}
\begin{enumerate}
\item $\dt l (III)\dt u\lesssim\frac{\mu}{\chi},$
\item $\dt l(III)S\cdot u\lesssim\frac{\mu}{\chi^2}$,\quad $l'\cdot S(III) \dt u\lesssim\frac{\mu}{\chi^2}$.
\end{enumerate}
\end{Cor}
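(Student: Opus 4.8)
The plan is to deduce Corollary~\ref{CorB3} directly from the three vector estimates established in Sublemma~\ref{sublm: dtl(III)BC}, by computing each scalar product coordinate by coordinate and keeping track of which entry dominates under the standing assumption $1/\chi\ll\mu$. No new dynamical input is needed; the corollary is a purely arithmetic consequence of that sublemma together with the explicit forms of $u$, $l'$, $\dt u$ and $\dt l$ recorded in Proposition~\ref{Prop: main} and Definition~\ref{Def: ul}.

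For part~(1), I would write $\dt l\,(III)\,\dt u$ as the contraction of the row vector $\dt l\,(III)$ from Sublemma~\ref{sublm: dtl(III)BC}(1) with the column vector $\dt u$ from Definition~\ref{Def: ul}. Since $\dt u$ is supported, up to smaller terms, on its $6$th, $8$th and $10$th coordinates, of sizes $\mu$, $1/\chi$ and $1/\chi$, while the matching coordinates of $\dt l\,(III)$ have sizes $1/\chi$, $\mu$ and $1/\chi$, the three contributions are $\mu/\chi$, $\mu/\chi$ and $1/\chi^2$; as $1/\chi\ll\mu$ the first two dominate and give $\dt l\,(III)\,\dt u\lesssim\mu/\chi$. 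Equivalently one may pair $\dt l$ with $(III)\,\dt u$ using the other half of Sublemma~\ref{sublm: dtl(III)BC}(1); the estimate is unchanged.

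For part~(2) the two inequalities are symmetric, so I only describe $\dt l\,(III)\,S\cdot u$. Contracting $\dt l\,(III)\,S$ from Sublemma~\ref{sublm: dtl(III)BC}(2) against $u$ from Proposition~\ref{Prop: main}, the coordinatewise products show that the dominant terms are the $5$th, $(\sG/\chi^2)\cdot\mu$, and the $6$th, $(1/\chi)\cdot(\mu\sG/\chi)$, both equal to $\mu\sG/\chi^2$; every other coordinate is strictly smaller, the closest competitors (the $2$nd and $7$th, each of size $\sG/\chi^3$) being dominated precisely because $1/\chi\ll\mu$. Hence $\dt l\,(III)\,S\cdot u\lesssim\mu\sG/\chi^2$, and the identical computation with $S(III)\,\dt u$ from Sublemma~\ref{sublm: dtl(III)BC}(3) paired against $l'$ yields $l'\cdot S(III)\,\dt u\lesssim\mu\sG/\chi^2$.

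I do not expect any genuine obstacle: all the work has already gone into Sublemma~\ref{sublm: dtl(III)BC}, and what remains is the bookkeeping of locating the dominant entry in each scalar product and invoking $1/\chi\ll\mu$ to discard the rest. The only point demanding a little care is that a careless reading of part~(2) would suggest a spurious $1/\chi^2$ contribution; this is avoided because the seventh coordinate of $u$ is $1/(\mu\chi^2)$, not $1/\chi^2$, and multiplying it by the $\mu\sG/\chi$ entry of $\dt l\,(III)\,S$ produces only $\sG/\chi^3\ll\mu\sG/\chi^2$.
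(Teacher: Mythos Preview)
Your proposal is correct and matches the paper's approach: the corollary is not given a separate proof in the paper precisely because it follows by the coordinatewise contraction you describe, using the vectors from Sublemma~\ref{sublm: dtl(III)BC} together with $u$, $l'$, $\dt u$ from Definition~\ref{Def: ul} and the standing assumption $1/\chi\ll\mu$. Your identification of the dominant entries (the $6$th and $8$th for part~(1), the $5$th/$6$th for $\dt l(III)S\cdot u$, and the $8$th for $l'\cdot S(III)\dt u$) is accurate, and your remark about the $7$th coordinate is exactly the check needed.
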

We are now ready to consider the last summand in \eqref{eq: (IV)(III)(II)new}. Using \eqref{eq: VulI}, we get
\begin{equation}
\begin{aligned}\label{eq: O(chi)}
&(\bar{V})(-\chi \dt u\otimes \dt l+S)(III)(-\chi \dt u\otimes \dt l+S)(\bar{I})\\
&=
(-\chi \hat{\dt u}\otimes \dt l+(\bar{V})S)(III) (-\chi \dt u\otimes \hat {\dt l}+S(\bar{I}))\\
&=\chi^2 \hat{\dt u}\otimes \dt l(III) \dt u\otimes \hat {\dt l}-\chi \hat{\dt u}\otimes \dt l(III)S(\bar{I})-\chi(\bar{V})S(III)  \dt u\otimes \hat {\dt l}\\
&+(\bar{V})S(III) S(\bar{I}).\\
\end{aligned}
\end{equation}
The first term in the RHS of \eqref{eq: O(chi)} is $O(\mu^3\chi)$ using Corollary \ref{CorB3}(1). Next,
%Then in the second term of the last ``=" of \eqref{eq: O(chi)}, we have
\[\dt l(III)S(\bar{I})=\dt l(III)SN_1+\chi\dt l(III)S u\otimes\hat l=O(\mu).\]
This implies the second term in the RHS of \eqref{eq: O(chi)} is $O(\mu\chi)$. To consider the third term in the RHS of
\eqref{eq: O(chi)}, we note that
\[(\bar{V})S(III) \dt u=N_5S(III) \dt u+\chi \hat u\otimes l'\cdot S(III) \dt u=O(\mu).\]
So the third term is also $O(\mu\chi)$. Thus we get
\[\eqref{eq: O(chi)}=(\bar{V})S(III) S(\bar{I})+O(\mu\chi).\]
We need the following calculations.
\begin{sublemma}\label{sublm: last}%\begin{equation}\begin{aligned}
We have the following estimates as $1/\chi\ll \mu\to 0$.
\begin{enumerate}
\item $ l'SN_1, l'N_3SN_1, l'SN_3SN_1= (1,0_{1\times 9})+O(\mu)\to \hat\brrlin_j$,
\item $ N_5Su,  N_5N_3Su, N_5SN_3Su= (0,1,0_{1\times 8})^T+O(\mu)\to \tw,$
\item $l'Su,\ l'S\tilde u,\ l' S
N_3Su \lesssim\frac{\mu}{\chi},\quad \tilde lSu=O\(\frac{1}{\chi^2}\),$
\item $N_5SN_3SN_1=O(\mu\chi)$.
\end{enumerate}
%\end{aligned}\end{equation}
\end{sublemma}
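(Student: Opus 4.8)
The plan is to prove Sublemma \ref{sublm: last} by explicit matrix–vector multiplication with careful bookkeeping of orders of magnitude in $\mu$ and $1/\chi$, exactly in the spirit of the estimates for $M$, $N_1$, $N_5$ in Section \ref{sct: var} and of the identities $\hat l = lN_1 = l+O(\mu)$, $\hat u = N_5 u = u+O(\mu)$ recorded in Definition \ref{Def: ul}.

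\emph{Items (1) and (2).} The key structural input is: $S$ (meaning $S_2$ or $S_4$) is the identity plus a matrix whose only $O(1)$ off-diagonal entries sit in column $1$ (rows $7$ and $10$), together with a near-identity middle block; and $M$, $N_1$, $N_5$ are identities plus matrices whose large, $O(\mu\chi)$, entries are confined to a few rows (resp. columns) whose indices meet only the small coordinates of an ``$l$-profile'' covector (resp. ``$u$-profile'' vector). Consequently each of $l'S$, $l'M$, $l'SM$, $l'MS$, $l'SMS$ is again $\lesssim l$ with leading (first) entry $1+O(\mu)$: the leading entry collects $l'_1\cdot 1$ together with $l'_7\cdot O(1)$, $l'_{10}\cdot O(1)$, and a sum of products of small quantities, all of which are $O(\mu)$, while every other entry is $O(\mu)$ or smaller. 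Pairing such an $l$-profile covector against $N_1=\Id_{10}+(N_1-\Id_{10})$ then gives $(\cdot)N_1=(\cdot)+O(\mu)$ by the same computation that yields $\hat l=lN_1=l+O(\mu)$; in particular the dominant correction to the leading entry is $(l')_1(N_1-\Id_{10})_{11}=O(\mu)$, and contributions such as $(l')_7(N_1-\Id_{10})_{71}=O(1/\chi)$ are harmless since $1/\chi\ll\mu$. Hence $l'SN_1,\ l'MSN_1,\ l'SMSN_1=(1,0_{1\times 9})+O(\mu)$, and since $l'\to\hat\brrlin_j=(1,0_{1\times 9})$ as $1/\chi\ll\mu\to 0$ by Proposition \ref{Prop: main}(b1), item (1) follows; geometrically this just says that all the factor matrices asymptotically fix the ``$L_3$-covector'', $L_3$ being a near-conserved quantity along the global map. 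Item (2) is the mirror statement for column vectors: $Su$, $Mu=\tilde u$, $SMu$, $MSu$, $SMSu$ are all $\lesssim u$ with second entry $1+O(\mu)$, then $N_5(\cdot)=(\cdot)+O(\mu)$ as in $\hat u=N_5u=u+O(\mu)$, and $u\to\tw=(0,1,0_{1\times 8})^T$ by Proposition \ref{Prop: main}(b1).

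\emph{Item (3).} These are inner products of $l$- or $\tilde l$-profile covectors with $u$- or $\tilde u$-profile vectors. Using the same structural facts, $Su$, $SMu$, $SMSu$ are all $\lesssim u$, so it suffices to bound the pairings $l'\cdot u$, $l'\cdot\tilde u$, $\tilde l\cdot u$ directly, entrywise. For $l'\cdot u$ the dominant contribution is the fifth coordinate, $l'_5u_5\sim(1/\chi)\mu=\mu/\chi$, while for $\tilde l\cdot u$ it is $\tilde l_5u_5\sim(1/\mu\chi^2)\mu=1/\chi^2$; every other product is smaller (for instance $l'_1u_1,l'_2u_2\sim 1/\chi^3$, $l'_7u_7\sim 1/\chi^2$, $l'_9u_9\sim\mu^2\sG^2/\chi^3$). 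This gives the claimed bounds $l'Su,\ l'S\tilde u,\ l'SMSu\lesssim\mu/\chi$ and $\tilde lSu=O(1/\chi^2)$.

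\emph{Item (4) and the main obstacle.} For $N_5SMSN_1=O(\mu\chi)$ one must exploit that the $O(\mu\chi)$ entries of $N_5$, $M$, $N_1$ occupy essentially disjoint rows/columns and therefore never compound along a path through the product; the naive product of entrywise majorants would otherwise allow $O((\mu\chi)^3)$. Concretely I would carry out, on the computer and exactly as in Section \ref{sct: var}, the majorant (``$\max$ instead of $\sum$'') multiplication of the nonnegative matrices $\Id_{10}+|N_5-\Id_{10}|$, $\Id_{10}+|S-\Id_{10}|$, $\Id_{10}+|M-\Id_{10}|$, $\Id_{10}+|S-\Id_{10}|$, $\Id_{10}+|N_1-\Id_{10}|$ and verify that every entry of the result is $O(\mu\chi)$: to pick up a factor $\mu\chi$ one must route through a large row of one of $N_5,M,N_1$, and the columns feeding such a row carry only $O(1)$, so each path picks up at most one factor $\mu\chi$. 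The main obstacle throughout Sublemma \ref{sublm: last}, as with all of Section \ref{sct: var}, is the bookkeeping: one has to be certain that the majorant multiplications are honest (no genuine cancellation is silently invoked) and, for items (1)–(2), that the errors introduced by $S$, $M$, $N_1$, $N_5$ are genuinely $O(\mu)$ rather than merely $O(1)$, which requires the fine (``$\sim$'') information on $l$, $l'$, $u$, $\tilde u$ and on the $44$-blocks recorded in Proposition \ref{Prop: main}(b).
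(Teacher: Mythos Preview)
Your proposal is correct and follows essentially the same approach as the paper: both rely on brute-force majorant matrix multiplication (with computer assistance) together with the structural observation that $l'$ and $u$ act as approximate left/right eigenvectors of $S$, $M$, $N_1$, $N_5$ because the large $O(\mu\chi)$ entries of the latter matrices sit in rows/columns that pair against only small entries of $l'$, $u$. The paper's own proof is in fact terser than yours --- it simply says ``use Mathematica'' and then records the same heuristics you spell out --- so your write-up is, if anything, a more detailed version of the intended argument.
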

\begin{proof}%[sketch of proof]
Items (1) and (2) can be obtained by taking the limit
$\displaystyle \lim_{\mu\to 0,\chi\to\infty}$ using Mathematica.
In item (4), we use Mathematica to get $\displaystyle \lim_{\mu\to 0,\chi\to\infty}N_5SN_3SN_1/\chi=0.$
\end{proof}

To understand (1) and (2) heuristically, we notice that all the entries of $l'$ are small except the first one, so
multiplying $l'$ by a matrix corresponds to picking out the first row.
Though $N_1,N_3$ have some large entries of order $O(\mu\chi)$, the corresponding entries of $l'$ are small enough to suppress them. The first rows of the matrices $S,N_3,N_1$ all have a similar structure to $l'$. Therefore, we may think of $l'$ as a left eigenvector of the matrices. The same heuristic argument applies to $u$.
To see where (4) comes from we may think of $S$ as the identity. The big entries of $O(\mu\chi)$ in $N_1,N_3,N_5$ are off-diagonal. It is not hard to keep track of these $O(\mu\chi)$ entries to see that we do not get terms greater than $O(\mu\chi)$.

Next, we multiply $(\bar{V})S(III)S(\bar{I})$ to get
\begin{equation}\label{eq: 8terms}%\label{eq: 8terms}
\begin{aligned}
&[N_5+\chi \hat u\otimes  l']S[N_3+\chi \tilde u\otimes l'+\chi u\otimes \tilde
l]S[N_1+\chi u\otimes \hat l]\\
&=[N_5+\chi \hat u\otimes  l'][SN_3+\chi S\tilde u\otimes l'+\chi Su\otimes \tilde l][SN_1+\chi Su\otimes \hat l]\\
&=[N_5SN_3+N_5(\chi S\tilde u\otimes l'+\chi Su\otimes \tilde l)+\chi \hat u\otimes  l'SN_3+O(\mu\chi)\hat u\otimes l'+O(\mu)]\cdot\\
&\ (SN_1+\chi Su\otimes \hat l)\\
&=N_5SN_3SN_1+\chi N_5 S\tilde u\otimes l'SN_1+\chi N_5Su\otimes \tilde lSN_1+\chi \hat u\otimes  l'SN_3SN_1+\\
&\ O(\mu\chi)\hat u\otimes l'SN_1
+\ \chi N_5SN_3Su\otimes \hat l+N_5(\chi S\tilde u\otimes l'+\chi Su\otimes \tilde l)(\chi Su\otimes \hat l)+\\
&\ (\chi \hat u\otimes  l') S
N_3(\chi Su\otimes \hat l)+O(\mu\chi^2)\hat u\otimes l'\cdot Su\otimes \hat l+O(\mu\chi),
\end{aligned}
\end{equation}
where in the second $=$, we use $l'Su\sim l'S\tilde u\lesssim \mu/\chi$ by Sublemma \ref{sublm: last}(3) and $\tilde l=l'+O(1/\chi)$ by their definitions in Definition \ref{Def: ul} and Proposition \ref{Prop: main}(a.2).

The first term in  \eqref{eq: 8terms} is $O(\mu\chi)$ by Sublemma \ref{sublm: last}(4). The ninth term $\mu\chi^2\hat u\otimes l'\cdot Su\otimes \hat l=O(\mu^2\chi)$ since $ l'\cdot Su=O\left(\frac{\mu}{\chi}\right)$ by
Sublemma \ref{sublm: last}(3). The seventh term
$N_5(\chi S\tilde u\otimes l'+\chi Su\otimes \tilde l)(\chi S u\otimes \hat l)
%=\chi \left(
%N_5Su+O\(\frac{1}{\chi}\)N_5S\right)\cdot \frac{\mu}{\chi}\cdot \chi\cdot\hat l
=O(\mu\chi)$
using that $\tilde u=u+O\left(\frac{1}{\chi}\right)$ and %$l'Su=O\(\frac{\mu}{\chi}\),\tilde lSu=O\(\frac{1}{\chi^2}\)$
Sublemma \ref{sublm: last}(3). The fifth term has the estimate $\mu\chi \hat u\otimes l'SN_1=O(\mu\chi)$ by Sublemma \ref{sublm: last}(1). The eighth term
$(\chi \hat u\otimes  l') S
N_3(\chi Su\otimes \hat l)=O(\mu\chi),$ since $l' S
N_3Su\lesssim\frac{\mu}{\chi}$ by Sublemma~\ref{sublm: last}(3).

We are left with four terms, the second, third, fourth and sixth terms, written together as
\begin{equation}\label{eq: OK}
\chi\left[N_5 S\tilde u\otimes l'SN_1+N_5Su\otimes \tilde lSN_1+ \hat u\otimes  l'SN_3SN_1+N_5SN_3 Su\otimes \hat l\right].\end{equation}
We first use the fact that \[\tilde u=u+O\(\frac{1}{\chi}\),\ \tilde l=l+O\(\frac{1}{\chi}\),\ l'=l+O\(\frac{1}{\chi}\),\ \hat l=l+O\(\mu\)\] and  $N_1,N_5=O(\mu\chi)$ to reduce the four terms to
\begin{equation}\nonumber
\chi\left[2 (N_5 S u+O\(\mu\))\otimes (l'SN_1+O(\mu))+\hat u\otimes  l'SN_3SN_1+ N_5SN_3 Su\otimes ( l'+O(\mu))\right].\end{equation}

Using parts (1) and (2) of Sublemma \ref{sublm: last}, we find that each term in expression \eqref{eq: OK} has the form of $\chi ( u+O(\mu))\otimes ( l'+O(\mu))=\chi  u\otimes l'+O(\mu\chi).$
Up to now, we have successfully separated the $O(\chi^2), O(\chi)$ and $O(\mu\chi)$ parts in the global map.

\textbf{Step 5.} {\it Completing the proof.}

Remember we have dropped the two $O(1)$ matrix $(1+u_1^i\otimes l_1^i)$ in $(I)$ and the matrix $(1+u_5^f\otimes l_5^f)$ in $(V)$ in Step 0. We summarize the results of Steps 2 and 4 as follows
\[d\Glob=(\Id_{10}+u_5^f\otimes l_5^f)(\chi^2\bar{ u}'\otimes \bar{ l}'+\chi u\otimes l'+O(\mu\chi))(\Id_{10}+u_1^i\otimes l_1^i).\]
 To complete the proof of the lemma, it is enough to define
 \begin{equation}\label{eq: bar ul}\brv=(\Id_{10}+u_5^f\otimes l_5^f)\bar{ u}',\quad \brrv=(\Id_{10}+u_5^f\otimes l_5^f) u,\quad \brlin=\bar{ l}'(\Id_{10}+u_1^i\otimes l_1^i),\quad  \brrlin=l'(\Id_{10}+u_1^i\otimes l_1^i).\end{equation}
We obtain the structure of $d\Glob$ stated in Lemma \ref{Lm: glob}. It remains to work out the vectors $\brrv,\brv,\brrlin,\brlin$.
We have \[\brrv= u+O(\mu)\to (0,1,0_{1\times 8})^T,\quad \brrlin =l'+O(\mu)\to (1,0_{1\times 9}) \mathrm{\ as\ } 1/\chi\ll\mu\to 0\] using Sublemma \ref{sublm: last} for $ u, l'$ and Proposition \ref{Prop: main} for $u_1^i$ and $l_5^f$. According to \eqref{eq: chi^2} in Step 2, we have $\bar{ u}'=N_5\hat u_{iii'}+O(1) \hat u,\
\bar{ l}'=\hat l_iN_1.$
We neglect the term $O(1)\hat u$ since it is enough to consider the span$\{N_5\hat u_{iii'},\hat u\}$ and $\hat u=u+O(\mu)$ is already provided by the $O(\chi)$ part of $d\Glob$.
Using $\hat u_{iii'}$ in Definition \ref{Def: ul} and $N_1$ in Proposition \ref{Prop: main}, we find in $\bar{ u}'$, we have $N_5\hat u_{iii'}\to (0,O(1), 0_{1\times 6},O(1),O(1))$ as $1/\chi\ll\mu\to 0$, where the last two $O(1)$ entries are \begin{equation}\label{EqEigen}(N_5)_{44}.(u_{iii'}(9),u_{iii'}(10))^T=(u_{iii'}(9),u_{iii'}(10))^T=\left(1,\frac{\hat L_4}{\hat L_4^2+\hat G_4^2}\right),\end{equation}
($(u_{iii'}(9),u_{iii'}(10))$ is an eigenvector of $(N_5)_{44}$  with eigenvalue 1)
and in $\bar{ l}',$ we have
$$\hat l_iN_1\to \lim l_i=\left(\frac{\tilde G_4}{\tilde L_4(\tilde L_4^2+\tilde G_4^2)},0_{1\times 7},-\frac{1}{\tilde L_4^2+\tilde G_4^2},-\frac{1}{\tilde L_4}\right).$$
It is easy to see that $u\to \brrv$ using the definition of $u$ in Proposition \ref{Prop: main}. We substitute these calculations back to \eqref{eq: bar ul} to get $\brv \to u_{iii'}+c\brrv$ for some constant $c$.

\section*{Acknowledgement}
The author would like to thank his thesis advisor Prof. Dmitry Dolgopyat who spent one year checking all the details of the paper. Without his  enormous intelligence and time input, constant encouragement and financial support, the work could not have been completed. The author would also like to express his deep gratitude to Professor Joseph Gerver who also carefully checked all the details of the paper and gave numerous suggestions which significantly improve the readability of the paper. The author is supported by NSFC (Significant project No.11790273) in China and Beijing Natural Science Foundation (Z180003).


\begin{thebibliography}{DGO}

\def\bi#1{\bibitem[#1]{#1}}
%\bi{A} V.I. Arnold, {\it Mathematical methods in classical mechanics}. Springer, (1989).
%\bi{Al} A. Albouy, {\it Lectures on the two-body problem}, Classical and Celestial Mechanics: The Recife Lectures (H. Cabral and F. Diacu, Eds.), Princeton University Press, Princeton, NJ, (2002).
%\bi{BM} S. Bolotin, R.S. MacKay, Nonplanar second species periodic and chaotic trajectories
%for the circular restricted three-body problem, Celestial Mech Dyn Astr \textbf{94}, (2006), 433--449.
%\bi{BN} S. Bolotin, P. Negrini, Variational approach to second species periodic solutions of Poincar\'{e} of the 3 body problem, arXiv:1104.2288
%\bi{C} A. Chenciner, {\it A l'infini en temps fini} (French), S\'{e}minaire Bourbaki, vol. 1996/97. Ast\'{e}risque 245 (1997): 323-353.
\bi{DX} J. Xue, D. Dolgopyat, {\it Noncollision singularities in a planar two-center-two-body problem}. Communications in Mathematical Physics, August 2016, Volume 345, Issue 3, pp 797-879
\bi{F} L. Floria, {\it a simple derivation of the hyperbolic Delaunay variables}, The Astronomical journal,
\textbf{110}, No 2, (1995), 940-942.
%\bi{FNS} J. Font, A. Nunes, C. Simo, {\it Consecutive quasi-collisions in the planar circular RTBP},  Nonlinearity \textbf{15}, (2002), 115--142.
\bi{G1}  J. Gerver, {\it The existence of pseudocollisions in the plane,} J. Differential Eq. {\bf 89} (1991) 1--68.
\bi{G2} J. Gerver, {\it Noncollision singularity: Do four bodies suffice}? Experiment Math. \textbf{12}, (2003), 187--198.
\bi{G3} J. Gerver, {\it Noncollision singularities in the n-body problem}, in Dynamical systems. Part I, 57--86, Pubbl. Cent. Ric. Mat. Ennio Giorgi, Scuola Norm. Sup., Pisa, 2003.
%\bi{K} O. Knill, http://www.math.harvard.edu/$\sim$\ knill/seminars/intr/index.html.
%\bi{LL} L. Landau, Lifschitz, Mechanics. Third Edition: Volume 1 (Course of Theoretical Physics).
%\bi{LS} D. Li, Ya. G, Sinai Blowups of complex-valued solutions for some hydrodynamic models, Regul. Chaotic Dyn.
\textbf{15} (2010) 521--531.
\bi{MM} J. Mather, R. McGehee, {\it Solutions of the collinear four body problem which become unbounded in finite time}, Dynamical
 Systems, Theory and Applications (J. Moser, ed.), Lecture Notes in Physics \textbf{38}, Springer-Verlag, Berlin, (1975),  573--597.
 \bi{Pa} P. Painlev\'{e},
{\it  Le\c{c}ons sur la th\'{e}orie analytique des \'{e}quations diff\'{e}rentielles}, Hermann, Paris, 1897.
 %\bi{Po} H. Poincar\'{e}, {\it New methods of celestial mechanics}. Translated from the French. History of Modern Physics and Astronomy, 13. American Institute of Physics, New York, 1993.
%\bi{V} C. Villani, A review of mathematical topics in collisional kinetic theory
\bi{Sa1} D. Saari,
{\it Improbability of collisions in Newtonian gravitational systems.}
Trans. Amer. Math. Soc. \textbf{162} (1971), 267--271; erratum, ibid. \textbf{168} (1972), 521.
\bi{Sa2} D. Saari,
{\it A global existence theorem for the four-body problem of Newtonian mechanics.}
J. Differential Equations {\bf 26} (1977)  80--111.
%\bi{Sa3} D. Saari, {\it Collisions, rings, and other Newtonian N-body problems}. CBMS Regional Conference Series in Mathematics, 104, AMS, Providence, RI, 2005
\bi{Sim} B. Simon,
{\it Fifteen problems in mathematical physics}. Perspectives in mathematics, 423--454, Birkhauser, Basel, 1984.
\bi{Su}  K. F. Sundman, {\it Nouvelles recherches sur le probl\`{e}me des trois corps}, Acta. Soc. Sci. Fennicae \textbf{35}  (1909), 3--27.
%\bi{SX} D. G. Saari, and Z. Xia. {\it Off to infinity in finite time.} Notices of the AMS 42.5 (1995).
\bi{W} A. Wintner, {\it The analytical foundations of celestial mechanics.} Courier Dover Publications, 2014.
\bi{X} Z. Xia, {\it The existence of noncollision singularity in Newtontian systems,} Annals of Mathematics, second series, Vol \textbf{135}, Issue 3, (1992), 411--468.
\bi{Xu} J. Xue, {\it Noncollision singularities in Newtonian $N$-body problem, results and perspectives,} survey. http://www.math.uchicago.edu/$\sim$jxue.
\end{thebibliography}
\end{document}